\newcommand{\bk}{\Bbbk}
\newcommand{\Z}{\mathbb{Z}}
\newcommand{\C}{\mathbb{C}}
\newcommand{\F}{\mathbb{F}}
\newcommand{\frg}{\mathfrak{g}}
\newcommand{\cO}{\mathcal{O}}
\newcommand{\gr}{{\mathrm{gr}}}
\newcommand{\scB}{\mathscr{B}}
\newcommand{\scG}{\mathscr{G}}
\newcommand{\cF}{\mathcal{F}}
\newcommand{\sA}{\mathscr{A}}
\newcommand{\sB}{\mathscr{B}}
\newcommand{\sC}{\mathscr{C}}
\newcommand{\Db}{D^{\mathrm{b}}}
\newcommand{\Kb}{K^{\mathrm{b}}}
\newcommand{\lf}{{\mathrm{lf}}}
\newcommand{\pf}{{\mathrm{pf}}}
\newcommand{\op}{{\mathrm{op}}}
\newcommand{\lmod}{\text{\normalfont-}\mathrm{mod}}
\newcommand{\lgmod}{\text{\normalfont-}\mathrm{gmod}}
\newcommand{\Ext}{\mathrm{Ext}}
\newcommand{\Sym}{\mathrm{Sym}}
\newcommand{\Hom}{\mathrm{Hom}}
\newcommand{\End}{\mathrm{End}}
\newcommand{\id}{\mathrm{id}}
\newcommand{\uHom}{\underline{\Hom}}
\newcommand{\uExt}{\underline{\Ext}}
\newcommand{\uEnd}{\underline{\End}}
\newcommand{\Irr}{\mathrm{Irr}}
\newcommand{\Proj}{\mathrm{Proj}}
\newcommand{\Inj}{\mathrm{Inj}}
\newcommand{\simto}{\xrightarrow{\sim}}
\newcommand{\la}{\langle}
\newcommand{\ra}{\rangle}
\newcommand{\lla}{\langle\hspace{-1pt}\langle}
\newcommand{\rra}{\rangle\hspace{-1pt}\rangle}
\newcommand{\Flag}{\mathscr{X}}
\newcommand{\Par}{\mathsf{Par}}
\newcommand{\Perv}{\mathsf{Perv}}
\newcommand{\Coh}{\mathsf{Coh}}
\newcommand{\Rep}{\mathsf{Rep}}
\newcommand{\Dmix}{D^{\mathrm{mix}}}
\newcommand{\dmix}{\Delta^{\mathrm{mix}}}
\newcommand{\nmix}{\nabla^{\mathrm{mix}}}
\newcommand{\mix}{\mathrm{mix}}
\newcommand{\bX}{\mathbf{X}}
\newcommand{\cN}{\mathcal{N}}
\newcommand{\tcN}{\widetilde{\mathcal{N}}}
\newcommand{\Gm}{\mathbb{G}_{\mathrm{m}}}
\newcommand{\Gr}{\mathrm{Gr}}
\newcommand{\Iw}{\mathrm{Iw}}
\newcommand{\Waff}{W_{\mathrm{aff}}}
\newcommand{\Wext}{W_{\mathrm{ext}}}
\newcommand{\Waffmin}{{}^0 \hspace{-1pt} W_{\mathrm{aff}}}
\newcommand{\Wextmin}{{}^0 \hspace{-1pt} W_{\mathrm{ext}}}
\newcommand{\coh}{\mathsf{H}}
\newcommand{\Fr}{\mathrm{Fr}}
\newcommand{\bc}{\mathbf{c}}
\newcommand{\puH}{{}^p \hspace{-1pt} \underline{H}}
\newcommand{\puM}{{}^p \hspace{-1pt} \underline{M}}
\newcommand{\puN}{{}^p \hspace{-1pt} \underline{N}}
\newcommand{\ph}{{}^p \hspace{-1pt} h}
\newcommand{\pmm}{{}^p \hspace{-1pt} m}
\newcommand{\pn}{{}^p \hspace{-1pt} n}
\newcommand{\Fl}{\mathrm{Fl}}
\newcommand{\IW}{\mathcal{IW}}
\newcommand{\ext}{\mathrm{ext}}
\newcommand{\bG}{\mathbf{G}}
\newcommand{\bB}{\mathbf{B}}
\newcommand{\bT}{\mathbf{T}}
\newcommand{\WKM}{\mathcal{W}}
\numberwithin{equation}{section}
\newtheorem{thm}{Théorème}[section]
\newtheorem{lem}[thm]{Lemme}
\newtheorem{prop}[thm]{Proposition}
\newtheorem{conj}[thm]{Conjecture}
\theoremstyle{definition}
\newtheorem{defn}[thm]{Définition}
\theoremstyle{remark}
\newtheorem{rmq}[thm]{Remarque}
\newtheorem{ex}[thm]{Exemple}
\title[Dualit\'e de Koszul et th\'eorie des repr\'esentations]{Dualit\'e de Koszul formelle et th\'eorie des repr\'esentations des groupes alg\'ebriques r\'eductifs en caract\'eristique positive}
\author{Pramod N. Achar}
\address{Department of Mathematics\\
  Louisiana State University\\
  Baton Rouge, LA 70803\\
  U.S.A.}
\email{pramod@math.lsu.edu}
\author{Simon Riche}
\address{Universit\'e Clermont Auvergne, CNRS, LMBP, F-63000 Clermont-Ferrand, France.
}
\email{simon.riche@uca.fr}
\thanks{P.A. was supported by NSF Grant No.~DMS-1500890.
%S.R. was partially supported by ANR Grant No.~ANR-13-BS01-0001-01. 
This project has received funding from the European Research Council (ERC) under the European Union's Horizon 2020 research and innovation programme (grant agreement No 677147).
}
\def\@@and{et}
\begin{document}

\begin{abstract}
Dans cet article nous pr\'esentons les grandes lignes de la preuve d'une formule de caract\`eres pour les repr\'esentations basculantes des groupes alg\'ebriques r\'eductifs sur un corps de caract\'eristique positive, obtenue partiellement en collaboration avec plusieurs auteurs. Nous unissons les diff\'erentes \'etapes de cette preuve dans la notion de ``dualit\'e de Koszul formelle'', et en pr\'esentons quelques applications.
\end{abstract}

\maketitle

\setcounter{tocdepth}{1}
\tableofcontents

%%%%%%%%%%%%%%%%%%%%%%%%%%%%%%%%%%%%%%%%%%%%%%%%%%%%%%%%%%%%%%%%%%%%%%%%%%%
\section{Introduction}
%%%%%%%%%%%%%%%%%%%%%%%%%%%%%%%%%%%%%%%%%%%%%%%%%%%%%%%%%%%%%%%%%%%%%%%%%%%

%--------------------------------------------------
\subsection{Pr\'esentation}
%--------------------------------------------------

Le but principal de cet article est de pr\'esenter les grandes lignes de la preuve d'une formule de caract\`eres pour les repr\'esentations basculantes des groupes alg\'ebriques r\'eductifs sur des corps alg\'ebriquement clos de caract\'eristique positive, conjectur\'ee par G. Williamson et le second auteur~\cite{rw}, et d\'emontr\'ee dans une suite de travaux partiellement en collaboration avec S.~Makisumi, C.~Mautner, L.~Rider, et G.~Williamson (voir notamment~\cite{arider, mr, prinblock, amrw2}).

Cette preuve implique la construction de plusieurs \'equivalences de cat\'egories qui ont une structure similaire, que nous avons essay\'e d'axiomatiser dans la notion de ``dualit\'e de Koszul formelle''. La dualit\'e de Koszul ``classique'' (\'etudi\'ee notamment dans un article fondateur de Be{\u\i}linson--Ginzburg--Soergel~\cite{bgs}) est une \'equivalence de cat\'egories qu'on peut construire quand on rencontre un \emph{anneau de Koszul}. Une dualit\'e de Koszul formelle est ``une dualit\'e de Koszul sans anneau de Koszul'', c'est-\`a-dire un foncteur poss\'edant certaines propri\'et\'es formellement similaires \`a celles des dualit\'es de Koszul (ou plus pr\'ecis\'ement de ``Ringel--Koszul'') classiques, mais qui peut exister m\^eme dans des situations o\`u aucun anneau de Koszul n'est pr\'esent. 

%--------------------------------------------------
\subsection{Anneaux et dualit\'e de Koszul}
%--------------------------------------------------

La notion d'\emph{anneau de Koszul} a \'et\'e introduite par Priddy, et \'etudi\'ee dans un cadre g\'en\'eral et tr\`es satisfaisant par Be{\u\i}linson--Ginzburg--Soergel~\cite{bgs}. Il s'agit d'un anneau $A$ gradu\'e par $\Z_{\geq 0}$, dont la composante de degr\'e $0$ est un anneau semi-simple, et qui v\'erifie la condition que
\begin{equation}
\label{eqn:def-Koszul-intro}
\Hom_{\Db(A\lgmod)}(A_0, A_0 \langle n \rangle[m])=0 \quad \text{si $n+m \neq 0$.}
\end{equation}
(Ici, $A\lgmod$ d\'esigne la cat\'egorie des $A$-modules gradu\'es de type fini, et $\langle n \rangle$ est le d\'ecalage de la graduation, avec des conventions pr\'ecis\'ees ci-dessous.)

Sous certaines conditions techniques, si $A$ est un anneau de Koszul l'anneau gradu\'e
\[
A^! := \left( \bigoplus_{n \in \Z} \Hom_{\Db(A\text{-gMod})}(A_0, A_0 \langle -n \rangle[n]) \right)^\op
\]
est encore un anneau de Koszul (appel\'e \emph{anneau de Koszul dual}), et (sous des hypoth\`ese techniques plus fortes) il existe une \'equivalence de cat\'egories triangul\'ees $\kappa$ entre les cat\'egories d\'eriv\'ees born\'ees des $A$-modules gradu\'es de type fini et des $A^!$-modules gradu\'es de type fini (appel\'ee \emph{dualit\'e de Koszul} associ\'ee \`a $A$). En un sens cette \'equivalence ``m\'elange'' la graduation des modules et la graduation cohomologique des complexes ; plus pr\'eci\'ement elle v\'erifie $\kappa \circ \langle 1 \rangle \cong \langle -1 \rangle[1] \circ \kappa$. En particulier, la condition~\eqref{eqn:def-Koszul-intro} se traduit par le fait que l'image par $\kappa$ d'un $A$-module simple gradu\'e concentr\'e en degr\'e $0$ est un $A^!$-module gradu\'e projectif, et dualement l'image inverse par $\kappa$ d'un $A^!$-module simple gradu\'e concentr\'e en degr\'e $0$ est un $A$-module gradu\'e injectif.

Le mod\`ele typique de cette construction est obtenu quand 
$A = \bigwedge V$ est l'algèbre extérieure d'un espace vectoriel $V$ de dimension finie (avec sa graduation naturelle) ; l'anneau dual est alors l'alg\`ebre 
symétrique $\mathrm{S}(V^*)$ de l'espace vectoriel dual $V^*$ (avec sa graduation naturelle, encore une fois).

%--------------------------------------------------
\subsection{La dualit\'e de Koszul de Be{\u\i}linson--Ginzburg--Soergel}
\label{ss:bgs-intro}
%--------------------------------------------------

L'exemple principal qui a motiv\'e l'\'etude g\'en\'erale de ces constructions par Be{\u\i}linson--Ginzburg--Soergel est celui o\`u $A$ est l'anneau qui ``gouverne'' un bloc r\'egulier de la cat\'egorie $\mathcal{O}$ d'une alg\`ebre de Lie semi-simple complexe (c'est-\`a-dire l'anneau oppos\'e \`a celui des endomorphismes d'un g\'en\'erateur projectif de cette cat\'egorie). Cet anneau n'est pas ``naturellement'' gradu\'e ; cependant Be{\u\i}linson--Ginzburg--Soergel montrent qu'il peut \^etre muni d'une graduation qui en fait un anneau de Koszul, isomorphe \`a son anneau de Koszul dual. La dualit\'e de Koszul associ\'ee permet d'``expliquer'' certaines formules v\'erif\'ees par les polyn\^omes de Kazhdan--Lusztig, et notamment la ``formule d'inversion'' de Kazhdan--Lusztig.

Cette construction a jou\'e un r\^ole pr\'epond\'erant en th\'eorie g\'eom\'etrique des repr\'esentations, et a depuis \'et\'e adapt\'ee dans divers contextes proches ; voir par exemple~\cite{so-icm,bac, riche-koszul, svv, es}.

%--------------------------------------------------
\subsection{Dualit\'e de Koszul et anneaux quasi-h\'er\'editaires}
%--------------------------------------------------

En plus du fait qu'il peut \^etre muni d'une graduation de Koszul, l'anneau $A$ consid\'er\'e au~\S\ref{ss:bgs-intro} poss\`ede une autre structure remarquable : il s'agit d'un anneau \emph{quasi-h\'er\'editaire} ; en d'autres termes, il poss\`ede des modules ``standards'' et des modules ``costandards'' qui exhibent des propri\'et\'es cohomologiques remarquables. Les anneaux gradu\'es qui poss\`edent ces deux structures, et pour lesquels les deux structures sont ``compatibles'' en un sens appropri\'e, sont appel\'es \emph{quasi-h\'er\'editairement de Koszul}.

Les anneaux quasi-h\'er\'editaires (gradu\'es ou non) poss\`edent \'egalement un autre type de ``dualit\'e'', la \emph{dualit\'e de Ringel}. En g\'en\'eral, le dual de Ringel d'un anneau quasi-h\'er\'editairement de Koszul n'est pas n\'ecessairement quasi-h\'er\'editairement de Koszul ; cependant, quand il l'est, des r\'esultats de Mazorchuk~\cite{mazorchuk} montrent que les dualit\'es de Koszul et de Ringel commutent en un sens appropri\'e, et leur composition devient une ``dualit\'e de Koszul--Ringel'' qui envoie les modules simples concentr\'es en degr\'e $0$ vers des modules \emph{basculants} de l'anneau dual.
En particulier, cette situation se produit dans le contexte de Be{\u\i}linson--Ginzburg--Soergel ; dans ce cas le dual de Ringel de l'anneau $A$ du~\S\ref{ss:bgs-intro} est \'egalement isomorphe \`a $A$, et la dualit\'e de Koszul--Ringel est donc une auto-\'equivalence de la cat\'egorie d\'eriv\'ee born\'ee des $A$-modules gradu\'es de type fini.

Cet anneau ``con\-tr\^ole'' \'egalement la cat\'egorie des faisceaux pervers sur la vari\'et\'e de drapeaux associ\'ee \`a notre alg\`ebre de Lie semi-simple, constructibles par rapport \`a la stratification de Bruhat (et \`a coefficients complexes). Une id\'ee fondamentale sugg\'er\'ee par Be{\u\i}linson--Ginzburg~\cite{bg} et mise en pratique par Bezrukavnikov--Yun~\cite{by} est que, dans ce cadre, la dualit\'e de Koszul--Ringel admet une vaste g\'en\'eralisation, qui relie les faisceaux pervers sur la vari\'et\'e de drapeaux d'un groupe de Kac--Moody aux faisceaux pervers sur la vari\'et\'e de drapeaux du groupe de Kac--Moody dual (au sens de Langlands).

%--------------------------------------------------
\subsection{Objets \`a parit\'e et dualit\'e de Koszul formelle}
%--------------------------------------------------

Dans le cadre de Be{\u\i}lin\-son--Ginzburg--Soergel, la ``Koszulit\'e'' est une propri\'et\'e d'un anneau, et permet la cons\-truction d'une dualit\'e de Koszul (ou de Koszul--Ringel). Une id\'ee exprim\'ee notamment dans~\cite{soergel-dualitat}\footnote{La pr\'epublication~\cite{soergel-dualitat} contient une erreur dans une preuve cruciale ; cette preuve a \'et\'e corrig\'ee, et la construction compl\'et\'ee, dans l'article~\cite{rsw}.} est que, dans le contexte de la th\'eorie des repr\'esentations sur des corps de caract\'eristique positive, on peut recontrer des \'equivalences qui ont les m\^emes propri\'et\'es formelles que les dualit\'es de Koszul, m\^eme en l'absence d'anneaux de Koszul. En d'autres termes, cette id\'ee sugg\`ere que la ``Koszulit\'e'' est une propri\'et\'e d'un \emph{foncteur} plut\^ot que d'un anneau (ou de sa cat\'egorie de modules).

Pour exprimer cette id\'ee plus pr\'ecis\'ement, nous avons besoin de la notion d'\emph{objets \`a parit\'e}. Cette notion a \'et\'e introduite et \'etudi\'ee par Juteau--Mautner--William\-son~\cite{jmw} dans le cadre des complexes de faisceaux sur des vari\'et\'es alg\'ebriques. Dans cet article nous en consid\'erons une variante dans le cadre des cat\'egories triangul\'ees munies d'une collection quasi-exceptionnelle (au sens de~\cite{bez}) ``\`a parit\'e''. Dans le cas o\`u cette collection quasi-exceptionnelle est en fait exceptionnelle (au sens de~\cite{bez:ctm}), et si de plus ses ob\-jets appartiennent au coeur de la t-structure associ\'ee, ce coeur poss\`ede une structure naturelle de \emph{cat\'egorie de plus haut poids} (c'est-\`a-dire une structure similaire \`a celle de la cat\'egorie des modules sur un anneau quasi-h\'er\'editaire), et on peut \'egalement consid\'erer les objets basculants dans cette cat\'egorie. Une \emph{dualit\'e de Koszul formelle} est (essentiellement) un foncteur entre deux cat\'egories triangul\'ees poss\'edant ces structures, qui envoie les objets \`a parit\'e de la premi\`ere cat\'egorie sur les objets basculants de la seconde et r\'eciproquement.

Dans la plupart des contextes rencontr\'es en pratique pour lesquels le corps de coefficients consid\'er\'e est de caract\'eristique $0$ (comme dans~\cite{bgs} ou~\cite{by}), les objets \`a parit\'e sont en fait des sommes directes de d\'ecal\'es d'objets simples dans le coeur de la t-structure associ\'ee \`a la collection exceptionnelle consid\'er\'ee. En ce sens les dualit\'es de Koszul--Ringel consid\'er\'ees dans~\cite{bgs,by} sont des dualit\'es de Koszul formelles. Cependant, cette notion recouvre \'egalement d'autres constructions qui ne s'\'enoncent pas naturellement en termes d'anneaux (par exemple celles de~\cite{abg}) ; de plus, dans des exemples importants pour les lesquels le corps de coefficients est de caract\'eristique positive, ces objets \`a parit\'e ne sont \emph{pas} des sommes de d\'ecal\'es d'objets simples, et il n'y a donc \emph{pas} d'anneau de Koszul ``sous-jacent'' \`a la dualit\'e de Koszul formelle consid\'er\'ee.

%--------------------------------------------------
\subsection{Formule de caract\`eres pour les repr\'esentations basculantes des grou\-pes alg\'ebriques r\'eductifs}
%--------------------------------------------------

Le contexte principal dans lequel de tels foncteurs apparaissent est le suivant. Consid\'erons un groupe alg\'ebrique r\'eductif connexe $\bG$ d\'efini sur un corps $\bk$ alg\'ebriquement clos de caract\'eristique $p>0$. La cat\'egorie $\Rep(\bG)$ des repr\'esentations alg\'ebriques de dimension finie de $\bG$ poss\`ede une structure naturelle de cat\'egorie de plus haut poids, et la d\'etermination des caract\`eres des repr\'esentations basculantes ind\'ecomposables associ\'ees est un probl\`eme crucial du domaine. (Il est notamment connu qu'\`a partir de ces caract\`eres on peut obtenir une formule de caract\`eres pour les repr\'esentations \emph{simples} de $\bG$.) 

Dans~\cite{rw}, G. Williamson et le second auteur ont propos\'e une formule conjecturale permettant de d\'eterminer ces caract\`eres, qui fait intervenir les \emph{$p$-polyn\^omes de Kazhdan--Lusztig} associ\'es au groupe de Weyl affine associ\'e \`a $\bG$. Ces polyn\^omes sont une ``trace combinatoire'' des objets \`a parit\'e dans une cat\'egorie de $\bk$-faisceaux sur la vari\'et\'e de drapeaux affines du groupe Langlands-dual. Cette conjecture \'enonce donc une \'egalit\'e entre des donn\'ees combinatoires provenant d'objets basculants et d'objets \`a parit\'e, ce qui sugg\`ere de la voir comme une trace combinatoire d'une dualit\'e de Koszul formelle. Cependant, sa preuve fait intervenir non pas une mais \emph{trois} foncteurs de type ``dualit\'e de Koszul formelle'' :
\begin{enumerate}
\item
une g\'en\'eralisation de la dualit\'e de Koszul de Bezru\-kavnikov--Yun au cas des $\bk$-faisceaux, d\'emontr\'ee en collaboration avec S.~Makisumi et G.~Williamson~\cite{amrw2} et reliant certains complexes de faisceaux ``d'Iwahori--Whit\-taker'' sur une vari\'et\'e de drapeaux affines \`a des complexes de faisceaux sur une Grassmannienne affine ;
\item
une adaptation au cadre modulaire d'une \'equivalence due \`a Arkhipov--Bezru\-kavnikov--Ginzburg~\cite{abg} reliant certains complexes de faisceaux sur une Grassmannienne affine \`a des complexes de faisceaux coh\'erents \'equivariants sur la r\'esolution de Springer du groupe dual, d\'emontr\'ee ind\'ependamment par L. Rider et le premier auteur~\cite{arider} et par C.~Mautner et le second auteur~\cite{mr} ;
\item
une adaptation au cadre modulaire d'une autre \'equivalence due \`a Arkhipov--Bezru\-kavnikov--Ginzburg~\cite{abg}, d\'emontr\'ee dans~\cite{prinblock} et reliant les complexes de faisceaux coh\'erents \'equivariants sur la r\'esolution de Springer du d\'ecal\'e de Frobenius $\dot\bG$ de $\bG$ \`a la cat\'egorie d\'eriv\'ee du bloc principal de $\bG$.
\end{enumerate}

%--------------------------------------------------
\subsection{Contenu}
%--------------------------------------------------

La Partie~\ref{pt:Koszul-classique} de cet article est d\'edi\'ee au rappel du point de vue ``classique'' sur la dualit\'e de Koszul, d\^u notamment \`a Be{\u\i}linson--Ginzburg--Soergel. En particulier, au~\S\ref{sec:anneaux-Koszul} nous rappelons les d\'efinitions et propri\'et\'es principales des anneaux de Koszul. Au~\S\ref{sec:anneaux-qh} nous consid\'erons les anneaux quasi-h\'er\'editaires, et l'interaction entre cette notion et la Koszulit\'e. Enfin, au~\S\ref{sec:dualite-Ringel} nous rappelons quelques r\'esultats concernant les relations entre la dualit\'e de Koszul et la dualit\'e de Ringel.

La Partie~\ref{pt:Koszul-formelle} est d\'edi\'ee \`a l'\'etude g\'en\'erale de la notion de dualit\'e de Koszul formelle. Au~\S\ref{sec:objets-parite} nous consid\'erons une variation de la notion de complexes \`a parit\'e de Juteau--Mautner--Williamson~\cite{jmw}. Puis au~\S\ref{sec:Koszul-formelle} nous d\'efinissons les dualit\'es de Koszul formelles, et \'enon{\c c}ons quelques propri\'et\'es qui d\'ecoulent directement de la d\'efinition.

Enfin, la Partie~\ref{pt:exemples} est d\'edi\'ee aux exemples de telles dualit\'es qui apparaissent en lien avec la th\'eorie des repr\'esentations modulaires des groupes alg\'ebriques r\'eductifs. Au~\S\ref{sec:exemples} nous pr\'esentons ces exemples. Au~\S\ref{sec:traces-comb} nous \'enon{\c c}ons quelques cons\'equences combinatoires de l'existence de ces foncteurs, dans le cadre des bases $p$-canoniques de Williamson. Enfin, au~\S\ref{sec:applications} nous pr\'esentons quelques applications de ces r\'esultats :
\begin{enumerate}
\item
\`a la preuve de la formule de caract\`eres conjectur\'ee dans~\cite{rw} ;
\item
\`a une preuve de la \emph{conjecture de Humphreys} sur les vari\'et\'es de support des $\bG$-modules basculants en grande caract\'eristique (obtenue dans~\cite{ahr}, en collaboration avec W.~Hardesty) ;
\item
\`a des formules ``de Steinberg'' et ``de Donkin'' pour la base $p$-canonique (qui n'avaient pas \'et\'e consid\'er\'ees jusque-l\`a).
\end{enumerate}

%--------------------------------------------------
\subsection{Remerciements}
%--------------------------------------------------

Comme expliqu\'e ci-dessus,
les r\'esultats pr\'esent\'es dans ce texte sont issus de travaux effectu\'es partiellement en collaboration avec S. Makisumi, C. Mautner, L. Rider, et G. Williamson. Nous les remercions tous les quatre pour l'aide et les id\'ees qu'ils nous ont apport\'ees. Nous remercions \'egalement la Soci\'et\'e Math\'ematique de France et les organisateurs du congr\`es de Lille pour avoir donn\'e l'occasion \`a l'un de nous de pr\'esenter ces travaux lors du 2\`eme congr\`es de la S.M.F.

L'essentiel du travail sur cet article a \'et\'e effectu\'e alors que le second auteur \'etait membre du Freiburg Institute for Advanced Studies, dans le cadre du programme ``Cohomology in Algebraic Geometry and Representation Theory'' organis\'e par A. Huber-Klawitter, S. Kebekus et W. Soergel.

%%%%%%%%%%%%%%%%%%%%%%%%%%%%%%%%%%%%%%%%%%%%%%%%%%%%%%%%%%%%%%%%%%%%%%%%%%%
%%%%%%%%%%%%%%%%%%%%%%%%%%%%%%%%%%%%%%%%%%%%%%%%%%%%%%%%%%%%%%%%%%%%%%%%%%%
\part{Dualité de Koszul classique} 
\label{pt:Koszul-classique}
%%%%%%%%%%%%%%%%%%%%%%%%%%%%%%%%%%%%%%%%%%%%%%%%%%%%%%%%%%%%%%%%%%%%%%%%%%%
%%%%%%%%%%%%%%%%%%%%%%%%%%%%%%%%%%%%%%%%%%%%%%%%%%%%%%%%%%%%%%%%%%%%%%%%%%%

%%%%%%%%%%%%%%%%%%%%%%%%%%%%%%%%%%%%%%%%%%%%%%%%%%%%%%%%%%%%%%%%%%%%%%%%%%%
\section{Anneaux de Koszul et dualit\'e}
\label{sec:anneaux-Koszul}
%%%%%%%%%%%%%%%%%%%%%%%%%%%%%%%%%%%%%%%%%%%%%%%%%%%%%%%%%%%%%%%%%%%%%%%%%%%

%----------------------------------------------------
\subsection{D\'efinitions}
\label{ss:Koszul-def}
%----------------------------------------------------

Soit $\bk$ un corps, et soit $A = \bigoplus_{n \ge 0} A_n$ une $\bk$-algèbre positivement graduée.  Notons $A\lgmod$ la catégorie des $A$-modules gradués de type fini, et soit $\Db(A\lgmod)$ sa catégorie dérivée bornée.  Si $M \in A\lgmod$, nous noterons $M\la n\ra$ le module obtenu par décalage de la graduation, où $(M\la n\ra)_m = M_{n+m}$.  Pour $M, N \in A\lgmod$, soit $\uExt^k_A(M,N)$ l'espace vectoriel gradué défini par
\begin{equation}\label{eqn:gr-ext}
\uExt_A^k(M,N)_n = \Hom_{\Db(A\lgmod)}(M,N\la n\ra[k]).
\end{equation}

Supposons maintenant de plus que chaque composante de $A$ est de dimension finie sur $\bk$, et que la composante $A_0$ est un anneau semi-simple. Notons
\[
\Irr_0(A)
\]
l'ensemble des classes d'isomorphisme de $A$-modules simples dont la graduation est concentrée en degré $0$. (Cet ensemble est en bijection avec celui des classes d'isomorphisme de $A_0$-modules simples; en particulier il est fini.) Notons
\[
\Proj_0(A)
\]
l'ensemble des classes d'isomorphisme de $A$-modules projectifs indécomposables dont l'unique quotient simple est concentré en degré $0$. Il y a évidemment une bijection canonique entre $\Proj_0(A)$ et $\Irr_0(A)$, envoyant $P$ sur son unique quotient simple. Il sera souvent pratique d'introduire un nouvel ensemble de paramètres $I$ avec une bijection fix\'ee
\begin{equation}\label{eqn:irr-param}
I \simto \Irr_0(A) = \Proj_0(A).
\end{equation}
Pour $i \in I$, notons
\[
L_i, \qquad\text{resp.} \qquad P_i
\]
un $A$-module gradu\'e simple, resp.~projectif, dans la classe d'isomorphisme correspondante. En particulier, il y un morphisme surjectif $P_i \twoheadrightarrow L_i$.

Considérons maintenant l'anneau
\[
E := \uExt_A^\bullet(L,L) \qquad \text{où $L = \bigoplus_{i \in I} L_i$.}
\]
Cet anneau est $\Z^2$-gradué: on a
\[
E = \bigoplus E^n_m \quad \text{où $E^n_m = \Hom_{\Db(A\lgmod)}(L,L\la m\ra[n])$.}
\]
Il est clair que la graduation en exposant (la graduation ``cohomologique'') est positive. La graduation en indice (ou ``interne'') vérifie certaines bornes aussi. En effet, nos hypoth\`eses impliquent que le module semi-simple $L$ admet une résolution projective
\[
\cdots \to P^{-1} \to P^0 \to L \to 0
\]
où chaque terme $P^k = \bigoplus_{n \in \Z} P^k_n$ est un $A$-module gradué projectif qui vérifie
\[
P^k_n = 0 \qquad \text{si $n < -k$.}
\]
Par conséquent, l'algèbre $\Z^2$-graduée $E$ vérifie
\[
E^k_n = 0 \qquad\text{si $n > -k$.}
\]

L'algèbre $A$ est dite \emph{de Koszul} si $E$ vérifie la condition
\[
E^k_n \neq 0 \quad \Rightarrow \quad n = -k.
\]
Si $A$ est de Koszul, soit
\[
A^! := E^\op
\]
l'anneau opposé de $E$, muni de la graduation d\'efinie par $A^!_n = E^n_{-n}$. Cet anneau gradu\'e v\'erifie les m\^emes hypoth\`eses que celles impos\'ees sur $A$ ; en particulier il est de Koszul (voir~\cite[Proposition~2.9.1 et Theorem~2.10.1]{bgs}), et on l'appelle  le \emph{dual} (au sens de Koszul) de $A$. Il est à noter que l'opération $A \mapsto A^!$ n'est pas une involution. Cependant, elle n'est pas loin de l'être, ce qui justifie l'utilisation du mot ``dual'' ; voir la Remarque~\ref{rmq:involution}.
On remarquera \'egalement que
\[
A^!_0 = \bigoplus_{i \in I} \End_{A_0}(L_i)^{\op}
\]
est un anneau semi-simple, dont les classes d'isomorphisme de modules simples sont en bijection canonique avec $I$.

\begin{rmq}\phantomsection
\label{rmq:A0-ss}
\begin{enumerate}
\item
Soit $A$ comme ci-dessus (positivement gradu\'ee, avec composantes gradu\'ees de dimension finie), mais ne supposons pas a priori que $A_0$ est semi-simple. D\'efinissons encore $I$ comme l'ensemble des classes d'isomorphisme de $A_0$-modules simples, ou de fa{\c c}on \'equivalente de $A$-modules simples concentr\'es en degr\'e $0$. Alors la condition que $\Hom_{\Db(A\lgmod)}(L_i, L_j \langle n \rangle [k])=0$ pour $n \neq -k$ implique en particulier que
\[
\Ext^1_{A_0}(L_i,L_j) = \Hom_{\Db(A\lgmod)}(L_i, L_j [1])=0
\]
pour tous $i,j \in I$. Donc $A_0$ est automatiquement un anneau semi-simple.
\item
\label{it:dual-quad}
Un anneau de Koszul est en particulier un anneau quadratique (voir~\cite[Corollary~2.3.3]{bgs}), et l'anneau $A^!$ est canoniquement isomorphe \`a l'anneau quadratique dual ; voir~\cite[Theorem~2.10.1]{bgs}.
\end{enumerate}
\end{rmq}

\begin{ex}\label{ex:sym-ext}
Soit $V$ un espace vectoriel de dimension finie sur $\bk$, et soit $A = \Sym(V)$ l'algèbre symétrique associ\'ee. Munissons-la de la graduation ``naturelle'', où $A_n = \Sym^n(V)$.  Il est bien connu que l'algèbre $E = \Ext_A^\bullet(\bk,\bk)$ s'identifie à l'algèbre extérieure de l'espace vectoriel dual, et que $E^k_n = 0$ si $n \ne -k$. Cet anneau est isomorphe à son anneau opposé, et on a donc $A^! \cong \bigwedge V^*$.

Il est également possible d'échanger les rôles des deux algèbres dans cet exemple: voir l'Exemple~\ref{ex:sym-ext2}.
\end{ex}

%------------------------------------
\subsection{Dualit\'e de Koszul}
%------------------------------------

Pour un anneau gradué $A$, on notera
\[
\Db_\lf(A\lgmod) \qquad\text{resp.}\qquad \Db_\pf(A\lgmod)
\]
la sous-catégorie triangul\'ee pleine de $\Db(A\lgmod)$ engendrée par les modules de longueur finie, resp.~par les modules projectifs de type fini. (On appelle les objets de $\Db_\pf(A\lgmod)$ les \emph{complexes parfaits}.)   L'énoncé suivant explique, dans le cas o\`u $A$ est de Koszul, la relation entre les $A$-modules gradu\'es et les $A^!$-modules gradu\'es.

\begin{prop}
\label{prop:koszul-pre}
Soit $A$ comme au~\S{\rm \ref{ss:Koszul-def}}, et supposons $A$ de Koszul.
Il existe une équivalence de catégories triangul\'ees
\[
\kappa: \Db_\lf(A\lgmod) \simto \Db_\pf(A^!\lgmod)
\]
telle que
\[
\kappa(M\la 1\ra) \cong \kappa(M)\la -1\ra[1]
\]
pour tout $M$ dans $\Db_\lf(A\lgmod)$.
De plus, en identifiant $\Irr_0(A)$ est $\Irr_0(A^!)$ comme ci-dessus, et en utilisant~\eqref{eqn:irr-param} (pour l'alg\`ebre $A^!$) on a
\[
\kappa(L_i) \cong P^!_i
\]
pour tout $i \in I$.
\end{prop}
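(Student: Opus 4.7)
L'approche consiste à utiliser le foncteur $\kappa := R\uHom_A(L, -)$, où $L = \bigoplus_{i \in I} L_i$.  À tout $M \in \Db(A\lgmod)$, ce foncteur associe un complexe muni d'une structure naturelle de module bigradué sur $E = \uExt^\bullet_A(L,L)$: sa composante de bidegré $(k,n)$ est $\Hom_{\Db(A\lgmod)}(L, M\la n\ra[k])$.

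L'étape cruciale est la ``linéarisation'' permise par l'hypothèse de Koszulité.  Celle-ci affirme précisément que $E$ est concentrée sur la diagonale $\{(k,n) : k+n=0\}$, ce qui permet d'identifier un $E$-module bigradué convenable avec un $A^!$-module simplement gradué (où $A^!_n := E^n_{-n}$), les deux directions de décalage du bigrading fusionnant en une seule.  Sous cette identification, le décalage interne $\la 1\ra$ sur le côté $A$-module se traduit par la composition $\la -1\ra[1]$ sur le côté $A^!$-module, d'où la relation d'entrelacement $\kappa \circ \la 1\ra \cong \la -1\ra[1] \circ \kappa$ annoncée.

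Pour calculer $\kappa(L_i)$, on observe que $R\uHom_A(L, L_i)$ correspond au facteur direct $e_i \cdot E$ de $E$ (vu comme module à droite), où $e_i$ est l'idempotent primitif associé à $L_i$; après passage à l'opposé, ce facteur devient le $A^!$-module projectif $A^! \cdot e_i = P^!_i$.  Pour conclure que $\kappa$ est une équivalence, on utilise l'argument d'engendrement: $\Db_\lf(A\lgmod)$ est engendrée comme catégorie triangulée par les $L_i\la n\ra$ ($i \in I$, $n \in \Z$), et $\Db_\pf(A^!\lgmod)$ par les $P^!_i\la n\ra$.  Il suffit alors de vérifier que $\kappa$ induit des bijections
\[
\Hom_{\Db(A\lgmod)}(L_i, L_j\la n\ra[k]) \simto \Hom_{\Db(A^!\lgmod)}(P^!_i, P^!_j\la -n\ra[k+n]),
\]
les deux côtés étant nuls lorsque $k+n \neq 0$ et coïncidant avec une composante de $E$ sur la diagonale lorsque $k+n=0$.

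L'obstacle technique principal réside dans la construction rigoureuse du passage des $E$-modules bigradués aux $A^!$-modules gradués: concrètement, il faut soit construire une résolution projective canonique et explicite de $L$ (un ``complexe de Koszul'' généralisé associé à $A$), soit invoquer le formalisme des algèbres dg et un argument de formalité pour $R\uHom_A(L,L)$.  Il faut également vérifier que $\kappa$, restreint à $\Db_\lf(A\lgmod)$, prend effectivement ses valeurs dans la sous-catégorie des complexes parfaits, ce qui résulte de la finitude des résolutions linéaires des $A^!$-modules de longueur finie (utilisant à nouveau la Koszulité).
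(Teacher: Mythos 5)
Your approach—taking $\kappa := R\uHom_A(L,-)$, observing that the bigraded $E$-module structure collapses to an $A^!$-module structure because of the diagonal concentration of $E$, reading off the shift relation $\langle 1\rangle \mapsto \langle -1\rangle[1]$ from the reindexing of the bigrading, and identifying $\kappa(L_i) \cong e_iE \cong A^!e_i = P^!_i$—is precisely the strategy of~\cite[\S 2.12]{bgs}, which the paper cites in lieu of a proof. So the approach is the right one, and the bookkeeping you carry out is consistent.

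The gap you flag is the genuine one, and it is where all of the work lies. To make $R\uHom_A(L,-)$ a functor valued in $\Db(A^!\lgmod)$, and not merely in dg modules over the dg algebra $R\uHom_A(L,L)$, one must either prove formality of that dg algebra (the auxiliary internal grading, in which the differential is homogeneous of degree~$0$, together with the diagonal concentration of the cohomology does force this, but the argument must actually be run) or, as BGS do, replace $L$ by the explicit Koszul bimodule complex $A^!\otimes_{A_0}A$—a projective resolution of $A_0$ over $A$ carrying a compatible left $A^!$-action—and set $\kappa = \Hom_A(A^!\otimes_{A_0}A, -)$. This construction is the real content of~\cite[Theorem 2.12.1]{bgs}. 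Once $\kappa$ is so constructed, the rest of your argument goes through: since $\kappa$ is triangulated and $\kappa(L_i\langle n\rangle[k]) \cong P^!_i\langle -n\rangle[n+k]$, the fact that the $L_i\langle n\rangle$ generate $\Db_\lf(A\lgmod)$ and the $P^!_i\langle n\rangle$ generate $\Db_\pf(A^!\lgmod)$ gives the landing in perfect complexes and essential surjectivity for free; full faithfulness reduces to comparing $\Hom$ spaces on these generators, and there both sides vanish unless $n+k=0$ (by Koszulity on the $A$-side, by projectivity on the $A^!$-side) and agree with the relevant component of $A^!$ on the diagonal, exactly as you say.
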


Pour une preuve de cette proposition, voir~\cite[Theorem~2.12.1]{bgs}. (En fait, ce dernier résultat est beaucoup plus général que notre Proposition~\ref{prop:koszul-pre}: les hypothèses sur les anneaux ainsi que les conditions ``lf'' et ``pf'' ont été affaiblies. Par contre, la version ci-dessus est plus facile à énoncer et suffira pour nos buts.)

Sous certaines hypothèses de finitude supplémentaires, on peut supprimer les indices ``lf'' et ``pf'' dans la Proposition~\ref{prop:koszul-pre}. Si $A$ est de dimension finie, alors la catégorie $A\lgmod$ a assez d'injectifs.  Notons
\[
\Inj_0(A)
\]
l'ensemble des classes d'isomorphisme de $A$-modules gradu\'es injectifs indécomposables dont l'unique sous-module simple est concentré en degré $0$.  Il y a encore une fois une bijection canonique $\Irr_0(A) = \Inj_0(A)$.  Pour $i \in I$, notons $J_i$ le module injectif correspondant. On a alors le résultat suivant.

\begin{thm}[Dualité de Koszul, 1ère version]
\label{thm:koszul1}
Soit $A$ comme au~\S{\rm \ref{ss:Koszul-def}}, suppos\'ee de plus
de dimension finie sur $\bk$, de dimension globale finie, et de Koszul.  Alors le dual $A^!$ est de dimension finie sur $\bk$, de dimension globale finie (et de Koszul), et il existe une équivalence de catégories triangul\'ees
\[
\kappa: \Db(A\lgmod) \simto \Db(A^!\lgmod)
\]
telle que
\[
\kappa(M\la 1\ra) \cong \kappa(M)\la -1\ra[1]
\]
pour tout $M$ dans $\Db(A\lgmod)$.
Via l'identification $\Irr_0(A) \cong \Irr_0(A^!)$ consid\'er\'ee ci-dessus, on a
\[
\kappa(L_i) \cong P_i^!, \qquad
\kappa(J_i) \cong L_i^!
\]
pour tout $i \in I$.
\end{thm}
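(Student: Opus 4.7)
La strat\'egie consiste \`a montrer que $A^!$ h\'erite des propri\'et\'es de finitude de $A$, \`a en d\'eduire que les indices ``lf'' et ``pf'' dans la Proposition~\ref{prop:koszul-pre} deviennent superflus des deux c\^ot\'es, puis \`a identifier l'image des injectifs par un calcul direct de $\Hom$.

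\emph{\'Etape 1 (finitude de $A^!$ sur $\bk$).} L'hypoth\`ese de dimension globale finie sur $A$ assure que le module semi-simple $L = \bigoplus_i L_i$ admet une r\'esolution projective gradu\'ee born\'ee ; on en d\'eduit $E^k = \uExt^k_A(L,L) = 0$ pour $k \gg 0$. La Koszulit\'e de $A$ impose alors $A^!_n = E^n_{-n} = 0$ pour $n \gg 0$. Chaque composante $A^!_n$ est par ailleurs de dimension finie sur $\bk$ (comme sous-quotient d'un espace de morphismes entre modules de dimension finie sur l'anneau fini-dimensionnel $A$). Donc $\dim_\bk A^! < \infty$.

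\emph{\'Etape 2 (dimension globale finie de $A^!$).} On dispose d\'ej\`a de la Koszulit\'e de $A^!$ et de l'identification canonique $(A^!)^! \cong A$ comme anneaux quadratiques (Remarque~\ref{rmq:A0-ss}). L'hypoth\`ese $\dim_\bk A < \infty$ permet d'appliquer le raisonnement de l'\'etape~1 \`a $A^!$ : les espaces $\uExt^k_{A^!}(L^!, L^!)$ s'annulent pour $k \gg 0$, donc les $L^!_i$ admettent des r\'esolutions projectives born\'ees, et $A^!$ est de dimension globale finie.

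\emph{\'Etape 3 (co\"incidence des cat\'egories d\'eriv\'ees et construction de $\kappa$).} Puisque $A$ est de dimension finie, tout objet de $A\lgmod$ est de dimension finie sur $\bk$, donc de longueur finie, d'o\`u $\Db(A\lgmod) = \Db_\lf(A\lgmod)$. La dimension globale finie de $A$ assure que tout tel objet admet une r\'esolution projective born\'ee par des projectifs de type fini, d'o\`u $\Db(A\lgmod) = \Db_\pf(A\lgmod)$. Les \'etapes~1 et~2 donnent les identifications analogues pour $A^!$. La Proposition~\ref{prop:koszul-pre} fournit alors directement l'\'equivalence $\kappa : \Db(A\lgmod) \simto \Db(A^!\lgmod)$ avec la relation de d\'ecalage et $\kappa(L_i) \cong P^!_i$.

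\emph{\'Etape 4 (image des injectifs).} L'injectivit\'e de $J_i$ et la description de son socle entra\^inent que $\Hom_{\Db(A\lgmod)}(L_j, J_i \la n \ra [k])$ vaut $\bk$ pour $(j,n,k) = (i,0,0)$ et $0$ sinon. En appliquant $\kappa$ et la relation $\kappa \circ \la 1 \ra \cong \la -1 \ra [1] \circ \kappa$, il vient
\[
\Hom_{\Db(A^!\lgmod)}(P^!_j, \kappa(J_i)\la n'\ra[k']) \cong \delta_{i,j} \delta_{n',0} \delta_{k',0} \cdot \bk
\]
pour tous $j \in I$ et $n', k' \in \Z$. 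La projectivit\'e des $P^!_j$ implique d'une part que $\kappa(J_i)$ est concentr\'e en degr\'e cohomologique $0$ (disons $\kappa(J_i) = M$), d'autre part que le ``top'' gradu\'e de $M$ est $L^!_i$ concentr\'e en degr\'e interne $0$. Pour exclure un radical non trivial, on utilise la sym\'etrie de la construction : appliqu\'ee \`a $A^!$ (via l'identification $(A^!)^! \cong A$), elle fournit une caract\'erisation duale de $L^!_i$ par annulation des Hom vers les $J^!_j$, compatible avec ce qu'on obtient ici. On conclut $\kappa(J_i) \cong L^!_i$.

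\emph{Principal obstacle.} Les trois premi\`eres \'etapes sont formelles d\`es qu'on dispose de l'identification $(A^!)^! \cong A$. La difficult\'e essentielle r\'eside dans l'\'etape~4 : les contraintes de $\Hom$ d\'eduites de la caract\'erisation de $J_i$ ne donnent imm\'ediatement que le top de $\kappa(J_i)$ ; il faut exploiter la sym\'etrie compl\`ete de la dualit\'e de Koszul (et la Koszulit\'e de $A^!$ \'etablie dans les \'etapes~1--2) pour conclure que $\kappa(J_i)$ est bien le module simple $L^!_i$ lui-m\^eme, et non un quotient propre de $P^!_i$.
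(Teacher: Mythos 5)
Your proposal follows the same overall skeleton as the paper's proof: reduce to Proposition~\ref{prop:koszul-pre} by identifying the ``lf'' and ``pf'' subcategories with the full derived categories, then determine $\kappa(J_i)$ by a $\Hom$ computation. But it differs at two points, one structural and one that is a genuine misconception.

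Structurally, in Step 2 you establish that $A^!$ has finite global dimension \emph{before} constructing $\kappa$, invoking ``$(A^!)^! \cong A$''. The paper does this in the opposite order: it first applies Proposition~\ref{prop:koszul-pre} (which only requires $A$ finite-dimensional, giving $\kappa : \Db(A\lgmod) \to \Db_\pf(A^!\lgmod)$), then computes $\kappa(J_j) \cong L^!_j$, and only then observes that since every simple $A^!$-module is a perfect complex, $A^!$ has finite global dimension, which finally allows $\kappa$ to be read as landing in the full $\Db(A^!\lgmod)$. This order avoids the double-dual entirely, which is a real advantage: Remark~\ref{rmq:involution} explicitly warns that $A \mapsto A^!$ is \emph{not} an involution, and the correct statement is $A \cong {}^!(A^!)$, not $(A^!)^! \cong A$. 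Your Step 2 can be repaired (use $A \cong {}^!(A^!) = (((A^!)^\op)^!)^\op$, deduce that the Ext-algebra of simple right $A^!$-modules is finite-dimensional, then transfer to left modules by $\bk$-duality), but it is considerably more delicate than a bare appeal to Remark~\ref{rmq:A0-ss}\eqref{it:dual-quad} suggests.

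In Step 4 you underestimate what the $\Hom$ computation yields and introduce an unnecessary ``symmetry'' argument to exclude a non-trivial radical. Because $P^!_j$ is projective, the functor $\Hom_{A^!\lgmod}(P^!_j,-)$ is exact; hence for a complex $X$, $\Hom_{\Db(A^!\lgmod)}(P^!_j, X[k]) \cong \Hom_{A^!\lgmod}(P^!_j, H^k(X))$, and for a graded module $M$ of finite length the dimension of $\Hom(P^!_j, M\la n\ra)$ equals (up to the fixed factor $\dim_\bk \End(L^!_j)$) the full composition-factor multiplicity $[M : L^!_j\la -n\ra]$, not merely the multiplicity of $L^!_j\la -n\ra$ in the head. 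The computed $\delta$-function therefore already says that $M = \kappa(J_i)$ is a module with exactly one composition factor, namely $L^!_i$ in internal degree $0$, so $M \cong L^!_i$ directly. This is precisely what the paper compresses into ``on peut vérifier''; no symmetry or double-duality argument is needed at this stage.
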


\begin{proof}
L'anneau $A^!$ est de dimension finie car $A$ est de dimension globale finie. Puisque tout $A$-module est de longueur finie, le foncteur d'inclusion $\Db_\lf(A\lgmod) \to \Db(A\lgmod)$ est une équivalence de catégories. On a alors le foncteur $\kappa: \Db(A\lgmod) \to \Db_\pf(A^!\lgmod)$ de la Proposition~\ref{prop:koszul-pre}.  En considérant les espaces
\[
\Hom_{\Db(A\lgmod)}(L_i,J_j \langle n \rangle [k]) \cong \Hom_{\Db(A^!\lgmod)}(P_i^!, \kappa(J_j) \langle -n \rangle[n+k]),
\]
on peut v\'erifier que $\kappa(J_j)$ est l'unique quotient simple de $P_i^!$. En particulier, tout $A^!$-module simple est un complexe parfait, ce qui implique que $A^!$ est de dimension globale finie.
Cette propri\'et\'e nous assure que l'inclusion $\Db_\pf(A^!\lgmod) \to \Db(A^!\lgmod)$ est une équivalence de catégories, et on peut alors consid\'erer $\kappa$ comme une \'equivalence $\Db(A\lgmod) \simto \Db(A^!\lgmod)$.
\end{proof}

\begin{rmq}\phantomsection
\label{rmq:involution}
\begin{enumerate}
\item
Pour une version plus ``cat\'egorique'' de cette construction, on pourra consulter~\cite[Theorem~2.4]{ar:kdsf}.
\item
\label{it:hyp}
Il est clair d'apr\`es la preuve du th\'eor\`eme que les hypoth\`eses r\'eellement n\'ecessaires sont que $A$ est de dimension finie et que $A^!$ est de dimension globale finie. Cependant, imposer les deux conditions sur $A$ permet d'obtenir une version plus sym\'etrique de l'\'enonc\'e.
\item
On peut remarquer que les rôles des modules projectifs et injectifs dans le Théorème~\ref{thm:koszul1} n'est pas symétrique, et se demander s'il existe une version modifiée de cet énoncé qui échange leurs rôles. C'est bien le cas : si $A$ est de Koszul, son anneau opposé l'est aussi (voir~\cite[Proposition~2.2.1]{bgs}), et on peut poser
\[
{}^!\!A := ((A^\op)^!)^\op.
\]
Avec cette notation,~\cite[Theorem~2.10.2]{bgs} implique qu'il existe des isomorphismes canoniques
\[
A \cong {}^!(A^!) \cong({}^!\!A)^!.
\]
Si $A$ vérifie de plus les hypothèses du Théorème~\ref{thm:koszul1}, alors il existe une équivalence de catégories triangul\'ees $\kappa': \Db(A\lgmod) \simto \Db({}^!A\lgmod)$ qui envoie les modules projectifs (resp.~simples) sur des modules simples (resp.~injectifs).
\end{enumerate}
\end{rmq}

\begin{ex}\label{ex:sym-ext2}
Soit $A = \bigwedge V^*$ l'algèbre extérieure du dual d'un espace vectoriel $V$.  Comme on l'a vu dans l'Exemple~\ref{ex:sym-ext}, on a ${}^!A \cong A^! \cong \Sym(V)$. (Ici on a utilisé le fait que les algèbres symétriques et extérieures sont toutes les deux isomorphes, de fa{\c c}on naturelle, à leurs anneaux opposés.) Cet exemple ne rentre pas dans le cadre du Théorème~\ref{thm:koszul1} car $A$ n'est pas de dimension globale finie ; mais sa preuve reste valable dans ce cas particulier\footnote{En fait, ce cas particulier est justement celui qui a motiv\'e le d\'eveloppement de cette th\'eorie.} (voir la Remarque~\ref{rmq:involution}\eqref{it:hyp}),\ et on obtient une equivalence de catégories triangul\'ees
\[
\kappa: \Db(\textstyle \bigwedge V^* \lgmod) \simto \Db(\Sym(V) \lgmod).
\]
Rappelons maintenent que l'anneau $A$ est auto-injectif: ses modules injectifs coïncident avec ses modules projectifs. Le foncteur $\kappa$ envoie alors
\begin{align*}
L_0 &\mapsto P_0^!, \\
P_0\la \dim V\ra &\mapsto L_0^!,
\end{align*}
où $L_0$ est l'unique module simple concentr\'e en degré $0$ (et de m\^eme pour $L_0^!$). La dualité de Koszul pour les algèbres symétrique et extérieure jouit donc d'une symétrie supplémentaire, qui ne vaut pas pour les anneaux de Koszul g\'en\'eraux.
\end{ex}

%----------------------------------------------
\subsection{Quelques exemples}
\label{ss:carquois}
%----------------------------------------------

Soit $R$ l'algèbre des chemins (à coefficients dans $\bk$) du carquois suivant :
\begin{equation}
\label{eqn:carquois-ex}
\begin{tikzcd}
\hbox{\textcircled{\raisebox{-0.9pt}{1}}} \ar[r, shift left=0.5ex, "\alpha"] &
\hbox{\textcircled{\raisebox{-0.9pt}{2}}} \ar[l, shift left=0.5ex, "\beta"]
\end{tikzcd}
\end{equation}
De même, soit
$R^*$ l'algèbre des chemins du carquois obtenu en inversant les flèches:
\[
\begin{tikzcd}
\hbox{\textcircled{\raisebox{-0.9pt}{1}}} \ar[r, shift right=0.5ex, "\beta^*"'] &
\hbox{\textcircled{\raisebox{-0.9pt}{2}}} \ar[l, shift right=0.5ex, "\alpha^*"']
\end{tikzcd}
\]
(La concat\'enation des chemins sera not\'ee avec les m\^emes conventions que pour les compositions d'applications.)
Munissons et $R$ et $R^*$ de la graduation donnée par la longueur des chemins. (En particulier, les idempotents correspondant aux sommets sont de degré $0$, et les éléments $\alpha$, $\beta$, $\alpha^*$, et $\beta^*$ correspondant aux flèches sont tous de degré $1$). Pour rendre explicite cette graduation, nous décrirons les représentations du carquois comme des espaces vectoriels gradués, de fa{\c c}on compatible avec 
les degr\'es des
flèches. Par exemple, les diagrammes suivants correspondent \`a des $R$-modules gradués :
\[
\begin{tikzcd}
\bk \ar[r, shift left=0.5ex, "\id"] &
\bk\la -1\ra \ar[l, shift left=0.5ex, "0"]
\end{tikzcd}
\qquad
\begin{tikzcd}
\bk\la -1\ra \ar[r, shift left=0.5ex, "0"] &
\bk. \ar[l, shift left=0.5ex, "\id"]
\end{tikzcd}
\]

Les anneaux de la colonne de gauche du tableau ci-dessous sont de Koszul, et leurs duaux de Koszul sont les anneaux qui figurent dans la colonne de droite :
\[
\begin{array}{ccc}
A && A^! \\
\cline{1-1}  \cline{3-3}
R && R^*/(\alpha^*\beta^*, \beta^*\alpha^*) \\
R/(\alpha\beta) && R^*/(\alpha^*\beta^*) \\
R/(\alpha\beta, \beta\alpha) && R^*
\end{array}
\]
En fait, pour v\'erifier cela on peut proc\'eder de la fa{\c c}on suivante. On \'ecrit $R$ comme l'alg\`ebre tensorielle du bimodule sur $R_0 = \bk e_1 \oplus \bk e_2$ donn\'e par $V=\bk \alpha \oplus \bk \beta$ (avec $e_1 \alpha=0$, $e_2 \alpha=\alpha$, etc.). L'alg\`ebre tensorielle d'un bimodule $V$ sur un anneau semisimple est toujours de Koszul, ce qui r\`egle la premi\`ere et la troisi\`eme ligne de ce tableau. Pour la deuxi\`eme ligne, on voit que $R/(\alpha \beta)$ est l'anneau quadratique d\'efini par $V$ et les relations $\bk \cdot (\alpha \otimes \beta) \subset V \otimes_{R_0} V$. S'il est de Koszul, son anneau dual est l'anneau quadratique dual (voir la Remarque~\ref{rmq:A0-ss}\eqref{it:dual-quad}). Avec les conventions adopt\'ee dans~\cite{bgs}, cet anneau quadratique dual est d\'efini par le $R_0$-bimodule $V^* = \Hom_{R_0}(V,R_0)=\bk \alpha^* \oplus \bk \beta^*$ (o\`u $\alpha^*(\alpha)=e_2$, $\alpha^*(\beta)=0=\beta^*(\alpha)$, $\beta^*(\beta)=e_1$) et les relations $\bk \cdot (\alpha^* \otimes \beta^*) \subset V^* \otimes_{R_0} V^*$. Il ne reste donc plus qu'\`a voir que $R/(\alpha\beta)$ est de Koszul.

Un module gradu\'e sur cet anneau peut être vu comme une représentation gradu\'ee du carquois~\eqref{eqn:carquois-ex} qui vérifie la relation $\alpha\beta=0$. Nous allons donc écrire les modules dans le langage des carquois. Les $R/(\alpha\beta)$-modules simples en degr\'e $0$ sont :
\[
L_1 = \begin{tikzcd}
\bk \ar[r, shift left=0.5ex, "0"] &
0 \ar[l, shift left=0.5ex, "0"]
\end{tikzcd},
\qquad
L_2 = \begin{tikzcd}
0 \ar[r, shift left=0.5ex, "0"] &
\bk \ar[l, shift left=0.5ex, "0"]
\end{tikzcd}
\]
Leurs couvertures projectives et enveloppes injectives sont:
\[
\begin{gathered}
P_1 = I_1\la -2\ra = 
\begin{tikzcd}[ampersand replacement=\&]
(\bk\la -2\ra \oplus \bk)  \ar[r, shift left=0.5ex, "{\left[\begin{smallmatrix} 0 & 1\end{smallmatrix}\right]}"]\&
\bk\la -1\ra \ar[l, shift left=0.5ex, "{\left[\begin{smallmatrix} 1 \\ 0\end{smallmatrix}\right]}"]
\end{tikzcd},
\\
P_2 = \begin{tikzcd}
\bk\la -1\ra \ar[r, shift left=0.5ex, "0"] &
\bk \ar[l, shift left=0.5ex, "\id"]
\end{tikzcd},
\qquad
I_2 = \begin{tikzcd}
\bk\la 1\ra \ar[r, shift left=0.5ex, "\id"] &
\bk \ar[l, shift left=0.5ex, "0"]
\end{tikzcd}.
\end{gathered}
\]
En particulier, on a des r\'esolutions projectives des objets simples de la forme
\begin{equation}
\label{eqn:res-proj}
0 \to P_2 \la -1 \ra \to P_1 \to L_1 \to 0 \ \text{ et } \ 0 \to P_2 \la -2 \ra \to P_1 \la -1 \ra \to P_2 \to L_2 \to 0,
\end{equation}
qui montrent que $R$ est bien de Koszul.

On peut comprendre plus concr\`etement la relation $\alpha^* \beta^*=0$ dans l'anneau dual de la fa{\c c}on suivante.
En utilisant les r\'esolutions projectives ci-dessus on peut v\'erifier que $\uExt^1(L_1,L_2)$ et $\uExt^1(L_2,L_1)$ sont de dimension~$1$; plus pr\'ecis\'ement, il existe des suites exactes courtes non scindées
\begin{gather*}
0 \to L_2\la -1\ra \to I_2\la -1\ra \to L_1 \to 0, \\
0 \to L_1\la -1\ra \to P_2 \to L_2 \to 0.
\end{gather*}
Soient $\alpha^* \in \Ext^1(L_1, L_2\la -1\ra)$  et $\beta^* \in \Ext^1(L_2, L_1\la -1\ra)$ les éléments correspondants à ces suites exactes courtes.  Le produit $\beta^* \cdot \alpha^*$ au sens de Yoneda correspond à la suite exacte
\begin{equation}\label{eqn:ext2-carquois}
0 \to L_1\la -2\ra \to P_2\la -1\ra \to I_2\la -1\ra \to L_1 \to 0.
\end{equation}
Il existe une suite de sous-modules $L_1\la -2\ra \subset P_2\la -1\ra \subset P_1$ telle que $P_1/L_1\la -2\ra \cong I_2\la -1\ra$ et $P_1/P_2\la -1\ra \cong L_1$.  Autrement dit, la classe de la suite~\eqref{eqn:ext2-carquois} dans le groupe $\Ext^2(L_1,L_1\la -2\ra)$ est nulle.

%%%%%%%%%%%%%%%%%%%%%%%%%%%%%%%%%%%%%%%%%%%%%%%%%%%%%%%%%%%%%%%%%%%%%%%%%%%
\section{Anneaux quasi-héréditaires et catégories de plus haut poids}
\label{sec:anneaux-qh}
%%%%%%%%%%%%%%%%%%%%%%%%%%%%%%%%%%%%%%%%%%%%%%%%%%%%%%%%%%%%%%%%%%%%%%%%%%%

%--------------------------------------------------------------------------
\subsection{Anneaux quasi-héréditaires}
\label{ss:qhered}
%--------------------------------------------------------------------------

Comme au~\S\ref{ss:Koszul-def},
soit $A = \bigoplus_{n \ge 0} A_n$ une $\bk$-algèbre positivement graduée, qu'on supposera ici de dimension finie pour simplifier.
Soit $I$ un ensemble paramétrant les $A_0$-modules simples (ou, de fa{\c c}on \'equivalente, les $A$-modules simples concentrés en degré $0$).  Supposons que $I$ est muni d'un ordre partiel $\preceq$.
On appellera ``id\'eal'' de $I$ un sous-ensemble $J \subset I$ tel que si $i \in I$, $j \in J$ et $i \preceq j$ alors $i \in J$. Pour tout id\'eal $T \subset I$, on notera $A\lgmod_T$ la sous-catégorie pleine de $A\lgmod$ form\'ee par les modules dont tous les facteurs de composition sont de la forme $L_i\la n\ra$ avec $i \in T$ et $n \in \Z$. On consid\'erera en particulier cette sous-cat\'egorie dans le cas o\`u $T=\{\preceq i\}:=\{j \in I \mid j \preceq i\}$, et dans celui o\`u $T=\{\prec i\}:=\{j \in I \mid j \prec i\}$.

L'alg\`ebre $A$ est dite \emph{quasi-héréditaire graduée}
si pour tout $i \in I$ il existe des objets $\Delta_i, \nabla_i \in A\lgmod_{\{\preceq i\}}$ et des morphismes $\Delta_i \to L_i \to \nabla_i$ qui vérifient les conditions suivantes:
\begin{enumerate}
\item 
\label{it:qh-1}
pour tout module simple $L_i$, on a $\uEnd(L_i) \cong \bk$ ;
\item
\label{it:qh-2}
pour tout id\'eal $T \subset I$ et tout $i \in T$ maximal, $\Delta_i$ est la couverture projective de $L_i$ dans $A\lgmod_T$, et $\nabla_i$ est son enveloppe injective ;
\item 
\label{it:qh-3}
le noyau de $\Delta_i \to L_i$ et le conoyau de $L_i \to \nabla_i$ appartiennent à $A\lgmod_{\{\prec i\}}$ ;
\item 
\label{it:qh-4}
pour tout $i,j \in I$, on a $\uExt^2_A(\Delta_i, \nabla_j) = 0$.
\end{enumerate}

Les modules de la forme $\Delta_i\la n\ra$, resp.~$\nabla_i\la n\ra$, sont dits \emph{standards}, resp.~\emph{costandards}. 
Notons que toute alg\`ebre quasi-héréditaire est de dimension globale finie ; voir~\cite[Corollary~3.2.2]{bgs}. Notons \'egalement qu'on a automatiquement
\begin{equation}
\label{eqn:qh-Hom-D-N}
\Hom_{\Db(A\lgmod)}(\Delta_i, \nabla_j \langle n \rangle [k]) \cong \begin{cases}
\bk & \text{si $i=j$ et $n=k=0$ ;}\\
0  & \text{sinon,}
\end{cases}
\end{equation}
voir par exemple~\cite[Corollary~7.6]{riche-hab}.

\begin{rmq}
\begin{enumerate}
\item
Les objets $\Delta_i$ et $\nabla_i$ sont bien s\^ur uniques (\`a isomorphisme pr\`es) s'ils existent.
\item
La notion d'alg\`ebre quasi-h\'er\'editaire est due \`a Cline--Parshall--Scott ; voir notamment~\cite{cps}. La d\'efinition que nous consid\'erons ici est diff\'erente de celle \'etudi\'ee dans~\cite{cps} ; cependant, 
il est possible de déduire de~\cite[Theorem~3.6]{cps} et de~\cite[Theorem~3.2.1]{bgs} que les deux notions de quasi-hérédité coïncident. La définition adopt\'ee ci-dessus (qui est sugg\'er\'ee dans~\cite{bgs}) a l'avantage d'être ``catégorique''; c'est-à-dire qu'elle s'adapte bien au cadre des catégories abéliennes plus générales, qui ne sont pas d\'efinies naturellement comme une cat\'egorie de modules sur un anneau.
\item
Notons bien que dans cette partie les alg\`ebres que nous consid\'erons sont \emph{positivement} gradu\'ees. On peut bien s\^ur \'etudier des alg\`ebres quasi-h\'er\'editaires gradu\'ees par $\Z$ (aucun des axiomes ci-dessus n'utilise nos restrictions sur la graduation), mais nous ne le ferons pas ici.
\item
Ci-dessous nous consid\'ererons \'egalement des alg\`ebres quasi-h\'er\'editaires \emph{non gradu\'ees}. Pour les d\'efinir, il suffit de remplacer les $\uHom$ et les $\uExt$ dans les axiomes ci-dessus par les $\Hom$ et $\Ext$ ordinaires, et d'omettre les décalages de graduation.
\end{enumerate}
\end{rmq}

%-------------------------------------------------
\subsection{Anneaux quasi-h\'er\'editairement de Koszul et dualit\'e de Koszul}
\label{ss:anneau-qh-koszul}
%-------------------------------------------------

Soit $A$ une $\bk$-algèbre 
quasi-héréditaire gradu\'ee (avec les conventions du~\S\ref{ss:qhered}). On dira que $A$ est \emph{quasi-héréditairement de Koszul} si pour tous $i, j \in I$, on a
\begin{equation}\label{eqn:qh-koszul-defn}
\Hom_{\Db(A\lgmod)}(\Delta_i,L_j\la n\ra[k]) = \Hom_{\Db(A\lgmod)}(L_i, \nabla_j\la n\ra[k]) = 0
\end{equation}
si $n \ne -k$.

\begin{rmq}
Dans la terminologie de~\cite[Definition~2.14.1]{bgs}, la condition que $\Hom_{\Db(A\lgmod)}(\Delta_i,L_j\la n\ra[k]) = 0$ si $n +k \neq 0$ peut s'interpr\'eter comme disant que les modules gradu\'es $\Delta_i$ sont des \emph{modules de Koszul}.
\end{rmq}

Cette terminologie est justifi\'ee par le r\'esultat suivant. (Pour une preuve dans le langage de modules sur un anneau, voir~\cite{adl}.  Pour une preuve dans un langage catégorique plus proche de celui utilis\'e ici, voir~\cite[Proposition~2.13]{modrap3} et la Remarque~\ref{rmq:A0-ss}.)

\begin{prop}
Si $A$ est une $\bk$-alg\`ebre quasi-héréditairement de Koszul, alors elle est de Koszul.
\end{prop}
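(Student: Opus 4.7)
The plan is to prove that every simple module $L_i$ is Koszul, i.e.\ that $\Hom_{\Db(A\lgmod)}(L_i, L_j\la n\ra[k]) = 0$ whenever $n + k \neq 0$; Koszulit\'e of $A$ then follows at once. Two preliminary observations simplify the task. First, positivity of the grading, together with the fact that simples are concentrated in degree~$0$, automatically forces vanishing in the range $n + k > 0$, so only $n + k < 0$ needs to be addressed. Second, the $k = 0$ case of the right-hand half of~\eqref{eqn:qh-koszul-defn} asserts $\Hom(L_i, \nabla_j\la n\ra) = 0$ for $n \neq 0$, so the socle of $\nabla_j$ is $L_j$ in degree~$0$; positivity of the grading then forces $\nabla_j$ to live in degrees~$\leq 0$, and consequently the cokernel $C_j := \nabla_j/L_j$ has composition factors of the form $L_{j'}\la m\ra$ with $j' \prec j$ and $m \geq 1$. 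Dually, $K_i := \ker(\Delta_i \to L_i)$ has composition factors $L_{i'}\la m\ra$ with $i' \prec i$ and $m \leq -1$.

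The main argument is a double induction on the pair $(i, j)$ along the partial order on $I$, using the short exact sequences $0 \to K_i \to \Delta_i \to L_i \to 0$ and $0 \to L_j \to \nabla_j \to C_j \to 0$. When either $i$ or $j$ is minimal one has $\Delta_i = L_i$ or $\nabla_j = L_j$, and the required vanishing is immediate from~\eqref{eqn:qh-koszul-defn}. For the inductive step I apply $\Hom(L_i, -)$ to the $\nabla$-sequence: combined with the right-hand vanishing in~\eqref{eqn:qh-koszul-defn}, this yields a surjection
\[
\Hom_{\Db(A\lgmod)}(L_i, C_j\la n\ra[k-1]) \twoheadrightarrow \Hom_{\Db(A\lgmod)}(L_i, L_j\la n\ra[k])
\]
for $n + k \neq 0$, and a d\'evissage via the composition-factor filtration of $C_j$ and the inductive hypothesis at pairs $(i, j')$ with $j' \prec j$ reduces the question to the vanishing of $\Hom_{\Db(A\lgmod)}(L_i, L_{j'}\la n + m\ra[k-1])$ for the shifts $m \geq 1$ that occur. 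Symmetrically, $\Hom(-, L_j\la n\ra)$ applied to the $\Delta$-sequence, combined with the left-hand vanishing in~\eqref{eqn:qh-koszul-defn}, produces an analogous surjection from the $K_i$-side and allows a reduction on the $i$-coordinate instead.

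The hard part will be to show that these two reductions combine to cover the whole range $n + k < 0$. The potential obstacle is the ``boundary'' case in which a composition factor $L_{j'}\la m\ra$ of $C_j$ has $m = 1 - (n+k)$: the relevant shifted Ext group then sits precisely at the Koszul degree for $L_{j'}$ where the inductive hypothesis gives no information. Such boundary cases really do occur: in a Koszul graded lift of the BGG category $\mathcal{O}$ for $\mathfrak{sl}_3$, for instance, $C_{w_0}$ admits composition factors at every shift $m \in \{1, 2, 3\}$. My plan for dealing with the boundary cases is to exploit the full strength of the left-hand hypothesis in~\eqref{eqn:qh-koszul-defn}, which forces $\Delta_i$ to have a linear projective resolution and hence a pure radical filtration (the $p$-th layer is semisimple and concentrated in degree $p$), together with the dual statement for $\nabla_j$: combining these two rigid structures via a joint induction on $(i, j)$ will produce the vanishing of the boundary Ext groups as well. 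This joint induction is the technical heart of the proof.
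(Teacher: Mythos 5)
Your preparatory observations are sound — positivity of the grading handles $n+k>0$, and the shift bounds $m \ge 1$ on the composition factors of $K_i$ and of $C_j$ are correct — but the ``technical heart'' you postpone is not a proof, and it conceals a misconception. You describe the reduction along the $\Delta$-sequence in the first variable as ``symmetric'' to the reduction along the $\nabla$-sequence in the second. Once the sign of $n+k$ is fixed, they are not. Starting from $n+k<0$, dévissage along $C_j$ replaces the index $n+k$ by $(n+m)+(k-1)=(n+k)+(m-1)$ with $m\ge 1$, which can increase all the way to $0$; this is exactly your boundary case. Dévissage along $K_i$ instead replaces it by $(n-m)+(k-1)=(n+k)-(m+1)$ with $m\ge 1$, which is \emph{strictly more negative} than $n+k$. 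So no boundary case ever arises on that side, and a single induction on $i$ along the well-order, using only $0 \to K_i \to \Delta_i \to L_i \to 0$, already closes the argument: applying $\Hom(-,L_j\la n\ra[k])$ and using that $\Ext^k(\Delta_i,L_j\la n\ra)=0$ for $n+k\ne 0$ gives a surjection $\Ext^{k-1}(K_i,L_j\la n\ra)\twoheadrightarrow\Ext^k(L_i,L_j\la n\ra)$, and for $n+k<0$ every dévissage term $\Ext^{k-1}(L_{i'},L_j\la n-m\ra)$ (with $i'\prec i$, $m\ge 1$) vanishes by the inductive hypothesis because $(n-m)+(k-1)<0$. The detour through linear resolutions, pure radical filtrations, and a ``joint induction'' is therefore unnecessary; and as written it is a statement of intent rather than an argument — you would still have to prove the purity claim and specify the induction scheme, and I do not see that it would close without the asymmetry just described.

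For comparison: the paper cites \cite{adl} and \cite[Proposition~2.13]{modrap3} rather than proving the Proposition itself, but Lemma~\ref{lem:!-pure} (established just below) yields a particularly clean route. The left-hand vanishing in \eqref{eqn:qh-koszul-defn} is, after reindexing, exactly condition~\eqref{it:lem-!-pure-1} of that Lemma for $M=L_j$, so $L_j$ lies in $\langle \nabla_{j'}\la s\ra[-s] : j'\in I,\, s\in\Z\rangle_\ext$; dévissage then reduces $\Hom(L_i,L_j\la n\ra[k])$ to groups $\Hom(L_i,\nabla_{j'}\la s+n\ra[k-s])$, all of which vanish for $n+k\ne 0$ by the right-hand vanishing in \eqref{eqn:qh-koszul-defn}.
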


\begin{rmq}
Cette proposition implique en particulier que si $A$ est une $\bk$-alg\`ebre quasi-héréditairement de Koszul alors l'anneau $A_0$ est semi-simple.
\end{rmq}

Consid\'erons encore une $\bk$-alg\`ebre quasi-h\'er\'editaire gradu\'ee $A$ comme au~\S\ref{ss:qhered}.
Si $M$ est un $A$-module gradu\'e, une \emph{filtration standard}, resp.~\emph{costandard}, est une filtration finie $0 = M_0 \subset M_1 \subset \cdots \subset M_k = M$ telle que tout sous-quotient $M_i/M_{i-1}$ est un module standard, resp.~costandard.
Une propriété très importante des anneaux quasi-héréditaires est que tout module projectif admet une filtration standard (voir~\cite[Theorem~3.2.1]{bgs}). En un sens, les modules standards servent donc à ``interpoler'' entre les modules simples et les modules projectifs. De même, les modules costandards interpolent entre les modules simples et les modules injectifs. Il est donc naturel de se demander comment les modules (co)standards se comportent par rapport au foncteur du Théorème~\ref{thm:koszul1}.

Le r\'esultat suivant est d\'emontr\'e dans~\cite{adl}. Ci-dessous nous en donnons une autre preuve.

\begin{thm}[Dualité de Koszul, 2ème version]
\label{thm:koszul2}
Soit $A$ une alg\`ebre quasi-h\'er\'editaire gradu\'ee comme au~\S{\rm \ref{ss:qhered}}
(pour un ensemble partiellement ordonné $(I,\preceq)$), et supposons que $A$ est quasi-héréditairement de Koszul. Alors le dual de Koszul $A^!$ est 
quasi-héréditaire gradu\'e (par rapport à l'ensemble partiellement ordonné $(I, \preceq^\op)$) et quasi-héréditairement de Koszul. Il existe une équivalence de catégories triangul\'ees
\[
\kappa: \Db(A\lgmod) \simto \Db(A^!\lgmod)
\]
qui vérifie
\[
\kappa(M\la 1\ra) \cong \kappa(M)\la -1\ra[1]
\]
pour tout $M$ dans $\Db(A\lgmod)$, et telle que
\[
\kappa(L_i) \cong P_i^!, \qquad \kappa(\nabla_i) \cong \Delta_i^!, \qquad \kappa(J_i) \cong L_i^!
\]
pour tout $i \in I$.
\end{thm}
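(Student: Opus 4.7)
The approach is to transport the quasi-hereditary structure of $A$ through the Koszul duality functor of Theorem~\ref{thm:koszul1}. Since a graded quasi-hereditary algebra has finite global dimension (cf.~\S\ref{ss:qhered}) and $A$ is assumed finite-dimensional, Theorem~\ref{thm:koszul1} applies and yields an equivalence $\kappa \colon \Db(A\lgmod) \simto \Db(A^!\lgmod)$ satisfying $\kappa(L_i) \cong P_i^!$ and $\kappa(J_i) \cong L_i^!$; moreover $A^!$ is automatically finite-dimensional and of finite global dimension.

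I would then set $\Delta_i^! := \kappa(\nabla_i)$ and first show that it is concentrated in cohomological degree $0$. Using the intertwining $\kappa(M\la p\ra[q]) \cong \kappa(M)\la -p\ra[p+q]$, one computes, for any $j$, $n$, $m$,
\[
\Hom_{\Db(A^!\lgmod)}(P_j^!, \Delta_i^!\la -n\ra[m]) \cong \Hom_{\Db(A\lgmod)}(L_j, \nabla_i\la n\ra[m-n]),
\]
which vanishes for $m \ne 0$ by~\eqref{eqn:qh-koszul-defn} applied to the pair $(n, m-n)$. Since $P_j^!$ is projective, the left-hand side equals $\Hom_{A^!\lgmod}(P_j^!, H^m(\Delta_i^!)\la -n\ra)$, and letting $(j,n)$ vary forces $H^m(\Delta_i^!) = 0$ for $m \ne 0$.

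Next I would verify the quasi-hereditary axioms for $A^!$ with respect to the opposite order $\preceq^{\op}$. Applying $\kappa$ to $L_i \hookrightarrow \nabla_i$ and $\nabla_i \hookrightarrow J_i$ yields morphisms $P_i^! \to \Delta_i^! \to L_i^!$ whose composite is the projective cover, so in particular $\Delta_i^! \twoheadrightarrow L_i^!$ is surjective. The graded composition factors of $\Delta_i^!$ are controlled by $\dim \Hom_{A^!\lgmod}(P_j^!, \Delta_i^!\la n\ra)$, which via $\kappa$ identifies with graded pieces of $\Ext^{-n}_A(L_j, \nabla_i)$; combining~\eqref{eqn:qh-koszul-defn} with the quasi-hereditary structure of $A$ restricts these to $j \succeq i$ (i.e.\ $j \preceq^{\op} i$), as required. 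The first Koszul vanishing condition on the dual side follows from
\[
\Hom_{\Db(A^!\lgmod)}(\Delta_i^!, L_j^!\la n\ra[k]) \cong \Hom_{\Db(A\lgmod)}(\nabla_i, J_j\la -n\ra[n+k]),
\]
which is zero for $n+k \ne 0$ by the injectivity of $J_j$ combined with the fact that $\nabla_i$ is a module.

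The main obstacle is the construction of candidate costandards $\nabla_i^!$, which $\kappa$ does not provide as transparently as the standards. I would address this by applying the construction above to the opposite algebra $A^{\op}$ (which is also quasi-hereditarily Koszul, with its standards and costandards obtained from those of $A$ by $\bk$-linear duality), and then transferring the resulting modules through a suitable duality identification of the Koszul duals; an alternative route is to construct $\nabla_i^!$ abstractly from the $\Delta_i^!$'s using a general characterization of quasi-hereditary algebras via families of modules satisfying the Ext-vanishing~\eqref{eqn:qh-Hom-D-N}. Once $\nabla_i^!$ is in place, the remaining axioms---notably the orthogonality $\Hom_\Db(\Delta_i^!, \nabla_j^!\la n\ra[k]) \cong \delta_{ij}\delta_{n,0}\delta_{k,0}\cdot\bk$ and the second Koszulity condition $\Hom_\Db(L_i^!, \nabla_j^!\la n\ra[k]) = 0$ for $n+k \ne 0$---follow by analogous $\kappa$-translations of~\eqref{eqn:qh-Hom-D-N} and of the quasi-hereditarily Koszul hypothesis on the $A$-side.
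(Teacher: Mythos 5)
Your proposal correctly identifies the strategy (transport the quasi-hereditary structure through $\kappa$, define $\Delta_i^! := \kappa(\nabla_i)$, verify the axioms via $\kappa$-translations) and the first several steps agree with the paper's proof. However, there are two genuine gaps where the hard work is deferred rather than done.

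First, the construction of $\nabla_i^!$ is not actually carried out: you sketch two alternatives (pass to $A^{\op}$ and dualize; or invoke an unspecified abstract characterization) without settling either. The paper instead gives a direct construction: $\nabla_i^!$ is defined as the largest subobject of $J_i^!$ lying in $A^!\lgmod_T$ for an ideal $T$ (for $\preceq^\op$) in which $i$ is maximal, and the independence of this subobject on the choice of $T$ is then proved by descending induction on $|T|$, using the composition-multiplicity computation $\Hom_{A^!\lgmod}(\Delta_j^!\la n\ra, J_i^!) \neq 0 \Rightarrow L_i^!$ is a composition factor of $\Delta_j^!\la n\ra$. This is a nontrivial step your proposal leaves open.

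Second, and more seriously, verifying axiom~\eqref{it:qh-4} (i.e.\ $\uExt^2(\Delta_i^!,\nabla_j^!)=0$) does not ``follow by analogous $\kappa$-translations'' of~\eqref{eqn:qh-Hom-D-N}, because you do not yet know $\kappa^{-1}(\nabla_j^!)$. The paper reduces the $\Ext^2$-vanishing to showing that $P_i^!$ admits a filtration by $\Delta_j^!\la n\ra$'s, which via $\kappa^{-1}$ becomes the statement that $L_i$ belongs to $\langle \nabla_j\la k\ra[-k] : j\in I,\,k\in\Z\rangle_{\ext}$. Proving this last statement is precisely the content of Lemma~\ref{lem:!-pure}, a separate recollement-based ``purity'' lemma which your proposal does not identify as being needed. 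Without it, the chain of $\kappa$-translations cannot be closed. The same lemma (in its dual form) is also what the paper uses to establish the second Koszulity condition $\Hom_{\Db}(L_i^!,\nabla_j^!\la n\ra[k])=0$ for $n\neq -k$, which again is not a direct translation.

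In short: your opening moves and the treatment of $\Delta_i^!$ match the paper, but the construction of $\nabla_i^!$ and the $\Ext^2$-vanishing are the two points where the proof is genuinely nontrivial, and both are left as gestures in your proposal.
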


Pour d\'emontrer ce r\'esultat, nous aurons besoin du lemme suivant. Ici, pour $\mathscr{D}$ une cat\'egorie triangul\'ee et $(X_\alpha : \alpha \in A)$ une collection d'objets de $\mathscr{D}$, nous noterons $\langle X_\alpha : \alpha \in A \rangle_\ext$ la sous-cat\'egorie de $\mathscr{D}$ engendr\'ee par extensions par les objets $(X_\alpha : \alpha \in A)$, c'est-\`a-dire la plus petite sous-cat\'egorie pleine $\mathscr{D}'$ de $\mathscr{D}$ contenant ces objets et telle que si $M_1,M_3 \in \mathscr{D}'$ et si $M_1 \to M_2 \to M_3 \xrightarrow{[1]}$ est un triangle distingu\'e, alors $M_2$ appartient \`a $\mathscr{D}'$.

\begin{lem}
\label{lem:!-pure}
Soit $A$ une $\bk$-algèbre 
quasi-héréditaire gradu\'ee comme au~\S{\rm \ref{ss:qhered}} (pour un ensemble partiellement ordonné $(I,\preceq)$).
Pour $M$ dans $\Db(A\lgmod)$, les deux conditions suivantes 
sont équivalentes:
\begin{enumerate}
\item 
\label{it:lem-!-pure-1}
pour tout $i \in I$, on a $\Hom_{\Db(A\lgmod)}(\Delta_i\la n\ra[m], M) = 0$ si $n \ne -m$ ;
\item 
\label{it:lem-!-pure-2}
$M$ appartient à $\langle \nabla_j\la k\ra[-k] : j \in I, \, k \in \Z \rangle_\ext$.
\end{enumerate}
\end{lem}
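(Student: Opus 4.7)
Pour l'implication \eqref{it:lem-!-pure-2}$\Rightarrow$\eqref{it:lem-!-pure-1}, nous v\'erifions la condition sur les g\'en\'erateurs. Par \eqref{eqn:qh-Hom-D-N},
\[
\Hom_{\Db(A\lgmod)}\bigl(\Delta_i\la n\ra[m],\,\nabla_j\la k\ra[-k]\bigr) \cong \Hom_{\Db(A\lgmod)}\bigl(\Delta_i,\,\nabla_j\la k-n\ra[-k-m]\bigr),
\]
et ce groupe est non nul seulement si $i=j$, $n=k$ et $m=-k$; en particulier il s'annule pour $n+m\neq 0$. Puisque la classe des objets v\'erifiant \eqref{it:lem-!-pure-1} est stable par extensions (comme on le voit imm\'ediatement \`a partir de la suite exacte longue de Hom associ\'ee \`a un triangle distingu\'e), cette classe contient toute la sous-cat\'egorie engendr\'ee par extensions $\langle\nabla_j\la k\ra[-k]\rangle_\ext$.

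Pour la r\'eciproque \eqref{it:lem-!-pure-1}$\Rightarrow$\eqref{it:lem-!-pure-2}, nous proposons un argument de d\'evissage utilisant l'invariant num\'erique
\[
\mu(M) := \sum_{i\in I,\,k\in\Z} \dim_\bk\Hom_{\Db(A\lgmod)}\bigl(\Delta_i\la -k\ra[k],\,M\bigr),
\]
qui est fini car $M$ est born\'e, les composantes gradu\'ees des $A$-modules de type fini sont de dimension finie, $I$ est fini, et $A$ est de dimension globale finie. Si $\mu(M)=0$, la condition \eqref{it:lem-!-pure-1} entra\^ine $\Hom(\Delta_i\la n\ra[m],M)=0$ pour tous $i,n,m$; puisque les objets $\Delta_i\la n\ra[m]$ engendrent $\Db(A\lgmod)$ (tout projectif admet une filtration standard), ceci force $M=0$, qui appartient trivialement \`a la sous-cat\'egorie voulue.

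Pour l'\'etape de r\'ecurrence, supposons $\mu(M)>0$ et choisissons $i_0\in I$ maximal tel que $\Hom(\Delta_{i_0}\la -k\ra[k],M)\neq 0$ pour un certain $k$. L'\'etape centrale sera de construire un triangle distingu\'e $N\to M\to M'\xrightarrow{[1]}$ o\`u $N$ est une somme directe finie d'objets de la forme $\nabla_{i_0}\la -k\ra[k]$, avec multiplicit\'es dict\'ees par les dimensions des espaces $\Hom(\Delta_{i_0}\la -k\ra[k],M)$, et o\`u $M'$ v\'erifie encore \eqref{it:lem-!-pure-1} avec $\mu(M')<\mu(M)$. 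La maximalit\'e de $i_0$ est essentielle~: elle garantit, via la propri\'et\'e quasi-h\'er\'editairement de Koszul \eqref{eqn:qh-koszul-defn} combin\'ee avec \eqref{eqn:qh-Hom-D-N}, que la suite exacte longue issue du triangle ne produit pas de Hom non diagonaux suppl\'ementaires depuis les objets $\Delta_j$ vers $M'$, ce qui pr\'eserve la condition \eqref{it:lem-!-pure-1}.

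Le point d\'elicat sera la construction explicite du morphisme $N\to M$. Il s'agira de convertir les \'el\'ements de $\Hom(\Delta_{i_0}\la -k\ra[k],M)$ en morphismes \`a valeurs dans $\nabla_{i_0}\la -k\ra[k]$, en exploitant d'une part la maximalit\'e de $i_0$ (qui fait que la diff\'erence $\nabla_{i_0}/L_{i_0}$, vivant dans $A\lgmod_{\{\prec i_0\}}$, interagit trivialement avec les donn\'ees diagonales au niveau $i_0$) et d'autre part l'annulation pr\'ecise \eqref{eqn:qh-koszul-defn} de $\Hom(L_{i_0},\nabla_j\la n\ra[k])$ hors de la diagonale. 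Une fois le triangle construit et la d\'ecroissance stricte de $\mu$ \'etablie, la r\'ecurrence conclut.
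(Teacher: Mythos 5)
Votre v\'erification de l'implication \eqref{it:lem-!-pure-2}$\Rightarrow$\eqref{it:lem-!-pure-1} est correcte et co\"incide avec celle du texte. En revanche l'implication r\'eciproque contient un probl\`eme de fond et deux probl\`emes de forme. Le probl\`eme de fond~: vous faites appel \`a deux reprises \`a~\eqref{eqn:qh-koszul-defn}, c'est-\`a-dire \`a l'hypoth\`ese que $A$ est quasi-h\'er\'editairement de Koszul. Or cette hypoth\`ese ne figure pas dans l'\'enonc\'e du Lemme~\ref{lem:!-pure}, qui ne suppose que $A$ quasi-h\'er\'editaire gradu\'ee. La seule annulation de type Koszul disponible est la condition~\eqref{it:lem-!-pure-1} port\'ee par l'objet $M$ lui-m\^eme, et c'est elle qui doit faire tout le travail~; il faut donc rev\'erifier que l'argument ne fuit pas vers des propri\'et\'es globales de $A$ qu'on n'a pas.

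Les probl\`emes de forme~: d'abord, le triangle est \'ecrit dans le mauvais sens. Vous annoncez $N\to M\to M'$ avec $N$ une somme de $\nabla_{i_0}\la -k\ra[k]$ mais vous d\'ecrivez ensuite la construction de morphismes \emph{\`a valeurs dans} $\nabla_{i_0}$, c'est-\`a-dire des fl\`eches $M\to\nabla_{i_0}$. Le bon triangle est $M'\to M\to N\xrightarrow{[1]}$, o\`u $N$ est le but et $M'$ le coc\^one~; c'est bien ainsi que proc\`ede le texte, via la fl\`eche canonique $\phi\colon M\to\bigoplus_n\Hom(\Delta_i\la n\ra[-n],M)\otimes_\bk\nabla_i\la n\ra[-n]$ issue du formalisme de recollement (\cite[Lemma~4]{bez}, \cite[\S7.3]{riche-hab}). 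Ensuite, c'est pr\'ecis\'ement la construction de cette fl\`eche et la v\'erification de ses propri\'et\'es qui constituent le coeur de la preuve, et vous la laissez sous forme d'intention (``le point d\'elicat sera'')~: il manque donc l'\'etape centrale. Signalons enfin que le texte utilise une r\'ecurrence structurelle sur le plus petit id\'eal $T\subset I$ tel que $M$ soit engendr\'ee par $\{L_i\la n\ra : i\in T\}$, plut\^ot que sur un invariant num\'erique~; cela rend automatique le fait que $M'$ vit dans une sous-cat\'egorie strictement plus petite, alors que la d\'ecroissance stricte de votre $\mu$ demanderait encore d'\^etre v\'erifi\'ee.
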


\begin{proof}
Pour d\'emontrer que~\eqref{it:lem-!-pure-2} implique~\eqref{it:lem-!-pure-1}, on peut supposer que $M = \nabla_j\la k\ra[-k]$. Dans ce cas, le r\'esultat voulu est une cons\'equence de~\eqref{eqn:qh-Hom-D-N}.

Supposons maintenant que $M$ v\'erifie la condition~\eqref{it:lem-!-pure-1}, et
soit $T \subset I$ le plus petit idéal tel que $M$ appartient à la sous-catégorie triangul\'ee de $\Db(A\lgmod)$ engendrée par les objets $\{ L_i\la n\ra : i \in T, \, n \in \Z \}$. Nous procéderons par récurrence sur le cardinal de $T$. Si $T$ est vide, alors $M = 0$, et il n'y a rien à prouver.  Sinon, soit $i$ un élément maximal de $T$.  Le formalisme de recollement (tel que d\'evelopp\'e dans~\cite[Lemma~4]{bez} ; voir aussi~\cite[\S 7.3]{riche-hab}) et notre condition sur $M$ permettent de construire un morphisme canonique
\[
\phi: M \to \bigoplus_{n \in \Z} \Hom(\Delta_i \la n\ra[-n],M) \otimes_\bk \nabla_i\la n\ra[-n]
\]
dont le coc\^one $M'$ appartient \`a la sous-cat\'egorie triangul\'ee engendr\'ee par les objets $\{ L_j\la n\ra : j \in T \smallsetminus \{i\}, \, n \in \Z \}$. En utilisant~\eqref{eqn:qh-Hom-D-N}, ll n'est pas difficile de voir que $M'$ v\'erifie encore la condition~\eqref{it:lem-!-pure-1}. Par r\'ecurrence il v\'erifie \'egalement~\eqref{it:lem-!-pure-2}, et on en d\'eduit la m\^eme condition pour $M$.
\end{proof}

\begin{proof}[D\'emonstration du Th\'eor\`eme~{\rm \ref{thm:koszul2}}]
Définissons $A^!$ et $\kappa$ comme dans le Théorème~\ref{thm:koszul1}. Pour tout $i \in I$, notons $A^!\lgmod_{\{\preceq^\op i\}}$ la sous-catégorie pleine de $A^!\lgmod$ dont les objets sont les modules dont tous les facteurs de composition sont de la forme $L_j^!\la n\ra$ avec $j \preceq^\op i$. 

Pour commencer,
nous allons montrer que $A^!$ vérifie les axiomes~\eqref{it:qh-1}--\eqref{it:qh-4} d\'efinissant les anneaux quasi-hérédi\-taires, et que $\kappa$ envoie les $A$-modules standards sur les $A^!$-modules costandards.  

Pour d\'emontrer~\eqref{it:qh-1}, on remarque que
puisque $\kappa(J_i) \cong L_i^!$, pour tout $n \in \Z$ on a
\[
\Hom_{A^!\lgmod}(L_i^!, L_i^! \langle n \rangle) \cong \Hom_{\Db(A\lgmod)}(J_i, J_i \langle -n \rangle [n]). 
\]
Bien s\^ur, ce groupe est nul si $n \neq 0$. Et pour $n=0$, on remarque que
\[
\dim(\Hom_{A^!\lgmod}(J_i,J_i))=[J_i : L_i].
\]
Mais, puisque $L_i$ est le socle de $J_i$, en utilisant le fait que $A^!$ est concentr\'e en degr\'es positifs ou nuls et que $A^!_0$ est semi-simple, on voit que le conoyau de l'injection $L_i \hookrightarrow J_i$ est concentr\'e en degr\'es strictement n\'egatifs;
il ne peut donc pas poss\'eder $L_i$ comme facteur de composition, ce qui implique que $[J_i : L_i]=1$.

Ensuite, définissons $\Delta_i^!$ par
\[
\Delta_i^! := \kappa(\nabla_i).
\]
A priori, il s'agit d'un objet de $\Db(A^!\lgmod)$. Pour montrer qu'il appartient en fait \`a $A^!\lgmod_{\{\preceq^\op i\}}$, il suffit de montrer que
\[
\Hom_{\Db(A^!\lgmod)}(P^!_j\la n\ra, \Delta_i^![k]) = 0 \qquad\text{sauf si $k = 0$ et $j \preceq^\op i$.}
\]
En appliquant $\kappa^{-1}$, on transforme cette condition en la version équivalente suivante :
\[
\Hom_{\Db(A\lgmod)}(L_j, \nabla_i\la n\ra[k-n]) = 0 \qquad\text{sauf si $k = 0$ et $j \succeq i$.}
\]
Le fait que cet espace est nul si $k \ne 0$ découle du fait que $A$ est quasi-héréditairement de Koszul.  Si $k = 0$ et $j \not\succeq i$, alors $L_j$ appartient à $A\lgmod_T$, o\`u $T$ est l'id\'eal $I \smallsetminus \{k \in I \mid k \succ i\}$, dans lequel $i$ est maximal.
L'annulation voulue découle alors du fait que (par hypoth\`ese) $\nabla_i$ est l'enveloppe injective de $L_i$ dans $A\lgmod_T$, puisque le foncteur naturel
\[
\Db(A\lgmod_T) \to \Db(A\lgmod)
\]
est pleinement fid\`ele (voir~\cite[Proposition~7.9(1)]{riche-hab}).

Nous avons établi que $\Delta_i^! \in A^!\lgmod_{\{\preceq^\op i\}}$. Pour montrer que, pour tout id\'eal $T$ (pour $\preceq^\op$) dans lequel $i$ est maximal, $\Delta_i^!$ est la couverture projective de $L^!_i$ dans $A^!\lgmod_T$, il suffit de montrer que pour tout $j \in I$ on a
\[
\Hom_{A^!\lgmod}(\Delta_i^!, L^!_j\la n\ra) \cong
\begin{cases}
\bk & \text{si $j = i$ et $n = 0$,} \\
0 & \text{sinon,}
\end{cases}
\]
et que si $j \not\preceq i$ on a
\[
\Hom_{\Db(A^!\lgmod)}(\Delta_i^!, L^!_j\la n\ra[1]) = 0.
\]
En appliquant $\kappa^{-1}$, on se ram\`ene \`a des arguments similaires \`a ceux utilis\'es ci-dessus.

Soit $K^!_i$ le noyau de la surjection $\Delta^!_i \to L^!_i$. On veut montrer que $K_i^!$ appartient \`a $A^!\lgmod_{\{\prec^\op i\}}$, c'est-\`a-dire que $\Hom(P^!_j, K^!_i\la n\ra) = 0$ si $i \not\prec j$.  L'objet $\kappa^{-1}(K^!_i)$ s'identifie au décalage par $[-1]$ du conoyau de l'injection $\nabla_i \hookrightarrow J_i$. Ce dernier objet admet une filtration costandard, dans laquelle les objets $\nabla_k\la m\ra$ qui interviennent vérifient $k \succ i$.  On peut en déduire que $\Hom(L_j, \kappa^{-1}(K^!_i)\la n\ra[k]) = 0$ si $j \not\succ i$, et ainsi que $K^!_i \in A^!\lgmod_{\{\prec^\op i\}}$, comme souhait\'e. On a donc v\'erifi\'e les axiomes~\eqref{it:qh-2} et~\eqref{it:qh-3} pour les objets $\Delta_i^!$.

Construisons maintenant les objets $\nabla_i^!$. Puisque $A^!$ est de dimension finie, tout module simple $L^!_i$ admet une enveloppe injective $J^!_i$. Si $T \subset I$ est un id\'eal (pour $\preceq^\op$) dans lequel $i$ est maximal, notons $\nabla^!_T$ le plus grand sous-objet de $J_i^!$ qui appartient \`a $A^!\lgmod_T$. Alors $\nabla^!_T$ est l'enveloppe injective de $L_i^!$ dans $A^!\lgmod_T$. Nous affirmons que cet objet ne d\'epend par de $T$, c'est-\`a-dire que si $\{\preceq^\op i\} := \{k \in I \mid k \preceq^\op i\}$ alors l'inclusion $\nabla^!_{i,\{\preceq^\op i\}} \hookrightarrow \nabla^!_{i,T}$ est une \'egalit\'e. Nous proc\'ederons par r\'ecurrence descendante sur le cardinal de $T$. Pour cela choisissons $j \in T$ maximal et diff\'erent de $i$, et posons $T':=T \smallsetminus \{j\}$. Si $\nabla^!_{i,T}$ admet $L_j \langle n \rangle$ comme facteur de composition pour un $n \in \Z$, d'apr\`es les propri\'et\'es des objets standards (d\'emontr\'ees ci-dessus) on doit avoir
\[
\Hom_{A\lgmod}(\Delta^!_j \langle n \rangle, \nabla^!_{i,T}) \neq 0.
\]
Par d\'efinition de $\nabla^!_{i,T}$, cet espace s'identifie \`a $\Hom_{A\lgmod}(\Delta^!_j \langle n \rangle, J^!_{i})$, et donc $L_i^!$ est un facteur de composition de $\Delta^!_j \langle n \rangle$. Ceci implique que $i \preceq^\op j$, ce qui est absurde puisque $i$ est maximal dans $T$.

Au vu de cette propri\'et\'e, on peut donc poser $\nabla_i^! := \nabla^!_{i,\{\preceq^\op i\}}$, et cet objet v\'erifie l'axiome~\eqref{it:qh-2}. Il v\'erifie \'egalement l'axiome~\eqref{it:qh-3}, puisque pour $n \in \Z$ on a
\[
[\nabla^!_i : L_i^! \langle n \rangle] =\dim \Hom_{A^!\lgmod}(\Delta^!_i \langle n \rangle, \nabla^!_i) = [\Delta^!_i \langle n \rangle : L_i^!] = \delta_{n,0}.
\]

On consid\`ere finalement l'axiome~\eqref{it:qh-4}.  L'axiome~\eqref{it:qh-3} implique déjà que
\[
\uExt^1(\Delta^!_i, \nabla^!_j) = 0
\]
pour tous $i,j \in I$ (voir par exemple la preuve de~\cite[Lemma~7.4]{riche-hab}). On en d\'eduit plus généralement que si $M$ est un $A^!$-module gradu\'e admettant une filtration par des objets $\Delta^!_j \langle n \rangle$, alors $\uExt^1(M,\nabla^!_j) = 0$. Soit $M^!_i$ le noyau de l'unique morphisme non nul $P^!_i \to \Delta^!_i$ (qui est surjectif puisque $L_i^!$ est la t\^ete de $\Delta^!_i$).  Pour montrer que $\uExt^2(\Delta^!_i, \nabla^!_j) = 0$, il suffit de montrer que $\uExt^1(M^!, \nabla^!_j) = 0$, et on se ramène donc à montrer que $M^!$ (ou, de fa{\c c}on \'equivalente, $P^!_i$) possède une filtration par des objets $\Delta^!_j \langle n \rangle$, c'est-\`a-dire appartient \`a $\langle \Delta^!_j \langle n \rangle : j \in I, \, n \in \Z \rangle_\ext$.  Via $\kappa^{-1}$, cette condition équivaut à dire que $L_i$ appartient à $\langle \nabla_j\la k\ra[-k] : j \in I, \, k \in \Z \rangle_\ext$.  Cette derni\`ere propri\'et\'e découle du Lemme~\ref{lem:!-pure} et de notre hypoth\`ese sur $A$.

Pour conclure, il reste maintenant \`a montrer que $A^!$ est quasi-h\'er\'editairement de Koszul, c'est-\`a-dire que
\begin{equation*}
\Hom_{\Db(A^!\lgmod)}(\Delta_i^!,L_j^!\la n\ra[k]) = 0 = \Hom_{\Db(A^!\lgmod)}(L_i^!, \nabla_j^!\la n\ra[k])
\end{equation*}
si $n \ne -k$. En appliquant $\kappa^{-1}$, on voit que la premi\`ere \'egalit\'e revient \`a dire que $\Hom_{\Db(A\lgmod)}(\nabla_i,J_j\la -n\ra[k+n]) = 0$ si $n \ne -k$, ce qui est clair. Pour la deuxi\`eme condition, en appliquant l'\'enonc\'e dual du Lemme~\ref{lem:!-pure}, on voit qu'on doit d\'emontrer que $L_i^!$ appartient \`a $\langle \Delta_j^! \langle -m \rangle [m] : j \in I, \, m \in \Z \rangle_\ext$, ce qui se traduit (en appliquant $\kappa^{-1}$) par la condition que $J_i$ appartient \`a $\langle \nabla_j \langle m \rangle : j \in I, \, m \in \Z \rangle_\ext$, c'est-\`a-dire que $J_i$ admet une filtration costandard. Cette derni\`ere propri\'et\'e est connue, comme rappel\'e ci-dessus.
\end{proof}

\begin{ex}\label{ex:carquois2}
Soit $R$
comme  au~\S\ref{ss:carquois}.
Considérons son quotient $R/(\alpha\beta)$, et posons
\begin{equation*}
\begin{gathered}
\Delta_1 = \nabla_1 = L_1 = \begin{tikzcd}
\bk \ar[r, shift left=0.5ex, "0"] &
0 \ar[l, shift left=0.5ex, "0"]
\end{tikzcd}
\\
\Delta_2 = P_2 = \begin{tikzcd}
\bk\la -1\ra \ar[r, shift left=0.5ex, "0"] &
\bk \ar[l, shift left=0.5ex, "\id"]
\end{tikzcd}
\qquad
\nabla_2 = 
J_2 = \begin{tikzcd}
\bk\la 1\ra \ar[r, shift left=0.5ex, "\id"] &
\bk \ar[l, shift left=0.5ex, "0"]
\end{tikzcd}
\end{gathered}
\end{equation*}
On peut montrer que ces objets vérifient les axiomes de la d\'efinition d'un anneau quasi-héréditaire (par rapport à l'ordre usuel $1 \prec 2$). Le seul point d\'elicat de cette preuve est la démonstration que $\uExt^2(\Delta_1,\nabla_1) = 0$ ; pour cela on peut utiliser la premi\`ere r\'esolution projective de~\eqref{eqn:res-proj}.

Les anneaux de Koszul autres que $R/(\alpha\beta)$ et $R^*/(\alpha^*\beta^*)$ considérés au~\S\ref{ss:carquois} 
ne sont quasi-h\'er\'editaires pour aucun ordre sur $\{1,2\}$.
\end{ex}

%-------------------------------------------------------------
\subsection{Un exemple fondamental : la dualit\'e de Koszul de Be{\u\i}linson--Ginz\-burg--Soergel}
\label{ss:bgs}
%-------------------------------------------------------------

Soit $\frg$ une algèbre de Lie complexe semi-simple, dont on fixe une sous-alg\`ebre de Borel et une sous-alg\`ebre de Cartan, et soit $\cO$ la catégorie de $\frg$-modules définie dans~\cite{bgg}. Soit $\cO_0 \subset \cO$ son \emph{bloc principal}, c'est-\`a-dire la sous-catégorie de Serre engendrée par les modules irréductibles dont le plus haut poids est de la forme $w\rho-\rho$ pour $w$ dans le groupe de Weyl $W$, où $\rho$ est la demi-somme des racines positives.  Par d\'efinition, les objets simples dans $\cO_0$ sont paramétrés par $W$. Il d\'ecoule par ailleurs des r\'esultats de~\cite{bgg} que tout objet de $\cO_0$ est de longueur finie, et que cette catégorie a assez d'objets projectifs.  Soit $L_w$, resp.~$P_w$, le module simple correspondant à $w \in W$, resp.~sa couverture projective.  Des r\'esultats g\'en\'eraux d'alg\`ebre homologique impliquent que
\begin{equation}
\label{eqn:o-alg}
\cO_0 \cong A\lmod, \quad \text{où} \quad A = \End \left( \bigoplus_{w \in W} P_w \right)^\op.
\end{equation}
Ici, l'anneau $A$ est une $\C$-algèbre de dimension finie.

Cet anneau est en fait quasi-héréditaire : sous l'équivalence~\eqref{eqn:o-alg}, les $\frg$-modules de Verma deviennent les $A$-modules standards, et leurs duaux deviennent les $A$-mo\-dules costandards. Par contre, $A$ n'a pas de graduation évidente.

Les résultats principaux de~\cite{bgs} affirment que cet anneau poss\`ede les propriétés remarquables suivantes :
\begin{enumerate}
\item L'anneau $A$ admet une graduation positive $A = \bigoplus_{n \ge 0} A_n$, où $A_0$ est un anneau semi-simple. Ses modules (co)standards admettent des graduations; ainsi, $A$ est un anneau quasi-héréditaire gradué.
\item L'anneau gradu\'e quasi-h\'er\'editaire $A$ est quasi-héréditairement de Koszul, et donc de Koszul.
\item L'anneau gradu\'e $A$ est isomorphe \`a son dual de Koszul.
\end{enumerate}
La catégorie $\cO_0^\gr=A\lgmod$ des modules gradués de type fini sur $A$ peut être considérée comme une ``version graduée'' de $\cO_0$.  
Le Théorème~\ref{thm:koszul2} affirme donc l'existence d'une équivalence de catégories triangul\'ees
\[
\kappa: \Db(\cO_0^\gr) \simto \Db(\cO_0^\gr)
\]
qui vérifie $\kappa(M\la 1\ra) \cong \kappa(M)\la -1\ra[1]$.  Les objets simples dans $\cO_0^\gr$ sont paramétrés par $(W, \preceq)$, où $\preceq$ désigne l'opposé de l'ordre de Bruhat. Le Théorème~\ref{thm:koszul2} donne un paramétrage des objets simples du côté dual par $(W, \preceq^\op)$ ; mais dans ce cas il est plus pratique d'utiliser le paramétrage ``intrinsèque'' des objets dans $\cO_0$.  D'après~\cite[Theorem~1.1.3]{bgs}, le foncteur $\kappa$ v\'erifie
\[
\kappa(L_w^\gr) \cong P_{w^{-1}w_0}^\gr, \quad \kappa(\nabla_w^\gr) \cong \Delta_{w^{-1} w_0}^\gr, \quad \kappa(J_w^\gr) \cong L_{w^{-1} w_0}^\gr
\]
pour tout $w \in W$,
où $w_0$ est l'élément le plus long de $W$. (Ici, pour tout $y \in W$, $L_y^\gr$ d\'esigne le module simple gradu\'e concentr\'e en degr\'e $0$ correspondant \`a $L_y$, $P_y^\gr$ sa couverture projective, $J_y^\gr$ son enveloppe injective, $\Delta_y^\gr$ l'unique ``relev\'e'' de $\Delta_y$ admettant une surjection vers $L_y^\gr$, et $\nabla_y^\gr$ l'unique ``relev\'e'' de $\nabla_y$ admettant $L_y^\gr$ comme sous-objet.)

\begin{ex}
Dans le cas où $\frg = \mathfrak{sl}_2$, l'anneau $A$ est isomorphe à l'anneau $R/(\alpha\beta)$ 
\'etudi\'e au~\S\ref{ss:carquois} et dans l'Exemple~\ref{ex:carquois2}. En particulier, on a vu 
explicitement
que cet anneau est auto-dual au sens de Koszul.
\end{ex}

%--------------------------------------------------------------------------
\subsection{Catégories gradu\'ees de plus haut poids}
\label{ss:cat-qhered}
%--------------------------------------------------------------------------

Afin de pouvoir adapter les notions du~\S\ref{ss:anneau-qh-koszul} dans un cadre plus général, nous allons maintenant introduire une version ``catégorique'' de quasi-hérédité.  Soit $\sA$ une catégorie abélienne $\bk$-linéaire qui est de longueur finie (c'est-\`a-dire telle que tout objet est de longueur finie). Supposons que $\sA$ est munie d'un automorphisme $\la 1\ra: \sA \to \sA$, qu'on appellera \emph{décalage interne}. On peut alors définir $\uHom$ et $\uExt$ comme dans~\eqref{eqn:gr-ext}. 

Soit $\Irr_0(\sA)$ un ensemble de représentants des objets simples à isomorphisme et à décalage interne près. Soit $I$ un ensemble 
muni d'une bijection fix\'ee
\[
I \simto \Irr_0(\sA),
\]
qu'on notera $i \mapsto L_i$,
ainsi que d'un ordre partiel $\preceq$ tel que pour tout $i \in I$, l'ensemble $\{ j \in I : j \preceq i \}$ est fini. Pour tout idéal $T \subset I$, notons $\sA_T$ la sous-catégorie pleine de $\sA$ dont les objets sont ceux dont tous les facteurs de composition sont de la forme $L_i\la n\ra$ avec $i \in T$ et $n \in \Z$. 

En s'inspirant de~\cite[\S 3.2]{bgs}, on dira que la catégorie $\sA$ (munie des structures ci-dessus) est \emph{gradu\'ee de plus haut poids} 
s'il existe des objets $\Delta_i, \nabla_i \in \sA_{\{\preceq i\}}$ et des morphismes $\Delta_i \to L_i \to \nabla_i$ qui vérifient les conditions suivantes :
\begin{enumerate}
\item 
\label{it:hw-1}
Pour tout $i \in I$, on a $\uEnd(L_i) \cong \bk$.
\item 
\label{it:hw-2}
Pour tout $i \in I$, et tout id\'eal $T \subset I$ dans lequel $i$ est maximal, $\Delta_i \to L_i$ est la couverture projective de $L_i$ dans $\sA_T$, et $L_i \to \nabla_i$ est son enveloppe injective. 
\item
\label{it:hw-3}
Le noyau du morphisme $\Delta_i \to L_i$ et le conoyau du morphisme $L_i \to \nabla_i$ appartiennent à $\sA_{\{\prec i\}}$. 
\item
\label{it:hw-4}
Pour tous $i,j \in I$, on a $\uExt^2(\Delta_i, \nabla_j) = 0$.
\end{enumerate}
Comme au~\S\ref{ss:anneau-qh-koszul}, les objets de la forme $\Delta_i\la n\ra$, resp.~$\nabla_i\la n\ra$, seront dits \emph{standards}, resp.~\emph{costandards}. Ils v\'erifient automatiquement
\begin{equation}
\label{eqn:hw-Ext-vanishing}
\Hom_{\Db(\sA)}(\Delta_i, \nabla_j \langle n \rangle [k]) = \begin{cases}
\bk & \text{si $i=j$ et $n=k=0$;}\\
0 & \text{sinon.}
\end{cases}
\end{equation}

La catégorie $\sA$ sera dite \emph{de type fini} si de plus elle vérifie la condition suivante:
\begin{enumerate}
\setcounter{enumi}{4}
\item 
\label{it:hw-5}
L'ensemble $I \cong \Irr_0(\sA)$ est fini.
\end{enumerate}
D'apr\`es~\cite[Theorem~3.2.1]{bgs}, si $\sA$ est une cat\'egorie gradu\'ee de plus haut poids et de type fini, alors il y a assez de projectifs et d'injectifs dans $\sA$. De plus, tout objet projectif (resp.~injectif) admet une filtration dont les sous-quotients sont des objets standards (resp.~costandards).  Dans ce cas, on note
\[
P_i \twoheadrightarrow L_i \qquad\text{et}\qquad L_i \hookrightarrow J_i
\]
la couverture projective et l'enveloppe injective de $L_i$, respectivement.

\begin{rmq}
La d\'efinition d'une cat\'egorie de plus haut poids (non gradu\'ee) est similaire, en omettant le foncteur $\langle 1 \rangle$ et en rempla{\c c}ant $\uEnd$ par $\End$ et $\uExt$ par $\Ext$. Les m\^emes propri\'et\'es restent valables dans ce cadre, avec les adaptations \'evidentes.
\end{rmq}

La catégorie $\sA$ sera dite \emph{positivement graduée}, resp.~\emph{strictement positivement graduée}, si elle vérifie
\begin{equation}\label{eqn:positive}
\Ext^1(L_i, L_j\la n\ra) = 0
\qquad\text{pour tout $n > 0$, resp.~pour tout $n \ge 0$.}
\end{equation}
Si $A = \bigoplus_{n \ge 0} A_n$ est un anneau positivement gradué avec $A_0$ de dimension finie, et si on choisit pour $\Irr_0(\sA)$ des repr\'esentants concentr\'es en degr\'e $0$, alors $A \lgmod$ est positivement graduée au sens de~\eqref{eqn:positive}. Elle est strictement positivement graduée si et seulement si $A_0$ est un anneau semi-simple.

\begin{rmq}
Il est à noter que la propriété d'être positivement graduée pour une catégorie abélienne $\sA$ comme ci-dessus dépend du choix des objets $L_i$ : si on remplace un $L_i$ par un décalage interne, cela peut changer les propriétés de positivité de la catégorie $\sA$. Ce phénomène correspond au fait qu'un anneau positivement gradué peut être Morita-équivalent à un anneau qui ne l'est pas.
\end{rmq}

\begin{ex}\label{ex:var-cplx}
Soit $X$ une variété algébrique complexe, munie d'une stratification algébrique finie $\mathscr{S}$ (au sens de~\cite[Definition~3.2.23]{cg}) dont toute strate est un espace affine. Comme expliqué dans~\cite[\S3.3]{bgs}, la catégorie $\Perv_{\mathscr{S}}(X,\bk)$ des faisceaux pervers sur $X$ à coefficients dans $\bk$ et constructibles par rapport à $\mathscr{S}$ est une catégorie de plus haut poids (non graduée) et de type fini. L'ensemble partiellement ordonné $(I,\preceq)$ qui gouverne cette catégorie est l'ensemble des strates, ordonnées par les inclusions de leurs adhérences. L'objet standard, resp.~costandard, associé à une strate $S$ est l'extension par zéro $j_{S!}\underline{\bk}[\dim S]$, resp.~l'image directe $j_{S*}\underline{\bk}[\dim S]$ du faisceau pervers constant (o\`u $j_S: S \hookrightarrow X$ est l'inclusion).

Si $\bk=\overline{\mathbb{Q}_\ell}$, et pour certains choix de $X$ (par exemple une vari\'et\'e de drapeaux, \'eventuellement parabolique, d'un groupe alg\'ebrique r\'eductif connexe),
les constructions de~\cite[\S4]{bgs} (qui exploitent la théorie des faisceaux étales $\ell$-adiques de Deligne) permettent de définir une catégorie $\Perv^\mix_{\mathscr{S}}(X,\bk)$, qui est graduée de plus haut poids et strictement positivement graduée, et qui peut \^etre consid\'er\'ee comme une ``version gradu\'ee'' de la catégorie $\Perv_{\mathscr{S}}(X,\bk)$.

Dans~\cite{ar:kdsf}, les auteurs ont proposé une nouvelle démarche qui mène à la même catégorie que dans~\cite[\S4]{bgs}. Cette nouvelle approche a présagé une gén\'eralisation au cas o\`u $\bk$ est de caractéristique positive, qui a été développée dans~\cite{modrap2}. Pour $\bk$ quelconque, la catégorie $\Perv_{\mathscr{S}}^\mix(X,\bk)$ comme définie dans~\cite{modrap2} est graduée de plus haut poids, mais pas n\'ecessairement positivement graduée.
\end{ex}

%---------------------------------------------
\subsection{Cat\'egories quasi-h\'er\'editairement de Koszul}
%---------------------------------------------

Une catégorie abélienne graduée de plus haut poids
est dite \emph{quasi-héréditairement de Koszul} si elle vérifie la condition suivante (cf.~\eqref{eqn:qh-koszul-defn}):
\[
\Ext^k(\Delta_i,L_j\la n\ra) = \Ext^k(L_i, \nabla_j\la n\ra) = 0
\qquad\text{si $n \ne -k$.}
\]
D'apr\`es~\cite[Proposition~2.13]{modrap3}, si $\sA$ est quasi-héréditairement de Koszul, alors elle est de Koszul au sens o\`u elle v\'erifie
\[
\Ext^k_{\sA}(L_i,L_j\la n\ra) =0
\qquad\text{si $n \ne -k$.}
\]
(En particulier, $\sA$ est alors strictement positivement graduée.)

Soient $\sA$ et $\sB$ deux catégories abéliennes graduées de plus haut poids, de type fini, et quasi-héréditairement de Koszul. Soient $(I_{\sA},\preceq_{\sA})$ et $(I_{\sB},\preceq_{\sB})$ les ensembles partiellement ordonnés qui paramètrent leurs objets simples respectifs. Une \emph{dualité de Koszul ``classique''} entre $\sA$ et $\sB$ est une \'equivalence de cat\'egories triangulées
\[
\kappa: \Db(\sA) \to \Db(\sB)
\]
qui vérifie
\[
\kappa(X\la 1\ra) \cong \kappa(X)\la -1\ra[1]
\]
pour tout $X$ dans $\Db(\sA)$,
et telle qu'il existe un isomorphisme d'ensembles partiellement ordonnés $\varphi: (I_{\sA},\preceq_{\sA}) \simto (I_{\sB}, \preceq_{\sB}^\op)$ tel que
\[
\kappa(L^\sA_i) \cong P^\sB_{\varphi(i)}, \qquad
\kappa(\nabla^\sA_i) \cong \Delta^\sB_{\varphi(i)}, \qquad
\kappa(I^\sA_i) \cong L^\sB_{\varphi(i)}
\]
pour tout $i \in I_{\sA}$.

Un analogue du Théorème~\ref{thm:koszul2} affirme que si $\sA$ est graduée de plus haut poids, de type fini, 
et quasi-héréditairement de Koszul, alors il existe une catégorie $\sB$ poss\'edant les m\^emes propri\'et\'es et une dualité de Koszul $\Db(\sA) \to \Db(\sB)$. En fait, on a alors $\sA \cong A\lgmod$ pour une certaine $\bk$-alg\`ebre $A$ comme dans le Théorème~\ref{thm:koszul2}, et on peut prendre $\sB = A^!\lgmod$, où $A^! = \uExt^\bullet(L,L)^\op$.

\begin{ex}[Beilinson--Ginzburg--Soergel]\label{ex:bgs2}
Soient $G$ un groupe réductif complexe et $B \subset G$ un sous-groupe de Borel. Notons $\Flag := G/B$ sa variété de drapeaux. Notons $\Perv_{\mathrm{Br}}(\Flag,\C)$ la catégorie des faisceaux pervers sur $\Flag$, à coefficients dans $\C$, et constructible par rapport à la stratification dont les strates sont les orbites de $B$ sur $\Flag$. Ces derniers étant des espaces affines, on peut faire appel à l'Exemple~\ref{ex:var-cplx} et ainsi conclure que la catégorie $\Perv_{\mathrm{Br}}(\Flag,\C)$ est de plus haut poids.

Dans ce cadre, Beilinson--Ginzburg--Soergel ont établi dans~\cite[\S4]{bgs} une version géométrique de l'exemple du~\S\ref{ss:bgs}. Soit $W$ le groupe de Weyl de $G$, et pour $w \in W$, notons $\mathscr{L}^\mix_w \in \Perv_{\mathrm{Br}}^\mix(\Flag,\C)$ le faisceau pervers simple de poids $0$ dont le support est la variété de Schubert correspondant à $w$.  Soiet $\mathscr{P}_w^\mix$, resp.~$\mathscr{I}^\mix_w$, sa couverture projective, resp.~son enveloppe injective, dans $\Perv_{\mathrm{Br}}^\mix(\Flag,\C)$. Les résultats de~\cite{bgs} impliquent qu'il existe une équivalence de catégories
\[
\kappa: \Db\Perv^\mix_{\mathrm{Br}}(\Flag,\C) \to \Db\Perv^\mix_{\mathrm{Br}}(\Flag,\C)
\]
qui vérifie
\[
\kappa(\mathscr{F}\la 1\ra) \cong \kappa(\mathscr{F})\la -1\ra[1]
\]
pour tout $\mathscr{F}$ dans $\Db\Perv^\mix_{\mathrm{Br}}(\Flag,\C)$.
Par rapport au paramétrage ``intrinsèque'' des deux côtés, ce foncteur v\'erifie
\[
\kappa(\mathscr{L}^\mix_w) \cong \mathscr{P}^\mix_{w^{-1}w_0}, \qquad
\kappa(\nabla^\mix_w) \cong \Delta^\mix_{w^{-1}w_0}, \qquad
\kappa(\mathscr{I}^\mix_w) \cong \mathscr{L}^\mix_{w^{-1}w_0}
\]
pour tout $w \in W$,
où $w_0$ est l'élément le plus long de $W$. 
\end{ex}

%%%%%%%%%%%%%%%%%%%%%%%%%%%%%%%%%%%%%%%%%%%%%%%%%%%%%%%%%%%%%%%%%%%%%%%%%%%
\section{Objets basculants et dualité de Ringel}
\label{sec:dualite-Ringel}
%%%%%%%%%%%%%%%%%%%%%%%%%%%%%%%%%%%%%%%%%%%%%%%%%%%%%%%%%%%%%%%%%%%%%%%%%%%

%--------------------------------------------------------------------------
\subsection{Objets basculants}
\label{ss:basculant}
%--------------------------------------------------------------------------

Soit $\sA$ une catégorie de plus haut poids (pas n\'ecessairement de type fini \`a ce stade). Un objet $X \in \sA$ est dit \emph{basculant} s'il existe une suite de sous-objets $0 = X_0 \subset X_1 \subset \cdots \subset X_n = X$ telle que tout sous-quotient $X_i/X_{i-1}$ est standard, ainsi qu'une autre suite $0 = X'_0 \subset X'_1 \subset \cdots \subset X'_m = X$ telle que tout sous-quotient $X'_j/X'_{j-1}$ est costandard.

Le r\'esultat suivant est d\^u \`a Ringel~\cite{ringel}. Pour une preuve dans le langage de cet article, on pourra consulter~\cite[\S 7]{riche-hab}.

\begin{thm}[Classification des objets basculants]
\label{thm:tilt-classif}
Soit $\sA$ une catégorie de plus haut poids.  Pour tout $i \in I$, il existe un objet basculant indécomposable $T_i$, unique à isomorphisme près, tel que $T_i \in \sA_{\{\preceq i\}}$ et $[T_i : L_i] = 1$. De plus, tout objet basculant indécomposable dans $\sA$ est isomorphe à $T_i$ pour un unique $i \in I$.
\end{thm}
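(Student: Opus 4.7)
Le plan se d\'ecompose en trois \'etapes : un crit\`ere cohomologique caract\'erisant les objets admettant une filtration (co)standard, la construction de $T_i$ par r\'ecurrence sur l'id\'eal $\{j \preceq i\}$ utilisant des extensions universelles, et un argument de Krull--Schmidt pour l'unicit\'e.

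Premi\`erement, je commencerais par \'etablir le crit\`ere classique suivant : un objet $M$ de $\sA$ admet une filtration standard si et seulement si $\uExt^1(M,\nabla_j)=0$ pour tout $j \in I$, et dualement $M$ admet une filtration costandard si et seulement si $\uExt^1(\Delta_j,M)=0$ pour tout $j \in I$. Le sens direct r\'esulte de~\eqref{eqn:hw-Ext-vanishing} par r\'ecurrence sur la longueur de la filtration, en utilisant l'axiome~\eqref{it:hw-4}. La r\'eciproque se d\'emontre par r\'ecurrence sur l'id\'eal engendr\'e par les facteurs de composition de $M$ : on choisit un \'el\'ement maximal $j$ dans cet id\'eal, puis l'annulation des Ext permet d'extraire selon le cas un sous-objet ou un quotient de la forme $\nabla_j^{\oplus m}$, et l'hypoth\`ese de r\'ecurrence s'applique au compl\'ementaire. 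En cons\'equence, un objet est basculant si et seulement s'il v\'erifie simultan\'ement $\uExt^1(M,\nabla_j)=0=\uExt^1(\Delta_j,M)$ pour tout $j \in I$, et on en d\'eduit la formule de r\'eciprocit\'e de Brauer--Humphreys $\dim \uHom(T,T') = \sum_{j \in I} [T:\Delta_j] \cdot [T':\nabla_j]$ pour tous objets basculants $T,T'$, qui jouera un r\^ole central dans la suite.

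Deuxi\`emement, je construirais $T_i$ par r\'ecurrence sur la cardinalit\'e (finie par hypoth\`ese) de $\{j \preceq i\}$. Le cas de base, o\`u $i$ est minimal, est trivial puisque $\Delta_i = L_i = \nabla_i$ est d\'ej\`a basculant. Dans le cas inductif, supposant construits les $T_j$ pour tout $j \prec i$, je formerais l'extension universelle
\[
0 \to \Delta_i \to T_i \to \bigoplus_{j \prec i,\, n \in \Z} \uExt^1(T_j, \Delta_i)_n^* \otimes_\bk T_j \la n \ra \to 0,
\]
dont la classe correspond \`a l'identit\'e via la dualit\'e canonique. Par construction, $T_i \in \sA_{\{\preceq i\}}$ admet une filtration standard avec $[T_i:L_i]=1$, et l'annulation $\uExt^1(\Delta_j, T_i)=0$ pour tout $j \in I$ r\'esulte d'une analyse des suites exactes longues d'Ext associ\'ees, en utilisant l'hypoth\`ese de r\'ecurrence (qui fournit $\uExt^1(\Delta_j, T_k)=0$ pour tout $k \prec i$ via le crit\`ere de la premi\`ere \'etape) ainsi que la surjectivit\'e du morphisme connectant garantie par le caract\`ere universel de l'extension. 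Par ce m\^eme crit\`ere, $T_i$ admet alors une filtration costandard, et c'est donc un objet basculant. Son ind\'ecomposabilit\'e r\'esulte du fait que $\uEnd(T_i) \twoheadrightarrow \uEnd(L_i) = \bk$ (induit par le foncteur quotient $\sA_{\{\preceq i\}} \to \sA_{\{\preceq i\}}/\sA_{\{\prec i\}}$) a pour noyau un id\'eal nilpotent, cons\'equence de la formule de Brauer--Humphreys appliqu\'ee au couple $(T_i, T_i)$.

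Troisi\`emement, pour \'etablir simultan\'ement l'unicit\'e de $T_i$ et la classification annonc\'ee, soit $T$ un objet basculant ind\'ecomposable quelconque, et soit $i$ un \'el\'ement maximal parmi les $j$ tels que $[T:L_j]\neq 0$, de sorte que $T \in \sA_{\{\preceq i\}}$. La formule de Brauer--Humphreys fournit $\uHom(T, T_i) \neq 0 \neq \uHom(T_i, T)$, et en choisissant convenablement $\phi : T \to T_i$ et $\psi : T_i \to T$, on peut assurer que $q(\psi\phi) \neq 0$ dans $\uEnd(L_i) = \bk$ (o\`u $q : \sA_{\{\preceq i\}} \to \sA_{\{\preceq i\}}/\sA_{\{\prec i\}}$ d\'esigne le foncteur quotient). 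Par localit\'e des anneaux d'endomorphismes d'objets ind\'ecomposables (cons\'equence du th\'eor\`eme de Krull--Schmidt, applicable dans la cat\'egorie ab\'elienne $\bk$-lin\'eaire de longueur finie $\sA$), $\psi\phi$ est alors un automorphisme ; $\phi$ est donc un monomorphisme scind\'e, faisant de $T_i$ un facteur direct de $T$. Par ind\'ecomposabilit\'e de $T$, on conclut $T \cong T_i$. Le point le plus d\'elicat sera l'analyse des annulations d'Ext lors de la construction inductive \`a l'\'etape~2 : v\'erifier que l'extension universelle produit effectivement $\uExt^1(\Delta_j, T_i)=0$ pour tout $j \in I$, y compris le cas $j = i$, requiert une \'etude soigneuse des morphismes connectants et une exploitation fine des axiomes d\'efinissant les cat\'egories de plus haut poids.
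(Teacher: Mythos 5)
Les \'etapes 1 et 3 suivent en substance le sch\'ema standard de Ringel (crit\`ere cohomologique pour les filtrations (co)standards, puis argument de Krull--Schmidt pour l'unicit\'e), et sont correctes dans leurs grandes lignes — modulo une petite inversion \`a l'\'etape 3 : si $\psi\phi$ est un automorphisme de $T$, alors $\phi$ est un monomorphisme scind\'e et c'est $T$ qui est facteur direct de $T_i$, et non l'inverse (la conclusion $T\cong T_i$ reste bien s\^ur valable par ind\'ecomposabilit\'e de $T_i$). En revanche, l'\'etape 2 contient une erreur r\'eelle, et non pas seulement un point \`a pr\'eciser.

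Votre extension universelle a pour quotient $\bigoplus_{j\prec i,\, n}\uExt^1(T_j,\Delta_i)_n^*\otimes T_j\la n\ra$, c'est-\`a-dire une extension par des objets \emph{basculants}, en un seul pas. Ce n'est pas la construction de Ringel, qui proc\`ede par extensions universelles \emph{it\'er\'ees} de quotients \emph{standards} $\Delta_j$ (en parcourant les indices $j\prec i$ par ordre d\'ecroissant le long d'une extension lin\'eaire de $\preceq$). La variante en un seul pas que vous proposez peut \'echouer, parce que $\uExt^1(T_j,\Delta_i)$ peut \^etre nul alors que $\uExt^1(\Delta_j,\Delta_i)$ ne l'est pas : la restriction $\uExt^1(T_j,\Delta_i)\to\uExt^1(\Delta_j,\Delta_i)$ le long de l'inclusion $\Delta_j\hookrightarrow T_j$ n'est pas surjective en g\'en\'eral, l'obstruction vivant dans $\uExt^2(T_j/\Delta_j,\Delta_i)$, groupe pour lequel aucun axiome d'annulation n'est disponible. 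Par cons\'equent, la cha\^ine de raisonnement ``universalit\'e $\Rightarrow$ $\uExt^1(\Delta_j,T_i)=0$'' que vous esquissez ne fonctionne pas : l'universalit\'e donne la surjectivit\'e du morphisme connectant $\Hom(T_j,E)\to\Ext^1(T_j,\Delta_i)$, pas celle de $\Hom(\Delta_j,E)\to\Ext^1(\Delta_j,\Delta_i)$, qui est ce dont on a besoin.

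Pour un contre-exemple explicite, prenez $\sA$ la cat\'egorie des repr\'esentations du carquois $1\xrightarrow{a}2\xrightarrow{b}3$ avec la relation $ba=0$, munie de l'ordre $1\prec 2\prec 3$. Alors $\Delta_i=L_i$ pour tout $i$, et on v\'erifie que $T_1=L_1$, $T_2=P_1=\nabla_2$, $T_3=P_2=\nabla_3$ (de vecteurs de dimension respectifs $(1,0,0)$, $(1,1,0)$, $(0,1,1)$). Un calcul direct donne $\Ext^1(T_1,\Delta_3)=\Ext^1(L_1,L_3)=0$ et $\Ext^1(T_2,\Delta_3)=\Ext^1(P_1,L_3)=0$ (car $P_1$ est projectif). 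Votre extension universelle est donc triviale et produit l'objet $\Delta_3=L_3$, qui n'admet pas de filtration costandard puisque $\nabla_3=P_2$ est de longueur $2$ ; l'annulation annonc\'ee $\uExt^1(\Delta_j,T_i)=0$ est d'ailleurs fausse pour cet objet, puisque $\Ext^1(\Delta_2,L_3)=\Ext^1(L_2,L_3)\cong\bk$. La preuve de Ringel, \`a laquelle renvoie le texte, \'evite ce pi\`ege pr\'ecis\'ement en it\'erant les extensions par des $\Delta_j$ (et en exploitant que $\Ext^1(\Delta_{j'},\Delta_j)=0$ d\`es que $j'\not\prec j$), ce qui permet de contr\^oler les groupes d'extensions \`a chaque \'etape sans avoir besoin d'informations sur $\uExt^2$.
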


La classification dans le cas gradué est très similaire. 
En utilisant~\eqref{eqn:hw-Ext-vanishing}, on voit
que tous les $\Ext$ supérieurs entre objets basculants s'annulent. De ce point de vue, le comportement des objets basculants ressemble à celui des objets projectifs (ou injectifs).

La \emph{dualité de Ringel} transforme cette remarque en une équivalence de catégories.  Supposons désormais que $\sA$ est de type fini. Il y a donc assez de projectifs (et d'injectifs) dans $\sA$. Soit $\sB$ une autre catégorie de plus haut poids et de type fini. Soient $(I_{\sA},\preceq_{\sA})$ et $(I_{\sB},\preceq_{\sB})$ les ensembles partiellement ordonnés qui paramètrent leurs objets simples respectifs. Une \emph{dualité de Ringel} entre $\sA$ et $\sB$ est un foncteur triangulé
\[
\rho: \Db(\sA) \to \Db(\sB)
\]
qui vérifie
\[
\rho(T^\sA_i) \cong P^\sB_{\varphi(i)}, \qquad
\rho(\Delta^\sA_i) \cong \nabla^\sB_{\varphi(i)}, \qquad
\rho(I^\sA_i) \cong T^\sB_{\varphi(i)}
\]
pour tout $i \in I_\sA$,
pour un isomorphisme d'ensembles partiellement ordonn\'es
\[
\varphi : (I_{\sA},\preceq_{\sA}) \simto (I_{\sB}, \preceq_{\sB}^\op).
\]
Dans le cas gradué, on impose de plus que $\rho$ commute au décalage interne.

\'Etant donn\'ee $\sA$, une telle dualit\'e existe toujours, comme affirm\'e dans l'\'enonc\'e suivant.

\begin{thm}[{Ringel~\cite{ringel}}]
\label{thm:ringel}
Soit $\sA$ une catégorie de plus haut poids et de type fini. Il existe une catégorie de plus haut poids et de type fini $\sB$ et une dualité de Ringel $\rho: \Db(\sA) \to \Db(\sB)$.
\end{thm}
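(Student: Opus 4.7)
Le plan est de suivre la construction classique de Ringel. On prend
\[
T := \bigoplus_{i \in I} T_i,
\]
qui est un objet de $\sA$ puisque $I$ est fini, et on pose $R := \End_\sA(T)^\op$, une $\bk$-alg\`ebre de dimension finie (gradu\'ee dans le cas gradu\'e, via $R_n := \Hom_\sA(T, T\la n\ra)$), ainsi que $\sB := R\lmod$ (resp.~$R\lgmod$).

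La premi\`ere \'etape consiste \`a obtenir une \'equivalence d\'eriv\'ee. Pour cela il faut v\'erifier que $T$ est un g\'en\'erateur compact classique : d'une part, l'existence simultan\'ee de filtrations standard et costandard pour chaque $T_i$, combin\'ee \`a~\eqref{eqn:hw-Ext-vanishing}, donne $\Ext^{>0}_\sA(T,T)=0$ ; d'autre part, $T$ engendre $\Db(\sA)$ comme cat\'egorie triangul\'ee, par r\'ecurrence sur l'ordre (pour $i$ minimal on a $T_i=\Delta_i=L_i$, et le cas g\'en\'eral se ram\`ene au pr\'ec\'edent via la filtration standard de $T_i$ et les suites exactes courtes $0 \to K_i \to \Delta_i \to L_i \to 0$, avec $K_i$ filtr\'e par des $\Delta_j$, $j \prec i$). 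La quasi-h\'er\'edit\'e impliquant que $\sA$ est de dimension globale finie, la forme cat\'egorique du th\'eor\`eme de Morita d\'eriv\'e fournit alors l'\'equivalence voulue
\[
\rho := R\Hom_\sA(T,-) : \Db(\sA) \simto \Db(\sB),
\]
qui v\'erifie $\rho(T_i) = \Hom_\sA(T, T_i) =: P^\sB_{\varphi(i)}$, o\`u $\varphi := \id_I$.

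L'\'etape la plus d\'elicate consistera \`a v\'erifier que $\sB$ est une cat\'egorie de plus haut poids pour l'ordre $\preceq^\op$, et \`a identifier les images par $\rho$ des autres objets caract\'eristiques. La strat\'egie est de d\'efinir
\[
\Delta^\sB_{\varphi(i)} := \Hom_\sA(T, \nabla_i) \cong \rho(\nabla_i)
\]
(le deuxi\`eme isomorphisme provenant de $\Ext^{>0}_\sA(T, \nabla_i)=0$, cons\'equence de la filtration standard de $T$), et de v\'erifier les axiomes~\eqref{it:hw-1}--\eqref{it:hw-3} \`a l'aide des filtrations standards des $T_i$, qui induisent des filtrations par les $\Delta^\sB_{\varphi(j)}$ des projectifs $P^\sB_{\varphi(i)} = \rho(T_i)$. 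L'axiome~\eqref{it:hw-4} se d\'eduit alors d'un argument analogue \`a celui du Lemme~\ref{lem:!-pure}. Enfin, l'identification $\rho(I_i) \cong T^\sB_{\varphi(i)}$ r\'esulte du fait que $\rho(I_i) = \Hom_\sA(T, I_i)$ est concentr\'e en degr\'e $0$ (car $I_i$ poss\`ede une filtration costandard) et admet \`a la fois une filtration par des $\Delta^\sB_{\varphi(j)}$ et par des $\nabla^\sB_{\varphi(j)}$ ; le Th\'eor\`eme~\ref{thm:tilt-classif} permet alors de conclure, apr\`es contr\^ole des facteurs de composition. De m\^eme, l'identification $\rho(\Delta_i) \cong \nabla^\sB_{\varphi(i)}$ s'obtient par une analyse homologique directe : on v\'erifie d'abord par r\'ecurrence sur l'ordre que $\rho(\Delta_i)$ appartient au c\oe ur de la t-structure standard de $\Db(\sB)$, puis on caract\'erise cet objet comme le costandard appropri\'e gr\^ace aux propri\'et\'es d'annulation de $\Hom$ transport\'ees via $\rho$ depuis~\eqref{eqn:hw-Ext-vanishing}.
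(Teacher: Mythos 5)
Votre approche est exactement celle que l'article esquisse et attribue à Ringel~\cite{ringel} : $T = \bigoplus_{i\in I} T_i$, $R = \End_\sA(T)^\op$, $\rho = R\Hom_\sA(T,-)$, avec $\Delta^\sB_{\varphi(i)} := \Hom_\sA(T,\nabla_i)$. Un détail à corriger toutefois : vous écrivez que les « filtrations standards des $T_i$ » induisent les filtrations par les $\Delta^\sB_{\varphi(j)}$ des projectifs $\rho(T_i)$ ; comme $\rho(\nabla_j) \cong \Delta^\sB_{\varphi(j)}$, il faut partir des filtrations \emph{costandards} des $T_i$, et c'est l'annulation $\Ext^{>0}_\sA(T,\nabla_j)=0$ (qui, elle, découle bien de la filtration standard de $T$) qui garantit que $\Hom_\sA(T,-)$ reste exact sur ces suites exactes courtes. À part ce lapsus, le schéma de preuve est correct et coïncide avec celui du texte.
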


La preuve dans~\cite{ringel} est \'ecrite dans le langage des anneaux quasi-héréditaires. Concrètement, si $\sA \cong A\lmod$ pour un anneau quasi-h\'er\'editaire $A$, on peut prendre $\sB = A^\sharp\lmod$, où
\begin{equation}\label{eqn:asharp-defn}
A^\sharp = \End\Big(\bigoplus_{i \in I} T_i\Big)^\op,
\end{equation}
puis poser $\rho := R\Hom(\bigoplus_i T_i, {-})$. Le contenu principal du Théorème~\ref{thm:ringel} est que l'anneau $A^\sharp$ est quasi-héréditaire.

Pour la version graduée du Théorème~\ref{thm:ringel}, on définit $A^\sharp$ et $\rho$ en termes de $\uEnd$ et $\uHom$. Notons que dans ce cas, $A^\sharp$ n'est pas n\'ecessairement positivement graduée.

\begin{ex}\label{ex:carquois3}
Nous avons vu dans l'Exemple~\ref{ex:carquois2} que l'anneau $R/(\alpha\beta)$ qui a été introduit au~\S\ref{ss:carquois}
est quasi-héréditaire. Les modules basculants indécomposables pour cet anneau sont:
\[
T_1 = L_1 = \begin{tikzcd}
\bk \ar[r, shift left=0.5ex, "0"] &
0 \ar[l, shift left=0.5ex, "0"]
\end{tikzcd},
\qquad
T_2 = P_1\la 1\ra = I_1\la -1\ra = 
\begin{tikzcd}[ampersand replacement=\&]
(\bk\la -1\ra \oplus \bk\la 1\ra)  \ar[r, shift left=0.5ex, "{\left[\begin{smallmatrix} 0 & 1\end{smallmatrix}\right]}"]\&
\bk \ar[l, shift left=0.5ex, "{\left[\begin{smallmatrix} 1 \\ 0\end{smallmatrix}\right]}"]
\end{tikzcd}.
\]
Il existe des morphismes non nuls $\alpha^\sharp: T_1 \to T_2\la 1\ra$ et $\beta^\sharp: T_2 \to T_1\la 1\ra$ qui vérifient $\beta^\sharp \circ \alpha^\sharp = 0$. On peut en déduire un isomorphisme $(R/(\alpha\beta))^\sharp \cong R/(\alpha\beta)$.  Autrement dit, l'anneau $R/(\alpha\beta)$ est auto-dual au sens de Ringel.
\end{ex}

\begin{ex}[Be{\u\i}linson--Bezrukavnikov--Mirkovi\'c]\label{ex:bbm}
Soient $G$, $B$, et $\Flag$ comme dans l'Exemple~\ref{ex:bgs2}. On a déjà vu que la catégorie $\Perv_{\mathrm{Br}}(\Flag,\bk)$ est de plus haut poids et de type fini, et on peut se demander quel est son dual au sens de Ringel. Un résultat remarquable de Be{\u\i}linson--Bezrukavnikov--Mirkovi\'c~\cite{bbm} affirme que cette cat\'egorie est auto-duale au sens de Ringel. (Dans le cas particulier de $G = \mathrm{SL}_2$, on peut déduire ce résultat de l'Exemple~\ref{ex:carquois3}.) En fait, les auteurs donnent une construction ``faisceautique'' d'une equivalence de catégories
\[
\rho: \Db\Perv_{\mathrm{Br}}(\Flag,\bk) \simto  \Db\Perv_{\mathrm{Br}}(\Flag,\bk)
\]
qui v\'erifie
\[
\rho(\mathscr{T}_w) \cong \mathscr{P}_{ww_0}, \qquad
\rho(\nabla_w) \cong \Delta_{ww_0}, \qquad
\rho(\mathscr{I}_w) \cong \mathscr{T}_{ww_0}
\]
pour tout $w \in W$.
Ici nous avons suivi la convention du~\S\ref{ss:bgs} et utilisé le paramétrage intrinsèque des deux côtés.
\end{ex}

%--------------------------------------------------------------------------
\subsection{Composition des dualités de Koszul et de Ringel}
\label{ss:koszul-ringel}
%--------------------------------------------------------------------------

Soit $\sA$ une catégorie graduée de plus haut poids, de type fini, 
et quasi-héréditairement de Koszul.  Il existe alors une autre catégorie quasi-héréditairement de Koszul $\sB$ ainsi qu'une dualité de Koszul classique
\[
\kappa: \Db(\sA) \to \Db(\sB).
\]
Appliquons la version graduée et inversée du Théorème~\ref{thm:ringel} à $\sB$: il existe une catégorie de plus haut poids $\sC$ et une équivalence de catégories
\[
\rho: \Db(\sC) \to \Db(\sB)
\]
qui envoie les objets basculants (resp.~injectifs) sur des objets projectifs (resp.~basculants).  Le foncteur composé
\[
\rho^{-1} \circ \kappa: \Db(\sA) \to \Db(\sC)
\]
a la propriété qu'il envoie
$L^\sA_i$ sur $T^\sC_{\varphi(i)}$
pour une bijection convenable $\varphi: I_\sA \simto I_\sC$.

En général, il n'est pas possible de prendre la composition des Théorèmes~\ref{thm:koszul2} et~\ref{thm:ringel} (ou des foncteurs $\kappa$ et $\rho^{-1}$) dans l'autre sens: même si $\sA$ est quasi-héréditaire\-ment de Koszul, son dual au sens de Ringel n'est pas n\'ecessairement de Koszul. Si celui-ci est effectivement quasi-héréditairement de Koszul, on dit (d'après~\cite{mazorchuk}) que $\sA$ est \emph{équilibrée}. Le résultat principal de~\cite{mazorchuk} affirme que si $\sA$ est équilibrée, les dualités de Koszul et de Ringel commutent. 

L'énoncé suivant est un résumé des remarques précédentes. 

\begin{thm}[Dualité de Koszul, 3ème version]
\label{thm:koszul3}
Soit $\sA$ une catégorie de plus haut poids, de type fini, et quasi-héréditairement de Koszul, dont les objets simples sont paramétrés par $(I_\sA, \preceq_\sA)$.

Alors il existe une catégorie graduée de plus haut poids $\sB$ (par rapport à un ensemble $(I_\sB, \preceq_\sB)$), un isomorphisme $\varphi: (I_\sA, \preceq_\sA) \simto (I_\sB, \preceq_\sB)$, et une équivalence de catégories triangul\'ees
\[
\kappa: \Db(\sA) \simto \Db(\sB)
\]
qui vérifie
\[
\kappa(M\la 1\ra) \cong \kappa(M)\la -1\ra[1]
\]
pour tout $M$ dans $\Db(\sA)$, et telle que
$\kappa(L^\sA_i) \cong T^\sB_{\varphi(i)}$ pour tout $i \in I_\sA$.

Si $\sA$ est équilibrée, alors $\kappa$ v\'erifie \'egalement
$\kappa(T^\sA_i) \cong L^\sB_{\varphi(i)}$ pour tout $i \in I_\sA$.
\end{thm}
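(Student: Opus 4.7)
The plan is to obtain $\kappa$ as the composition of a Koszul duality with the inverse of a Ringel duality, following the outline sketched in the paragraphs preceding the statement. First, since $\sA$ is graded, of finite type, and quasi-héréditairement de Koszul, the categorical version of Theorem~\ref{thm:koszul2} (summarized at the end of~\S\ref{ss:cat-qhered}) yields a graded catégorie de plus haut poids $\sB'$, de type fini et quasi-héréditairement de Koszul, together with a dualité de Koszul classique
\[
\kappa_0 : \Db(\sA) \simto \Db(\sB')
\]
commuting with $\la 1 \ra$ in the prescribed way and sending $L^\sA_i$ to $P^{\sB'}_{\varphi_0(i)}$ for some isomorphisme renversant l'ordre $\varphi_0 : (I_\sA, \preceq_\sA) \simto (I_{\sB'}, \preceq_{\sB'}^\op)$.

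Next, I would apply the graded version of Theorem~\ref{thm:ringel} to $\sB'$, which supplies a graded catégorie de plus haut poids $\sB$ de type fini together with a dualité de Ringel
\[
\rho : \Db(\sB) \simto \Db(\sB')
\]
commuting with $\la 1 \ra$ and satisfying $\rho(T^\sB_j) \cong P^{\sB'}_{\varphi_1(j)}$ and $\rho(I^\sB_j) \cong T^{\sB'}_{\varphi_1(j)}$ for an isomorphisme renversant l'ordre $\varphi_1 : (I_\sB, \preceq_\sB) \simto (I_{\sB'}, \preceq_{\sB'}^\op)$. Setting $\kappa := \rho^{-1} \circ \kappa_0$ and $\varphi := \varphi_1^{-1} \circ \varphi_0$, I obtain the desired equivalence: $\kappa$ commutes with $\la 1 \ra$ in the prescribed way (both factors do), $\varphi$ is \emph{order-preserving} as a composition of two order-reversing bijections, and
\[
\kappa(L^\sA_i) = \rho^{-1}(\kappa_0(L^\sA_i)) \cong \rho^{-1}(P^{\sB'}_{\varphi_0(i)}) \cong T^\sB_{\varphi(i)}
\]
by the defining properties of $\rho$.

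For the case where $\sA$ is équilibrée, the key input is Mazorchuk's commutation theorem~\cite{mazorchuk}. By hypothesis, the dual de Ringel $\sA^\sharp$ of $\sA$ is itself quasi-héréditairement de Koszul; hence one can alternatively construct $\kappa$ by first performing a dualité de Ringel on $\sA$ (which by Theorem~\ref{thm:ringel} sends $T^\sA_i$ to an objet injectif of $\sA^\sharp$) and then performing a dualité de Koszul classique on $\sA^\sharp$ (which by Theorem~\ref{thm:koszul2} sends objets injectifs to objets simples). Mazorchuk's result asserts that this alternative composition is naturally isomorphic to $\kappa$ above, and that the two resulting bijections on index sets coincide; from this we deduce $\kappa(T^\sA_i) \cong L^\sB_{\varphi(i)}$.

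The principal obstacle is this last step: Mazorchuk's commutation theorem is the nontrivial input, and one must verify carefully that the parameter bijections produced by the two different paths around the Koszul--Ringel square agree, so that the resulting $\varphi$ is unambiguous. The other steps are essentially formal manipulations of the already-established equivalences and their compatibility with $\la 1\ra$.
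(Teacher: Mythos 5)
Your proposal reproduces the paper's own argument in \S\ref{ss:koszul-ringel} (of which Theorem~\ref{thm:koszul3} is explicitly stated to be a summary): take $\kappa := \rho^{-1} \circ \kappa_0$ with $\kappa_0$ a classical Koszul duality out of $\sA$ and $\rho$ a graded, inverted Ringel duality, and invoke Mazorchuk's commutation theorem for the balanced case. One small notational slip: with the paper's conventions a Ringel duality sends tilting objects to projectives, so your parenthetical claim that the Ringel duality on $\sA$ sends $T^\sA_i$ to an injective should refer instead to the \emph{inverse} Ringel duality, but this does not affect the substance of the argument.
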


%--------------------------------------------------------------------------
\subsection{Travaux de Bezrukavnikov--Yun}
\label{ss:bez-yun}
%--------------------------------------------------------------------------

Soient $G$, $B$, et $\Flag$ comme dans les Ex\-emples~\ref{ex:bgs2} et~\ref{ex:bbm}. Les constructions de l'Exemple~\ref{ex:bbm} se g\'en\'eralisent au cadre des faisceaux pervers mixtes (voir~\cite{ar:kdsf}), de sorte que le dual de Ringel de la catégorie $\Perv_{\mathrm{Br}}^\mix(\Flag,\C)$ est bien quasi-héréditairement de Koszul. Cette catégorie est donc équilibrée au sens de~\cite{mazorchuk}, et le Théorème~\ref{thm:koszul3} nous permet d'affirmer qu'il existe une équivalence de catégories triangul\'ees
\[
\kappa: \Db\Perv^\mix_{\mathrm{Br}}(\Flag,\C) \simto \Db\Perv^\mix_{\mathrm{Br}}(\Flag,\C)
\]
qui vérifie
\[
\kappa(\mathscr{F}\la 1\ra) \cong \kappa(\mathscr{F})\la -1\ra[1]
\]
pour tout $\mathscr{F}$ dans $\Db\Perv^\mix_{\mathrm{Br}}(\Flag,\C)$, et telle que
\[
\kappa(\mathscr{L}^\mix_w) \cong \mathscr{T}^\mix_{w^{-1}}, \qquad
\kappa(\mathscr{T}^\mix_w) \cong \mathscr{L}^\mix_{w^{-1}}
\]
pour tout $w \in W$.
Il est à noter que ni les objets projectifs, ni les objets injectifs ne figurent dans cet énoncé. Prenant cette observation (sugg\'er\'ee plus t\^ot par Be{\u\i}linson--Ginzburg dans~\cite{bg}) comme point de départ, Bezrukavnikov--Yun ont établi une vaste généralisation de cet énoncé. 

Ils travaillent dans le contexte d'un groupe de Kac--Moody $\scG$. (Il existe plusieurs versions des ``groupes de Kac--Moody'' dans la litt\'erature. On renvoie \`a~\cite[\S 9]{rw} pour une discussion de la version que nous utilisons. En particulier, pour nous un groupe de Kac--Moody est d\'etermin\'e par une ``donn\'ee radicielle de Kac--Moody''; cette donn\'ee est sous-entendue quand on consid\`ere $\mathscr{G}$. La dualit\'e de Langlands pour les groupes de Kac--Moody est d\'efinie par l'involution \'evidente sur l'ensemble des donn\'ees radicielles de Kac--Moody.) Par d\'efinition ce groupe poss\`ede un sous-groupe de Borel canonique $\mathscr{B}$, et on notera
$\Flag = \Flag_{\mathscr{G}}:=\mathscr{G}/\mathscr{B}$ sa vari\'et\'e de drapeaux. Alors la d\'ecomposition de Bruhat fournit une stratification naturelle
\begin{equation}
\label{eqn:stratif-Bruhat}
\Flag_\mathscr{G} = \bigsqcup_{w \in \WKM} \Flag_{\mathscr{G},w}
\end{equation}
o\`u $\WKM$ est le groupe de Weyl du groupe de Kac--Moody correspondant (muni de sa structure naturelle de groupe de Coxeter), et o\`u $\Flag_{\mathscr{G},w} \cong \mathbb{C}^{\ell(w)}$ pour tout $w \in \WKM$.

Si $\scG$ est de dimension infinie, alors $\Flag$ l'est aussi, et il y a un nombre infini de strates. Cependant, l'adhérence de toute strate est la réunion d'un nombre fini de strates, ce qui permet de définir la catégorie des complexes constructibles $\Db_{\mathrm{Br}}(\Flag,\C)$ ainsi que la sous-catégorie des faisceaux pervers $\Perv_{\mathrm{Br}}(\Flag,\C)$. Cette derni\`ere cat\'egorie est encore une catégorie de plus haut poids : on peut donc parler des objets standards, costandards, ou basculants dans cette cat\'egorie.

Par contre, quand $\scG$ est de dimension infinie, la catégorie $\Perv_{\mathrm{Br}}(\Flag_\scG,\C)$ n'a en g\'en\'eral pas assez d'objets projectifs ni d'objets injectifs. Les Théorèmes~\ref{thm:koszul2} et~\ref{thm:ringel} n'ont donc pas de sens pour $\Perv_{\mathrm{Br}}(\Flag_\scG,\C)$, mais leur ``composée'', le Théorème~\ref{thm:koszul3}, admet une généralisation.

\begin{thm}[Bezrukavnikov--Yun]\label{thm:by}
Soit $\scG$ un groupe de Kac--Moody, et soit $\scG^\vee$ son dual de Langlands. 
Il existe une équivalence de catégories triangul\'ees
\[
\kappa: \Db\Perv^\mix_{\mathrm{Br}}(\Flag_{\scG},\C) \simto \Db\Perv^\mix_{\mathrm{Br}}(\Flag_{\scG^\vee},\C)
\]
qui vérifie
\[
\kappa(\mathscr{F}\la 1\ra) \cong \kappa(\mathscr{F})\la -1\ra[1]
\]
pour tout $\mathscr{F}$ dans $\Db\Perv^\mix_{\mathrm{Br}}(\Flag_{\scG},\C)$, et telle que
\[
\kappa(\mathscr{L}^\mix_{\scG,w}) \cong \mathscr{T}^\mix_{\scG^\vee,w^{-1}}, \qquad
\kappa(\mathscr{T}^\mix_{\scG,w}) \cong \mathscr{L}^\mix_{\scG^\vee,w^{-1}}
\]
pour tout $w \in \WKM$,
o\`u on identifie les groupes de Weyl de $\scG$ et $\scG^\vee$ de la fa{\c c}on canonique.
\end{thm}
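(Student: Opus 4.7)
Le plan est de suivre la stratégie de Bezrukavnikov--Yun~\cite{by}, inspirée de l'idée fondatrice de Be{\u\i}linson--Ginzburg~\cite{bg}, qui consiste à décrire les deux membres de l'équivalence à l'aide d'un même modèle combinatoire ne dépendant que du système de Coxeter $(\WKM, S)$. Puisque les données radicielles de $\scG$ et $\scG^\vee$ fournissent le même groupe de Weyl muni des mêmes générateurs simples, un tel modèle commun fournira directement l'équivalence cherchée, et les propriétés voulues sur les images des $\mathscr{L}^\mix_{\scG,w}$ et $\mathscr{T}^\mix_{\scG,w}$ découleront de la construction.

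Je commencerai par introduire, du côté $\scG^\vee$, la sous-catégorie additive $\mathrm{Tilt}^\mix_{\scG^\vee}$ des objets basculants mixtes. Des arguments standard sur les catégories de plus haut poids (comme ceux développés dans~\cite{ar:kdsf}) donneront une équivalence
\[
\Kb(\mathrm{Tilt}^\mix_{\scG^\vee}) \simto \Db\Perv^\mix_{\mathrm{Br}}(\Flag_{\scG^\vee},\C).
\]
Du côté $\scG$, j'utiliserai la sous-catégorie additive $\mathrm{Par}^\mix_{\scG}$ engendrée par sommes directes et décalages (interne et cohomologique) des objets semi-simples $\mathscr{L}^\mix_{\scG,w}$, qui engendre $\Db\Perv^\mix_{\mathrm{Br}}(\Flag_{\scG},\C)$ par extensions et jouera le rôle analogue aux basculants côté $\scG^\vee$.

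Le cœur technique consistera à démontrer la formalité de la DG-algèbre $R\uHom(\mathscr{E},\mathscr{E})$, où $\mathscr{E}$ est un générateur adéquat de $\mathrm{Par}^\mix_{\scG}$, c'est-à-dire à montrer qu'elle est quasi-isomorphe à sa cohomologie. Cet énoncé repose sur la pureté au sens de Deligne des faisceaux semi-simples mixtes et sur un argument d'action du Frobenius dans le formalisme $\ell$-adique ; c'est ce point qui utilise de manière cruciale le fait que le corps de coefficients est de caractéristique $0$, et je m'attends à ce que ce soit l'obstacle principal de la preuve.

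Une fois la formalité établie, les deux catégories additives $\mathrm{Tilt}^\mix_{\scG^\vee}$ et $\mathrm{Par}^\mix_{\scG}$ s'identifieront toutes deux à la catégorie des bimodules de Soergel associés à $(\WKM,S)$, via le théorème de Soergel (dans sa version pour les complexes semi-simples d'un côté, et dans sa version pour les basculants de l'autre). Ces deux identifications se recolleront, en passant aux catégories homotopiques bornées et en utilisant la formalité, en l'équivalence triangulée cherchée. Par construction, celle-ci enverra $\mathscr{L}^\mix_{\scG,w}$ sur $\mathscr{T}^\mix_{\scG^\vee,w^{-1}}$ et $\mathscr{T}^\mix_{\scG,w}$ sur $\mathscr{L}^\mix_{\scG^\vee,w^{-1}}$, l'inversion $w \mapsto w^{-1}$ reflétant l'opposition entre les deux structures monoïdales (action à gauche vs à droite) sur la catégorie de Soergel.
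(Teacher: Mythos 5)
The paper does not prove this theorem: it is cited from Bezrukavnikov--Yun~\cite{by}, and the Remarque immediately following the statement observes that the version above (phrased for $\Db\Perv^\mix_{\mathrm{Br}}$) can either be obtained by methods similar to those of~\cite{by} or deduced as a special case of Th\'eor\`eme~\ref{thm:mkd}. So there is no internal proof to compare with; what you are offering is a summary of the strategy of~\cite{by}, and as such it does capture its main lines: realize both triangulated categories as bounded homotopy categories of additive subcategories (tiltings versus semisimple mixed complexes), prove formality of the relevant endomorphism dg-algebra via Deligne's weights (the step that genuinely uses characteristic zero), and identify both additive categories with Soergel bimodules for $(\WKM,S)$. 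The $w\mapsto w^{-1}$ you attribute to a left/right asymmetry is exactly what the paragraph after the theorem says: pass to $\Flag_{\scG^\vee}^\op$ on the target side and the inversion disappears.

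Your plan does, however, gloss over the technical heart of~\cite{by}. As the Remarque warns, Bezrukavnikov--Yun do not work with $\Db\Perv^\mix_{\mathrm{Br}}(\Flag,\C)$ but with categories built from \emph{all} Deligne-mixed perverse sheaves, and --- more importantly --- they pass through equivariant and monodromic (``free-monodromic'') completions; it is through those completions that the Soergel-bimodule description on the tilting side is actually obtained, and controlling them absorbs most of the technical work in~\cite{by}. Your last step, identifying $\mathrm{Tilt}^\mix_{\scG^\vee}$ directly with Soergel bimodules, skips precisely that layer. It is also worth noting that your $\ell$-adic/Frobenius route is not the one the paper itself privileges: the paper's framework (parity complexes as in~\cite{modrap2}, and the general Th\'eor\`eme~\ref{thm:mkd}) is designed to avoid the weight-theoretic input altogether, recovering Th\'eor\`eme~\ref{thm:by} as the $\bk=\C$ instance of the modular Koszul duality rather than as a result requiring its own purity argument.
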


Si on remplace $\Flag_{\scG^\vee}$ par la variété de drapeaux ``opposée'' $\Flag_{\scG^\vee}^\op := \scB^\vee\backslash \scG^\vee$, on obtient un foncteur $\kappa'$ v\'erifiant
\[
\kappa'(\mathscr{L}^\mix_{\scG,w}) \cong \mathscr{T}^\mix_{\scG^\vee,\op,w}, \qquad
\kappa'(\mathscr{T}^\mix_{\scG,w}) \cong \mathscr{L}^\mix_{\scG^\vee,\op,w}
\]
pour tout $w \in \WKM$.

\begin{rmq}
\begin{enumerate}
\item
Dans~\cite{by} les auteurs ne consid\`erent pas des cat\'egories du type $\Db\Perv^\mix_{\mathrm{Br}}(\Flag_{\scG},\C)$, mais plut\^ot certaines cat\'egories dont la d\'efinition fait intervenir \emph{tous} les faisceaux pervers mixtes au sens de Deligne. La version \'enonc\'ee au Th\'eor\`eme~\ref{thm:by} peut \^etre obtenue par des m\'ethodes similaires (ou \^etre vue comme un cas particulier du Th\'eor\`eme~\ref{thm:mkd} ci-dessous).
\item
Sous l'hypoth\`ese o\`u $\mathscr{G}$ est sym\'etrisable, le Th\'eor\`eme~\ref{thm:by} reste vrai si on remplace $\mathscr{G}^\vee$ par $\mathscr{G}$. Ceci explique pourquoi la dualit\'e de Langlands n'apparaissait pas dans l'\'enonc\'e consid\'er\'e au d\'ebut de ce paragraphe.
\end{enumerate}
\end{rmq}

Pour les faisceaux pervers $\Perv_{\mathrm{Br}}(\Flag_\scG,\bk)$ à coefficients dans un corps g\'en\'eral $\bk$, le Théorème~\ref{thm:by} tel qu'\'enonc\'e ici n'est pas vrai: si $\bk$ est de caractéristique positive, les faisceaux pervers mixtes standards et simples ne vérifient pas~\eqref{eqn:qh-koszul-defn} en général.

Cependant, il est possible d'\'enoncer une version du Théorème~\ref{thm:by} qui est valable en caractéristique positive si on remplace les faisceaux pervers simples par de nouveau objets : les \emph{faisceaux à parité} introduits dans~\cite{jmw}. Dans le~\S\ref{sec:objets-parite} nous consid\'erons une variante de cette construction, \`a savoir le concept d'\emph{objets à parité} dans une cat\'egorie triangul\'ee munie d'une suite quasi-exceptionnelle.

%%%%%%%%%%%%%%%%%%%%%%%%%%%%%%%%%%%%%%%%%%%%%%%%%%%%%%%%%%%%%%%%%%%%%%%%%%%
%%%%%%%%%%%%%%%%%%%%%%%%%%%%%%%%%%%%%%%%%%%%%%%%%%%%%%%%%%%%%%%%%%%%%%%%%%%
\part{Dualité de Koszul formelle} 
\label{pt:Koszul-formelle}
%%%%%%%%%%%%%%%%%%%%%%%%%%%%%%%%%%%%%%%%%%%%%%%%%%%%%%%%%%%%%%%%%%%%%%%%%%%
%%%%%%%%%%%%%%%%%%%%%%%%%%%%%%%%%%%%%%%%%%%%%%%%%%%%%%%%%%%%%%%%%%%%%%%%%%%

%%%%%%%%%%%%%%%%%%%%%%%%%%%%%%
\section{Objets à parité}
\label{sec:objets-parite}
%%%%%%%%%%%%%%%%%%%%%%%%%%%%%%

Dans cette partie nous d\'eveloppons une th\'eorie d'\emph{objets \`a parit\'e} dans le cadre des cat\'egories triangul\'ees munies d'une collection quasi-exceptionnelle satisfaisant certaines conditions techniques. Cette th\'eorie s'inspire de la th\'eorie des \emph{complexes \`a parit\'e} consid\'er\'es par Juteau--Mautner--Williamson~\cite{jmw}, et les preuves des propri\'et\'es de nos objets suivent \'etroitement celles de ces auteurs. Cependant, le cadre consid\'er\'e est un petit peu diff\'erent (voir le~\S\ref{ss:comparaison-JMW} pour une comparaison pr\'ecise).

%--------------------------------------------------------------------------
\subsection{Collections quasi-exceptionnelles}
%--------------------------------------------------------------------------

Consid\'erons une cat\'egorie triangul\'ee $\mathscr{D}$. Si $X$ et $Y$ sont deux objets de $\mathscr{D}$, on notera
\[
\Hom^\bullet_{\mathscr{D}}(X,Y) := \bigoplus_{n \in \Z} \Hom_{\mathscr{D}}(X,Y[n]),
\]
et on consid\'erera ce groupe ab\'elien comme gradu\'e avec la graduation naturelle.

Soit $(I,\preceq)$ un ensemble partiellement ordonn\'e.
 Suivant Bezrukavnikov~\cite{bez}, une \emph{collection quasi-exception\-nelle} param\'etr\'ee par $(I,\preceq)$ est une famille $(\nabla_i : i \in I)$ d'objets de $\mathscr{D}$ param\'etr\'ee par $I$ et v\'erifiant les conditions suivantes :
\begin{enumerate}
\item
si $i,j \in I$ et
$\Hom_{\mathscr{D}}^\bullet(\nabla_i, \nabla_j) \neq 0$, alors $i \succeq j$;
\item
si $i \in I$ alors $\Hom_{\mathscr{D}}(\nabla_i, \nabla_i[n])=0$ pour $n<0$, et $\Hom_{\mathscr{D}}(\nabla_i, \nabla_i)$ est une alg\`ebre \`a division.
\end{enumerate}
\'Etant donn\'ee une telle collection exceptionnelle, une famille $(\Delta_i : i \in I)$ d'objets de $\mathscr{D}$ sera dite \emph{duale} de $(\nabla_i : i \in I)$ si elle v\'erifie
\[
\Hom^\bullet_{\mathscr{D}}(\Delta_i, \nabla_j)=0 \quad \text{pour tous $i \succ j$}
\]
et si de plus les images de $\Delta_i$ et $\nabla_i$ dans le quotient de Verdier $\mathscr{D} / \mathscr{D}_{\{\prec i\}}$ (o\`u $\mathscr{D}_{\{\prec i\}}$ est la sous-cat\'egorie triangul\'ee pleine de $\mathscr{D}$ engendr\'ee par les objets $\nabla_j$ avec $j \prec i$) sont isomorphes. D'apr\`es~\cite[Lemma~2]{bez}, une collection duale est unique (\`a isomorphismes pr\`es) si elle existe, elle forme une collection quasi-exceptionnelle param\'etr\'ee par $(I,\preceq^\op)$ o\`u $\preceq^\op$ est l'ordre oppos\'e \`a $\preceq$, et enfin on a :
\begin{enumerate}
\item
$\Hom^\bullet_{\mathscr{D}}(\Delta_i, \nabla_j)=0$ pour tous $i,j \in I$ avec $i \neq j$ ;
\item
un isomorphisme d'anneaux gradu\'es $\Hom^\bullet_{\mathscr{D}}(\nabla_i, \nabla_i) \cong \Hom^\bullet_{\mathscr{D}}(\Delta_i, \Delta_i)$ pour tout $i \in I$.
\end{enumerate}

\begin{ex}
\label{ex:collections-qe}
Les exemples principaux de collections quasi-exceptionnelles que nous rencontrerons seront les suivants.
\begin{enumerate}
\item
\label{it:exceptionnelle}
Supposons que $\mathscr{D}$ est lin\'eaire sur un corps $\bk$, et que pour tous $X,Y$ dans $\mathscr{D}$ le $\bk$-espace vectoriel $\bigoplus_{n \in \Z} \Hom_{\mathscr{D}}(X,Y[n])$ est de dimension finie. Suivant~\cite{bez:ctm}, une collection quasi-exceptionnelle $(\nabla_i : i \in I)$ est dite \emph{exceptionnelle} si elle v\'erifie $\Hom_{\mathscr{D}}(\nabla_i, \nabla_i[n])=0$ pour $n>0$, et $\Hom_{\mathscr{D}}(\nabla_i, \nabla_i)=\bk$. Sous ces conditions, si $I$ est une r\'eunion disjointe d'ensembles finis et de copies de $\Z_{\geq 0}$ (avec son ordre naturel), la collection duale existe toujours d'apr\`es~\cite[Proposition~1]{bez:ctm}.
\item
\label{it:qe-faisceaux}
Soit $\bk$ un corps, soit $X$ une vari\'et\'e alg\'ebrique complexe, munie d'une action d'un groupe alg\'ebrique complexe connexe $H$, et soit $\mathscr{S}$ une stratification alg\'ebrique finie de $X$ dont toute strate est connexe et stable par l'action de $H$. On notera $j_s : X_s \to X$ l'inclusion. Soit $\mathscr{D}=\Db_{H,\mathscr{S}}(X,\bk)$ la cat\'egorie d\'eriv\'ee $H$-\'equivariante et $\mathscr{S}$-constructible des faisceaux de $\bk$-espaces vectoriels sur $X$. On supposera que pour tout $s \in \mathscr{S}$ et pour tous syst\`emes locaux simples $H$-\'equivariants non isomorphes $\mathscr{L}$ et $\mathscr{L}'$ on a
\[
\Hom^\bullet_{\Db_H(X_s,\bk)}(\mathscr{L}, \mathscr{L}')=0.
\]
(Cette condition est bien s\^ur automatique si le syst\`eme local constant est le seul syst\`eme local $H$-\'equivariant simple sur $X_s$.)
Soit alors $I$ l'ensemble des paires $(s, \mathscr{L})$ o\`u $s \in \mathscr{S}$ et $\mathscr{L}$ est un(e classe d'isomorphisme de) syst\`eme local $H$-\'equivariant simple sur $X_s$, muni de l'ordre induit par l'inclusion des adh\'erences de strates. Il est facile de v\'erifier que la collection d\'efinie par
\[
\nabla_{s, \mathscr{L}} := (j_s)_* \mathscr{L}[\dim X_s]
\]
est quasi-exceptionnelle, que la collection duale existe, et qu'elle est donn\'ee par
\[
\Delta_{s, \mathscr{L}} := (j_s)_! \mathscr{L}[\dim X_s].
\]
\end{enumerate}
\end{ex}

Dans la suite, on supposera toujours que $(I,\preceq)$ est bien ordonn\'e (c'est-\`a-dire que l'ordre est total, et que tout sous-ensemble non vide de $I$ admet un \'el\'ement minimal), et on notera $\mathscr{D}_{\{\preceq i\}}$, resp.~$\mathscr{D}_{\{\prec i\}}$, la sous-cat\'egorie triangul\'ee de $\mathscr{D}$ engendr\'ee par les objets $\nabla_j$ avec $j \preceq i$, resp.~$j \prec i$. D'apr\`es~\cite[Lemma~2(f)]{bez}, notre hypoth\`ese implique que $\mathscr{D}_{\{\preceq i\}}$ est \'egalement engendr\'ee par les objets $\Delta_j$ avec $j \preceq i$, et de m\^eme pour $\mathscr{D}_{\{\prec i\}}$.

Sous cette hypoth\`ese, et si $\mathscr{D}$ est engendr\'ee comme cat\'egorie triangul\'ee par les objets $(\nabla_i : i \in I)$, il existe une (unique) t-structure sur $\mathscr{D}$ telle que $\mathscr{D}^{\geq 0}$ est la sous-cat\'egorie pleine engendr\'ee par extension par les objets de la forme $\nabla_i[n]$ avec $i \in I$ et $n \in \Z_{\leq 0}$, et $\mathscr{D}^{\leq 0}$ est la sous-cat\'egorie pleine engendr\'ee par extension par les objets de la forme $\Delta_i[n]$ avec $i \in I$ et $n \in \Z_{\geq 0}$; voir~\cite[Proposition~1]{bez}. Cette t-structure se restreint en une t-structure sur chacune des sous-cat\'egories $\mathscr{D}_{\{\preceq i\}}$ et $\mathscr{D}_{\{\prec i\}}$ (plus pr\'ecis\'ement, en la t-structure associ\'ee aux collections quasi-exceptionnelles $(\nabla_j : j \preceq i)$ et $(\nabla_j : j \prec i)$ respectivement). 

\begin{ex}
Dans le cas particulier de l'Exemple~\ref{ex:collections-qe}\eqref{it:qe-faisceaux}, la t-structure ainsi cons\-truite est la t-structure perverse.
\end{ex}

%--------------------------------------------------------------------------
\subsection{Objets \`a parit\'e : d\'efinition}
\label{ss:parite-def}
%--------------------------------------------------------------------------

Soit $\mathscr{D}$ une cat\'egorie triangul\'ee qui est de Krull--Schmidt (c'est-\`a-dire que tout objet s'\'ecrit comme somme directe d'objets ind\'ecomposables dont l'anneau des endomorphismes est local), et soit $(I,\preceq)$ un ensemble bien ordonn\'e.  

Si $(\nabla_i : i \in I)$ est une collection quasi-exceptionnelle qui admet une collection duale $(\Delta_i : i \in I)$, on dira que $(\nabla_i : i \in I)$ est \emph{\`a parit\'e} si elle v\'erifie
\begin{equation}
\label{eqn:Hom-nabla-pair}
\Hom_{\mathscr{D}}(\nabla_i, \nabla_i[n]) = 0 \quad \text{pour tout $i \in I$ et tout $n$ impair.}
\end{equation}
Dans la suite, on fixe une telle collection, et on suppose de plus que la famille
$(\nabla_i : i \in I)$ (ou, de fa{\c c}on \'equivalente, la famille $(\Delta_i : i \in I)$) engendre $\mathscr{D}$ comme cat\'egorie triangul\'ee.

La d\'efinition suivante est inspir\'ee par~\cite{jmw}. (Voir le~\S\ref{ss:comparaison-JMW} ci-dessous pour une comparaison d\'etaill\'ee.)

\begin{defn}
\label{def:parite}
Consid\'erons une fonction $\dag : I \to \Z/2\Z$.
On dira qu'un object $X$ de $\mathscr{D}$ est:
\begin{enumerate}
\item
$(\dag,*)$-pair si pour tout $i \in I$ le $\Hom^\bullet_{\mathscr{D}}(\nabla_i, \nabla_i)$-module \`a gauche $\Z$-gradu\'e
\[
\Hom^\bullet_{\mathscr{D}}(X, \nabla_i)
\]
est libre de rang fini et concentr\'e en degr\'es dont la classe modulo $2$ est $\dag(i)$;
\item
$(\dag,!)$-pair si pour tout $i \in I$ le $\Hom^\bullet_{\mathscr{D}}(\Delta_i, \Delta_i)$-module \`a droite $\Z$-gradu\'e
\[
\Hom_{\mathscr{D}}^\bullet(\Delta_i,X)
\]
est libre de rang fini et concentr\'e en degr\'es dont la classe modulo $2$ est $\dag(i)$;
\item
$\dag$-pair si $X$ est $(\dag,*)$-pair et $(\dag,!)$-pair;
\item
$(\dag,*)$-impair si $X[1]$ est $(\dag,*)$-pair;
\item
$(\dag,!)$-impair si $X[1]$ est $(\dag,!)$-pair;
\item
$\dag$-impair si $X[1]$ est $\dag$-pair;
\item
\`a $\dag$-parit\'e si $X$ est isomorphe \`a une somme $X' \oplus X''$ avec $X'$ $\dag$-pair et $X''$ $\dag$-impair.
\end{enumerate}
\end{defn}

Dans la suite, la fonction $\dag$ sera souvent sous-entendue, et omise des notations.

\begin{rmq}\phantomsection
\label{rmq:def-parite}
\begin{enumerate}
\item
Dans la situation de l'Exemple~\ref{ex:collections-qe}\eqref{it:exceptionnelle},
la condition d'\^etre libre de rang fini est bien s\^ur automatique.
\item
La sous-cat\'egorie pleine de $\mathscr{D}$ dont les objets sont les objets \`a $\dag$-parit\'e est stable par le d\'ecalage cohomologique $[1]$.
\item
\label{it:parite-facteur-direct}
Par le lemme de Nakayama gradu\'e, tout facteur direct d'un $\Hom^\bullet_{\mathscr{D}}(\nabla_i, \nabla_i)$-module gradu\'e libre de rang fini est \'egalement libre de rang fini; on en d\'eduit qu'un facteur direct d'un objet $(\dag,*)$-pair est $(\dag,*)$-pair, et qu'un facteur direct d'un objet $(\dag,!)$-pair est $(\dag,!)$-pair.
\item
\label{it:decalage}
Il est clair que ces d\'efinitions d\'ependent du choix de la collection quasi-exceptionnelle. Par exemple, si $(\nabla_i : i \in I)$ est une collection quasi-exception\-nelle et si $\ddag : I \to \Z$ est une fonction, alors $(\nabla_i[\ddag(i)] : i \in I)$ est \'egalement une collection quasi-exceptionnelle (pour le m\^eme ordre sur $I$), pour laquelle les notions de parit\'e seront diff\'erentes en g\'en\'eral. Par exemple, un objet $X$ sera $(\dag,*)$-pair par rapport \`a $(\nabla_i : i \in I)$ ssi il est $(\dag + (\ddag \bmod 2),*)$-pair pour $(\nabla_i[\ddag(i)] : i \in I)$. Cette id\'ee sugg\`ere de ne pas mentionner de fonction $\dag$ dans la d\'efinition, puisqu'elle peut \^etre modifi\'ee en ``renormalisant'' la collection quasi-exceptionnelle. Cependant, les collections quasi-exceptionnelles qu'on rencontrera ci-dessous auront une ``normalisation'' naturelle, et changer celle-ci risque d'amener \`a des confusions.
\end{enumerate}
\end{rmq}

%--------------------------------------------------------------------------
\subsection{Le cas des \'el\'ements minimaux}
%--------------------------------------------------------------------------

Avant d'\'etudier les objets \`a parit\'e en g\'en\'eral, nous devons consid\'erer le cas de la cat\'egorie $\mathscr{D}_{\{\preceq i\}}$ o\`u $i$ est l'\'el\'ement minimal de $I$.

\begin{lem}
\label{lem:parite-minimal}
Si $i$ est minimal dans $I$ (de sorte que $\Delta_i=\nabla_i$), alors pour un objet $X$ de $\mathscr{D}_{\{\preceq i\}}$ les conditions suivantes sont \'equivalentes :
\begin{enumerate}
\item
\label{it:parite-minimal-1}
$X$ est $*$-pair;
\item
\label{it:parite-minimal-2}
$X$ est $!$-pair;
\item
\label{it:parite-minimal-3}
$X$ est isomorphe \`a une somme directe d'objets de la forme $\nabla_i[n] = \Delta_i[n]$ o\`u $n \bmod 2 = \dag(i)$.
\end{enumerate}
\end{lem}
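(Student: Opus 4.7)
Le plan est d'établir d'abord l'implication facile \eqref{it:parite-minimal-3}$\Rightarrow$\eqref{it:parite-minimal-1}, \eqref{it:parite-minimal-2} par calcul direct, puis de démontrer la réciproque en exploitant le fait que $\mathscr{D}_{\{\preceq i\}}$ est engendrée, comme catégorie triangulée, par $\nabla_i=\Delta_i$. Posons $E := \Hom^\bullet_{\mathscr{D}}(\nabla_i, \nabla_i)$ ; par les hypothèses sur la collection quasi-exceptionnelle et par la condition~\eqref{eqn:Hom-nabla-pair}, $E$ est un anneau gradué concentré en degrés pairs positifs ou nuls, avec $E_0$ une algèbre à division.

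Pour \eqref{it:parite-minimal-3}$\Rightarrow$\eqref{it:parite-minimal-1}, si $X = \bigoplus_{k=1}^r \nabla_i[n_k]$ avec $n_k \equiv \dag(i) \pmod 2$, on vérifie que $\Hom^\bullet_{\mathscr{D}}(X, \nabla_i) \cong \bigoplus_{k=1}^r E[-n_k]$ est un $E$-module à gauche libre de rang fini, concentré en degrés de parité $\dag(i)$ puisque $E$ est en degrés pairs. La même formule donne \eqref{it:parite-minimal-3}$\Rightarrow$\eqref{it:parite-minimal-2} en considérant $\Hom^\bullet_{\mathscr{D}}(\Delta_i, X)$ comme $E$-module à droite.

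Pour \eqref{it:parite-minimal-1}$\Rightarrow$\eqref{it:parite-minimal-3}, on choisit une base homogène $(f_k : X \to \nabla_i[n_k])_{k=1}^r$ du $E$-module $\Hom^\bullet_{\mathscr{D}}(X, \nabla_i)$ ; par hypothèse les $n_k$ sont de parité $\dag(i)$ et $r$ est fini. On pose $\phi := (f_k)_k : X \to \bigoplus_{k=1}^r \nabla_i[n_k]$, puis on remarque que l'application induite $\phi^*$ sur $\Hom^\bullet_{\mathscr{D}}(-, \nabla_i)$ envoie la base canonique du $E$-module libre $\bigoplus_k E[-n_k]$ (donnée par les projections sur chaque facteur) sur la base $(f_k)$, et est donc un isomorphisme. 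En complétant $\phi$ en un triangle distingué $X \xrightarrow{\phi} \bigoplus_k \nabla_i[n_k] \to Y \xrightarrow{+1}$ dans $\mathscr{D}_{\{\preceq i\}}$, la suite exacte longue fournit $\Hom^\bullet_{\mathscr{D}}(Y, \nabla_i) = 0$.

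L'étape décisive — et la principale difficulté — est alors de déduire $Y = 0$. Pour cela, on observe que $\mathscr{C} := \{Z \in \mathscr{D}_{\{\preceq i\}} : \Hom^\bullet_{\mathscr{D}}(Y, Z) = 0\}$ est une sous-catégorie triangulée épaisse de $\mathscr{D}_{\{\preceq i\}}$ contenant $\nabla_i$ ; puisque $\nabla_i$ engendre $\mathscr{D}_{\{\preceq i\}}$ comme catégorie triangulée, on a $\mathscr{C} = \mathscr{D}_{\{\preceq i\}}$. Prenant $Z = Y$ on obtient $\Hom_{\mathscr{D}}(Y, Y) = 0$, d'où $Y = 0$, et $\phi$ est donc un isomorphisme, ce qui établit~\eqref{it:parite-minimal-3}. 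L'implication \eqref{it:parite-minimal-2}$\Rightarrow$\eqref{it:parite-minimal-3} se traite dualement, en choisissant une base homogène du $E$-module à droite $\Hom^\bullet_{\mathscr{D}}(\nabla_i, X)$, en formant le morphisme $\bigoplus_k \nabla_i[n_k] \to X$ correspondant, et en appliquant l'argument analogue avec $\Hom^\bullet_{\mathscr{D}}(\nabla_i, -)$.
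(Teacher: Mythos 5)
Votre preuve est correcte, mais elle emprunte un chemin sensiblement diff\'erent de celui de l'article. L'article raisonne par r\'ecurrence sur le rang du $E$-module libre $\Hom^\bullet_{\mathscr{D}}(X,\nabla_i)$ : il introduit la t-structure sur $\mathscr{D}_{\{\preceq i\}}$ associ\'ee \`a la collection $(\nabla_i)$, observe que $\nabla_i$ est l'unique objet simple de son coeur et que~\eqref{eqn:Hom-nabla-pair} force ce coeur \`a \^etre semi-simple, tronque $X$ au degr\'e cohomologique extr\^eme, v\'erifie que la troncation reste $*$-paire, puis conclut par r\'ecurrence en montrant que le triangle de troncation est scind\'e. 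Vous, au contraire, construisez en une seule \'etape un morphisme explicite $\phi\colon X \to \bigoplus_k \nabla_i[n_k]$ \`a partir d'une base homog\`ene $(f_k)$ du $E$-module $\Hom^\bullet_{\mathscr{D}}(X,\nabla_i)$, montrez que $\phi^*$ est un isomorphisme (il envoie base canonique sur base et est $E$-lin\'eaire), et d\'eduisez que le c\^one $Y$ de $\phi$ v\'erifie $\Hom^\bullet(Y,\nabla_i)=0$, donc $Y=0$ par g\'en\'eration. Le point commun d\'ecisif est le m\^eme dans les deux cas, \`a savoir que $\nabla_i$ engendre $\mathscr{D}_{\{\preceq i\}}$ comme cat\'egorie triangul\'ee ; mais votre d\'emonstration \'evite le recours \`a la t-structure et \`a la r\'ecurrence et construit l'isomorphisme recherch\'e d'un seul coup, ce qui la rend plus courte et sans doute plus transparente, tandis que l'argument de l'article d\'egage au passage la semi-simplicit\'e du coeur, fait structurel qui peut \^etre utile ailleurs. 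Notez simplement, pour \^etre complet, que l'existence d'une base homog\`ene dont tous les degr\'es $n_k$ ont la parit\'e $\dag(i)$ repose sur le fait que $E=\Hom^\bullet_{\mathscr{D}}(\nabla_i,\nabla_i)$ est concentr\'e en degr\'es pairs positifs ou nuls avec $E^0$ une alg\`ebre \`a division : c'est ce qui autorise votre choix des $(f_k)$.
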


\begin{proof}
On va montrer que~\eqref{it:parite-minimal-1} implique~\eqref{it:parite-minimal-3}; l'implication oppos\'ee est claire par d\'efinition (et d'apr\`es notre hypoth\`ese~\eqref{eqn:Hom-nabla-pair}), et l'\'equivalence entre~\eqref{it:parite-minimal-2} et~\eqref{it:parite-minimal-3} s'obtient par des arguments duaux. 

On raisonne par r\'ecurrence sur le rang de $\Hom^\bullet_{\mathscr{D}}(X, \nabla_i)$. Si ce rang est nul alors $X=0$ (car $\nabla_i$ engendre $\mathscr{D}_{\{\preceq i\}}$) et il n'y a rien \`a d\'emontrer. Sinon, soit $n$ l'entier minimal tel que $\Hom_{\mathscr{D}}(X,\nabla_i[n])\neq 0$. (En particulier, $n \bmod 2 =\dag(i)$.) On consid\`ere la t-structure sur $\mathscr{D}_{\{\preceq i\}}$ associ\'ee \`a la collection quasi-exceptionnelle $(\nabla_i)$. Alors $\nabla_i$ est l'unique objet simple dans le coeur de cette t-structure (voir par exemple~\cite[Proposition~2]{bez}) ; notre hypoth\`ese~\eqref{eqn:Hom-nabla-pair}
 implique donc que tout objet dans le coeur de cette t-structure est une somme directe de copies de $\nabla_i$. On en d\'eduit que $n$ est l'entier minimal 
tel que $\mathcal{H}^{-n}(X) \neq 0$ (pour cette t-structure), et que $\mathcal{H}^{-n}(X)$ est une somme directe de copies de $\nabla_i$.

Consid\'erons le triangle distingu\'e de troncation
\[
\tau_{<-n}(X) \to X \to \mathcal{H}^{-n}(X)[n] \xrightarrow{[1]}.
\]
Alors le morphisme 
\[
\Hom^\bullet_{\mathscr{D}}(\mathcal{H}^{-n}(X)[n], \nabla_i) \to \Hom^\bullet_{\mathscr{D}}(X,\nabla_i)
\]
induit par la deuxi\`eme fl\`eche de ce triangle
est l'inclusion du facteur direct engendr\'e par $\Hom_{\mathscr{D}}(X,\nabla_i[n])$, de sorte que $\tau_{<-n}(X)$ est $*$-pair \'egalement. Par r\'ecurrence c'est une somme directe d'objets de la forme $\nabla_i[m]$ avec $m \bmod 2 = \dag(i)$.
En utilisant encore~\eqref{eqn:Hom-nabla-pair} on voit que le triangle consid\'er\'e ci-dessus est scind\'e, et le r\'esultat suit.
\end{proof}

%--------------------------------------------------------
\subsection{Objets \`a parit\'e : classification}
\label{ss:parite-classification}
%--------------------------------------------------------

Les hypothèses du~\S\ref{ss:parite-def} restent en vigueur. Puisque $\mathscr{D}$ est suppos\'ee de Krull--Schmidt, et au vu de la Remarque~\ref{rmq:def-parite}\eqref{it:parite-facteur-direct}, tout objet \`a parit\'e dans $\mathscr{D}$ est une somme directe d'objets \`a parit\'e ind\'ecomposables.
Le r\'esultat suivant (une variante de~\cite[Theorem~2.12]{jmw}, dont la preuve est essentiellement identique) classifie ces derniers objets \`a isomorphisme pr\`es.

\begin{thm}[Juteau--Mautner--Williamson]
\label{thm:JMW}
Pour tout $i \in I$, il existe au plus un objet \`a parit\'e ind\'ecomposable $E_i$ qui appartient \`a $\mathscr{D}_{\{\preceq i\}}$ et dont l'image dans $\mathscr{D}_{\{\preceq i\}} / \mathscr{D}_{\{\prec i\}}$ est isomorphe \`a celle de $\nabla_i$. De plus, tout objet \`a parit\'e ind\'ecomposable est isomorphe \`a $E_i[n]$ pour un unique couple $(i,n) \in I \times \Z$ tel que $E_i$ existe.
\end{thm}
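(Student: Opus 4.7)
Je suivrai l'approche de Juteau--Mautner--Williamson~\cite{jmw}, dont la pierre angulaire est un lemme d'annulation de ``Hom impairs'' entre objets à parité. Plus précisément, je chercherai à établir d'abord le \emph{Lemme clé} : si $X$ est $(\dag,*)$-pair et $Y$ est $(\dag,!)$-impair, alors $\Hom_{\mathscr{D}}(X,Y) = 0$. Sa preuve procède par récurrence (transfinie) sur le plus petit $j \in I$ tel que $Y \in \mathscr{D}_{\{\preceq j\}}$ : le cas initial (lorsque $j$ est minimal) découle directement du Lemme~\ref{lem:parite-minimal} et de l'hypothèse~\eqref{eqn:Hom-nabla-pair}. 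Pour le pas inductif, on insère $Y$ dans un triangle distingué
\[
Y' \to Y \to Y_j \xrightarrow{[1]}
\]
issu du formalisme de recollement, où $Y' \in \mathscr{D}_{\{\prec j\}}$ hérite de la condition d'imparité et où $Y_j$ est construit à partir de décalés $\Delta_j[n]$ (ou $\nabla_j[n]$) dont les décalages sont contraints par la $(\dag,!)$-parité de $Y$ ; la $(\dag,*)$-parité de $X$ fournit l'annulation de $\Hom(X,-)$ appliqué aux pièces constituant $Y_j$, et l'hypothèse de récurrence s'applique à $Y'$.

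Pour l'unicité de $E_i$, je partirai de deux objets à parité indécomposables $E, E' \in \mathscr{D}_{\{\preceq i\}}$ dont les images dans $\mathscr{D}_{\{\preceq i\}}/\mathscr{D}_{\{\prec i\}}$ sont isomorphes à celle de $\nabla_i$. En utilisant à nouveau le formalisme de recollement, je construirai un morphisme $\phi : E \to E'$ dont le cône $C$ appartient à $\mathscr{D}_{\{\prec i\}}$ ; ce cône est automatiquement à parité, comme extension d'objets à parité. Le Lemme clé, appliqué avec les ajustements appropriés de la fonction de parité, impose alors $\Hom_{\mathscr{D}}(E, C) = 0$, de sorte que le triangle distingué $E \xrightarrow{\phi} E' \to C \xrightarrow{[1]}$ se scinde et $E' \cong E \oplus C$. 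L'indécomposabilité de $E'$ combinée à la non-nullité de son image dans le quotient force $C = 0$, d'où $\phi$ est un isomorphisme.

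Pour la classification, je considérerai un objet à parité indécomposable $X \in \mathscr{D}$. Puisque $\mathscr{D}$ est engendrée par les $(\nabla_j : j \in I)$ et que $I$ est bien ordonné, le plus petit idéal $J$ tel que $X \in \mathscr{D}_J$ admet un élément maximal $i$, et l'image $\bar{X}$ de $X$ dans $\mathscr{D}_{\{\preceq i\}}/\mathscr{D}_{\{\prec i\}}$ est non nulle et reste à parité dans la catégorie quotient. Comme cette dernière joue le rôle de $\mathscr{D}_{\{\preceq j\}}$ pour $j$ minimal, le Lemme~\ref{lem:parite-minimal} identifie $\bar{X}$ à une somme directe de copies de $\nabla_i[n]$ avec $n \equiv \dag(i) \pmod 2$ ; l'indécomposabilité de $X$ limite cette somme à un seul facteur $\nabla_i[n]$, et alors $X[-n]$ vérifie les conditions caractérisant $E_i$ (unique d'après la partie précédente), de sorte que $X \cong E_i[n]$. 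L'unicité du couple $(i,n)$ en résulte immédiatement.

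L'obstacle principal est bien le Lemme clé : dans le cadre quasi-exceptionnel (plus général que celui des complexes à parité de~\cite{jmw}), il faut suivre avec soin les fonctions de parité $\dag$, s'assurer que les cônes et les fibres dans les triangles de recollement héritent correctement des conditions $(\dag,*)$ et $(\dag,!)$, et exploiter précisément la dualité entre $\Delta_\bullet$ et $\nabla_\bullet$ pour faire communiquer ces deux variantes de la parité.
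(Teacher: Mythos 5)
Votre lemme clé est bien le même que le Lemme~\ref{lem:annulation-Hom-parite} du texte, à une symétrie près : vous faites la récurrence transfinie sur le plus petit $j$ tel que $Y \in \mathscr{D}_{\{\preceq j\}}$ au lieu du plus petit $i$ tel que $X \in \mathscr{D}_{\{\preceq i\}}$. Grâce au triangle de recollement dual $\iota\iota^r Y \to Y \to \Pi^r\Pi Y \to [1]$, cette variante est parfaitement valable. La partie ``classification'' de votre proposition coïncide elle aussi, pour l'essentiel, avec la preuve du texte.

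En revanche, votre argument d'unicité contient une lacune sérieuse. Vous écrivez que le cône $C$ du morphisme $\phi : E \to E'$ est ``automatiquement à parité, comme extension d'objets à parité''. Cette affirmation est fausse : la classe des objets à parité n'est \emph{pas} stable par cônes. Concrètement, dans la suite exacte longue associée au triangle $E \xrightarrow{\phi} E' \to C \to E[1]$, si $\dag(j) = \overline{0}$ alors $\Hom(\Delta_j, C[n])$ s'identifie au conoyau de $\Hom(\Delta_j, E[n]) \to \Hom(\Delta_j, E'[n])$ pour $n$ pair et au noyau de $\Hom(\Delta_j, E[n+1]) \to \Hom(\Delta_j, E'[n+1])$ pour $n$ impair ; dès que $\phi$ n'est ni ``injectif'' ni ``surjectif'' sur ces espaces de Hom, $C$ a des composantes des deux parités et n'est donc pas à parité. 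Le cône d'un morphisme entre objets à parité n'hérite de la parité que dans des situations très particulières (par exemple, lorsque le troisième sommet du triangle est lui-même impair et les deux autres pairs) ; ce n'est pas le cas ici. Par ailleurs, même en supposant $C$ à parité, le groupe $\Hom_{\mathscr{D}}(E,C)$ que vous annulez n'est pas celui qui contrôle le scindage du triangle $E \to E' \to C \to E[1]$ : c'est $\Hom_{\mathscr{D}}(C, E[1])$ qu'il faudrait annuler.

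La preuve du texte contourne précisément cet écueil. Le Lemme~\ref{lem:annulation-Hom-parite}, combiné au triangle de recollement $\Pi^l\Pi(E_i) \to E_i \to \iota\iota^l(E_i) \to [1]$, montre que l'application $\Hom(E_i, F_i) \to \Hom(\nabla_i, \nabla_i)$ est \emph{surjective}. On choisit alors $f : E_i \to F_i$ relevant $\id_{\nabla_i}$, et \emph{symétriquement} $g : F_i \to E_i$ relevant aussi $\id_{\nabla_i}$. L'élément $g \circ f$ de l'anneau local $\End(E_i)$ a une image inversible dans $\End(\nabla_i)$, donc n'appartient pas à l'idéal maximal : c'est une unité. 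De même $f \circ g$ est une unité, donc $f$ et $g$ sont des isomorphismes. Cet argument n'exige à aucun moment que le cône d'un morphisme soit à parité ; c'est la structure d'anneau local de $\End(E_i)$ (fournie par l'hypothèse de Krull--Schmidt) qui fait tout le travail. Je vous encourage à réécrire l'unicité suivant ce schéma.
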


Lorsqu'il existe, l'objet $E_i$ sera appel\'e l'objet \`a parit\'e ind\'ecomposable \emph{normalis\'e} associ\'e \`a $i$.

\begin{rmq}\phantomsection
\label{rmq:classification}
\begin{enumerate}
\item
Notons que le Th\'eor\`eme~\ref{thm:JMW} n'affirme rien quant \`a l'existence des objets $E_i$. En fait, dans~\cite{jmw} les auteurs donnent des exemples pour lesquels de tels objets n'existent pas.
\item
Consid\'erons le cas particulier o\`u $\mathscr{D}$ est la cat\'egorie d\'eriv\'ee d'une cat\'egorie de plus haut poids $\mathscr{A}$ avec objets standards $(\Delta_i : i \in I)$ et objets costandards $(\nabla_i : i \in I)$ param\'etr\'es par un ensemble bien ordonn\'e $I$. Alors $(\nabla_i : i \in I)$ est une collection exceptionnelle dans $\mathscr{D}$, et $(\Delta_i : i \in I)$ est la collection duale. Les objets basculants de $\mathscr{A}$ sont alors pairs au sens de la D\'efinition~\ref{def:parite} pour la fonction $\dag$ constante \'egale \`a $0$; d'apr\`es le Th\'eor\`eme~\ref{thm:JMW} on en d\'eduit que dans ce cas les objets \`a parit\'e sont les sommes directes de d\'ecal\'es cohomologiques d'objets basculants de $\mathscr{A}$.
\item
\label{it:parite-Ei}
Par d\'efinition,
un objet \`a parit\'e ind\'ecomposable est soit pair, soit impair. Avec nos conventions, l'objet $E_i$ est pair si $\dag(i)=\overline{0}$, et impair si $\dag(i)=\overline{1}$.
\end{enumerate}
\end{rmq}

L'\'etape-cl\'e de la d\'emonstration du th\'eor\`eme sera le lemme suivant.

\begin{lem}
\label{lem:annulation-Hom-parite}
Si $X$ est $*$-pair et $Y$ est $!$-pair, alors on a $\Hom_{\mathscr{D}}(X,Y[n])=0$ pour tout $n$ impair.
\end{lem}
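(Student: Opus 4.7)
The plan is to prove the statement by induction on the ``support'' of $X$, that is, the smallest subset $J \subseteq I$ such that $X$ belongs to the triangulated subcategory generated by $\{\nabla_j : j \in J\}$. Since $X$ is built from finitely many of the $\nabla_j$'s, this support is a finite subset of the well-ordered set $I$. The base case $X = 0$ is trivial; for the inductive step let $i_0$ be the maximal element of $\mathrm{supp}(X)$, so $X \in \mathscr{D}_{\{\preceq i_0\}}$.

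First I would extract the ``top piece'' of $X$. Using the $(*,\dag)$-parity of $X$, the graded $\Hom^\bullet_{\mathscr{D}}(\nabla_{i_0}, \nabla_{i_0})$-module $\Hom^\bullet_{\mathscr{D}}(X, \nabla_{i_0})$ is free of finite rank with homogeneous basis in degrees $n_\alpha \equiv \dag(i_0) \pmod{2}$. Realizing this basis by morphisms $\phi_\alpha: X \to \nabla_{i_0}[n_\alpha]$ and assembling them into $\phi: X \to E$ with $E := \bigoplus_\alpha \nabla_{i_0}[n_\alpha]$, one completes to a distinguished triangle $X' \to X \xrightarrow{\phi} E \xrightarrow{[1]}$. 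Applying Lemma~\ref{lem:parite-minimal} in the Verdier quotient $\mathscr{D}_{\{\preceq i_0\}}/\mathscr{D}_{\{\prec i_0\}}$ (where $i_0$ becomes the minimal element), the image of $\phi$ there is an isomorphism, whence $X' \in \mathscr{D}_{\{\prec i_0\}}$ and $\mathrm{supp}(X') \subsetneq \mathrm{supp}(X)$. One then verifies that $X'$ remains $(*,\dag)$-pair by a case analysis on $j$ in the long exact sequence obtained by applying $\Hom^\bullet_{\mathscr{D}}(-, \nabla_j)$: for $j \succeq i_0$ the verification is immediate using the quasi-exceptional vanishing of $\Hom^\bullet_{\mathscr{D}}(\nabla_{i_0}, \nabla_j)$, while for $j \prec i_0$ one uses the freeness of all modules in play over $\Hom^\bullet_{\mathscr{D}}(\nabla_j, \nabla_j)$ and \eqref{eqn:Hom-nabla-pair} to split the sequence.

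Next I would apply $\Hom_{\mathscr{D}}(-, Y[n])$ to the triangle above. The resulting long exact sequence reduces the claim to proving two things: (i) $\Hom_{\mathscr{D}}(X', Y[n]) = 0$ for $n$ odd, which is provided by the induction hypothesis; and (ii) $\Hom_{\mathscr{D}}(E, Y[k]) = 0$ for $k$ odd, equivalently $\Hom_{\mathscr{D}}(\nabla_{i_0}, Y[k]) = 0$ whenever $k \not\equiv \dag(i_0) \pmod{2}$. For (ii) I would use a choice of morphism $\Delta_{i_0} \to \nabla_{i_0}$ realizing the identification of their images in the quotient $\mathscr{D}_{\{\preceq i_0\}}/\mathscr{D}_{\{\prec i_0\}}$ (which exists by the dual collection definition and produces a distinguished triangle $C \to \Delta_{i_0} \to \nabla_{i_0} \xrightarrow{[1]}$ with $C \in \mathscr{D}_{\{\prec i_0\}}$). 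Applying $\Hom_{\mathscr{D}}(-, Y[k])$ to this triangle, the $(!,\dag)$-parity of $Y$ kills the $\Delta_{i_0}$-term in the relevant degrees, so the claim reduces to vanishing of $\Hom_{\mathscr{D}}(C, Y[k])$ in those same degrees; this is again a case of the induction hypothesis, once $C$ has been endowed with its own $(*,\dag)$-parity by an auxiliary argument parallel to the one for $X'$.

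The main obstacle will be controlling the parity properties of the auxiliary objects $X'$ and $C$ produced by the recollement, so that the induction hypothesis applies to each of them. The delicate point is the behaviour of the off-diagonal graded Hom-spaces $\Hom^\bullet_{\mathscr{D}}(\nabla_{i_0}, \nabla_j)$ for $j \prec i_0$, whose concentration in prescribed parity is not part of the axioms but must be bootstrapped from \eqref{eqn:Hom-nabla-pair} and the freeness assumptions in the definition of parity objects, using the well-ordering of $I$. Once this bookkeeping is in place, the rest of the proof is a formal unravelling of the two long exact sequences above.
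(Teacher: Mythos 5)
There is a genuine gap, and it comes from your choice of the wrong side of the recollement. You split off a ``top piece'' $E = \bigoplus_\alpha \nabla_{i_0}[n_\alpha]$ and form a triangle $X' \to X \to E$, and you also use the triangle $C \to \Delta_{i_0} \to \nabla_{i_0}$. Both auxiliary objects $X'$ and $C$ then have to be shown $*$-pair for the induction to close, and at this point you run into the off-diagonal Hom-spaces $\Hom^\bullet_{\mathscr{D}}(\nabla_{i_0}, \nabla_j)$ for $j \prec i_0$: applying $\Hom^\bullet(-, \nabla_j)$ to your triangles gives long exact sequences in which $\Hom^\bullet(E, \nabla_j)$ (resp.\ $\Hom^\bullet(\nabla_{i_0}, \nabla_j)$) appears, and nothing in the axioms for a quasi-exceptional collection \`a parit\'e constrains the degrees of these groups. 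They cannot be ``bootstrapped'' as you hope; asking them to sit in prescribed parity is essentially asking $\nabla_{i_0}$ itself to be $*$-pair, which is not an assumption and is false in the motivating examples. Moreover your step (ii) requires $\Hom_{\mathscr{D}}(\nabla_{i_0}, Y[k])$ to vanish in a prescribed parity, but the $!$-parity of $Y$ only controls $\Hom^\bullet(\Delta_j, Y)$, not $\Hom^\bullet(\nabla_j, Y)$; you try to bridge this via $C$, and the same uncontrolled off-diagonal Hom-spaces reappear.

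The paper's proof avoids both problems by using the \emph{other} recollement triangle
\[
\Pi^l\Pi(X) \longrightarrow X \longrightarrow \iota\iota^l(X) \xrightarrow{\ [1]\ },
\]
in which the piece split off is $\Pi^l\Pi(X)$, a finite direct sum of shifts $\Delta_{i_0}[n]$ with $n \equiv \dag(i_0) \pmod 2$. This orientation is not a cosmetic choice: since $\Hom^\bullet(\Delta_{i_0}, \nabla_j) = 0$ for all $j \neq i_0$ (a property of the dual collection that has no analogue for $\Hom^\bullet(\nabla_{i_0}, \nabla_j)$ with $j \prec i_0$), the long exact sequence for $\Hom^\bullet(-, \nabla_j)$ degenerates and shows immediately that $\iota\iota^l(X) \in \mathscr{D}_{\{\prec i_0\}}$ is $*$-pair. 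And because the split-off piece consists of $\Delta_{i_0}$'s rather than $\nabla_{i_0}$'s, pairing it with $Y$ is directly handled by the $!$-parity of $Y$, with no need for an intermediate object like your $C$. If you rework your ``top piece'' construction to produce a map into $X$ from a direct sum of shifted $\Delta_{i_0}$'s (dually, using the freeness of $\Hom^\bullet(\Delta_{i_0}, X)$ coming from the $!$-parity of $X$ --- no, wait, here it is the $*$-parity of $X$ together with $\Pi^l\Pi(\Delta_{i_0}) \cong \Delta_{i_0}$ that is used; see the paper's argument), the rest of your induction goes through.
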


\begin{proof}
On raisonne par r\'ecurrence (transfinie) sur un \'el\'ement $i \in I$ tel que $X$ est dans $\mathscr{D}_{\{\preceq i\}}$. Si $i$ est minimal, le r\'esultat d\'ecoule du Lemme~\ref{lem:parite-minimal} et de la d\'efinition des objets $!$-pairs.

Soit maintenant $i \in I$, et supposons que le r\'esultat est vrai si $X$ appartient \`a $\mathscr{D}_{\{\prec i\}}$. Rappelons que d'apr\`es~\cite[Lemma~4]{bez} le foncteur de projection
\[
\Pi : \mathscr{D}_{\{\preceq i\}} \to \mathscr{D}_{\{\preceq i\}}/\mathscr{D}_{\{\prec i\}}
\]
admet un adjoint \`a gauche $\Pi^l$ v\'erifiant $\Pi^l \Pi(\Delta_i) \cong \Delta_i$ et un adjoint \`a droite $\Pi^r$ v\'erifiant $\Pi^r \Pi(\nabla_i) \cong \nabla_i$, et que l'inclusion
\[
\iota : \mathscr{D}_{\{\prec i\}} \to \mathscr{D}_{\{\preceq i\}}
\]
admet \'egalement un adjoint \`a gauche $\iota^l$ et un adjoint \`a droite $\iota^r$. De plus, ces donn\'ees v\'erifient les axiomes de ``recollement'' de~\cite[\S 1.4]{bbd}; en particulier on a un triangle distingu\'e fonctoriel
\begin{equation}
\label{eqn:triangle-recollement}
\Pi^l \Pi(Z) \to Z \to \iota \iota^l(Z) \xrightarrow{[1]}
\end{equation}
pour tout $Z$ dans $\mathscr{D}_{\{\preceq i\}}$.

Il n'est pas difficile de v\'erifier que l'objet $\iota \iota^l(X)$ est $*$-pair, de sorte qu'on a $\Hom_{\mathscr{D}}(\iota \iota^l(X),Y[n])=0$ pour $n$ impair par r\'ecurrence. D'un autre c\^ot\'e, puisque $\Pi^r \Pi(\nabla_i) \cong \nabla_i$ l'objet $\Pi(X)$ est $*$-pair dans $\mathscr{D}_{\{\preceq i\}} / \mathscr{D}_{\{\prec i\}}$ (pour la suite quasi-exceptionnelle $(\Pi(\nabla_i))$); d'apr\`es le Lemme~\ref{lem:parite-minimal} ceci implique que $\Pi(X)$ est une somme directe d'objets $\Pi(\Delta_i)[n]$ avec $n \bmod 2 = \dag(i)$, et donc que $\Pi^l \Pi(X)$ est une somme directe d'objets $\Delta_i[n]$ avec $n \bmod 2 = \dag(i)$. Le r\'esultat voulu suit en consid\'erant la suite exacte longue obtenue en appliquant le foncteur $\Hom_{\mathscr{D}}(-,Y)$ au triangle~\eqref{eqn:triangle-recollement} (appliqu\'e \`a $Z=X$).
\end{proof}

\begin{proof}[Démonstration du Théorème~{\rm \ref{thm:JMW}}]
Supposons qu'il existe deux objets $E_i$ et $F_i$ satisfaisant les conditions de l'\'enonc\'e. Soient $\Pi$ et $\iota$ comme dans la preuve du Lemme~\ref{lem:annulation-Hom-parite}, et fixons des isomorphismes $\Pi(E_i) \cong \Pi(\nabla_i)$ et $\Pi(F_i) \cong \Pi(\nabla_i)$. Consid\'erons le triangle distingu\'e~\eqref{eqn:triangle-recollement} pour $Z=E_i$ :
\[
\Pi^l \Pi(E_i) \to E_i \to \iota \iota^l(E_i) \xrightarrow{[1]}.
\]
Comme dans la preuve du Lemme~\ref{lem:annulation-Hom-parite} l'objet $\iota \iota^l(E_i)$ est $*$-pair, de sorte que le Lemme~\ref{lem:annulation-Hom-parite} implique que le morphisme naturel
\begin{multline*}
\Hom(E_i, F_i) \to \Hom(\Pi^l \Pi(E_i), F_i) \cong \Hom(\Pi(E_i),\Pi(F_i)) \\
= \Hom(\Pi(\nabla_i),\Pi(\nabla_i)) = \Hom(\nabla_i,\nabla_i)
\end{multline*}
est surjectif. Choisissons un morphisme $f : E_i \to F_i$ dont l'image est $\id_{\nabla_i}$. De m\^eme, il existe un morphisme  $g : F_i \to E_i$ tel que $\Pi(g)=\Pi(\id_{\nabla_i})$. Alors $g \circ f$ est un élément de l'anneau local $\End(E_i)$ dont l'image dans son quotient $\End(\nabla_i)$ est inversible, et donc $g \circ f$ est un isomorphisme. De m\^eme, $f \circ g$ est un isomorphisme, 
donc $f$ et $g$ \'egalement.

Pour un objet \`a parit\'e indécomposable $E$ g\'en\'eral, on choisit $i$ minimal tel que $E$ appartient \`a $\mathscr{D}_{\{\preceq i\}}$. Alors l'image de $E$ dans $\mathscr{D}_{\{\preceq i\}} / \mathscr{D}_{\{\prec i\}}$ est non nulle, \`a parit\'e et ind\'ecomposable par le m\^eme argument que ci-dessus ; il est donc isomorphe \`a l'image de $\nabla_i[n]$ pour un $n \in \Z$ d'apr\`es le Lemme~\ref{lem:parite-minimal}. On en d\'eduit que $E\cong E_i[n]$.
\end{proof}

%--------------------------------------------------------------------------
\subsection{L'exemple des faisceaux constructibles}
\label{ss:comparaison-JMW}
%--------------------------------------------------------------------------

Notre d\'efinition des objets \`a parit\'e est inspir\'ee de celle consid\'er\'ee dans~\cite{jmw}. Le contexte que ces auteurs consid\`erent est celui de l'Exemple~\ref{ex:collections-qe}\eqref{it:qe-faisceaux}. Pour que la collection quasi-exceptionnelle consid\'er\'ee dans cet exemple soit \`a parit\'e, il faut supposer que pour tout $s \in \mathscr{S}$ et tout syst\`eme local $H$-\'equivariant simple $\mathscr{L}$ sur $X_s$ on a
\[
\Hom_{\Db_H(X_s,\bk)}(\mathscr{L},\mathscr{L}[n])=0 \quad \text{si $n$ est impair.}
\]

Les d\'efinitions des objets pairs et impairs dans la D\'efinition~\ref{def:parite} (dans ce cas particulier) sont formellement diff\'erentes de celles donn\'ees dans~\cite[Definition~2.4]{jmw}, mais les deux d\'efinitions sont \'equivalentes. (Ce fait peut se d\'emontrer par r\'ecurrence sur un \'el\'ement $i$ tel que $X$ appartient \`a $\mathscr{D}_{\{\preceq i\}}$, en utilisant le Lemme~\ref{lem:parite-minimal}, le triangle distingu\'e~\eqref{eqn:triangle-recollement}, et le triangle ``dual'' faisant intervenir les adjoints \`a droite.)

Les normalisations que nous utilisons sont \'egalement diff\'erentes de celles utilis\'ees dans~\cite{jmw} : par exemple, un objet $X$ de $\Db_{H,\mathscr{S}}(X,\bk)$ est $(\dag,*)$-pair au sens du pr\'esent article ssi il est $(\dag',*)$-pair au sens de~\cite{jmw}, o\`u $\dag'(s)=\dag(s)+(\dim(X_s) \bmod 2)$. Dans la pratique, le cas 
qui intervient dans toutes les applications connues de cette th\'eorie
est celui o\`u on prend $\dag(s)=\dim(X_s) \bmod 2$ (correspondant au cas de la ``pariversit\'e constante'' $\natural$ dans les conventions de~\cite{jmw}).

\begin{rmq}
Dans ce contexte particulier,
les auteurs de~\cite{jmw} consid\`erent une situation un petit peu plus g\'en\'erale : ils autorisent des anneaux de coefficients plus g\'en\'eraux, et font l'hypoth\`ese plus faible que
\[
\Hom_{\Db_H(X_s,\bk)}(\mathscr{L},\mathscr{L}'[n])=0 \quad \text{si $n$ est impair}
\]
pour tous $s \in \mathscr{S}$ et $\mathscr{L}, \mathscr{L}'$ syst\`emes locaux $H$-\'equivariants simples sur $X_s$. Sans nos hypoth\`eses suppl\'ementaires, ce cadre ne rentre pas dans le formalisme des collections quasi-exceptionnelles (mais on dispose tout de m\^eme d'un formalisme de recollement qui permet de consid\'erer des constructions similaires).
\end{rmq}

%-------------------------------------------------------
\subsection{Version gradu\'ee}
\label{ss:graduee}
%-------------------------------------------------------

En plus du contexte \'etudi\'e aux~\S\S\ref{ss:parite-def}--\ref{ss:parite-classification}, il est utile de consid\'erer une situation ``gradu\'ee'' dans laquelle la cat\'egorie triangul\'ee $\mathscr{D}$ admet un automorphisme triangul\'e $\langle 1 \rangle$. Pour tout $n \in \Z$, on notera alors $\langle n \rangle$ la puissance $n$-i\`eme de $\langle 1 \rangle$.
On supposera de nouveau que $\mathscr{D}$ est de Krull--Schmidt. Si $X$ et $Y$ sont dans $\mathscr{D}$ on notera
\[
\uHom^{\bullet}_{\mathscr{D}}(X,Y) := \bigoplus_{n,m \in \Z} \Hom_{\mathscr{D}}(X,Y\langle n \rangle [m]),
\]
avec la $\Z^2$-graduation \'evidente.
On consid\'erera \'egalement l'automorphisme
\[
\{1\}:=\langle -1 \rangle [1]
\]
de $\mathscr{D}$.

Si $(I,\preceq)$ est un ensemble bien ordonn\'e, une \emph{collection quasi-exceptionnelle gradu\'ee} est une collection $(\nabla_i : i \in I)$ telle que
\begin{enumerate}
\item
si $i,j \in I$ et
$\uHom_{\mathscr{D}}^\bullet(\nabla_i, \nabla_j) \neq 0$, alors $i \succeq j$;
\item
si $i \in I$ alors $\Hom_{\mathscr{D}}(\nabla_i, \nabla_i\langle m \rangle[n])=0$ pour tous $m,n \in \Z$ tels que $n<0$ ou $n=0$ et $m \neq 0$, et $\Hom_{\mathscr{D}}(\nabla_i, \nabla_i)$ est une alg\`ebre \`a division.
\end{enumerate}
On d\'efinit alors $\mathscr{D}_{\{\preceq i\}}$ et $\mathscr{D}_{\{\prec i\}}$ comme les sous-cat\'egories triangul\'ees engendr\'ees par les objets de la forme $\nabla_j \langle m \rangle$ avec $j \preceq i$ et $m \in \Z$ et avec $j \prec i$ et $m \in \Z$ respectivement. On peut d\'efinir de fa{\c c}on \'evidente la collection duale, et la th\'eorie d\'evelopp\'ee dans~\cite{bez} s'\'etend ais\'ement \`a ce cadre.

Si $(\nabla_i : i \in I)$ est une collection quasi-exceptionnelle gradu\'ee admettant une collection duale $(\Delta_i : i \in I)$, on dira que $(\nabla_i : i \in I)$ est \emph{\`a parit\'e} si
\[
\Hom_{\mathscr{D}}(\nabla_i, \nabla_i \{ m \}[n])=0 \quad \text{si $n \neq 0$ ou si $m$ est impair.}
\]
Sous ces conditions, l'anneau $\Z^2$-gradu\'e $\uHom^\bullet_{\mathscr{D}}(\nabla_i,\nabla_i) = \uHom^\bullet_{\mathscr{D}}(\Delta_i,\Delta_i)$ est con\-cen\-tr\'e sur $\{(n,-n) : n \in 2\Z_{\geq 0}\} \subset \Z^2$. 
Dans la suite, on supposera de plus que la famille
$(\nabla_i \langle m \rangle : i \in I, \, m \in \Z)$ engendre $\mathscr{D}$ comme cat\'egorie triangul\'ee.

Si on se fixe une fonction $\dag : I \to \Z/2\Z$,
on dira alors qu'un objet $X$ de $\mathscr{D}$ est
\begin{enumerate}
\item
$(\dag,*)$-pair si pour tout $i \in I$ le $\uHom^{\bullet}_{\mathscr{D}}(\nabla_i, \nabla_i)$-module \`a gauche $\Z^2$-gradu\'e
\[
\uHom^\bullet_{\mathscr{D}}(X, \nabla_i)
\]
est libre de rang fini et concentr\'e sur $\{(n,-n) : n \in \Z, \, n \bmod 2 = \dag(i)\} \subset \Z^2$;
\item
$(\dag,!)$-pair si pour tout $i \in I$ le $\uHom^\bullet_{\mathscr{D}}(\Delta_i, \Delta_i)$-module \`a droite $\Z^2$-gradu\'e
\[
\uHom_{\mathscr{D}}^\bullet(\Delta_i,X)
\]
est libre de rang fini et concentr\'e sur $\{(n,-n) : n \in \Z, \, n \bmod 2 = \dag(i)\} \subset \Z^2$;
\item
$\dag$-pair si $X$ est $(\dag,*)$-pair et $(\dag,!)$-pair;
\item
$(\dag,*)$-impair si $X \{1\}$ est $(\dag,*)$-pair;
\item
$(\dag,!)$-impair si $X \{1\}$ est $(\dag,!)$-pair;
\item
$\dag$-impair si $X \{1\}$ est $\dag$-pair;
\item
\`a $\dag$-parit\'e si $X$ est isomorphe \`a une somme $X' \oplus X''$ avec $X'$ $\dag$-pair et $X''$ $\dag$-impair.
\end{enumerate}

Comme dans le cas non gradu\'e, la fonction $\dag$ sera souvent omise des notations.

\begin{rmq}
\begin{enumerate}
\item
Cette notion a d\'ej\`a \'et\'e consid\'er\'ee, dans le cas particulier o\`u $\mathscr{D}$ est la cat\'egorie d\'eriv\'ee d'une cat\'egorie de plus haut poids gradu\'ee $\mathscr{A}$ (pour la suite quasi-exceptionnelle gradu\'ee form\'ee par les objets costandards de $\mathscr{A}$), dans~\cite{ahr}.
\item
La sous-cat\'egorie pleine de $\mathscr{D}$ dont les objets sont les objets \`a $\dag$-parit\'e est stable par le d\'ecalage $\{ 1 \}$.
\end{enumerate}
\end{rmq}

Dans ce contexte le Th\'eor\`eme~\ref{thm:JMW} admet la variante suivante, dont la preuve est essentiellement identique.

\begin{thm}
\label{thm:JMW-gradue}
Pour tout $i \in I$, il existe au plus un objet \`a parit\'e ind\'ecomposable $E_i$ qui appartient \`a $\mathscr{D}_{\{\preceq i\}}$ et dont l'image dans $\mathscr{D}_{\{\preceq i\}} / \mathscr{D}_{\{\prec i\}}$ est isomorphe \`a celle de $\nabla_i$. De plus, tout objet \`a parit\'e ind\'ecomposable est isomorphe \`a $E_i\{ n \}$ pour un unique couple $(i,n) \in I \times \Z$ tel que $E_i$ existe.
\end{thm}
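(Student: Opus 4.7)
Le plan est de transposer les démonstrations des Lemmes~\ref{lem:parite-minimal} et~\ref{lem:annulation-Hom-parite} et du Théorème~\ref{thm:JMW} au cadre gradué, en remplaçant le décalage cohomologique $[1]$ par le décalage $\{1\} = \langle -1 \rangle [1]$ partout où intervient la comptabilité de parité, tout en préservant $[1]$ dans les triangles de recollement hérités de~\cite{bez}.

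\emph{Première étape} (analogue gradué du Lemme~\ref{lem:parite-minimal}): pour $i$ minimal dans $I$, je montrerais qu'un objet $X \in \mathscr{D}_{\{\preceq i\}}$ est $*$-pair (resp.~$!$-pair) si et seulement s'il est isomorphe à une somme directe d'objets $\nabla_i\{n\}$ avec $n \bmod 2 = \dag(i)$. L'argument procède par récurrence sur le rang du $\uHom^\bullet_{\mathscr{D}}(\nabla_i,\nabla_i)$-module bi-gradué $\uHom^\bullet_{\mathscr{D}}(X, \nabla_i)$ (fini par définition de la $*$-parité), en choisissant un entier $k$ minimal tel que $\Hom_{\mathscr{D}}(X, \nabla_i\{k\}) \neq 0$ (nécessairement avec $k \bmod 2 = \dag(i)$). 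La t-structure associée à la collection quasi-exceptionnelle graduée $(\nabla_i)$ a pour seul objet simple dans son coeur l'objet $\nabla_i$ (à décalage interne près), donc le triangle de troncation en degré cohomologique $k$ fait apparaître une composante qui est une somme directe de copies de $\nabla_i\langle m\rangle$ pour divers $m$; l'hypothèse de parité sur $\nabla_i$ permet alors de scinder ce triangle comme dans la preuve d'origine.

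\emph{Deuxième étape} (analogue gradué du Lemme~\ref{lem:annulation-Hom-parite}): pour $X$ $*$-pair et $Y$ $!$-pair, je démontrerais que $\Hom_{\mathscr{D}}(X, Y\{n\}) = 0$ pour tout $n$ impair, par récurrence transfinie sur un $i \in I$ tel que $X \in \mathscr{D}_{\{\preceq i\}}$. Le formalisme de recollement de~\cite{bez} s'adapte trivialement au cadre gradué car tous les foncteurs impliqués commutent avec $\langle 1 \rangle$, et on dispose donc du triangle~\eqref{eqn:triangle-recollement}. L'objet $\iota \iota^l(X)$ est encore $*$-pair et appartient à $\mathscr{D}_{\{\prec i\}}$, ce qui traite sa contribution par récurrence; quant à $\Pi^l \Pi(X)$, c'est (par la première étape appliquée dans le quotient $\mathscr{D}_{\{\preceq i\}}/\mathscr{D}_{\{\prec i\}}$) une somme directe d'objets $\Delta_i\{m\}$ avec $m \bmod 2 = \dag(i)$, et la définition de $!$-parité pour $Y$ donne alors l'annulation voulue.

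Le théorème en découle par les arguments de la preuve du Théorème~\ref{thm:JMW}. Pour l'unicité de $E_i$, si $E_i$ et $F_i$ vérifient tous deux les hypothèses, la deuxième étape appliquée à $\iota \iota^l(E_i)$ assure la surjectivité de $\Hom(E_i, F_i) \twoheadrightarrow \Hom(\Pi(E_i), \Pi(F_i))$; on relève alors l'identité en morphismes $f : E_i \to F_i$ et $g : F_i \to E_i$, et les compositions $f \circ g$, $g \circ f$ sont inversibles puisque leurs images dans les quotients des anneaux locaux $\End(E_i)$ et $\End(F_i)$ par leurs idéaux maximaux le sont (propriété de Krull--Schmidt). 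Pour un objet à parité indécomposable $E$ général, on choisit $i$ minimal avec $E \in \mathscr{D}_{\{\preceq i\}}$: l'image de $E$ dans $\mathscr{D}_{\{\preceq i\}}/\mathscr{D}_{\{\prec i\}}$ est non nulle, indécomposable et à parité, donc isomorphe à (l'image de) $\nabla_i\{n\}$ pour un unique $n$ par la première étape, d'où $E \cong E_i\{n\}$. Le point le plus délicat — sans être conceptuellement nouveau — est la comptabilité bi-graduée: il faut vérifier que la concentration de $\uHom^\bullet_{\mathscr{D}}(X,\nabla_i)$ sur l'anti-diagonale $\{(n,-n)\}$ avec la contrainte de parité sur $n$ est préservée par les foncteurs de recollement et par les troncations de la première étape, ce qui est immédiat mais demande de la vigilance dans le suivi des deux degrés simultanément.
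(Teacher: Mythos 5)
Your proposal is correct and matches the paper's approach: the text explicitly states that the proof of this theorem is \emph{essentiellement identique} to that of the ungraded Th\'eor\`eme~\ref{thm:JMW}, and you supply exactly the right transposition, replacing $[1]$ by $\{1\} = \langle -1\rangle[1]$ in all parity bookkeeping while keeping the recollement triangles inherited from~\cite{bez} $[1]$-based. You also correctly flag the one point requiring vigilance (tracking the two degrees simultaneously through the truncation and recollement steps); note only that for $X$ $*$-pair, $*$-parity in fact forces the bottom cohomology $\mathcal{H}^{j_0}(X)$ to be a sum of copies of the single object $\nabla_i\langle j_0\rangle$ rather than of $\nabla_i\langle m\rangle$ for various $m$, which makes the splitting step in your first \'etape slightly cleaner than your phrasing suggests.
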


\begin{rmq}
Comme dans la Remarque~\ref{rmq:classification}\eqref{it:parite-Ei}, l'objet $E_i$ est pair si $\dag(i)=\overline{0}$, et impair si $\dag(i)=\overline{1}$.
\end{rmq}

\begin{ex}
\label{ex:parite-basculants}
Soit $\mathscr{A}$ une cat\'egorie de plus haut poids gradu\'ee, d'objets costandards et standards associ\'es $(\nabla_i : i \in I)$ et $(\Delta_i : i \in I)$ respectivement. Posons $\mathscr{D}:=\Db (\mathscr{A})$ et consid\'erons la collection quasi-exceptionnelle gradu\'ee form\'ee par les $(\nabla_i : i \in I)$. Supposons qu'il existe une fonction $\dag : I \to \Z/2\Z$ telle que les objets basculants ind\'ecomposables normalis\'es $(\mathsf{T}_i : i \in I)$ v\'erifient
\[
\Hom_{\mathscr{A}}(\Delta_i, \mathsf{T}_j \langle n \rangle) \neq 0 \quad \Rightarrow \quad n \bmod 2 = \dag(i) + \dag(j).
\]
Plut\^ot que de consid\'erer l'automorphisme de $\mathscr{D}$ induit par l'automorphisme $\langle 1 \rangle$ de $\mathscr{A}$ (qu'on notera similairement), consid\'erons l'automorphisme
$\langle\hspace{-1pt}\langle 1 \rangle\hspace{-1pt}\rangle := \langle -1 \rangle [1]$. Alors puisque
\[
\Hom_{\mathscr{D}}(\Delta_i, \mathsf{T}_j \langle\hspace{-1pt}\langle n \rangle\hspace{-1pt}\rangle[m]) = \Hom_{\mathscr{D}}(\Delta_i, \mathsf{T}_j \langle -n \rangle [n+m]),
\]
si le terme de gauche est non nul on doit avoir $n+m=0$ et $(n \bmod 2)+\dag(j)=\dag(i)$. Donc $\mathsf{T}_j$ est pair si $\dag(j)=\overline{0}$, et impair si $\dag(j)=\overline{1}$. On en d\'eduit que, dans ce contexte, pour tout $i \in I$ l'objet $E_i$ du Th\'eor\`eme~\ref{thm:JMW-gradue} existe, et est isomorphe \`a $\mathsf{T}_i$.
\end{ex}

%-------------------------------------------------------
\subsection{Foncteurs de d\'egraduation}
\label{ss:degrad}
%-------------------------------------------------------

Soit $\mathscr{C}$ une catégorie triangulée munie d'un automorphisme $\la 1\ra$ comme au~\S\ref{ss:graduee}, et soit $\overline{\mathscr{D}}$ une autre catégorie triangulée. Un \emph{foncteur de dégraduation} (par rapport à l'automorphisme $\la 1\ra$) est un foncteur triangulé $\Phi: \mathscr{C} \to \overline{\mathscr{D}}$ muni d'un isomorphisme naturel
\[
\varepsilon: \Phi \circ \la 1\ra \simto \Phi
\]
qui vérifie les deux conditions suivantes:
\begin{enumerate}
\item 
\label{it:degr-1}
les objets dans l'image essentielle de $\Phi$ engendrent $\overline{\mathscr{D}}$ en tant que catégorie triangulée;
\item 
\label{it:degr-2}
si $X$ et $Y$ sont dans $\mathscr{C}$, alors l'application
\[
\uHom_{\mathscr{C}}(X,Y) = \bigoplus_{n \in \Z} \Hom_{\mathscr{C}}(X, Y\la n\ra) \longrightarrow \Hom_{\overline{\mathscr{D}}}(\Phi(X),\Phi(Y))
\]
induite par $\Phi$ et $\varepsilon$ est un isomorphisme.
\end{enumerate}
Moralement, un foncteur de dégraduation est ``aussi proche que possible d'\^etre une équivalence de catégories'' entre $\mathscr{C}$ et $\overline{\mathscr{D}}$, compte tenu du fait que ces cat\'egories ont des structures diff\'erentes : la condition~\eqref{it:degr-1} ci-dessus remplace la condition d'être essentiellement surjectif, et la condition~\eqref{it:degr-1} remplace la condition d'être pleinement fidèle. 

\begin{ex}
Soit $A$ un anneau gradué Noetherien. Si $A\lgmod$ d\'esigne la cat\'egorie des $A$-modules gradu\'es de type fini et $A\lmod$ celle des $A$-modules de type fini, alors le foncteur d'oubli $\mathrm{For} : \Db(A\lgmod) \to \Db(A\lmod)$ vérifie toujours la propriété~\eqref{it:degr-2} ci-dessus. Si $A$ est Artinien (comme anneau non gradu\'e), alors la cat\'egorie triangul\'ee $\Db(A\lmod)$ est engendr\'ee par les $A$-modules simples, et ceux-ci appartiennent \`a l'image essentielle de $\mathrm{For}$ (voir~\cite[Proposition~3.5]{gg}). Celui-ci est donc un foncteur de d\'egraduation.
\end{ex}

Consid\'erons encore $\mathscr{C}$, $\overline{\mathscr{D}}$ et $\Phi$ comme ci-dessus. Il est clair que si $(\nabla_i: i \in I)$ est une collection quasi-exceptionelle graduée dans $\mathscr{C}$, alors $(\Phi(\nabla_i) : i \in I)$ est une collection quasi-exceptionelle dans $\overline{\mathscr{D}}$. Il est clair \'egalement que, pour ces collections, l'image par $\Phi$ d'un objet $(\dag,*)$-pair, resp.~$(\dag,!)$-pair, resp.~pair, est $(\dag,*)$-pair, resp.~$(\dag,!)$-pair, resp.~pair, et de m\^eme pour les objets impairs ou \`a parit\'e.

%%%%%%%%%%%%%%%%%%%%%%%%%%%%%%%%%%
\section{Dualit\'e de Koszul formelle}
\label{sec:Koszul-formelle}
%%%%%%%%%%%%%%%%%%%%%%%%%%%%%%%%%%

%--------------------------------------------------------
\subsection{Contexte}
\label{ss:Koszul-contexte}
%--------------------------------------------------------

On consid\'erera \`a partir de maintenant le contexte suivant: $\bk$ d\'esignera un corps, $\mathscr{D}$ sera une cat\'egorie triangul\'ee $\bk$-lin\'eaire munie d'un automorphisme $\langle 1 \rangle$. On supposera donn\'es un ensemble bien ordonn\'e $(I,\preceq)$ isomorphe \`a $(\Z_{\ge 0}, \leq)$ et deux familles d'objets $(\Delta_i : i \in I)$ et $(\nabla_i : i \in I)$ v\'erifiant les conditions suivantes :
\begin{enumerate}
\item
la famille $(\nabla_i \langle n \rangle : i \in I, \, n \in \Z)$ engendre $\mathscr{D}$ comme cat\'egorie triangul\'ee;
\item
si $\uHom^\bullet_{\mathscr{D}}(\nabla_i, \nabla_j) \neq 0$ alors $i \succeq j$;
\item
pour tout $i \in I$ et tous $n,m \in \Z$ on a $\Hom_{\mathscr{D}}(\nabla_i, \nabla_i \langle n \rangle [m])=0$ si $(n,m) \neq (0,0)$, et de plus $\End_{\mathscr{D}}(\nabla_i)=\bk$;
\item
pour tous $i \succ j$ on a $\uHom^\bullet_{\mathscr{D}}(\Delta_i, \nabla_j)=0$, et $\Delta_i$ et $\nabla_i$ ont des images isomorphes dans $\mathscr{D}_{\{\preceq i\}} / \mathscr{D}_{\{\prec i\}}$;
\item
\label{it:condition-coeur}
pour tous $i,j \in I$ et tous $n,m \in \Z$, si $\Hom(\nabla_i, \nabla_j \langle m \rangle [n]) \neq 0$ alors $n\geq0$, et de m\^eme si $\Hom(\Delta_i, \Delta_j \langle m \rangle [n]) \neq 0$ alors $n \geq 0$.
\end{enumerate}

En particulier, ces conditions impliquent que pour tous $X,Y$ dans $\mathscr{D}$ le $\bk$-espace vectoriel
\[
\bigoplus_{n,m \in \Z} \Hom_{\mathscr{D}}(X,Y \langle n \rangle [m])
\]
est de dimension finie, et donc que $\mathscr{D}$ est de Krull--Schmidt (voir notamment~\cite[Corollary~A.2]{cyz}). De plus
 $(\nabla_i : i \in I)$ est une collection exceptionnelle gradu\'ee dans $\mathscr{D}$ (voir l'Exemple~\ref{ex:collections-qe}\eqref{it:exceptionnelle}), de collection duale $(\Delta_i : i \in I)$. Notre condition~\eqref{it:condition-coeur} implique que les objets $\Delta_i$ et $\nabla_i$ appartiennent au coeur $\mathscr{A}$ de la t-structure associ\'ee \`a cette collection exceptionnelle, et des arguments standards montrent alors que $\mathscr{A}$ est une cat\'egorie gradu\'ee de plus haut poids au sens du~\S\ref{ss:cat-qhered}, pour la restriction de $\langle 1 \rangle$ \`a $\mathscr{A}$, et avec objets costandards les $(\nabla_i : i \in I)$ et objets standards les $(\Delta_i : i \in I)$. En particulier, ceci permet de consid\'erer les objets basculants ind\'ecomposables $(\mathsf{T}_i : i \in I)$ dans cette cat\'egorie. 
 
Ci-dessous nous consid\'ererons les objets \`a parit\'e dans cette cat\'egorie au sens du~\S\ref{ss:graduee}, principalement par rapport au choix de l'automorphisme $\langle 1 \rangle$ et de la collection quasi-exceptionnelle $(\nabla_i : i \in I)$. Mais on peut \'egalement consid\'erer ces objets pour le choix d'automorphisme $\lla 1 \rra := \{1\} = \langle -1 \rangle[1]$ plut\^ot que $\langle 1 \rangle$ (et la m\^eme collection quasi-exceptionnelle gradu\'ee). L'\'enonc\'e suivant est une sorte de r\'eciproque de la situation consid\'er\'ee dans l'Exemple~\ref{ex:parite-basculants}.

\begin{lem}
\label{lem:parite-lla}
Soit $\dag : I \to \Z/2\Z$ une fonction. Tout objet \`a parit\'e $E$ pour la fonction $\dag$ et l'automorphisme $\lla 1 \rra$ appartient \`a $\mathscr{A}$, et est basculant dans cette cat\'egorie.
En particulier, s'il existe un objet ind\'ecomposable $E_i$ comme dans le Th\'eor\`eme~{\rm \ref{thm:JMW-gradue}} pour la fonction $\dag$ et l'automorphisme $\lla 1 \rra$, alors $E_i \cong \mathsf{T}_i$.
\end{lem}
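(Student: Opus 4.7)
Pour commencer, je noterai que l'automorphisme $\{1\}$ associ\'e \`a $\lla 1 \rra$ est $\lla -1 \rra [1] = \langle 1 \rangle$ ; en particulier, si $E$ est un objet impair ind\'ecomposable, alors $E\langle 1 \rangle$ est pair ind\'ecomposable, et puisque $\langle 1 \rangle$ pr\'eserve le coeur $\mathscr{A}$ et la sous-cat\'egorie des objets basculants, je me ram\`enerai au cas o\`u $E$ est pair ind\'ecomposable. Ensuite, j'exploiterai l'identit\'e $\lla n \rra [m] = \langle -n \rangle [n+m]$ pour traduire la condition de $\dag$-parit\'e (pour $\lla 1 \rra$) en deux annulations dans la graduation originale de $\mathscr{D}$ :
\[
\Hom_{\mathscr{D}}(E, \nabla_j\langle k\rangle[\ell]) = 0 \quad \text{et} \quad \Hom_{\mathscr{D}}(\Delta_j\langle k\rangle, E[\ell]) = 0
\]
pour tout $\ell \neq 0$, tout $j \in I$ et tout $k \in \Z$.

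L'\'etape centrale consistera \`a d\'eduire de ces annulations que $E$ appartient au coeur $\mathscr{A}$. Pour cela, je raisonnerai sur la cohomologie de $E$ par rapport \`a la t-structure associ\'ee. Soit $n_+ := \max\{n \in \Z \mid H^n(E) \neq 0\}$ ; puisque $H^{n_+}(E)$ est un objet non nul de $\mathscr{A}$ (et donc de longueur finie), il admet un quotient simple $L_j\langle k\rangle$, ce qui, compos\'e avec l'inclusion du socle $L_j\langle k\rangle \hookrightarrow \nabla_j\langle k\rangle$, fournit un morphisme non nul $H^{n_+}(E) \to \nabla_j\langle k\rangle$ dans $\mathscr{A}$. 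En le pr\'ecomposant avec la projection de troncation $E \to H^{n_+}(E)[-n_+]$, j'obtiendrai un \'el\'ement de $\Hom_{\mathscr{D}}(E, \nabla_j\langle k\rangle[-n_+])$ dont la non-nullit\'e se v\'erifiera en appliquant le foncteur cohomologique $H^{n_+}$ (qui redonne la fl\`eche initiale dans $\mathscr{A}$). La premi\`ere annulation imposera alors $n_+ = 0$. Dualement, en consid\'erant le plus petit $n_-$ avec $H^{n_-}(E) \neq 0$, un sous-objet simple $L_j\langle k\rangle \hookrightarrow H^{n_-}(E)$, et la surjection $\Delta_j\langle k\rangle \twoheadrightarrow L_j\langle k\rangle$, la deuxi\`eme annulation donnera $n_- = 0$. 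On aura ainsi \'etabli que $E \in \mathscr{A}$.

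Une fois $E$ dans $\mathscr{A}$, les annulations se relisent comme $\Ext^{>0}_{\mathscr{A}}(E, \nabla_j\langle k\rangle) = 0 = \Ext^{>0}_{\mathscr{A}}(\Delta_j\langle k\rangle, E)$ pour tous $j,k$, et un r\'esultat standard des cat\'egories de plus haut poids entra\^inera que $E$ admet \`a la fois une filtration par des $\Delta_j\langle k\rangle$ et une filtration par des $\nabla_j\langle k\rangle$, donc est basculant. Pour l'identification finale, l'objet $E_i$ fourni par le Th\'eor\`eme~\ref{thm:JMW-gradue} sera un basculant ind\'ecomposable de $\mathscr{A} \cap \mathscr{D}_{\{\preceq i\}}$ dont la multiplicit\'e $[E_i : L_i]$ vaut $1$ (son image dans le quotient $\mathscr{D}_{\{\preceq i\}}/\mathscr{D}_{\{\prec i\}}$ \'etant isomorphe \`a celle de $\nabla_i$), d'o\`u $E_i \cong \mathsf{T}_i$ par le Th\'eor\`eme~\ref{thm:tilt-classif}. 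Le point le plus d\'elicat sera la v\'erification rigoureuse de la non-nullit\'e des compositions ``troncation suivie d'un morphisme vers un $\nabla$'' (et leurs duales), qui s'effectuera syst\'ematiquement en appliquant les foncteurs cohomologiques $H^{n_\pm}$ pour ramener le calcul dans $\mathscr{A}$.
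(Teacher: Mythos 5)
Your proof is correct and takes essentially the same route as the paper's: the translation of $\dag$-parity with respect to $\lla 1 \rra$ into the vanishing of $\Hom_{\mathscr{D}}(E,\nabla_j\langle k\rangle[\ell])$ and $\Hom_{\mathscr{D}}(\Delta_j\langle k\rangle,E[\ell])$ for $\ell\neq 0$ is precisely the paper's reduction, and the final identification $E_i\cong\mathsf{T}_i$ is the same. Where the paper then simply invokes the ``classical argument'' of~\cite[Lemma~4]{bez:ctm} to conclude that $E$ lies in $\mathscr{A}$ and is tilting, you reconstruct that argument explicitly (the perverse-amplitude bound via the nonzero composites factoring through $H^{n_\pm}(E)$, followed by the $\Ext^1$-vanishing criterion for $\Delta$- and $\nabla$-filtrations), which is a correct unpacking of the cited lemma; your preliminary reduction to the even case is harmless but not needed, since the diagonal concentration of the bigraded $\Hom$'s already forces $\ell=0$ in both parities.
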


\begin{proof}
Soit $E$ comme dans l'\'enonc\'e.
Par d\'efinition, pour tout $j \in I$ l'espace
\[
\Hom_{\mathscr{D}}(\Delta_j, E \la k \ra [l]) = \Hom_{\mathscr{D}}(\Delta_j, E \lla -k \rra [l+k])
\]
s'annule sauf si $l=0$. De m\^eme, on a
\[
\Hom_{\mathscr{D}}(E,\nabla_j \la k \ra [l])=0
\]
sauf si $l=0$. D'apr\`es un argument classique (voir~\cite[Lemma~4]{bez:ctm}), ceci implique que $E$ appartient \`a $\mathscr{A}$ et est basculant. Si $E=E_i$ pour un $i$, il est facile de voir que cet objet v\'erifie les conditions qui caract\'erisent $\mathsf{T}_i$.
\end{proof}

\begin{rmq}
Le Lemme~\ref{lem:parite-lla} montre que l'existence d'objets \`a parit\'e impose de fortes contraintes sur la fonction $\dag$. Dans la pratique, cette fonction sera toujours impos\'ee par le contexte \'etudi\'e.
\end{rmq}
 
%--------------------------------------------------------
\subsection{D\'efinition}
%--------------------------------------------------------

On consid\`ere maintenant deux cat\'egories $\mathscr{C}$ et $\mathscr{D}$ munies des structures consid\'er\'ees au~\S\ref{ss:Koszul-contexte}. Par souci de clart\'e, les donn\'ees associ\'ees seront not\'ees avec un indice $\mathscr{C}$ ou $\mathscr{D}$.

\begin{defn}
\label{def:dualite-Koszul}
Une \emph{dualit\'e de Koszul formelle} est un foncteur triangul\'e $\Phi : \mathscr{C} \to \mathscr{D}$ qui v\'erifie les conditions suivantes :
\begin{enumerate}
\item
$\Phi \circ \langle 1 \rangle_{\mathscr{C}} \cong \{1\}_{\mathscr{D}} \circ \Phi$;
\item
\label{it:def-Koszul-dn}
pour tout $i \in I_\mathscr{C}$, il existe un indice $\varphi(i) \in I_{\mathscr{D}}$ tel que 
\[
\Phi(\Delta_i^{\mathscr{C}}) \cong \Delta_{\varphi(i)}^{\mathscr{D}} \quad \text{ et } \quad \Phi(\nabla_i^{\mathscr{C}}) \cong \nabla_{\varphi(i)}^{\mathscr{D}};
\]
\item
\label{it:def-Koszul-I}
la fonction $\varphi$ d\'etermin\'ee par la condition~\eqref{it:def-Koszul-dn} est une bijection $I_{\mathscr{C}} \simto I_{\mathscr{D}}$ qui identifie les ordres $\preceq_{\mathscr{C}}$ et $\preceq_{\mathscr{D}}$.
\end{enumerate}
\end{defn}

%---------------------------------------------
\subsection{Propri\'et\'es}
%---------------------------------------------

\begin{lem}
\label{lem:Koszul-equiv}
Si $\Phi$ est une dualit\'e de Koszul formelle, alors c'est une \'equivalence de cat\'egories, et $\Phi^{-1}$ est \'egalement une dualit\'e de Koszul formelle.
\end{lem}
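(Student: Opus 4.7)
Le plan est de procéder en trois étapes : d'abord établir la pleine fidélité de $\Phi$, puis l'essentielle surjectivité, et enfin vérifier que le quasi-inverse satisfait les axiomes de la Définition~\ref{def:dualite-Koszul}. Les deux dernières étapes se déduisent assez directement de la première ; le c\oe ur du travail sera donc la pleine fidélité.

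Pour celle-ci, la famille $(\nabla_i^{\mathscr{C}} \langle n \rangle_{\mathscr{C}})_{i \in I_{\mathscr{C}}, \, n \in \Z}$ étant un système de générateurs de $\mathscr{C}$ comme catégorie triangulée, il suffira d'établir que $\Phi$ induit, pour tous $i,j \in I_{\mathscr{C}}$ et $N,m \in \Z$, un isomorphisme
\[
\Hom_{\mathscr{C}}(\nabla_i^{\mathscr{C}}, \nabla_j^{\mathscr{C}} \langle N \rangle_{\mathscr{C}} [m]) \xrightarrow{\sim} \Hom_{\mathscr{D}}(\nabla_{\varphi(i)}^{\mathscr{D}}, \nabla_{\varphi(j)}^{\mathscr{D}} \{N\}_{\mathscr{D}} [m]).
\]
Je procéderais par récurrence sur $i$ en identifiant $I_{\mathscr{C}}$ à $\Z_{\geq 0}$. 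Les cas $i \prec j$ et $i = j$ sont immédiats : dans le premier, les deux côtés s'annulent par quasi-exceptionnalité (puisque $\varphi$ préserve l'ordre), et dans le second les deux côtés valent $\delta_{N,0}\delta_{m,0}\bk$ avec $\Phi$ envoyant l'identité sur l'identité. Pour $i \succ j$, l'outil clé sera le triangle distingué
\[
\Delta_i^{\mathscr{C}} \to \nabla_i^{\mathscr{C}} \to C_i^{\mathscr{C}} \xrightarrow{[1]},
\]
avec $C_i^{\mathscr{C}} \in \mathscr{C}_{\{\prec i\}}$, obtenu en appliquant le triangle~\eqref{eqn:triangle-recollement} à $\nabla_i^{\mathscr{C}}$ et en utilisant $\Pi^l \Pi(\nabla_i^{\mathscr{C}}) \cong \Pi^l \Pi(\Delta_i^{\mathscr{C}}) \cong \Delta_i^{\mathscr{C}}$. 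Par les axiomes de $\Phi$, ce triangle est envoyé sur le triangle analogue dans $\mathscr{D}$. L'application du foncteur $\Hom(-, \nabla_j^{\mathscr{C}} \langle N \rangle [m])$ (et son analogue dans $\mathscr{D}$) fournira alors deux suites exactes longues reliées par $\Phi$ ; les termes en $\Delta_i^{\mathscr{C}}$ s'annulent pour $j \prec i$ par l'axiome sur $\uHom^\bullet(\Delta_i, \nabla_j)$, ceux en $C_i^{\mathscr{C}}$ sont traités par l'hypothèse de récurrence (puisque $C_i^{\mathscr{C}} \in \mathscr{C}_{\{\prec i\}}$), et le lemme des cinq conclura.

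L'essentielle surjectivité résultera du fait que l'image essentielle de $\Phi$ contient les objets $\Phi(\nabla_i^{\mathscr{C}} \langle n \rangle_{\mathscr{C}}) \cong \nabla_{\varphi(i)}^{\mathscr{D}} \langle -n \rangle_{\mathscr{D}}[n]$, donc, par décalage cohomologique, tous les $\nabla_j^{\mathscr{D}} \langle m \rangle_{\mathscr{D}} [k]$, qui engendrent $\mathscr{D}$ ; cette image est stable par cônes (puisque $\Phi$ est triangulé et pleinement fidèle) et par facteurs directs (via le relèvement des idempotents dans $\mathscr{C}$, qui est de Krull--Schmidt), donc coïncide avec $\mathscr{D}$. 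Enfin, pour $\Phi^{-1}$, les axiomes se vérifieront formellement : l'isomorphisme $\Phi^{-1} \circ \langle 1 \rangle_{\mathscr{D}} \cong \{1\}_{\mathscr{C}} \circ \Phi^{-1}$ se déduit de la relation pour $\Phi$ en prenant les inverses et en exploitant $\{1\} = \langle -1 \rangle[1]$, tandis que la bijection $\varphi^{-1}$ est un isomorphisme d'ensembles ordonnés qui envoie via $\Phi^{-1}$ les objets $\Delta_j^{\mathscr{D}}$ et $\nabla_j^{\mathscr{D}}$ sur $\Delta_{\varphi^{-1}(j)}^{\mathscr{C}}$ et $\nabla_{\varphi^{-1}(j)}^{\mathscr{C}}$. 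Le principal obstacle sera la vérification soigneuse, dans le pas inductif de la pleine fidélité, que $\Phi$ transporte bien les triangles de recollement de $\mathscr{C}$ sur ceux de $\mathscr{D}$ (en particulier, que $\Phi(C_i^{\mathscr{C}}) \cong C_{\varphi(i)}^{\mathscr{D}}$) afin de pouvoir appliquer le lemme des cinq.
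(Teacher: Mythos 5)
Ta preuve est correcte dans ses grandes lignes, mais suit une route réellement différente de celle de l'article. L'article ne fait aucune récurrence sur les espaces $\Hom(\nabla_i,\nabla_j)$ : il observe d'abord que le cône de tout morphisme non nul $\Delta_i^{\mathscr{C}} \to \nabla_i^{\mathscr{C}}$ appartient à $\mathscr{C}_{\{\prec i\}}$, donc que le cône de son image par $\Phi$ appartient à $\mathscr{D}_{\{\prec \varphi(i)\}}$, ce qui force $\Phi(\Delta_i^{\mathscr{C}} \to \nabla_i^{\mathscr{C}})$ à être non nul. Puisque $\Hom(\Delta_i,\nabla_j\langle m\rangle[n])$ est toujours $0$ ou $\bk$ (formule~\eqref{eqn:hw-Ext-vanishing}), cette seule observation donne directement que $\Phi$ induit des isomorphismes sur tous ces espaces ; la pleine fidélité suit alors par le dévissage standard (la famille des $\Delta_i\langle m\rangle$ et celle des $\nabla_i\langle m\rangle$ engendrent chacune $\mathscr{C}$). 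Ton approche, elle, travaille avec $\Hom(\nabla_i,\nabla_j)$, qui n'est pas connu explicitement, d'où la nécessité d'une récurrence ; mais tu contournes en retour le point délicat de l'article, car ton cas $i=j$ est réglé par le fait trivial que $\Phi$ préserve l'identité, et dans ton cas $i \succ j$ les termes $\Hom(\Delta_i,\nabla_j)$ s'annulent des deux côtés sans qu'on ait besoin de savoir où va le morphisme canonique. Deux remarques pour affermir ta rédaction : d'une part, l'« obstacle principal » que tu signales à la fin ($\Phi(C_i^{\mathscr{C}}) \cong C_{\varphi(i)}^{\mathscr{D}}$) n'est en fait pas nécessaire — le lemme des cinq ne demande qu'un morphisme de suites exactes longues, que $\Phi$ fournit fonctoriellement, sans qu'on ait à identifier $\Phi(C_i^{\mathscr{C}})$ au cône canonique dans $\mathscr{D}$ (en revanche, cette identification découlerait bien de l'argument du cône de l'article, qui garantit que $\Phi(\Delta_i\to\nabla_i)$ est non nul). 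D'autre part, il manque l'étape explicite du dévissage sur $C_i^{\mathscr{C}}$ : ton hypothèse de récurrence porte sur les $\nabla_k\langle n\rangle$ avec $k\prec i$, et il faut la propager à $C_i^{\mathscr{C}}$ (objet quelconque de $\mathscr{C}_{\{\prec i\}}$) par une seconde application du lemme des cinq sur une filtration de $C_i^{\mathscr{C}}$ — exactement le même type de dévissage que l'article cache sous « l'argument classique ».
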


\begin{proof}
Les conditions impos\'ees dans la D\'efinition~\ref{def:dualite-Koszul} montrent que pour tout $i \in I_{\mathscr{C}}$ on a $\Phi(\mathscr{C}_{\{\prec_{\mathscr{C}} i\}}) \subset \mathscr{D}_{\{\prec_{\mathscr{D}} \varphi(i)\}}$. Comme le c\^one de tout morphisme non nul $\Delta_i^{\mathscr{C}} \to \nabla_i^\mathscr{C}$ appartient \`a $\mathscr{C}_{\{\prec_{\mathscr{C}} i\}}$, ceci montre que le c\^one de son image par $\Phi$ appartient \`a $\mathscr{D}_{\{\prec_{\mathscr{D}} \varphi(i)\}}$; en particulier, ce morphisme est non nul. Donc $\Phi$ induit un isomorphisme
\[
\Hom_{\mathscr{C}}(\Delta^{\mathscr{C}}_i, \nabla^{\mathscr{C}}_j \la m \ra_{\mathscr{C}} [n]) \simto \Hom_{\mathscr{D}}(\Phi(\Delta^{\mathscr{C}}_i), \Phi(\nabla^{\mathscr{C}}_j \la m \ra_{\mathscr{C}}) [n])
\]
pour tous $i,j \in I_{\mathscr{C}}$ et $n,m \in \Z$. Comme les objets $(\nabla^{\mathscr{C}}_i \langle m \rangle_{\mathscr{C}} : i \in I, \, m \in \Z)$ et les objets $(\Delta^{\mathscr{C}}_i \langle m \rangle_{\mathscr{C}} : i \in I, \, m \in \Z)$ engendrent chacun $\mathscr{C}$ comme cat\'egorie triangul\'ee, par un argument classique on en d\'eduit que $\Phi$ est pleinement fid\`ele. Enfin, puisque les objets $(\nabla^{\mathscr{D}}_i \langle m \rangle_{\mathscr{D}} : i \in I, \, m \in \Z)$ engendrent $\mathscr{D}$ comme cat\'egorie triangul\'ee, $\Phi$ est \'egalement essentiellement surjective.

L'assertion concernant $\Phi^{-1}$ est \'evidente.
\end{proof}

\begin{prop}
\label{prop:Koszul-parite-basculant}
Soit $\dag : I_{\mathscr{C}} \to \Z/2\Z$ une fonction, et soit
$\Phi : \mathscr{C} \to \mathscr{D}$ une dualit\'e de Koszul formelle. Alors pour tout objet $E$ \`a $\dag$-parit\'e dans $\mathscr{C}$, $\Phi(E)$ appartient \`a $\mathscr{A}_{\mathscr{D}}$ et est basculant. En particulier, si $i \in I$ et s'il existe un objet ind\'ecomposable $E_i^{\mathscr{C}}$ comme dans le Th\'eor\`eme~{\rm \ref{thm:JMW-gradue}}, alors $\Phi(E_i^{\mathscr{C}}) \cong \mathsf{T}_{\varphi(i)}^{\mathscr{D}}$.
\end{prop}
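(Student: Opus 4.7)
Le plan est de d\'emontrer que $\Phi(E)$ est un objet \`a parit\'e dans $\mathscr{D}$ pour la collection quasi-exceptionnelle $(\nabla^{\mathscr{D}}_j)_{j \in I_\mathscr{D}}$ mais \emph{pour l'automorphisme $\lla 1 \rra_\mathscr{D}$} (au lieu de $\langle 1 \rangle_\mathscr{D}$), puis d'invoquer le Lemme~\ref{lem:parite-lla}, qui identifie pr\'ecis\'ement les objets \`a $\lla 1 \rra$-parit\'e aux objets basculants du coeur $\mathscr{A}_\mathscr{D}$. L'observation centrale est que la D\'efinition~\ref{def:dualite-Koszul} fournit un entrelacement $\Phi \circ \langle 1 \rangle_\mathscr{C} \cong \{1\}_\mathscr{D} \circ \Phi = \lla 1 \rra_\mathscr{D} \circ \Phi$ : le foncteur $\Phi$ \'echange donc le d\'ecalage interne $\langle 1 \rangle_\mathscr{C}$ du c\^ot\'e source avec le d\'ecalage ``de parit\'e'' $\lla 1 \rra_\mathscr{D}$ du c\^ot\'e but, ce qui est pr\'ecis\'ement la conversion de graduations dont nous avons besoin.

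Plus pr\'ecis\'ement, en combinant le Lemme~\ref{lem:Koszul-equiv} (pleine fid\'elit\'e de $\Phi$) avec cet entrelacement et avec l'identification $\Phi(\nabla^{\mathscr{C}}_i) \cong \nabla^{\mathscr{D}}_{\varphi(i)}$, on obtient, pour tous $i \in I_\mathscr{C}$ et $k, \ell \in \Z$, un isomorphisme canonique
\[
\Hom_\mathscr{C}\bigl(E, \nabla^{\mathscr{C}}_i \langle k \rangle_\mathscr{C}[\ell]\bigr) \simto \Hom_\mathscr{D}\bigl(\Phi(E), \nabla^{\mathscr{D}}_{\varphi(i)} \lla k \rra_\mathscr{D}[\ell]\bigr).
\]
Si $E$ est $\dag$-pair, la d\'efinition du~\S\ref{ss:graduee} affirme que le membre de gauche s'annule sauf si $\ell = -k$ et $k \bmod 2 = \dag(i)$ ; cette m\^eme condition se transporte alors sur le membre de droite, ce qui traduit exactement le fait que $\Phi(E)$ est $(\dag \circ \varphi^{-1}, *)$-pair dans $\mathscr{D}$ pour l'automorphisme $\lla 1 \rra_\mathscr{D}$. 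Un argument dual utilisant $\Phi(\Delta^\mathscr{C}_i) \cong \Delta^\mathscr{D}_{\varphi(i)}$ fournira la condition $(\dag \circ \varphi^{-1}, !)$-pair. Le cas g\'en\'eral (o\`u $E$ admet \'eventuellement un facteur direct impair) se ram\`enera au cas pair en d\'ecalant par $\{1\}_\mathscr{C}$, en utilisant la cons\'equence formelle $\Phi \circ \{1\}_\mathscr{C} \cong \langle 1 \rangle_\mathscr{D} \circ \Phi$ de l'entrelacement pr\'ec\'edent, et le fait que $\langle 1 \rangle_\mathscr{D}$ pr\'eserve aussi bien $\mathscr{A}_\mathscr{D}$ que la propri\'et\'e d'\^etre basculant. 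Le Lemme~\ref{lem:parite-lla} permettra alors de conclure que $\Phi(E)$ appartient \`a $\mathscr{A}_\mathscr{D}$ et y est basculant.

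Pour la derni\`ere assertion, puisque $\Phi$ est une \'equivalence (Lemme~\ref{lem:Koszul-equiv}) qui identifie les sous-cat\'egories $\mathscr{C}_{\{\preceq i\}}$ et $\mathscr{D}_{\{\preceq \varphi(i)\}}$ ainsi que leurs analogues stricts, l'objet $\Phi(E_i^\mathscr{C})$ sera automatiquement ind\'ecomposable, appartiendra \`a $\mathscr{D}_{\{\preceq \varphi(i)\}}$, et son image dans $\mathscr{D}_{\{\preceq \varphi(i)\}}/\mathscr{D}_{\{\prec \varphi(i)\}}$ sera celle de $\Phi(\nabla_i^\mathscr{C}) = \nabla_{\varphi(i)}^\mathscr{D}$ ; combin\'e au fait qu'il est basculant par la premi\`ere partie, la caract\'erisation des objets basculants ind\'ecomposables du Th\'eor\`eme~\ref{thm:tilt-classif} (ou de mani\`ere \'equivalente la seconde assertion du Lemme~\ref{lem:parite-lla}) permettra d'identifier $\Phi(E_i^\mathscr{C})$ \`a $\mathsf{T}_{\varphi(i)}^\mathscr{D}$. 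Le principal obstacle de cette preuve est donc la gestion soigneuse des deux automorphismes $\langle 1 \rangle$ et $\lla 1 \rra$, et la v\'erification que $\Phi$ r\'ealise bien l'\'echange attendu entre eux ; une fois ce point mis en place, tout le reste d\'ecoule formellement.
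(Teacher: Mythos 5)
Votre preuve est correcte et reproduit exactement l'argument du papier : le point central est l'entrelacement $\Phi \circ \langle 1 \rangle_{\mathscr{C}} \cong \{1\}_{\mathscr{D}} \circ \Phi$, qui montre que $\Phi$ transporte la parit\'e pour $\langle 1 \rangle_{\mathscr{C}}$ en la parit\'e pour $\lla 1 \rra_{\mathscr{D}}$, apr\`es quoi le Lemme~\ref{lem:parite-lla} conclut. Vous d\'eveloppez simplement avec plus de d\'etails (calculs explicites de Hom, traitement des facteurs impairs) ce que le papier exprime en deux phrases.
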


\begin{proof}
Puisque $\Phi \circ \la 1 \ra_{\mathscr{C}} \cong \{1\}_{\mathscr{D}} \circ \Phi$, si $E$ est \`a $\dag$-parit\'e dans $\mathscr{C}$ alors $\Phi(E)$ est \`a $\dag$-parit\'e dans $\mathscr{D}$, pour le choix de foncteur de d\'ecalage $\lla 1 \rra := \{1\}_{\mathscr{D}}$. L'\'enonc\'e d\'ecoule alors du Lemme~\ref{lem:parite-lla}.
\end{proof}

%---------------------------------------------
\subsection{Version non gradu\'ee}
\label{ss:Koszul-mod-nongrad}
%---------------------------------------------

On consid\`ere maintenant une cat\'egorie triangul\'ee $\mathscr{C}$ comme au~\S\ref{ss:Koszul-contexte}, et une cat\'egorie triangul\'ee $\overline{\mathscr{D}}$ munie d'une structure similaire mais \emph{non gradu\'ee}, c'est-\`a-dire de collections $(\overline{\nabla}_i : i \in \overline{I})$ et $(\overline{\Delta}_i : i \in \overline{I})$, telles que
\begin{enumerate}
\item
la famille $(\overline{\nabla}_i : i \in I)$ engendre $\overline{\mathscr{D}}$ comme cat\'egorie triangul\'ee;
\item
si $\Hom^{\bullet}_{\overline{\mathscr{D}}}(\overline{\nabla}_i, \overline{\nabla}_j) \neq 0$ alors $i \succeq j$;
\item
pour tout $i \in I$ on a $\Hom_{\overline{\mathscr{D}}}(\overline{\nabla}_i, \overline{\nabla}_i[m])=0$ si $m \neq 0$, et de plus $\End_{\overline{\mathscr{D}}}(\overline{\nabla}_i)=\bk$;
\item
pour tous $i \succ j$ on a $\Hom^{\bullet}_{\overline{\mathscr{D}}}(\overline{\Delta}_i, \overline{\nabla}_j)=0$, et $\overline{\Delta}_i$ et $\overline{\nabla}_i$ ont des images isomorphes dans $\overline{\mathscr{D}}_{\{\preceq i\}} / \overline{\mathscr{D}}_{\{\prec i\}}$;
\item
\label{it:conditiondg-coeur}
pour tous $i,j \in I$, si $\Hom(\overline{\nabla}_i, \overline{\nabla}_j [n]) \neq 0$ alors $n\geq0$, et de m\^eme si $\Hom(\overline{\Delta}_i, \overline{\Delta}_j [n]) \neq 0$ alors $n \geq 0$.
\end{enumerate}

Comme au~\S\ref{ss:Koszul-contexte}, ces conditions impliquent que $(\overline{\nabla}_i : i \in I)$ est une collection quasi-exceptionnelle, avec collection duale $(\overline{\Delta}_i : i \in I)$ et que le coeur $\mathscr{A}_{\overline{\mathscr{D}}}$ de la t-structure associ\'ee a une structure naturelle de cat\'egorie de plus haut poids. Les objets basculants ind\'ecomposables dans cette cat\'egorie seront not\'es $(\overline{\mathsf{T}}_i : i \in I)$.

\begin{defn}
\label{def:dualite-Koszul-degrad}
Une \emph{dualit\'e de Koszul formelle d\'egraduante} est un foncteur triangul\'e $\Phi : \mathscr{C} \to \overline{\mathscr{D}}$ qui v\'erifie les conditions suivantes :
\begin{enumerate}
\item
$\Phi \circ \langle 1 \rangle \cong [1] \circ \Phi$;
\item
\label{it:def-Koszuldg-dn}
pour tout $i \in I$, il existe un indice $\varphi(i) \in \overline{I}$ tel que 
\[
\Phi(\Delta_i) \cong \overline{\Delta}_{\varphi(i)} \quad \text{ et } \quad \Phi(\nabla_i) \cong \overline{\nabla}_{\varphi(i)};
\]
\item
\label{it:def-Koszuldg-I}
la fonction $\varphi$ d\'etermin\'ee par la condition~\eqref{it:def-Koszuldg-dn} est une bijection $I \simto \overline{I}$ qui identifie les ordres correspondant.
\end{enumerate}
\end{defn}

Moralement, on consid\'erera une dualit\'e de Koszul d\'egraduante comme ``la compos\'ee d'une dualit\'e de Koszul formelle et d'un foncteur de d\'egraduation'' (comme d\'efini au~\S\ref{ss:degrad}).

Les \'enonc\'es suivants sont des analogues du Lemme~\ref{lem:Koszul-equiv} et de la Proposition~\ref{prop:Koszul-parite-basculant}, et d\'ecoulent d'arguments similaires.

\begin{lem}
\label{lem:koszul-degrad}
Si $\Phi$ est une dualit\'e de Koszul formelle d\'egraduante, alors c'est un foncteur de d\'egraduation pour l'automorphisme $\langle -1 \rangle [1]$.
\end{lem}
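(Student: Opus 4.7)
Le plan est de v\'erifier les deux axiomes~\eqref{it:degr-1} et~\eqref{it:degr-2} du~\S\ref{ss:degrad} pour l'automorphisme $\lla 1 \rra := \langle -1 \rangle [1]$ de $\mathscr{C}$. La compatibilit\'e $\Phi \circ \langle 1 \rangle \cong [1] \circ \Phi$ de la D\'efinition~\ref{def:dualite-Koszul-degrad} fournira imm\'ediatement un isomorphisme naturel $\Phi \circ \lla 1 \rra \simto \Phi$, puisque $\Phi \circ \langle -1 \rangle \circ [1] \cong [-1] \circ [1] \circ \Phi = \Phi$. La condition~\eqref{it:degr-1} est alors imm\'ediate : comme $\Phi(\nabla_i) \cong \overline{\nabla}_{\varphi(i)}$ et que $\varphi : I \simto \overline{I}$ est une bijection, l'image essentielle de $\Phi$ contient la famille $(\overline{\nabla}_j : j \in \overline{I})$, qui engendre $\overline{\mathscr{D}}$ comme cat\'egorie triangul\'ee par hypoth\`ese.

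Pour la condition~\eqref{it:degr-2}, on proc\'edera par d\'evissage \`a partir du cas de base $X = \Delta_i \langle k \rangle$ et $Y = \nabla_j \langle m \rangle$ (avec $i,j \in I$ et $k,m \in \Z$). Un calcul direct, utilisant les propri\'et\'es de la collection exceptionnelle gradu\'ee $(\nabla_i)$ rappel\'ees au~\S\ref{ss:Koszul-contexte}, montre que
\[
\bigoplus_{n \in \Z} \Hom_{\mathscr{C}}(\Delta_i \langle k \rangle, \nabla_j \langle m \rangle \lla n \rra)
\]
vaut $\bk$ si $i=j$ et $k=m$, et $0$ sinon ; les axiomes analogues pour $\overline{\mathscr{D}}$, combin\'es avec la condition~\eqref{it:def-Koszuldg-dn}, donneront la m\^eme r\'eponse pour $\Hom_{\overline{\mathscr{D}}}(\overline{\Delta}_{\varphi(i)}[k], \overline{\nabla}_{\varphi(j)}[m])$. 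L'application induite par $\Phi$ entre ces espaces est donc automatiquement un isomorphisme dans les cas nuls, et la seule v\'erification non triviale concerne sa non-nullit\'e dans le cas diagonal : on reprendra ici l'argument utilis\'e dans la preuve du Lemme~\ref{lem:Koszul-equiv}, en observant que $\Phi$ envoie $\mathscr{C}_{\{\prec i\}}$ dans $\overline{\mathscr{D}}_{\{\prec \varphi(i)\}}$, de sorte que l'image par $\Phi$ du morphisme canonique $\Delta_i \to \nabla_i$ demeure non nulle dans le quotient de Verdier $\overline{\mathscr{D}}_{\{\preceq \varphi(i)\}} / \overline{\mathscr{D}}_{\{\prec \varphi(i)\}}$.

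Le d\'evissage s'effectuera alors en deux temps : fixant d'abord $Y = \nabla_j \langle m \rangle$, la sous-cat\'egorie pleine des $X \in \mathscr{C}$ pour lesquels l'application consid\'er\'ee est un isomorphisme est triangul\'ee et stable par $\langle 1 \rangle$ ; comme elle contient tous les $\Delta_i \langle k \rangle$ et que ceux-ci engendrent $\mathscr{C}$, elle co\"incide avec $\mathscr{C}$ tout entier. On \'etendra ensuite la propri\'et\'e \`a tout $Y \in \mathscr{C}$ par le m\^eme argument, en utilisant cette fois que la famille $(\nabla_j \langle m \rangle : j \in I, \, m \in \Z)$ engendre \'egalement $\mathscr{C}$. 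L'obstacle principal demeure la v\'erification de la non-nullit\'e dans le cas de base, qui constitue l'unique endroit o\`u l'on exploite finement l'interaction entre $\Phi$ et les filtrations par les sous-cat\'egories $\mathscr{C}_{\{\preceq i\}}$.
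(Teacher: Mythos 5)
Ta preuve est correcte et suit essentiellement la m\^eme strat\'egie que celle sugg\'er\'ee dans l'article, qui renvoie explicitement aux ``arguments similaires'' de la preuve du Lemme~\ref{lem:Koszul-equiv} : calcul des $\Hom$ entre standards et costandards des deux c\^ot\'es, v\'erification que $\Phi$ induit un isomorphisme sur ces espaces (via la pr\'eservation des filtrations par les $\mathscr{C}_{\{\prec i\}}$), puis d\'evissage par g\'en\'eration triangul\'ee. Tu as simplement explicit\'e ce que l'article laisse au lecteur, et les d\'etails que tu donnes sont justes.
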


\begin{prop}
\label{prop:Koszul-parite-basculant-degrad}
Soit $\dag : I \to \Z/2\Z$ une fonction, et soit
$\Phi : \mathscr{C} \to \overline{\mathscr{D}}$ une dualit\'e de Koszul formelle dégraduante. Alors pour tout objet $E$ \`a $\dag$-parit\'e dans $\mathscr{C}$, $\Phi(E)$ appartient \`a $\mathscr{A}_{\overline{\mathscr{D}}}$ et est basculant. En particulier, si $i \in I$ et s'il existe un objet ind\'ecomposable $E_i$ comme dans le Th\'eor\`eme~{\rm \ref{thm:JMW-gradue}}, alors $\Phi(E_i) \cong \overline{\mathsf{T}}_{\varphi(i)}$.
\end{prop}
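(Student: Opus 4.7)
Le plan est d'adapter la preuve de la Proposition~\ref{prop:Koszul-parite-basculant} au cadre d\'egraduant, en exploitant le Lemme~\ref{lem:koszul-degrad} qui identifie toute dualit\'e de Koszul formelle d\'egraduante \`a un foncteur de d\'egraduation pour l'automorphisme $\{1\} = \la -1 \ra[1]$. La nouveaut\'e est que $\overline{\mathscr{D}}$ ne poss\`ede pas de d\'ecalage interne, de sorte qu'on ne peut pas simplement transf\'erer la notion de parit\'e \`a $\overline{\mathscr{D}}$ ni invoquer directement le Lemme~\ref{lem:parite-lla}; il faudra donc \'etablir \`a la main les annulations d'Ext caract\'erisant les objets basculants.

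Premi\`erement, je montrerai que pour tout $l \neq 0$ et tout $j \in \overline{I}$,
\[
\Hom_{\overline{\mathscr{D}}}(\overline{\Delta}_j, \Phi(E)[l]) = 0 \quad \text{et} \quad \Hom_{\overline{\mathscr{D}}}(\Phi(E), \overline{\nabla}_j[l]) = 0.
\]
En posant $i = \varphi^{-1}(j)$, en utilisant $\overline{\Delta}_j \cong \Phi(\Delta_i)$, l'isomorphisme $\Phi(E)[l] \cong \Phi(E\la l\ra)$ (d\'ecoulant de $\Phi \circ \la 1 \ra \cong [1] \circ \Phi$), et la propri\'et\'e de d\'egraduation du Lemme~\ref{lem:koszul-degrad}, on obtient
\[
\Hom_{\overline{\mathscr{D}}}(\overline{\Delta}_j, \Phi(E)[l]) \cong \bigoplus_{n \in \Z} \Hom_{\mathscr{C}}(\Delta_i, E\la l\ra\{n\}) \cong \bigoplus_{n \in \Z} \Hom_{\mathscr{C}}(\Delta_i, E\la l-n\ra[n]).
\]
L'hypoth\`ese de $\dag$-parit\'e sur $E$ (au sens du~\S\ref{ss:graduee}) concentre chaque terme $\Hom_{\mathscr{C}}(\Delta_i, E\la m\ra[r])$ sur la ``diagonale'' $r = -m$; le terme correspondant \`a $n$ s'annule donc sauf si $n = -(l-n)$, c'est-\`a-dire $l = 0$. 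La deuxi\`eme annulation s'\'etablit par un argument dual faisant intervenir la $(\dag,*)$-parit\'e. L'argument standard utilis\'e dans la preuve du Lemme~\ref{lem:parite-lla} entra\^ine alors que $\Phi(E)$ appartient \`a $\mathscr{A}_{\overline{\mathscr{D}}}$ et y est basculant.

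Pour la seconde assertion, on remarque que $\Phi(E_i)$ est basculant, appartient \`a $\overline{\mathscr{D}}_{\{\preceq \varphi(i)\}}$ (puisque $\Phi$ envoie $\mathscr{C}_{\{\preceq i\}}$ dans $\overline{\mathscr{D}}_{\{\preceq \varphi(i)\}}$), et a pour image $\overline{\nabla}_{\varphi(i)}$ dans le quotient $\overline{\mathscr{D}}_{\{\preceq \varphi(i)\}} / \overline{\mathscr{D}}_{\{\prec \varphi(i)\}}$ (obtenue comme image sous le foncteur induit de $\nabla_i$, l'image de $E_i$ dans le quotient gradu\'e correspondant). Pour conclure $\Phi(E_i) \cong \overline{\mathsf{T}}_{\varphi(i)}$ via le Th\'eor\`eme~\ref{thm:tilt-classif}, il reste \`a \'etablir l'ind\'ecomposabilit\'e de $\Phi(E_i)$. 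La d\'egraduation identifie $\End_{\overline{\mathscr{D}}}(\Phi(E_i)) \cong \bigoplus_n \Hom_{\mathscr{C}}(E_i, E_i\{n\})$, dont la composante en degr\'e $0$ est l'anneau local $\End_{\mathscr{C}}(E_i)$ ($E_i$ \'etant ind\'ecomposable dans la cat\'egorie de Krull--Schmidt $\mathscr{C}$); une variante gradu\'ee du Lemme~\ref{lem:annulation-Hom-parite} devrait permettre de v\'erifier que les \'el\'ements de degr\'e non nul sont dans le radical, d'o\`u la localit\'e de l'anneau total et l'ind\'ecomposabilit\'e voulue.

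La principale obstruction sera pr\'ecis\'ement cette ind\'ecomposabilit\'e de $\Phi(E_i)$: contrairement au cas gradu\'e (Proposition~\ref{prop:Koszul-parite-basculant}), o\`u elle d\'ecoulait imm\'ediatement de ce que $\Phi$ est une \'equivalence (Lemme~\ref{lem:Koszul-equiv}), ici $\Phi$ n'est qu'un foncteur de d\'egraduation et l'analyse demande une adaptation gradu\'ee des r\'esultats d'annulation de~\S\ref{sec:objets-parite} qu'il faut v\'erifier avec soin.
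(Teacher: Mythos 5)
Le papier lui-même ne démontre pas cette proposition mais renvoie à des « arguments similaires » à ceux de la Proposition~\ref{prop:Koszul-parite-basculant}. Votre première étape est correcte et est vraisemblablement ce que les auteurs avaient en tête : le calcul $\Hom_{\overline{\mathscr{D}}}(\overline{\Delta}_j, \Phi(E)[l]) \cong \bigoplus_n \Hom_{\mathscr{C}}(\Delta_i, E\la l-n\ra[n])$, l'annulation hors de la diagonale via la $(\dag,!)$-parité de $E$ (et l'argument dual pour $\overline{\nabla}_j$), puis la conclusion par le critère de \cite[Lemma~4]{bez:ctm}, sont exactement l'adaptation naturelle de la preuve de la Proposition~\ref{prop:Koszul-parite-basculant} au cadre dégraduant.

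Le point délicat que vous identifiez, à savoir l'indécomposabilité de $\Phi(E_i)$, est réel et effectivement passé sous silence par le papier ; c'est bien là que le cas dégraduant diffère du cas gradué (où elle était gratuite via le Lemme~\ref{lem:Koszul-equiv}). En revanche, l'outil que vous proposez est mal choisi : une variante graduée du Lemme~\ref{lem:annulation-Hom-parite} ne donne que l'annulation de $\Hom_{\mathscr{C}}(E_i, E_i\{n\}[m])$ pour $m$ \emph{impair}, ce qui ne dit rien de l'anneau $R := \End_{\overline{\mathscr{D}}}(\Phi(E_i)) \cong \bigoplus_n \Hom_{\mathscr{C}}(E_i, E_i\{n\})$, pour lequel $m=0$. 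L'argument correct est plutôt une version du lemme de Gordon--Green~\cite{gg} (cité d'ailleurs au~\S\ref{ss:degrad}) : $R$ est une $\bk$-algèbre $\Z$-graduée de dimension finie dont la composante de degré $0$, $\End_{\mathscr{C}}(E_i)$, est locale puisque $\mathscr{C}$ est de Krull--Schmidt ; le radical d'une telle algèbre est gradué, et la structure des algèbres semi-simples $\Z$-graduées de dimension finie montre alors que si la composante de degré $0$ du quotient semi-simple n'a pas d'idempotent non trivial, le quotient tout entier est une algèbre à division, donc $R$ est locale. On peut renforcer cette analyse en notant, via l'unicité du Théorème~\ref{thm:JMW-gradue} ($E_i \not\cong E_i\{n\}$ pour $n\neq 0$), que toute composition $R_n \cdot R_{-n}$ tombe dans le radical de $R_0$. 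Sans un tel ingrédient de théorie des algèbres graduées — que le Lemme~\ref{lem:annulation-Hom-parite} ne remplace pas — votre preuve reste incomplète sur ce point.
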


%%%%%%%%%%%%%%%%%%%%%%%%%%%%%%%%%%%%%%%%%%%%%%%%%%%%%%%%%%%%%%%%%%%%%%%%%%%
%%%%%%%%%%%%%%%%%%%%%%%%%%%%%%%%%%%%%%%%%%%%%%%%%%%%%%%%%%%%%%%%%%%%%%%%%%%
\part{Exemples et applications} 
\label{pt:exemples}
%%%%%%%%%%%%%%%%%%%%%%%%%%%%%%%%%%%%%%%%%%%%%%%%%%%%%%%%%%%%%%%%%%%%%%%%%%%
%%%%%%%%%%%%%%%%%%%%%%%%%%%%%%%%%%%%%%%%%%%%%%%%%%%%%%%%%%%%%%%%%%%%%%%%%%%

%%%%%%%%%%%%%%%%%%%%%%%%%%%%%%
\section{Exemples}
\label{sec:exemples}
%%%%%%%%%%%%%%%%%%%%%%%%%%%%%%

%--------------------------------------------------------------------------
\subsection{Cat\'egories d\'eriv\'ees mixtes pour les vari\'et\'es de drapeaux des groupes de Kac--Moody}
\label{ss:Dmix}
%--------------------------------------------------------------------------

Soit $\mathscr{G}$ un groupe de Kac--Moody, et soit $\WKM$ son groupe de Weyl. (Voir le~\S\ref{ss:bez-yun} pour des rappels et le d\'etail des notations que nous utiliserons ci-dessous).
Choisissons un ordre $\preceq$ sur $\WKM$ \'etendant l'ordre de Bruhat et tel que $(\WKM,\preceq)$ est isomorphe \`a $(\Z_{\ge 0},\leq)$.

Soit $\Flag_\scG$ la variété de drapeaux de $\mathscr{G}$, et notons $\Db_{\mathrm{Br}}(\Flag_\mathscr{G},\bk)$ la catégorie des complexes de $\bk$-faisceaux sur $\Flag_{\mathscr{G}}$, constructibles par rapport \`a la stratification~\eqref{eqn:stratif-Bruhat}. Cette catégorie poss\`ede une collection exceptionnelle naturelle, donn\'ee par
\[
\nabla_{\mathscr{G},w} := i_{w*} \underline{\bk}_{\Flag_{\mathscr{G},w}}[\ell(w)], 
\]
o\`u $i_w : \Flag_{\mathscr{G},w} \to \Flag_\mathscr{G}$ est l'inclusion (voir l'Exemple~\ref{ex:collections-qe}\eqref{it:qe-faisceaux}). La collection duale est donn\'ee par $\Delta_{\mathscr{G},w} := i_{w!} \underline{\bk}_{\Flag_{\mathscr{G},w}}[\ell(w)]$, et la t-structure associ\'ee est la t-structure perverse.

Consid\'erons la fonction $\dag : \WKM \to \Z/2\Z$ associant \`a $w \in \WKM$ la classe de $\ell(w)$. Alors les r\'esultats de~\cite[\S 4.1]{jmw} (voir \'egalement le~\S\ref{ss:comparaison-JMW} pour une comparaison avec nos conventions) montrent que pour tout $w \in \WKM$ il existe un objet \`a $\dag$-parit\'e $\mathscr{E}_{\mathscr{G},w}$ dans $\Db_{\mathrm{Br}}(\Flag_{\mathscr{G}},\bk)$ comme dans le Th\'eor\`eme~\ref{thm:JMW}. Nous noterons $\Par_{\mathrm{Br}}(\Flag_\mathscr{G},\bk)$ la sous-cat\'egorie pleine de $\Db_{\mathrm{Br}}(\Flag_\mathscr{G},\bk)$ dont les objets sont les objets \`a $\dag$-parit\'e. Suivant~\cite{modrap2} on pose alors
\[
\Dmix_{\mathrm{Br}}(\Flag_\mathscr{G},\bk) := \Kb \Par_{\mathrm{Br}}(\Flag_\mathscr{G},\bk).
\]
Le d\'ecalage cohomologique dans $\Db_{\mathrm{Br}}(\Flag_\mathscr{G},\bk)$ induit un automorphisme de la cat\'egorie $\Par_{\mathrm{Br}}(\Flag_\mathscr{G},\bk)$, et donc de $\Dmix_{\mathrm{Br}}(\Flag_\mathscr{G},\bk)$; cet automorphisme sera not\'e $\{1\}$. On d\'efinit alors le ``d\'ecalage de Tate'' par $\la 1 \ra := \{-1\}[1]$.

La th\'eorie d\'evelopp\'ee dans~\cite{modrap2} fournit des objets $\dmix_{\mathscr{G},w}$ et $\nmix_{\mathscr{G},w}$ dans la cat\'egorie $\Dmix_{\mathrm{Br}}(\Flag_\mathscr{G},\bk)$. Pour $w \in \WKM$, nous noterons $\mathscr{E}_{\mathscr{G},w}^{\mix}$ le complexe concentr\'e en degr\'e $0$, et form\'e par l'objet $\mathscr{E}_{\mathscr{G},w}$ en ce degr\'e.

\begin{prop}
\label{prop:Dmix-Bruhat}
La cat\'egorie $\Dmix_{\mathrm{Br}}(\Flag_{\mathscr{G}},\bk)$, munie des structures d\'efinies ci-dessus, v\'erifie les conditions du~{\rm \S\ref{ss:Koszul-contexte}}. De plus, si $\dag : \WKM \to \Z/2\Z$ est la fonction associant \`a $w \in \WKM$ la classe de $\dim(\Flag_{\mathscr{G},w})$, alors pour tout $w \in \WKM$ l'objet $\mathscr{E}_{\mathscr{G},w}^{\mix}$ est un objet \`a $\dag$-parit\'e ind\'ecomposable satisfaisant les conditions du Th\'eor\`eme~{\rm \ref{thm:JMW-gradue}} (dans $\Dmix_{\mathrm{Br}}(\Flag_\mathscr{G},\bk)$ munie de l'automorphisme $\la 1 \ra$ et de la suite exceptionnelle gradu\'ee ci-dessus).
\end{prop}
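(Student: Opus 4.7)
Le plan est d'invoquer systématiquement les constructions et résultats de~\cite{modrap2} pour la catégorie $\Dmix_{\mathrm{Br}}(\Flag_\mathscr{G},\bk)$, puisque celle-ci a \'et\'e d\'efinie pr\'ecis\'ement pour rendre vraies les propri\'et\'es du~\S\ref{ss:Koszul-contexte}. La d\'emonstration se r\'eduit donc pour l'essentiel \`a une v\'erification que les donn\'ees $(\dmix_{\mathscr{G},w}, \nmix_{\mathscr{G},w})$ construites dans~\cite{modrap2} s'inscrivent bien dans le formalisme abstrait introduit ci-dessus, et \`a une traduction de la parit\'e au sens de~\cite{jmw} dans le cadre gradu\'e.

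Pour la premi\`ere assertion, je v\'erifierais les cinq conditions dans l'ordre. La condition (1) d\'ecoule du fait que chaque objet $\mathscr{E}_{\mathscr{G},w}$ admet une filtration par des objets $\nabla_{\mathscr{G},v}$ (voir~\cite{jmw}), qui fournit apr\`es passage \`a $\Kb\Par$ une filtration ``distingu\'ee'' de $\mathscr{E}_{\mathscr{G},w}^{\mix}$ par les $\nmix_{\mathscr{G},v}\la n \ra$ ; comme les $\mathscr{E}_{\mathscr{G},w}\{n\}$ engendrent $\Par_{\mathrm{Br}}(\Flag_\mathscr{G},\bk)$ additivement, leurs compos\'es avec $\Par \hookrightarrow \Kb\Par$ engendrent $\Dmix_{\mathrm{Br}}(\Flag_\mathscr{G},\bk)$ comme cat\'egorie triangul\'ee. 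Les conditions (2) et (4) sont des cons\'equences directes du calcul, effectu\'e dans~\cite{modrap2}, de
\[
\uHom^\bullet_{\Dmix}(\nmix_{\mathscr{G},v}, \nmix_{\mathscr{G},w}) \ \text{ et } \ \uHom^\bullet_{\Dmix}(\dmix_{\mathscr{G},v}, \nmix_{\mathscr{G},w}),
\]
dont l'annulation pour $v \not\succeq w$ (respectivement $v \neq w$) reproduit les propri\'et\'es classiques de recollement pour la stratification de Bruhat. Pour la condition (3), le fait que $\End_{\Dmix}(\nmix_{\mathscr{G},w}) = \bk$ sans Ext sup\'erieur (ni en degr\'e cohomologique, ni en degr\'e de Tate) est la traduction mixte du fait que $\nabla_{\mathscr{G},w} = i_{w*}\underline{\bk}[\ell(w)]$ est ``simple'' dans sa strate. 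Enfin la condition~(5) de positivit\'e provient de la description des $\Hom$ entre $\dmix$ et $\nmix$ via les tiges et cotiges, et du fait que celles-ci sont concentr\'ees en degr\'es cohomologiques positifs ou nuls (c'est la ``puret\'e'' incorpor\'ee dans la d\'efinition mixte).

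Pour l'assertion sur la parit\'e, je proc\`ederais comme suit. L'objet $\mathscr{E}_{\mathscr{G},w}^{\mix}$ est ind\'ecomposable car c'est l'image par le foncteur pleinement fid\`ele $\Par_{\mathrm{Br}}(\Flag_\mathscr{G},\bk) \hookrightarrow \Kb\Par_{\mathrm{Br}}(\Flag_\mathscr{G},\bk)$ de l'objet ind\'ecomposable $\mathscr{E}_{\mathscr{G},w}$. Son image dans $\Dmix_{\{\preceq w\}}/\Dmix_{\{\prec w\}}$ s'identifie \`a celle de $\nmix_{\mathscr{G},w}$ : les deux engendrent ce quotient et sont caract\'eris\'es par leur tige g\'en\'erique sur $\Flag_{\mathscr{G},w}$. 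Il reste \`a v\'erifier la condition de parit\'e. Pour cela, j'utiliserais l'identification, cons\'equence de la d\'efinition de $\Dmix$,
\[
\Hom_{\Dmix}(\mathscr{E}_{\mathscr{G},w}^{\mix}, \nmix_{\mathscr{G},v}\{k\}[\ell]) = 0 \ \text{ si $\ell \neq 0$,}
\]
(car $\mathscr{E}_{\mathscr{G},w}^{\mix}$ est un complexe concentr\'e en degr\'e $0$ et les $\nmix$ se calculent par une r\'esolution explicite), combin\'ee avec l'annulation de Hom impaires entre objets \`a parit\'e (\cite[Proposition~2.6]{jmw}) pour en d\'eduire la concentration attendue sur la ``diagonale'' $\{(n,-n)\}$ avec $n$ de parit\'e prescrite. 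L'argument sym\'etrique avec $\dmix_{\mathscr{G},v}$ traite le c\^ot\'e $!$.

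L'obstacle principal se trouve dans la traduction soigneuse entre les deux \'echelles gradu\'ees, c'est-\`a-dire entre le d\'ecalage $\{1\}$ h\'erit\'e du d\'ecalage cohomologique sur $\Par$ et le ``d\'ecalage de Tate'' $\la 1 \ra = \{-1\}[1]$ qui intervient dans la d\'efinition de la parit\'e au~\S\ref{ss:graduee}. Il faut v\'erifier que la concentration des $\uHom$ sur la droite $\{(n,-n) : n \bmod 2 = \dag(w) + \dag(v)\}$ est bien \'equivalente \`a l'annulation des $\Hom_{\Kb\Par}$ en degr\'es cohomologiques non nuls (provenant du fait que $\mathscr{E}_{\mathscr{G},w}^{\mix}$ est un complexe concentr\'e en degr\'e $0$) combin\'ee avec l'annulation des Hom impaires entre objets \`a parit\'e dans $\Par$. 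Ce passage est essentiellement formel, mais demande de tracer avec pr\'ecision les identifications d\'ej\`a faites dans~\cite{modrap2}.
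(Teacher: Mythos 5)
Your overall strategy matches the paper's: reduce everything to the recollement formalism of~\cite{modrap2} for the mixed derived category, and obtain the parity of $\mathscr{E}_{\mathscr{G},w}^{\mix}$ by combining the fact that it is a one-term complex in $\Kb\Par$ with the even-degree vanishing in the ordinary derived category. Your treatment of indecomposability, of the image in the Verdier quotient, and your awareness that the bookkeeping between $\{1\}$ and $\langle 1\rangle$ must be done carefully, are all fine. There is, however, a real gap in one step, and one smaller imprecision.

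The gap is in your justification of the pivotal vanishing $\Hom_{\Dmix}(\mathscr{E}_{\mathscr{G},w}^{\mix}, \nmix_{\mathscr{G},v}\{k\}[\ell]) = 0$ for $\ell \neq 0$, which you present as a ``cons\'equence de la d\'efinition de $\Dmix$'' because $\mathscr{E}_{\mathscr{G},w}^{\mix}$ is concentrated in degree $0$. That inference does not go through: for a one-term complex $A$ in $\Kb\Par$, the group $\Hom_{\Kb\Par}(A,B[\ell])$ is the $\ell$-th cohomology of the Hom-complex $\Hom_{\Par}^\bullet(A^0,B^\bullet)$ and has no reason to vanish for $\ell\neq 0$ in general. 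The vanishing depends essentially on the \emph{parity} of $\mathscr{E}_{\mathscr{G},w}$, via the nontrivial fact that the mixed restrictions $i_v^*\mathscr{E}_{\mathscr{G},w}^{\mix}$ and $i_v^!\mathscr{E}_{\mathscr{G},w}^{\mix}$ are themselves concentrated in degree $0$ with value the ordinary $i_v^*\mathscr{E}_{\mathscr{G},w}$, $i_v^!\mathscr{E}_{\mathscr{G},w}$; this is precisely~\cite[Remark~2.7]{modrap2}, and it is the technical input the paper's proof makes explicit, by passing through the adjunction for the recollement functors $i_v^*$, $i_{v!}$, $i_{v*}$, $i_v^!$. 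Without appealing to that specific statement your argument does not close. Separately, and more mildly: parity objects $\mathscr{E}_{\mathscr{G},w}$ do not in general admit filtrations by the $\nabla_{\mathscr{G},v}$, so that part of your argument for condition~(1) is not literally correct; generation of $\Dmix_{\mathrm{Br}}(\Flag_\mathscr{G},\bk)$ by the $\nmix_{\mathscr{G},w}\langle n\rangle$ should instead be extracted directly from the recollement formalism of~\cite{modrap2}, as the paper does.
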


\begin{proof}
Notons $\Dmix_{\mathrm{Br}}(\Flag_{\mathscr{G},w},\bk)$ la cat\'egorie d\'eriv\'ee mixte construite com\-me ci-dessus pour la vari\'et\'e $\Flag_{\mathscr{G},w}$ au lieu de $\Flag_{\mathscr{G}}$. Dans~\cite{modrap2} on construit un formalisme de ``recollement'' (au sens de~\cite[\S 1.4]{bbd}), qui fournit notamment des foncteurs $i_{w*}, i_{w!} : \Dmix_{\mathrm{Br}}(\Flag_{\mathscr{G},w},\bk) \to \Dmix_{\mathrm{Br}}(\Flag_\mathscr{G},\bk)$ et $i_w^*, i_w^! : \Dmix_{\mathrm{Br}}(\Flag_\mathscr{G},\bk) \to \Dmix_{\mathrm{Br}}(\Flag_{\mathscr{G},w},\bk)$ qui v\'erifient les adjonctions habituelles. Par d\'efinition on a alors
\[
\dmix_{\mathscr{G},w} := i_{w!} \underline{\bk}_{\Flag_{\mathscr{G},w}}\{\ell(w)\}, \qquad \nmix_{\mathscr{G},w} := i_{w*} \underline{\bk}_{\Flag_{\mathscr{G},w}}\{\ell(w)\}.
\]
Le fait que la collection $(\nmix_{\mathscr{G},w} : w \in \WKM)$ est une collection exceptionnelle gradu\'ee d\'ecoule du formalisme de recollement. Le fait qu'elle v\'erifie la condition~\eqref{it:condition-coeur} du~\S\ref{ss:Koszul-contexte} est le contenu de~\cite[Theorem~4.7]{modrap2}.

Pour $y,w \in \WKM$ et $m,n \in \Z$, par adjonction on a
\begin{multline*}
\Hom_{\Dmix_{\mathrm{Br}}(\Flag_\mathscr{G},\bk)}(\dmix_{\mathscr{G},y}, \mathscr{E}_{\mathscr{G},w}^\mix \la m \ra [n]) \cong \\
\Hom_{\Dmix_{\mathrm{Br}}(\Flag_{\mathscr{G},y},\bk)}(\underline{\bk}_{\Flag_{\mathscr{G},y}}\{\ell(y)\}, i_y^! \mathscr{E}_{\mathscr{G},w}^\mix \{-m\}[m+n]).
\end{multline*}
D'apr\`es~\cite[Remark~2.7]{modrap2} le complexe $i_y^! \mathscr{E}_{\mathscr{G},w}^\mix$ est concentr\'e en degr\'e $0$, avec en ce degr\'e l'objet $i_y^! \mathscr{E}_{\mathscr{G},w}$ (o\`u ici $i_y^!$ est le foncteur d'image inverse exceptionnelle usuel). L'espace ci-dessus s'annule donc si $n+m \neq 0$, et est isomorphe \`a
\[
\Hom_{\Db_{\mathrm{Br}}(\Flag_{\mathscr{G}},\bk)}(\Delta_{\mathscr{G},y}, \mathscr{E}_{\mathscr{G},w} \{-m\})
\]
si $m+n=0$. Puisque $\mathscr{E}_{\mathscr{G},w}$ a la m\^eme parit\'e que $\ell(w)$, par d\'efinition cet espace s'annule si $m$ n'a pas la m\^eme parit\'e que $\ell(y)+\ell(w)$. 

On v\'erifie par des arguments similaires que
\[
\Hom_{\Dmix_{\mathrm{Br}}(\Flag_\mathscr{G},\bk)}(\mathscr{E}_{\mathscr{G},w}^\mix, \nmix_{\mathscr{G},y} \la m \ra [n]) 
\]
s'annule sauf si $m+n=0$ et $m$ a la m\^eme parit\'e que $\ell(y)+\ell(w)$, ce qui entra\^ine que $\mathscr{E}_{\mathscr{G},w}^\mix$ est \`a parit\'e (plus pr\'ecis\'ement, pair si $\ell(w)$ est pair et impair sinon). Cet objet est clairement ind\'ecomposable ; il co\"incide donc avec l'objet caract\'eris\'e dans le Th\'eor\`eme~\ref{thm:JMW-gradue}.
\end{proof}

Suivant la terminologie de~\cite{modrap2}, nous appellerons ``t-structure perverse'' la t-structure associ\'ee \`a la collection exceptionnelle $(\nmix_{\mathscr{G},w} : w \in \WKM)$. Son coeur sera not\'e $\Perv^\mix_{\mathrm{Br}}(\Flag_{\mathscr{G}},\bk)$. La Proposition~\ref{prop:Dmix-Bruhat} assure que cette cat\'egorie poss\`ede une structure naturelle de cat\'egorie de plus haut poids gradu\'ee. En particulier, on dispose d'objets basculants ind\'ecomposables ``normalis\'es'' $\mathscr{T}^\mix_{\mathscr{G},w}$ pour $w \in \WKM$.

\begin{ex}
Si $s$ est une r\'eflexion simple, l'objet $\mathscr{T}^\mix_{\mathscr{G},s}$ peut se d\'ecrire explicitement comme le complexe
\[
\cdots \to 0 \to \mathscr{E}_e \{-1\} \to \mathscr{E}_s \to \mathscr{E}_e \{1\} \to 0 \to \cdots
\]
o\`u $\mathscr{E}_s$ est en degr\'e $0$, et les diff\'erentielles sont les applications naturelles.
\end{ex}

\begin{rmq}
\begin{enumerate}
\item
Notre notation $\{1\}$ est bien s\^ur compatible avec celles utilis\'ees dans la Partie~\ref{sec:Koszul-formelle} (si l'automorphisme de d\'ecalage interne est $\la 1 \ra$).
\item
Une partie de la th\'eorie d\'evelopp\'ee dans~\cite{modrap2} s'applique \`a des vari\'et\'es tr\`es g\'en\'erales (la seule hypoth\`ese importante \'etant que les strates de la stratification consid\'er\'ee sont des espaces affines). Cependant, nous ne savons pas si la condition~\eqref{it:condition-coeur} du~\S\ref{ss:Koszul-contexte} est vraie en g\'en\'erale. Dans~\cite{modrap2}, cette condition n'est d\'emontr\'ee que dans le cas des vari\'et\'es de drapeaux (\'eventuellement paraboliques) des groupes de Kac--Moody. (La preuve s'applique \'egalement aux ``variantes'' consid\'er\'ees au~\S\ref{ss:drap-affine} ci-dessous.)
\item
Nous consid\'ererons la cat\'egorie $\Perv^\mix_{\mathrm{Br}}(\Flag_{\mathscr{G}},\bk)$ comme une ``version gradu\'ee'' de la cat\'egorie des faisceaux pervers ordinaires $\Perv_{\mathrm{Br}}(\Flag_{\mathscr{G}},\bk)$. Notons cependant qu'il n'existe aucun foncteur naturel ``d'oubli'' reliant ces cat\'egories.
\end{enumerate}
\end{rmq}

Ci-dessous on consid\'erera \'egalement la vari\'et\'e de drapeaux ``oppos\'ee'' $\Flag_{\mathscr{G}}^\op = \mathscr{B} \backslash \mathscr{G}$. Toutes les constructions ci-dessus s'appliquent de la m\^eme fa{\c c}on \`a cette vari\'et\'e ; on utilisera des notations similaires, en rempla{\c c}ant les indices ``$\mathscr{G},w$'' par ``$\mathscr{G},\op,w$''. Bien s\^ur, l'application $g \cdot \mathscr{B} \mapsto \mathscr{B} \cdot g^{-1}$ induit des \'equivalences de cat\'egories
\[
\Db_{\mathrm{Br}}(\Flag_{\mathscr{G}},\bk) \simto \Db_{\mathrm{Br}}(\Flag^\op_{\mathscr{G}},\bk) \quad \text{et} \quad
\Dmix_{\mathrm{Br}}(\Flag_{\mathscr{G}},\bk) \simto \Dmix_{\mathrm{Br}}(\Flag^\op_{\mathscr{G}},\bk)
\]
envoyant $\mathscr{E}_{\mathscr{G},w}$ sur $\mathscr{E}_{\mathscr{G},\op,w^{-1}}$, $\dmix_{\mathscr{G},w}$ sur $\dmix_{\mathscr{G},\op,w^{-1}}$, etc. ; mais comme on le verra il est parfois utile de distinguer ces deux versions.

%--------------------------------------------------------------------------
\subsection{Variante : les vari\'et\'es de drapeaux affines}
\label{ss:drap-affine}
%--------------------------------------------------------------------------

Soit $G$ un groupe alg\'ebrique complexe r\'eductif connexe, $B \subset G$ un sous-groupe de Borel, et $T \subset B$ un tore maximal. Posons $\mathscr{K}:=\C( \hspace{-1pt} (z) \hspace{-1pt} )$, $\mathscr{O}:=\C[ \hspace{-1pt} [z] \hspace{-1pt} ]$, et notons $G_{\mathscr{K}}$, resp.~$G_{\mathscr{O}}$ le ind-sch\'ema en groupe, resp.~sch\'ema en groupe, sur $\C$ qui repr\'esente le foncteur
\[
R \mapsto G \bigl( R( \hspace{-1pt} (z) \hspace{-1pt} ) \bigr), \quad \text{resp.} \quad R \mapsto G \bigl( R[ \hspace{-1pt} [z] \hspace{-1pt} ] \bigr).
\]
On a un morphisme d'\'evaluation $G_{\mathscr{O}} \to G$ (correspondant \`a $z \mapsto 0$), et on note $I$ l'image inverse de $B$ par ce morphisme. La \emph{vari\'et\'e de drapeaux affine} de $G$ est la ind-vari\'et\'e complexe
\[
\Fl_G := G_{\mathscr{K}} / I.
\]

Si $G$ est quasi-simple et simplement connexe,
cette ind-vari\'et\'e est la vari\'et\'e de drapeaux d'un groupe de Kac--Moody (affine, non tordu), et on peut donc lui appliquer les constructions du~\S\ref{ss:Dmix}. Dans le cas g\'en\'eral, $\Fl_G$ n'est pas la vari\'et\'e de drapeaux d'un groupe de Kac--Moody ; cependant elle poss\`ede une structure tr\`es similaire. En particulier on a une ``d\'ecomposition de Bruhat'' d\'ecrivant les $I$-orbites sur $\Fl_G$, qui sont isomorphes \`a des espaces affines et param\'etr\'ees par le ``groupe de Weyl affine \'etendu'' $\Wext:=W \ltimes X_*(T)$ (voir le~\S\ref{ss:Koszul-affine} ci-dessous pour plus de d\'etails sur ce groupe). On peut donc consid\'erer les complexes \`a parit\'e sur cette vari\'et\'e (pour la fonction $\dag$ donn\'ee par la parit\'e de la dimension de l'orbite), la cat\'egorie d\'eriv\'ee mixte associ\'ee $\Dmix_{\mathrm{Br}}(\Fl_G,\bk)$, et sa t-structure perverse. Les objets \`a parit\'e ind\'ecomposables dans $\Db_{\mathrm{Br}}(\Fl_G,\bk)$ sont param\'etr\'es par $\Wext \times \Z$, et on notera $\mathscr{E}_{\Fl_G,w}$ l'objet associ\'e \`a $(w,0)$ (pour $w \in \Wext$) et $\mathscr{E}_{\Fl_G,w}^\mix$ son image dans $\Dmix_{\mathrm{Br}}(\Fl_G,\bk)$. 
Les objets standards et costandards associ\'es \`a $w \in \Wext$ seront not\'es $\dmix_{\Fl_G,w}$ et $\nmix_{\Fl_G,w}$ respectivement; alors l'analogue de la
Proposition~\ref{prop:Dmix-Bruhat} est vrai, et se prouve de fa{\c c}on similaire.
Les faisceaux pervers basculants ind\'ecomposables dans $\Dmix_{\mathrm{Br}}(\Fl_G,\bk)$ sont \'egalement param\'etr\'es par $\Wext \times \Z$, et on notera $\mathscr{T}^\mix_{\Fl_G,w}$ l'objet associ\'e \`a $(w,0)$.

De m\^eme on consid\'erera la vari\'et\'e de drapeaux affine ``oppos\'ee''
\[
\Fl_G^\op = I \backslash G_{\mathscr{K}},
\]
et la cat\'egorie d\'eriv\'ee mixte associ\'ee $\Dmix_{\mathrm{Br}}(\Fl_G^\op,\bk)$. Les objets \`a parit\'e ind\'ecomposables normalis\'es, leurs images dans $\Dmix_{\mathrm{Br}}(\Fl_G^\op,\bk)$, et les faisceaux pervers basculants normalis\'es seront not\'es respectivement $\mathscr{E}_{\Fl_G^\op,w}$, $\mathscr{E}^\mix_{\Fl_G^\op,w}$ et $\mathscr{T}^\mix_{\Fl_G^\op,w}$. Les objets standards et costandards seront not\'es $\dmix_{\Fl^\op_G,w}$ et $\nmix_{\Fl^\op_G,w}$ respectivement.

Ces vari\'et\'es ont \'egalement des analogues paraboliques (souvent appel\'ees plut\^ot ``parahoriques'' dans ce contexte). Le cas qui nous int\'eressera sera celui de la ``Grassmannienne affine'' et sa version oppos\'ee, d\'efinies respectivement par
\[
\Gr_G:=G_\mathscr{K} / G_\mathscr{O}, \qquad \Gr_G^\op := G_\mathscr{O} \backslash G_\mathscr{K}.
\]
Dans ce cas on peut consid\'erer les orbites de l'action naturelle de $I$ sur $\Gr_G$ ou $\Gr_G^\op$; la combinatoire associ\'ee sera rappel\'ee au~\S\ref{ss:koszul-parabolique-aff}. Mais on peut \'egalement consid\'erer l'action du sous-groupe d'Iwahori ``oppos\'e'' $\Iw$, d\'efini comme l'image inverse par le morphisme d'\'evaluation $G_{\mathscr{O}} \to G$ du sous-groupe de Borel $B^+ \subset G$ oppos\'e \`a $B$ (par rapport \`a $T$). Dans ce cadre-l\`a on param\`etrera les orbites par $X_*(T)$, en envoyant $\lambda$ sur l'orbite de la classe de l'image de $z \in \mathscr{K}^\times$ par le morphisme $\mathscr{K}^\times \to T_{\mathscr{K}}$ induit par $\lambda$. On obtient ainsi une stratification alg\'ebrique
\[
\Gr_G = \bigsqcup_{\lambda \in X_*(T)} \Gr_{G,\lambda},
\]
o\`u chaque $\Gr_{G,\lambda}$ est isomorphe \`a un espace affine. On notera $\Db_{(\Iw)}(\Gr_G,\bk)$ la cat\'egorie d\'eriv\'ee des $\bk$-faisceaux constructibles par rapport \`a cette stratification, et $\Dmix_{(\Iw)}(\Gr_G,\bk)$ la cat\'egorie d\'eriv\'ee mixte associ\'ee. Cette cat\'egorie poss\`ede une t-structure perverse, dont on notera $\Perv^{\mix}_{(\Iw)}(\Gr_G,\bk)$ le coeur. Les objets standards et costandards associ\'es \`a $\lambda \in X_*(T)$ seront not\'es $\dmix_{\Gr_G,\lambda}$ et $\nmix_{\Gr_G,\lambda}$ respectivement, et les complexes \`a parit\'e ind\'ecomposables normalis\'es et leurs images dans $\Dmix_{(\Iw)}(\Gr_G,\bk)$ seront not\'es $\mathscr{E}_{\Gr_G,\lambda}$ et $\mathscr{E}^\mix_{\Gr_G,\lambda}$ respectivement. Encore une fois, l'analogue de la
Proposition~\ref{prop:Dmix-Bruhat} est vrai, et se prouve de fa{\c c}on similaire. Les faisceaux pervers ind\'ecomposables normalis\'es seront not\'es $\mathscr{T}^\mix_{\Gr_G,\lambda}$.

Nous utiliserons des notations similaires pour $\Gr_G^\op$ \`a la place de $\Gr_G$.

%--------------------------------------------------------------------------
\subsection{Dualit\'e de Koszul pour les faisceaux constructibles}
\label{ss:Koszul-constructible}
%--------------------------------------------------------------------------

On revient aux notations du~\S\ref{ss:Dmix} : on suppose donc que $\Flag_{\mathscr{G}}$ est la vari\'et\'e de drapeaux d'un groupe de Kac--Moody $\mathscr{G}$. On consid\`ere \'egalement la vari\'et\'e de drapeaux oppos\'ee $\Flag_{\mathscr{G}^\vee}^{\mathrm{op}}$ du groupe de Kac--Moody $\mathscr{G}^\vee$ dual de $\mathscr{G}$ au sens de Langlands (c'est-\`a-dire dont la donn\'ee radicielle de Kac--Moody est duale de celle de $\mathscr{G}$). On peut alors consid\'erer les cat\'egories $\Dmix_{\mathrm{Br}}(\Flag,\bk)$ et $\Dmix_{\mathrm{Br}}(\Flag^\op_{\mathscr{G}^{\vee}},\bk)$. 
Notons que les groupes de Weyl associ\'es \`a $\scG$ et $\scG^\vee$ sont canoniquement isomorphes ; ils seront tous deux not\'es $\WKM$.

Le th\'eor\`eme suivant est le r\'esultat principal de~\cite{amrw2}.

\begin{thm}
\label{thm:mkd}
Supposons que $\mathrm{char}(\bk)$ est 
diff\'erent de $2$.
Il existe une dualit\'e de Koszul formelle
\[
\kappa : \Dmix_{\mathrm{Br}}(\Flag_{\mathscr{G}},\bk) \simto \Dmix_{\mathrm{Br}}(\Flag^\op_{\mathscr{G}^\vee},\bk),
\]
dont la bijection associ\'ee sur les ensembles de param\`etres est $\id_\WKM$.
\end{thm}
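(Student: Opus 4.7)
Le plan est de construire $\kappa$ via une description combinatoire commune des deux cat\'egories en termes de \emph{bimodules de Soergel}. Du c\^ot\'e gauche, la cat\'egorie additive $\Par_{\mathrm{Br}}(\Flag_{\mathscr{G}},\bk)$ s'identifie (apr\`es passage \`a une variante bi\'equivariante ou ``monodromique'') \`a la cat\'egorie $\mathcal{S}\mathrm{Bim}$ des bimodules de Soergel associ\'ee \`a une r\'ealisation de la donn\'ee radicielle de Kac--Moody de $\mathscr{G}$, gr\^ace aux r\'esultats de type Elias--Williamson et \`a leurs g\'en\'eralisations modulaires dues \`a Abe et Riche--Williamson. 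Du c\^ot\'e droit, on introduit une version ``libre monodromique compl\'et\'ee'' de $\Dmix_{\mathrm{Br}}(\Flag^\op_{\mathscr{G}^\vee},\bk)$ qui contient des \emph{complexes basculants libres monodromiques} $\widetilde{\mathscr{T}}_w$ pour $w \in \WKM$\,; la sous-cat\'egorie additive qu'ils engendrent s'identifie elle aussi \`a $\mathcal{S}\mathrm{Bim}$, par une construction combinatoire utilisant la convolution et des morphismes de type Bott--Samelson. L'identification entre ces deux r\'ealisations de $\mathcal{S}\mathrm{Bim}$ repose essentiellement sur l'identification entre la r\'ealisation de $\mathscr{G}$ et celle de $\mathscr{G}^\vee$, qui est l'expression combinatoire de la dualit\'e de Langlands.

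\smallskip

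Cette identification produit une \'equivalence additive entre $\Par_{\mathrm{Br}}(\Flag_{\mathscr{G}},\bk)$ et la cat\'egorie additive engendr\'ee par les $\widetilde{\mathscr{T}}_w\{n\}$, envoyant $\mathscr{E}^{\mix}_{\mathscr{G},w}$ sur $\widetilde{\mathscr{T}}_w$ (en accord avec la Proposition~\ref{prop:Koszul-parite-basculant}). En passant aux cat\'egories d'homotopie born\'ees, on obtient un foncteur
\[
\widetilde{\kappa} : \Dmix_{\mathrm{Br}}(\Flag_{\mathscr{G}},\bk) \simto \Kb(\mathrm{Add}(\widetilde{\mathscr{T}}_w\{n\}))
\]
qui v\'erifie $\widetilde{\kappa} \circ \langle 1 \rangle \cong \{1\} \circ \widetilde{\kappa}$, la discordance de d\'ecalages provenant du fait que la graduation interne de $\mathcal{S}\mathrm{Bim}$ se transporte diff\'eremment des deux c\^ot\'es. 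Il reste \`a voir que la cat\'egorie cible s'identifie \`a $\Dmix_{\mathrm{Br}}(\Flag^\op_{\mathscr{G}^\vee},\bk)$\,; pour cela on quotiente par l'action canonique de $\Sym(V)$ avec $V = X^*(T) \otimes_{\Z} \bk$ (``d\'e-monodromisation''), de sorte que les $\widetilde{\mathscr{T}}_w$ descendent en les objets basculants usuels $\mathscr{T}^\mix_{\mathscr{G}^\vee,\op,w}$, et une variante gradu\'ee du Th\'eor\`eme~\ref{thm:ringel} garantit que la cat\'egorie d'homotopie born\'ee des basculants de $\Perv^\mix_{\mathrm{Br}}(\Flag^\op_{\mathscr{G}^\vee},\bk)$ s'identifie bien \`a la cat\'egorie d\'eriv\'ee born\'ee.

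\smallskip

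Une fois $\kappa$ ainsi d\'efini, les isomorphismes $\kappa(\dmix_{\mathscr{G},w}) \cong \dmix_{\mathscr{G}^\vee,\op,w}$ et $\kappa(\nmix_{\mathscr{G},w}) \cong \nmix_{\mathscr{G}^\vee,\op,w}$ se d\'emontrent par r\'ecurrence sur $\ell(w)$, en utilisant des r\'esolutions de type Bott--Samelson des objets standards par des complexes \`a parit\'e et la dualit\'e de Verdier. L'obstacle principal est le d\'eveloppement rigoureux du formalisme libre monodromique sur $\Flag^\op_{\mathscr{G}^\vee}$, ainsi que la d\'emonstration de l'identification entre la cat\'egorie additive des $\widetilde{\mathscr{T}}_w$ et $\mathcal{S}\mathrm{Bim}$\,: c'est l\`a qu'intervient l'hypoth\`ese $\mathrm{char}(\bk) \neq 2$, n\'ecessaire pour assurer l'existence d'une r\'ealisation ``r\'eflexion-fid\`ele'' des bimodules de Soergel et les identifications de type Elias--Williamson en caract\'eristique positive.
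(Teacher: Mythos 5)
Le texte de l'article ne d\'emontre pas ce th\'eor\`eme\,: il le cite comme le r\'esultat principal de~\cite{amrw2} (et la Remarque qui le suit pr\'ecise que l'hypoth\`ese $\mathrm{char}(\bk)\neq 2$ est une simplification d'une condition technique plus faible sur la r\'ealisation). Votre esquisse reproduit fid\`element la strat\'egie effectivement suivie dans~\cite{amrw2} et~\cite{amrw1}\,: r\'ealiser les deux cat\'egories additives (complexes \`a parit\'e d'un c\^ot\'e, basculants libres monodromiques compl\'et\'es de l'autre) comme des incarnations de la cat\'egorie de Hecke diagrammatique d'Elias--Williamson pour des r\'ealisations Langlands-duales, puis ``d\'emonodromiser'' pour redescendre \`a $\Dmix_{\mathrm{Br}}(\Flag^\op_{\mathscr{G}^\vee},\bk)$ et v\'erifier le comportement sur les (co)standards par r\'ecurrence via des complexes de Bott--Samelson. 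La seule inexactitude est dans la justification de $\mathrm{char}(\bk)\neq 2$\,: ce n'est pas la ``fid\'elit\'e par r\'eflexions'' au sens de Soergel qui est en jeu (cette notion n'intervient pas dans le cadre modulaire diagrammatique), mais des conditions du type surjectivit\'e de Demazure et inversibilit\'e de certaines quantit\'es li\'ees \`a la matrice de Cartan, qui assurent \`a la fois le bon fonctionnement du calcul diagrammatique et la comparaison des deux r\'ealisations.
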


Notons que la Proposition~\ref{prop:Koszul-parite-basculant} nous assure que l'\'equivalence du Th\'eor\`eme~\ref{thm:mkd} v\'erifie
\begin{equation}
\label{eqn:kappa-E-T}
\kappa(\mathscr{E}_{\mathscr{G},w}^\mix) \cong \mathscr{T}^{\mix}_{\mathscr{G}^\vee,\op,w} \quad \text{et} \quad \kappa(\mathscr{T}_{\mathscr{G},w}^\mix) \cong \mathscr{E}_{\mathscr{G}^\vee,\op,w}^{\mix}
\end{equation}
pour tout $w \in \WKM$.

\begin{rmq}
\begin{enumerate}
\item
Dans~\cite{amrw2} nous travaillons en fait sous une hypoth\`ese plus faible (mais plus technique \`a \'enoncer) sur $\mathrm{char}(\bk)$. Puisque cette hypoth\`ese est v\'erifi\'ee d\`es que $\mathrm{char}(\bk)$ est diff\'erent de $2$, ici nous avons simplifi\'e l'\'enonc\'e en nous restreignant \`a ce cas.
\item
Il devrait \^etre possible de g\'en\'eraliser les constructions du Th\'eor\`eme~\ref{thm:mkd} au contexte du~\S\ref{ss:drap-affine} (pour $G$ r\'eductif quelconque). Cependant les m\'ethodes utilis\'ees dans~\cite{amrw2} reposent sur les constructions de~\cite{ew}, qui n'ont \'et\'e d\'evelopp\'ees que pour les groupes de Coxeter (et non pour leurs ``cousins'' que sont les groupes de Weyl affines \'etendus) ; cette g\'en\'eralisation demanderait donc comme pr\'ealable de g\'en\'eraliser ces outils, ce que nous ne ferons pas ici. Cependant la ``trace combinatoire'' de cette \'equivalence se g\'en\'eralise facilement au cadre du~\S\ref{ss:drap-affine}; voir le~\S\ref{ss:Koszul-affine} ci-dessous. Ceci est suffisant pour toutes les applications consid\'er\'ees jusque-l\`a.
\end{enumerate}
\end{rmq}

%--------------------------------------------------------------------------
\subsection{Faisceaux coh\'erents sur la r\'esolution de Springer et t-structure exotique}
\label{ss:ets}
%--------------------------------------------------------------------------

\newcommand{\GCoh}{\mathfrak{G}}
\newcommand{\BCoh}{\mathfrak{B}}
\newcommand{\TCoh}{\mathfrak{T}}

On suppose maintenant que $\bk$ est alg\'ebriquement clos, et on note $\GCoh$ un groupe alg\'ebrique connexe r\'eductif sur $\bk$ dont le sous-groupe d\'eriv\'e est simplement connexe. On choisit un sous-groupe de Borel $\BCoh \subset \GCoh$ et un tore maximal $\TCoh \subset \BCoh$, et on pose $\bX := X^*(\TCoh)$.

Soit $\tcN_{\GCoh}$ l'espace cotangent \`a la vari\'et\'e de drapeaux $\GCoh/\BCoh$. Cette vari\'et\'e poss\`ede une action naturelle de $\GCoh \times \Gm$, o\`u $\GCoh$ agit via l'action induite par l'action naturelle sur $\GCoh/\BCoh$, et $z \in \Gm$ agit par multiplication par $z^{-2}$ le long des fibres de la projection $\tcN_{\GCoh} \to \GCoh/\BCoh$. On consid\`ere alors la cat\'egorie d\'eriv\'ee born\'ee
\[
\Db \Coh^{\GCoh \times \Gm}(\tcN_{\GCoh})
\]
de la cat\'egorie des faisceaux coh\'erents $\GCoh \times \Gm$-\'equivariants sur $\tcN_{\GCoh}$. On note\footnote{La convention adopt\'ee ici est oppos\'ee \`a celle utilis\'ee dans~\cite{mr,prinblock,ahr}.} $\la 1 \ra$ l'automorphisme donn\'e par le produit tensoriel avec le $\Gm$-module de dimension $1$ et de poids $-1$.

Dans~\cite{bez:ctm} (voir aussi~\cite{mr:ets}), Bezrukavnikov construit une collection exceptionnelle gradu\'ee $(\nabla^\Coh_\lambda : \lambda \in \bX)$ dans $\Db \Coh^{\GCoh \times \Gm}(\tcN_\GCoh)$, pour un certain ordre $\preceq$ sur $\bX$. (Cet ordre n'est pas uniquement d\'etermin\'e ; dans la suite on le fixe, en supposant en particulier que $(\bX,\preceq)$ est isomorphe \`a $(\Z,\leq)$. Ce choix n'affecte aucune des constructions consid\'er\'ees.) On notera $(\Delta^\Coh_\lambda : \lambda \in \bX)$ la collection duale, et $\dag : \bX \to \Z/2\Z$ la fonction envoyant $\lambda$ sur la classe de
\begin{equation}
\label{eqn:long-wlambda}
\sum_{\alpha \in R^+} |\langle \lambda,\alpha^\vee \rangle| - \ell(v_\lambda),
\end{equation}
o\`u $R$ est le syst\`eme de racines de $(\GCoh,\TCoh)$, $R^+ \subset R$ est le syst\`eme de racines positives tel que $\BCoh$ est le sous-groupe de Borel n\'egatif, $\alpha^\vee$ est la coracine associ\'ee \`a $\alpha$, et enfin $v_\lambda$ est l'\'el\'ement du groupe de Weyl $W$ de $(\bG,\bT)$ de longueur minimale tel que $v_\lambda(\lambda)$ est dominant (pour la structure de groupe de Coxeter sur $W$ d\'etermin\'ee par $R^+$). (Voir la Remarque~\ref{rmq:longueur-wlambda} ci-dessous pour une interpr\'etation plus naturelle de cette fonction.)

\begin{prop}
\label{prop:ets}
La cat\'egorie $\Db \Coh^{\GCoh \times \Gm}(\tcN_\GCoh)$, munie des structures ci-dessus, v\'erifie les conditions du~{\rm \S\ref{ss:Koszul-contexte}}. Soit $\dag : \bX \to \Z/2\Z$ la fonction consid\'er\'ee ci-dessus. Alors, de plus, pour tout $\lambda \in \bX$ il existe un objet \`a $\dag$-parit\'e ind\'ecomposable $\mathcal{E}_\lambda$ dans $\Db \Coh^{\GCoh \times \Gm}(\tcN_\GCoh)$ v\'erifiant les conditions du Th\'eor\`eme~{\rm \ref{thm:JMW-gradue}}.
\end{prop}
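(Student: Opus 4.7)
La preuve se d\'ecompose naturellement en deux \'etapes ind\'ependantes, reposant pour l'essentiel sur des r\'esultats d\'ej\`a \'etablis dans la litt\'erature.

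La premi\`ere \'etape est la v\'erification des axiomes du~\S\ref{ss:Koszul-contexte}. La collection $(\nabla^\Coh_\lambda : \lambda \in \bX)$ est par construction une collection exceptionnelle gradu\'ee au sens de Bezrukavnikov~\cite{bez:ctm} (voir aussi~\cite{mr:ets}), ce qui fournit imm\'ediatement les conditions~(1)--(4) du~\S\ref{ss:Koszul-contexte}. Le point essentiel restant est la condition de concentration cohomologique sur les espaces de morphismes. Celle-ci \'equivaut \`a l'existence d'une t-structure born\'ee dont les objets $\nabla^\Coh_\lambda$ et $\Delta^\Coh_\lambda$ appartiennent tous au coeur : c'est pr\'ecis\'ement la t-structure exotique construite dans~\cite{bez:ctm}. (On choisira au pr\'ealable un bon ordre sur $\bX$ isomorphe \`a $(\Z_{\geq 0}, \leq)$ raffinant l'ordre initial, ce qui n'affecte pas les propri\'et\'es essentielles de la collection exceptionnelle.)

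La deuxi\`eme \'etape, plus d\'elicate, concerne l'existence des objets \`a parit\'e $\mathcal{E}_\lambda$. L'outil-cl\'e ici serait l'adaptation modulaire de l'\'equivalence d'Arkhipov--Bezrukavnikov--Ginzburg \'etablie dans~\cite{arider, mr}, qui identifie $\Db \Coh^{\GCoh \times \Gm}(\tcN_\GCoh)$ \`a une cat\'egorie du type $\Dmix_{(\Iw)}(\Gr_{\GCoh^\vee},\bk)$, en transportant les suites exceptionnelles gradu\'ees $(\nabla^\Coh_\lambda, \Delta^\Coh_\lambda)$ sur $(\nmix_{\Gr_{\GCoh^\vee},\lambda}, \dmix_{\Gr_{\GCoh^\vee},\lambda})$ \`a un d\'ecalage interne d\'ependant de $\lambda$ pr\`es. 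L'analogue de la Proposition~\ref{prop:Dmix-Bruhat} pour la Grassmannienne affine munie de l'action d'Iwahori (\'enonc\'e rappel\'e au~\S\ref{ss:drap-affine}) fournit, pour chaque $\lambda$, un objet \`a parit\'e ind\'ecomposable normalis\'e $\mathscr{E}^\mix_{\Gr_{\GCoh^\vee},\lambda}$; son image inverse par l'\'equivalence fournirait le candidat $\mathcal{E}_\lambda$.

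La difficult\'e principale sera de v\'erifier que la fonction $\dag$ d\'efinie par~\eqref{eqn:long-wlambda} correspond effectivement, via cette \'equivalence, \`a la parit\'e de la dimension r\'eelle des $\Iw$-orbites sur $\Gr_{\GCoh^\vee}$. Cette derni\`ere vaut la longueur de l'\'el\'ement minimal de la classe $W t_\lambda$ dans le groupe de Weyl affine \'etendu, soit $\sum_{\alpha \in R^+}|\langle \lambda, \alpha^\vee \rangle| - \ell(v_\lambda)$, ce qui co\"incide num\'eriquement avec~\eqref{eqn:long-wlambda}. Le travail effectif consistera donc \`a d\'emontrer, \`a partir des constructions explicites de~\cite{arider,mr}, que le d\'ecalage interne relatif entre les deux suites exceptionnelles est compatible avec ces deux fonctions de parit\'e. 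Une fois cette compatibilit\'e \'etablie, l'existence d'un objet $\mathcal{E}_\lambda$ v\'erifiant les conditions du Th\'eor\`eme~\ref{thm:JMW-gradue} en d\'ecoule formellement, via le transport par \'equivalence et la pr\'eservation \'evidente de la structure de Krull--Schmidt et des propri\'et\'es de parit\'e.
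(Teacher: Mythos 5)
Votre premi\`ere \'etape (v\'erification des conditions du~\S\ref{ss:Koszul-contexte}) est correcte dans les grandes lignes : la collection est exceptionnelle gradu\'ee par construction, et l'essentiel est la condition~\eqref{it:condition-coeur}, \'equivalente \`a l'appartenance de tous les $\Delta^\Coh_\lambda$ et $\nabla^\Coh_\lambda$ au coeur de la t-structure exotique. Cependant, vous citez~\cite{bez:ctm}, qui travaille sur $\C$ ; le papier renvoie \`a~\cite[Corollary~3.11]{mr:ets} et~\cite[Proposition~8.5]{arider}, qui sont les adaptations modulaires n\'ecessaires ici, le corps $\bk$ \'etant de caract\'eristique positive.

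Votre deuxi\`eme \'etape contient une erreur de fond. Vous proposez de d\'efinir $\mathcal{E}_\lambda$ comme l'image inverse, par l'\'equivalence $\Psi$ du Th\'eor\`eme~\ref{thm:coh-const}, de l'objet \`a parit\'e $\mathscr{E}^\mix_{\Gr_{G^\vee},\lambda}$. Mais $\Psi$ est une dualit\'e de Koszul formelle : elle entrelace $\langle 1 \rangle$ avec $\{1\} = \langle -1 \rangle[1]$, de sorte que, par la Proposition~\ref{prop:Koszul-parite-basculant} (appliqu\'ee \`a $\Psi^{-1}$), les objets \`a parit\'e du c\^ot\'e constructible correspondent aux objets \emph{basculants} exotiques du c\^ot\'e coh\'erent, et non aux objets \`a parit\'e. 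Le papier le note d'ailleurs explicitement apr\`es le Th\'eor\`eme~\ref{thm:coh-const} : $\Psi(\mathcal{T}_\lambda) \cong \mathscr{E}^\mix_{\Gr_{G^\vee},\lambda}$ et $\Psi(\mathcal{E}_\lambda) \cong \mathscr{T}^\mix_{\Gr_{G^\vee},\lambda}$. Votre construction produirait donc $\mathcal{T}_\lambda$ et non $\mathcal{E}_\lambda$ ; le fait que les fonctions $\dag$ co\"incident num\'eriquement ne change rien \`a cette inversion. De plus, l'argument est circulaire : la D\'efinition~\ref{def:dualite-Koszul} d'une dualit\'e de Koszul formelle pr\'esuppose que les deux cat\'egories v\'erifient d\'ej\`a les conditions du~\S\ref{ss:Koszul-contexte}, ce qui est pr\'ecis\'ement l'objet de la pr\'esente Proposition du c\^ot\'e coh\'erent ; on ne peut donc pas invoquer le Th\'eor\`eme~\ref{thm:coh-const} pour l'\'etablir. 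Le papier cite~\cite[\S 3.4]{ahr}, o\`u les $\mathcal{E}_\lambda$ sont construits directement dans $\Db\Coh^{\GCoh\times\Gm}(\tcN_{\GCoh})$, sans recours \`a la dualit\'e de Koszul.
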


\begin{proof}
Comme expliqu\'e ci-dessus, la collection $(\nabla^\Coh_\lambda : \lambda \in \bX)$ est exceptionnelle gradu\'ee par construction. Le fait qu'elle v\'erifie la condition~\eqref{it:condition-coeur} du~\S\ref{ss:Koszul-contexte} est le contenu de~\cite[Corollary~3.11]{mr:ets} (voir \'egalement~\cite[Proposition~8.5]{arider}). Enfin, l'existence des objets \`a parit\'e est d\'emontr\'ee dans~\cite[\S 3.4]{ahr}.
\end{proof}

La t-structure associ\'ee \`a la collection exceptionnelle gradu\'ee $(\nabla^\Coh_\lambda : \lambda \in \bX)$ est appel\'ee la \emph{t-structure exotique}. La Proposition~\ref{prop:ets} implique que le coeur de cette t-structure est une cat\'egorie gradu\'ee de plus haut poids ; on peut donc consid\'erer ses objets basculants ind\'ecomposables. Ceux-ci sont param\'etr\'es par $\bX \times \Z$, et on notera $\mathcal{T}_\lambda$ l'objet associ\'e \`a $(\lambda,0)$ (pour $\lambda \in \bX$).

%-----------------------------------------------------
\subsection{Dualit\'e de Koszul constructibles-coh\'erents}
\label{ss:koszul-coh-const}
%-----------------------------------------------------

On continue avec les notations du~\S\ref{ss:ets}, et on suppose de plus que la caract\'eristique de $\bk$ est bonne pour $\GCoh$, et que l'alg\`ebre de Lie de $\GCoh$ poss\`ede une forme bilin\'eaire $\GCoh$-invariante non-d\'eg\'en\'er\'ee. 
On notera $G^\vee$ le groupe r\'eductif connexe complexe qui est dual de $\GCoh$ au sens de Langlands, avec le tore maximal et le sous-groupe de Borel $T^\vee \subset B^\vee \subset G^\vee$ qui sont duaux de $\TCoh \subset \BCoh \subset \GCoh$, et $\Gr_{G^\vee}$ la Grassmannienne affine de $G^\vee$. On a alors, par d\'efinition $\bX=X^*(\TCoh) = X_*(T^\vee)$. Comme au~\S\ref{ss:drap-affine} on consid\`ere l'action sur $\Gr_{G^\vee}$ du sous-groupe d'Iwahori $\Iw$ d\'efini par le sous-groupe de Borel oppos\'e \`a $B^\vee$ (par rapport \`a $T^\vee$), et la cat\'egorie d\'eriv\'ee mixte associ\'ee $\Dmix_{(\Iw)}(\Gr_{G^\vee},\bk)$.

L'\'enonc\'e suivant est le r\'esultat principal de~\cite{arider} (voir \'egalement~\cite{mr} pour une preuve alternative, sous des hypoth\`ese l\'eg\`erement plus fortes).

\begin{thm}
\label{thm:coh-const}
Il existe une dualit\'e de Koszul formelle
\[
\Psi : \Db \Coh^{\GCoh \times \Gm}(\tcN_{\GCoh}) \simto \Dmix_{(\Iw)}(\Gr_{G^\vee},\bk)
\]
dont la bijection associ\'ee sur les ensembles de param\`etres est $\id_\bX$.
\end{thm}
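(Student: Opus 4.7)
Le plan est de construire le foncteur $\Psi$ en utilisant l'\'equivalence de Satake g\'eom\'etrique, puis de v\'erifier qu'il satisfait les axiomes de la D\'efinition~\ref{def:dualite-Koszul}. On proc\'edera en quatre \'etapes.

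\emph{\'Etape 1 : r\'eduction \`a une \'equivalence d'objets ``mod\`eles''.} Par d\'efinition on a $\Dmix_{(\Iw)}(\Gr_{G^\vee},\bk) = \Kb \Par_{(\Iw)}(\Gr_{G^\vee},\bk)$. Du c\^ot\'e coh\'erent, la Proposition~\ref{prop:ets} assure l'existence des objets \`a parit\'e $\mathcal{E}_\lambda$ pour la t-structure exotique. En utilisant le fait que ces objets engendrent la cat\'egorie par d\'ecalages et extensions, et en s'appuyant sur la propri\'et\'e que les $\Hom$ entre objets \`a parit\'e sont concentr\'es en degr\'e cohomologique $0$ (au sens appropri\'e), on montre que le foncteur naturel
\[
\Kb(\Par^\Coh) \to \Db \Coh^{\GCoh \times \Gm}(\tcN_{\GCoh})
\]
(o\`u $\Par^\Coh$ d\'esigne la sous-cat\'egorie additive pleine des objets \`a parit\'e exotiques) est une \'equivalence de cat\'egories triangul\'ees. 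Il suffit donc de construire une \'equivalence de cat\'egories additives gradu\'ees
\[
\psi : \Par^\Coh \simto \Par_{(\Iw)}(\Gr_{G^\vee},\bk)
\]
envoyant $\mathcal{E}_\lambda$ sur $\mathscr{E}_{\Gr_{G^\vee},\lambda}$ et commutant aux d\'ecalages internes.

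\emph{\'Etape 2 : construction du foncteur via Satake et convolution.} L'\'equivalence de Satake g\'eom\'etrique identifie la cat\'egorie monoidale $\Perv_{G^\vee_\mathscr{O}}(\Gr_{G^\vee},\bk)$ (sous convolution) avec $\Rep(\GCoh)$. Un objet cl\'e est le ``faisceau asph\'erique'' $\mathscr{A}_0$ sur $\Gr_{G^\vee}$, qui via la construction d'Arkhipov--Bezrukavnikov--Ginzburg (adapt\'ee au cadre parit\'e dans~\cite{arider,mr}) admet un analogue $\bk$-lin\'eaire. Le foncteur $\Psi$ est alors essentiellement d\'efini par
\[
\mathcal{F} \mapsto \mathscr{S}(\mathcal{F}) \star \mathscr{A}_0,
\]
o\`u $\mathscr{S}$ est la r\'ealisation g\'eom\'etrique ``d\'ecal\'ee'' de l'\'equivalence de Satake et $\star$ d\'esigne la convolution naturelle, le tout tensoris\'e et twist\'e convenablement par la structure $\Gm$-\'equivariante pour qu'elle corresponde \`a la graduation mixte.

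\emph{\'Etape 3 : compatibilit\'e avec les objets standards et costandards.} C'est le c{\oe}ur technique de la preuve, et l'obstacle principal. Il faut identifier, \`a normalisations pr\`es, les images par $\Psi$ des objets $\Delta^\Coh_\lambda$ et $\nabla^\Coh_\lambda$ avec les objets $\dmix_{\Gr_{G^\vee},\lambda}$ et $\nmix_{\Gr_{G^\vee},\lambda}$. Pour $\lambda$ dominant, cela d\'ecoule d'un calcul direct utilisant l'identification de $\Gr_{G^\vee,\lambda}$ avec la r\'esolution de Bott--Samelson appropri\'ee et les propri\'et\'es de la construction centrale de Gaitsgory (pour $\mathscr{A}_0$). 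Le cas g\'en\'eral s'en d\'eduit par r\'ecurrence sur l'ordre de Bruhat, \`a l'aide des foncteurs de ``convolution par un objet Wakimoto'' correspondant aux r\'eflexions affines simples, qui sur les deux c\^ot\'es co\"incident avec l'action des twists par les fibr\'es en droites $\mathcal{O}_{\tcN_\GCoh}(\lambda)$ (c\^ot\'e coh\'erent) et avec la convolution par les objets $\dmix_{\Fl_{G^\vee},\lambda}$ ou $\nmix_{\Fl_{G^\vee},\lambda}$ (c\^ot\'e constructible). V\'erifier la compatibilit\'e \`a ces op\'erations de tressage, de mani\`ere uniforme en caract\'eristique positive et sans s'appuyer sur des propri\'et\'es de formalit\'e \`a la ABG, est l'\'etape qui demande le plus de travail.

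\emph{\'Etape 4 : v\'erification des axiomes.} Une fois cette compatibilit\'e \'etablie, $\Psi$ envoie clairement $\mathcal{E}_\lambda$ sur un objet \`a parit\'e ind\'ecomposable de $\Dmix_{(\Iw)}(\Gr_{G^\vee},\bk)$ dont l'image dans la strate ouverte correspondante est $\nmix_{\Gr_{G^\vee},\lambda}$, donc isomorphe \`a $\mathscr{E}^{\mix}_{\Gr_{G^\vee},\lambda}$ par le Th\'eor\`eme~\ref{thm:JMW-gradue}. Ceci fournit l'\'equivalence $\psi$ de l'\'etape~1, et donc l'\'equivalence triangul\'ee voulue. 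La relation $\Psi \circ \la 1\ra \cong \{1\} \circ \Psi$ provient du choix de normalisation $\Gm$-\'equivariante/mixte, et l'identification de l'ordre partiel r\'esulte du fait que les deux ordres (l'ordre utilis\'e pour la t-structure exotique et l'ordre de Bruhat sur les orbites) sont tous deux compatibles avec la stratification en orbites de $\Iw$ via l'identification $X^*(\TCoh) = X_*(T^\vee)$.
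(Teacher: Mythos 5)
Le papier ne d\'emontre pas le Th\'eor\`eme~\ref{thm:coh-const} : il renvoie \`a~\cite{arider} (preuve par dg-mod\`eles et cohomologie \'equivariante) et~\cite{mr} (preuve alternative via les faisceaux de Wakimoto et l'unicit\'e des faisceaux basculants exotiques). Votre esquisse tente donc de reconstruire l'une de ces preuves. Votre \'Etape~1 est une r\'eduction valide : le fait que $\Kb(\Par^\Coh) \to \Db\Coh^{\GCoh\times\Gm}(\tcN_{\GCoh})$ soit une \'equivalence d\'ecoule bien de la version gradu\'ee du Lemme~\ref{lem:annulation-Hom-parite} (qui donne $\Hom(X, Y\la a\ra[b])=0$ pour $b \ne -a$ entre objets \`a parit\'e gradu\'es, donc la concentration des $\Ext$ en degr\'e~$0$) et d'un argument triangulaire de g\'en\'eration. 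Cette r\'eduction est d'ailleurs dans l'esprit de~\cite{mr}.

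En revanche l'\'Etape~2 contient une erreur conceptuelle qui fait \'echouer la construction : la formule ``$\mathcal{F}\mapsto\mathscr{S}(\mathcal{F})\star\mathscr{A}_0$'' n'a pas de sens, car l'\'equivalence de Satake g\'eom\'etrique relie $\Perv_{G^\vee_\mathscr{O}}(\Gr_{G^\vee},\bk)$ \`a $\Rep(\GCoh)$, et non la cat\'egorie (beaucoup plus grosse) $\Db\Coh^{\GCoh\times\Gm}(\tcN_{\GCoh})$ ; une telle ``r\'ealisation de Satake'' $\mathscr{S}$ d'un faisceau coh\'erent sur $\tcN_{\GCoh}$ n'existe pas, et la difficult\'e pr\'ecise du th\'eor\`eme de type ABG est justement de traiter la ``direction de Springer'' et la $\Gm$-\'equivariance (c\^ot\'e coh\'erent : le $\Sym$ des fonctions sur $\tcN$ ; c\^ot\'e constructible : la cohomologie $\Iw$-\'equivariante). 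De plus le terme ``faisceau asph\'erique $\mathscr{A}_0$'' ne correspond ni \`a un objet de~\cite{abg}, ni \`a un objet utilis\'e dans~\cite{arider,mr} dans ce contexte. Enfin, vous identifiez correctement l'\'Etape~3 (compatibilit\'e aux op\'erations de Wakimoto/Bernstein) comme le c{\oe}ur technique, mais vous ne donnez aucune indication sur la fa{\c c}on de l'\'etablir --- or c'est pr\'ecis\'ement ce point qui constitue l'essentiel de~\cite{mr} ; sans lui, l'\'equivalence additive $\psi$ de l'\'Etape~1, et donc le th\'eor\`eme, restent non d\'emontr\'es.
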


Notons que la Proposition~\ref{prop:Koszul-parite-basculant} nous assure que l'\'equivalence du Th\'eor\`eme~\ref{thm:coh-const} v\'erifie
\[
\Psi(\mathcal{E}_{\lambda}) \cong \mathscr{T}^{\mix}_{\Gr_{G^\vee},\lambda} \quad \text{et} \quad \Psi(\mathcal{T}_\lambda) \cong \mathscr{E}_{\Gr_{G^\vee},\lambda}^{\mix}
\]
pour tout $\lambda \in \bX$.

%-----------------------------------------------------
\subsection{Des faisceaux coh\'erents aux repr\'esentations}
\label{ss:coh-rep}
%-----------------------------------------------------

Comme aux~\S\S\ref{ss:ets}--\ref{ss:koszul-coh-const} on suppose que $\bk$ est un corps alg\'ebriquement clos, mais on suppose maintenant que sa caract\'eristique $p$ est positive.
Pour tout $\bk$-groupe alg\'ebrique affine $\mathbf{H}$, on notera $\dot{\mathbf{H}}$ le d\'ecal\'e de Frobenius de $\mathbf{H}$ (voir par exemple~\cite[\S I.9.2]{jantzen}).

Soit $\bG$ un groupe alg\'ebrique connexe r\'eductif sur $\bk$, \`a sous-groupe d\'eriv\'e simplement connexe, et soient $\bT \subset \bB \subset \bG$ un tore maximal et un sous-groupe de Borel.
Dans ce paragraphe on supposera que $p>h$, o\`u $h$ est le nombre de Coxeter de $\bG$. 
Alors le groupe $\GCoh:=\dot{\bG}$ v\'erifie les hypoth\`eses du~\S\ref{ss:koszul-coh-const}. On appliquera les constructions de ce paragraphe pour le tore maximal $\TCoh = \dot{\bT}$ et le sous-groupe de Borel $\BCoh=\dot{\bB}$. On identifiera le groupe des caract\`eres de $\bT$ \`a $\bX=X^*(\dot{\bT})$ de sorte que la composition avec le morphisme de Frobenius $\bT \to \dot{\bT}$ correspond \`a la multiplication par $p$ sur $\bX$.

\begin{rmq}
Il est plus courant en th\'eorie des repr\'esentations d'imposer l'hypoth\`ese $p \geq h$ plut\^ot que $p>h$. On peut v\'erifier (en consultant la table des nombres de Coxeter par exemple) que cette derni\`ere condition est \'equivalente \`a demander que :
\begin{enumerate}
\item
$p \geq h$ ;
\item
$p$ est tr\`es bon pour $\bG$.
\end{enumerate}
\end{rmq}

On notera
$\Rep(\bG)$ la cat\'egorie des repr\'esentations alg\'ebriques de dimension finie de $\bG$. Si $\bX^+ \subset \bX$ est le sous-ensemble des poids dominants (pour le choix de racines positives tel que $\bB$ est le sous-groupe de Borel \emph{n\'egatif}), alors pour $\lambda \in \bX^+$ on notera $\mathsf{N}(\lambda)$ le module induit de plus haut poids $\lambda$ (not\'e $H^0(\lambda)$ dans~\cite{jantzen}) et $\mathsf{M}(\lambda)$ le module de Weyl de plus haut poids $\lambda$. Si $w_0$ est l'\'el\'ement de plus grande longueur dans le groupe de Weyl $W$ de $(\bG,\bT)$, alors on a $\mathsf{M}(\lambda) = \mathsf{N}(-w_0\lambda)^*$. Il existe (\`a scalaire pr\`es) un unique morphisme de $\bG$-modules non nul $\mathsf{M}(\lambda) \to \mathsf{N}(\lambda)$; son image est l'unique sous-module simple de $\mathsf{N}(\lambda)$, qu'on notera $\mathsf{L}(\lambda)$. 

D'apr\`es~\cite[\S II.4.13, Remark~2]{jantzen}, les objets $(\mathsf{N}(\lambda) : \lambda \in \bX^+)$ forment une collection exceptionnelle dans la cat\'egorie triangul\'ee $\Db \Rep(\bG)$ pour l'ordre naturel sur $\bX^+$ (voir par exemple~\cite[\S II.1.5]{jantzen}). La famille duale est la famille $(\mathsf{M}(\lambda) : \lambda \in \bX^+)$ (voir~\cite[Proposition~II.4.13]{jantzen}), et la t-structure associ\'ee est la t-structure tautologique. Puisque les objets $\mathsf{N}(\lambda)$ et $\mathsf{M}(\lambda)$ sont concentr\'es en degr\'e $0$, la cat\'egorie $\Rep(\bG)$ poss\`ede une structure naturelle de cat\'egorie de plus haut poids, et on peut donc consid\'erer ses objets basculants ; les objets basculants ind\'ecomposables sont classifi\'es (\`a isomorphisme pr\`es) par $\bX^+$, et on notera $\mathsf{T}(\lambda)$ l'objet associ\'e \`a $\lambda$ (voir \'egalement~\cite[Appendice~E]{jantzen}).

Consid\'erons maintenant le groupe de Weyl affine \'etendu
\[
\Wext := W \ltimes \bX
\]
et son sous-groupe $\Waff := W \ltimes \Z R$ (o\`u $R$ est le syst\`eme de racines de $(\bG,\bT)$, et $\Z R \subset \bX$ est le r\'eseau radiciel). On notera $t_\lambda$ l'image dans $\Wext$ d'un \'el\'ement $\lambda \in \bX$. Alors $\Waff$ poss\`ede une structure naturelle de groupe de Coxeter (voir~\cite[Chap.~II.6]{jantzen}). Sa fonction de longueur v\'erifie
\[
\ell(w \cdot t_\lambda)=\sum_{\substack{\alpha \in R^+ \\  w(\alpha) \in R^+}} |\langle \lambda,
\alpha^\vee \rangle | + \sum_{\substack{\alpha \in
     R^+ \\ w(\alpha) \in - R^+}}
  |1 + \langle \lambda, \alpha^\vee \rangle |
\]
pour $w \in W$ et $\lambda \in \Z R$. Cette formule a un sens plus g\'en\'eralement pour $\lambda \in \bX$, ce qui permet d'\'etendre $\ell$ en une fonction sur $\Wext$ (qu'on notera de la m\^eme fa{\c c}on). En utilisant cette fonction on peut \'egalement \'etendre de fa{\c c}on naturelle l'ordre de Bruhat sur $\Waff$ en un ordre partiel sur $\Wext$.

On consid\`ere l'action de $\Wext$ sur $\bX$ d\'efinie par
\[
(w t_\lambda) \cdot_p \mu = w(\mu + p\lambda + \rho) - \rho
\]
pour $\lambda,\mu \in \bX$ et $w \in W$, o\`u $\rho$ est la demi-somme des racines positives de $\bG$. Alors il est bien connu que, sous nos hypoth\`eses, pour $w \in \Wext$ le poids $w \cdot_p 0$ est dominant si et seulement si $w$ appartient au sous-ensemble $\Wextmin \subset \Wext$ form\'e des \'el\'ements $y$ de longueur minimale dans $W \cdot y$. De plus, on a des bijections naturelles
\[
\bX \simto W \backslash \Wext \simto \Wextmin ; 
\]
leur compos\'ee sera not\'ee $\lambda \mapsto w_\lambda$. On utilise cette bijection pour transf\'erer l'ordre $\preceq$ sur $\bX$ en un ordre sur $\Wextmin$, qu'on notera par le m\^eme symbole. (Par construction, cet ordre \'etend la restriction \`a $\Wextmin$ de l'ordre de Bruhat sur $\Wext$.)

\begin{rmq}
\label{rmq:longueur-wlambda}
Une fois ces notations introduites, on peut interpr\'eter la quantit\'e~\eqref{eqn:long-wlambda} comme $\ell(w_\lambda)$ ; voir par exemple~\cite[Lemma~2.4]{mr:ets}.
\end{rmq}

Pour $w \in \Wextmin$, on pose maintenant
\[
\mathsf{L}_w := \mathsf{L}(w \cdot_p 0), \quad \mathsf{N}_w := \mathsf{N}(w \cdot_p 0), \quad \mathsf{M}_w := \mathsf{M}(w \cdot_p 0), \quad \mathsf{T}_w := \mathsf{T}(w \cdot_p 0).
\]
Le ``linkage principle'' (voir~\cite[Corollary~II.6.17]{jantzen}) assure que la sous-cat\'egorie de Serre $\Rep_\varnothing(\bG)$ de $\Rep(\bG)$ engendr\'ee par les modules simple $(\mathsf{L}_w : w \in \Wextmin)$ est un facteur direct de $\Rep(\bG)$, qui contient les repr\'esentations $\mathsf{N}_w$, $\mathsf{M}_w$ et $\mathsf{T}_w$ pour $w \in \Wextmin$. De plus, la cat\'egorie $\Rep_\varnothing(\bG)$, munie des objets $(\mathsf{N}_w : w \in \Wextmin)$ et $(\mathsf{M}_w : w \in \Wextmin)$, est encore une cat\'egorie de plus haut poids pour l'ordre $\preceq$ sur $\Wextmin$ (voir notamment~\cite[Proposition~II.6.16]{jantzen} et~\cite[Lemma~10.1]{prinblock}).

L'\'enonc\'e suivant est le r\'esultat principal de~\cite{prinblock}.

\begin{thm}
\label{thm:prinblock}
Il existe une dualit\'e de Koszul formelle d\'egraduante
\[
\Xi : \Db \Coh^{\dot \bG \times \Gm}(\tcN_{\dot \bG}) \to \Db \Rep_{\varnothing}(\bG)
\]
dont la bijection associ\'ee sur les espaces de param\`etres est $\lambda \mapsto w_\lambda$.
\end{thm}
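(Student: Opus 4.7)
Le plan est de construire $\Xi$ en deux temps, en commen{\c c}ant par une \'equivalence ``non gradu\'ee'' entre faisceaux coh\'erents $\dot{\bG}$-\'equivariants sur $\tcN_{\dot{\bG}}$ et la cat\'egorie d\'eriv\'ee du bloc principal, puis en exploitant la $\Gm$-\'equivariance pour obtenir la structure d\'egraduante. La premi\`ere \'etape est de construire un \'equivalent modulaire de la localisation de Beilinson--Bernstein. Plus pr\'ecis\'ement, je r\'eduirais d'abord \`a un \'enonc\'e pour la r\'esolution de Springer affine ``partielle'' en utilisant une d\'eg\'en\'erescence vers le c\^one nilpotent, puis j'utiliserais la th\'eorie de Bezrukavnikov--Mirkovi\'c--Rumynin (localisation en caract\'eristique positive) pour identifier les faisceaux coh\'erents \'equivariants sur $\tcN_{\dot{\bG}}$ avec une cat\'egorie de modules sur un bloc du groupe $\bG_1 \bT$ (produit du noyau de Frobenius avec le tore). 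Un passage \`a l'induction \`a $\bG$ (au sens de l'\'equivalence d'Andersen--Jantzen--Soergel) fournit alors une \'equivalence non gradu\'ee
\[
\widetilde{\Xi} : \Db \Coh^{\dot{\bG}}(\tcN_{\dot{\bG}}) \simto \Db \Rep_\varnothing(\bG).
\]

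Dans un deuxi\`eme temps, j'observerais que l'action de $\Gm$ sur $\tcN_{\dot{\bG}}$ par homoth\'etie dans les fibres de la projection $\tcN_{\dot{\bG}} \to \dot{\bG}/\dot{\bB}$ n'a pas de contrepartie directe sur $\Rep_\varnothing(\bG)$, mais permet tout de m\^eme d'\'enrichir $\widetilde{\Xi}$ en un foncteur d\'egraduant $\Xi$ : l'id\'ee est que l'action de $\Gm$ par le poids $-2$ (corr\'elat\'ee \`a l'absence de racine carr\'ee dans le cadre modulaire) correspond, apr\`es Koszul dualit\'e formelle, au d\'ecalage cohomologique sur le c\^ot\'e repr\'esentations. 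Concr\`etement, en choisissant un g\'en\'erateur tilting $\mathcal{E} := \bigoplus_\lambda \mathcal{E}_\lambda$ pour une sous-cat\'egorie finie bien choisie (voir la Proposition~\ref{prop:ets}), je calculerais l'alg\`ebre gradu\'ee $\uEnd^\bullet(\mathcal{E})$ et montrerais qu'elle est isomorphe (comme alg\`ebre gradu\'ee) \`a $\Ext^\bullet_{\Rep_\varnothing(\bG)}(\mathsf{P},\mathsf{P})$ o\`u $\mathsf{P}$ est la somme des couvertures projectives correspondantes. La pleine fid\'elit\'e d\'ecoule alors du Lemme~\ref{lem:koszul-degrad} et de la condition $\Phi \circ \la 1 \ra \cong [1] \circ \Phi$.

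La v\'erification que $\Xi(\Delta^\Coh_\lambda) \cong \mathsf{M}_{w_\lambda}$ et $\Xi(\nabla^\Coh_\lambda) \cong \mathsf{N}_{w_\lambda}$ se ram\`ene \`a une identification des t-structures et de leurs objets (co)standards. Le fait-cl\'e est que la t-structure exotique sur $\Db \Coh^{\dot{\bG} \times \Gm}(\tcN_{\dot{\bG}})$ correspond, via $\Xi$, \`a la t-structure naturelle sur $\Db \Rep_\varnothing(\bG)$ ; ceci devrait se d\'eduire de la compatibilit\'e entre la classification des objets simples du c\^ot\'e exotique par $\bX$ (via Bezrukavnikov) et celle des simples du bloc principal par $\Wextmin$ (via le ``linkage principle''), la bijection \'etant pr\'ecis\'ement $\lambda \mapsto w_\lambda$ (on retrouve ici la Remarque~\ref{rmq:longueur-wlambda} qui relie $\ell(w_\lambda)$ au param\`etre $\dag(\lambda)$).

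L'obstacle principal sera la compatibilit\'e fine entre la $\Gm$-\'equivariance du c\^ot\'e coh\'erent et le d\'ecalage cohomologique du c\^ot\'e repr\'esentation. Contrairement au cas classique de~\cite{abg}, o\`u la Koszul dualit\'e entre alg\`ebres sym\'etrique et ext\'erieure fournit directement ce lien, en caract\'eristique positive il n'existe pas de telle Koszul dualit\'e sous-jacente, et il faut raisonner indirectement via les objets \`a parit\'e (Proposition~\ref{prop:ets}) et leur correspondance avec les objets basculants de $\Rep_\varnothing(\bG)$ (qu'on attend d'\^etre $\Xi(\mathcal{E}_\lambda) \cong \mathsf{T}_{w_\lambda}$ par la Proposition~\ref{prop:Koszul-parite-basculant-degrad}). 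La compatibilit\'e des caract\`eres de ces objets dans le groupe de Grothendieck, combin\'ee avec l'unicit\'e des rel\`evements gradu\'es \`a partir d'une \'equivalence non gradu\'ee d\'ej\`a construite, devrait suffire \`a conclure.
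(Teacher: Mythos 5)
The paper offers no proof of this theorem; it simply cites~\cite{prinblock}. Your sketch follows a route that~\cite{prinblock} is explicitly designed to avoid, and it contains a genuine gap. Your first step runs through Bezrukavnikov--Mirkovi\'c--Rumynin localization in positive characteristic (with a degeneration to the nilpotent cone) to identify $\Db\Coh^{\dot\bG}(\tcN_{\dot\bG})$ with a block of $\bG_1\bT$-modules, and then invokes the Andersen--Jantzen--Soergel equivalence to reach $\Rep_\varnothing(\bG)$. But the AJS equivalence relates blocks of $\bG_1\bT$-modules to a \emph{combinatorial} category of deformed Soergel modules; it supplies no ``induction to $\bG$.'' And there is in fact no naive equivalence between $\Rep_\varnothing(\bG_1\bT)$ and $\Rep_\varnothing(\bG)$: for $M,N \in \Rep(\bG)$ one has $\Hom_{\bG_1\bT}(M,N) \cong \Hom_{\bG_1}(M,N)^{\dot\bT}$ while $\Hom_\bG(M,N) \cong \Hom_{\bG_1}(M,N)^{\dot\bG}$, so restriction $\Rep_\varnothing(\bG) \to \Rep_\varnothing(\bG_1\bT)$ is not even full. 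Moreover, the programme this theorem belongs to is deliberately independent of AJS, so routing through it would undermine the point. The argument in~\cite{prinblock} instead follows the template of~\cite{abg}: $\Xi$ is built directly from an explicit kernel tied to the $\bG_1$-cohomology of a Steinberg-type tilting object --- this is why the ``amplification'' $\Xi(\cF \otimes V) \cong \Xi(\cF) \otimes \Fr^*V$ used in the proof of Proposition~\ref{prop:cohom-glob} is available and why $\Xi(\cO_{\tcN_{\dot\bG}}) \cong \bk$ --- combined with geometric Satake and a comparison of exotic tilting sheaves with tilting $\bG$-modules. Neither localization theory nor AJS appears.

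Your second step treats the graded enrichment as routine, but it is where nearly all the content lies. Promoting a non-graded equivalence $\widetilde{\Xi}$ to a functor $\Xi$ with $\Xi \circ \la 1 \ra \cong [1] \circ \Xi$ is a shearing statement akin to a formality theorem for a graded $\Ext$-algebra; ``l'unicit\'e des rel\`evements gradu\'es'' plus compatibility of classes in the Grothendieck group does not supply it. What is actually needed is the graded $\Ext$-comparison that~\cite{prinblock} obtains from the explicit kernel together with the purity of the exotic parity objects (Proposition~\ref{prop:ets}). Finally, invoking Proposition~\ref{prop:Koszul-parite-basculant-degrad} to conclude $\Xi(\mathcal{E}_\lambda) \cong \mathsf{T}_{w_\lambda}$ is circular: that proposition \emph{assumes} the existence of a dualit\'e de Koszul formelle d\'egraduante and derives the statement about tiltings from it, not the reverse.
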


La Proposition~\ref{prop:Koszul-parite-basculant-degrad} nous assure que l'\'equivalence $\Xi$ du Th\'eor\`eme~\ref{thm:prinblock} v\'erifie
\[
\Xi(\mathcal{E}_\lambda) \cong \mathsf{T}_{w_\lambda}
\]
pour tout $\lambda \in \bX$.

%%%%%%%%%%%%%%%%%%%%%%%%%%%%%%%%
\section{Traces combinatoires}
\label{sec:traces-comb}
%%%%%%%%%%%%%%%%%%%%%%%%%%%%%%%%

Dans cette partie on interpr\`ete les resultats du~\S\ref{sec:exemples} en termes de la ``base $p$-canonique'' de Williamson.

%----------------------------------------------------------------
\subsection{Base $p$-canonique}
\label{ss:base-p-can}
%----------------------------------------------------------------

Consid\'erons le cadre du~\S\ref{ss:Dmix}: $\mathscr{G}$ est un groupe de Kac--Moody, $\Flag_{\mathscr{G}}$ la vari\'et\'e de drapeaux correspondante, 
et $\WKM$ son groupe de Weyl,
qu'on munit de sa structure naturelle de groupe de Coxeter. On choisit un corps $\bk$ de caract\'eristique $p \neq 2$.

\`A $\WKM$ on peut associer son alg\`ebre de Hecke $\mathcal{H}_\WKM$. (On suivra les conventions de notation de~\cite{soergel-comb-tilting}; en particulier la base standard de $\mathcal{H}_\WKM$ sera not\'ee $(H_w : w \in \WKM)$, et sa base de Kazhdan--Lusztig sera not\'ee $(\underline{H}_w : w \in \WKM)$.) Si on note $[\Dmix_{\mathrm{Br}}(\Flag_{\mathscr{G}},\bk)]$ le groupe de Grothendieck de la cat\'egorie triangul\'ee $\Dmix_{\mathrm{Br}}(\Flag_{\mathscr{G}},\bk)$, il n'est pas difficile de v\'erifier qu'il existe un unique isomorphisme de $\Z[v,v^{-1}]$-modules
\[
\varphi_{\mathscr{G}}^\bk : [\Dmix_{\mathrm{Br}}(\Flag_{\mathscr{G}},\bk)] \simto \mathcal{H}_\WKM
\]
envoyant $[\dmix_{\mathscr{G},w}\{n\}]$ sur $v^n H_w$. Sous cet isomorphisme, pour toute r\'eflexion simple $s$ l'action de $\underline{H}_s$ sur $\mathcal{H}_\WKM$ correspond au foncteur induit par la convolution \`a droite avec le complexe \`a parit\'e $\underline{\bk}_{\Flag_{\mathscr{G},s}}\{1\}$ (un complexe $\mathscr{B}$-\'equivariant).

De fa{\c c}on plus concr\`ete, si $\mathscr{E}$ est un complexe \`a parit\'e sur $\Flag_{\mathscr{G}}$, qu'on identifie \`a l'objet de $\Dmix_{\mathrm{Br}}(\Flag_{\mathscr{G}},\bk)$ donn\'e par le complexe concentr\'e en degr\'e $0$, avec $\mathscr{E}$ en degr\'e $0$, alors on a
\begin{equation}
\label{eqn:phi-parity}
\varphi_{\mathscr{G}}^\bk([\mathscr{E}])= \sum_{\substack{n \in \Z \\ w \in \WKM}} \dim_\bk \mathsf{H}^{-\ell(w)-n}(\Flag_{\mathscr{G},w}, \mathscr{E}_{|\Flag_{\mathscr{G},w}}) \cdot v^n H_w.
\end{equation}

\begin{defn}
Pour $w \in \WKM$, on pose
\[
\puH^{\mathscr{G}}_w := \varphi_{\mathscr{G}}^\bk(\mathscr{E}^\mix_{\mathscr{G},w}).
\]
\end{defn}

Il est facile de v\'erifier que :
\begin{enumerate}
\item
les \'el\'ements $\puH^{\mathscr{G}}_w$ 
ne d\'ependent pas de $\bk$ lui-m\^eme, mais uniquement de sa caract\'eristique (ce qui justifie la notation) ;
\item
la famille $(\puH^{\mathscr{G}}_w : w \in \WKM)$
est une base de $\mathcal{H}_\WKM$.
\end{enumerate}

La famille $(\puH^{\mathscr{G}}_w : w \in \WKM)$ est la \emph{base $p$-canonique} de Williamson (voir~\cite{jw} ; le lien entre la d\'efinition adopt\'ee dans~\cite{jw} et celle consid\'er\'ee ici---qui est sugg\'er\'ee dans~\cite{jmw}---est \'etabli dans~\cite[Partie III]{rw}). Dans le cas o\`u $p=0$, des r\'esultats de Kazhdan--Lusztig~\cite{kl} (voir \'egalement~\cite{springer}) montrent que $\mathscr{E}_{\mathscr{G},w}$ est le complexe de cohomologie d'intersection de $\Flag_{\mathscr{G,w}}$, et que ${}^0 \hspace{-1pt} \underline{H}_w^{\mathscr{G}} = \underline{H}_w$ pour tout $w \in \WKM$. Il est connu \'egalement que pour tout $w \in \WKM$ il existe $N(w) \in \Z_{\geq 0}$ tel que $\puH_w^{\mathscr{G}} = \underline{H}_w$ pour tout nombre premier $p \geq N(w)$; mais l'entier $N(w)$ n'est pas connu, et semble difficile \`a d\'ecrire (sauf dans des cas tr\`es particuliers).

De m\^eme que les polyn\^omes de Kazhdan--Lusztig se d\'efinissent \`a partir de la base de Kazhdan--Lusztig de $\mathcal{H}_\WKM$, on d\'efinit les \emph{$p$-polyn\^omes de Kazhdan--Lusztig} $(\ph^{\mathscr{G}}_{x,y} : x,y \in \WKM)$ comme les coefficients des \'el\'ements de la base $p$-canonique dans la base standard, c'est-\`a-dire par l'\'egalit\'e
\[
\puH^{\mathscr{G}}_y = \sum_{x \in \WKM} \ph^{\mathscr{G}}_{x,y} \cdot H_x
\]
pour $y \in \WKM$.
Contrairement aux polyn\^omes de Kazhdan--Lusztig usuels, ces polyn\^omes n'appartiennent pas n\'ecessairement \`a $\Z[v]$ (mais plut\^ot \`a $\Z[v,v^{-1}]$). Par contre, d'apr\`es~\eqref{eqn:phi-parity}, ils ont des coefficients positifs ou nuls.

\begin{rmq}
Le groupe $\WKM$ est le m\^eme pour $\mathscr{G}$ et pour le groupe dual $\mathscr{G}^\vee$ consid\'er\'e au~\S\ref{ss:Koszul-constructible}. Dans tous les exemples donn\'es ci-dessus, on a $\puH^{\mathscr{G}}_w = \puH^{\mathscr{G}^\vee}_w$. Cependant, cette \'egalit\'e est fausse en g\'en\'eral; voir~\cite{jw} pour des exemples explicites.
\end{rmq}

Bien s\^ur, on peut consid\'erer des constructions similaires pour la vari\'et\'e de drapeaux oppos\'ee $\Flag_{\mathscr{G}}^\op$ \`a la place de $\Flag_{\mathscr{G}}$. On a alors un isomorphisme
\[
\varphi_{\mathscr{G},\op}^\bk : [\Dmix_{\mathrm{Br}}(\Flag_{\mathscr{G}}^\op,\bk)] \simto \mathcal{H}_\WKM
\]
envoyant $[\dmix_{\mathscr{G},\op,w}\{n\}]$ sur $v^n H_w$. En utilisant le fait que les objets $\mathscr{E}_{\mathscr{G},w}$ se ``rel\`event'' dans la cat\'egorie $\mathscr{B}$-\'equivariante, on peut v\'erifier sans difficult\'e qu'on a encore
\begin{equation}
\label{eqn:varphi-Eop}
\puH^{\mathscr{G}}_w = \varphi_{\mathscr{G},\op}^\bk(\mathscr{E}^\mix_{\mathscr{G},\op,w})
\end{equation}
pour tout $w \in \WKM$. (Au vu des commentaires \`a la fin du~\S\ref{ss:Dmix}, cela revient \`a dire que $\ph^{\mathscr{G}}_{x,y} = \ph^{\mathscr{G}}_{x^{-1},y^{-1}}$ pour tous $x,y \in \WKM$.)

%----------------------------------------------------------------
\subsection{Dualit\'e de Koszul et base $p$-canonique}
\label{ss:Koszul-pcan}
%----------------------------------------------------------------

On consid\`ere maintenant le groupe de Kac--Moody $\mathscr{G}^\vee$ qui est dual de $\mathscr{G}$ au sens de Langlands, sa vari\'et\'e de drapeaux oppos\'ee $\Flag^\vee_{\op}$, et l'\'equivalence $\kappa$ du Th\'eor\`eme~\ref{thm:mkd}. 

\begin{lem}
\label{lem:koszul-iota}
L'automorphisme de $\mathcal{H}_{\WKM}$ donn\'e par $\varphi^{\bk}_{\mathscr{G}^\vee,\mathrm{op}} \circ [\kappa] \circ (\varphi^\bk_{\mathscr{G}})^{-1}$ coincide avec l'involution d'anneau donn\'ee par
\[
\iota(v^n H_w) = (-v)^{-n} H_w
\]
pour $n \in \Z$ et $w \in \WKM$.
\end{lem}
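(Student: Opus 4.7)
La strat\'egie sera de calculer $\Phi := \varphi^{\bk}_{\mathscr{G}^\vee,\mathrm{op}} \circ [\kappa] \circ (\varphi^\bk_{\mathscr{G}})^{-1}$ en deux temps ind\'ependants, correspondant respectivement \`a la partie ``combinatoire'' et \`a la partie ``gradu\'ee'' d'une dualit\'e de Koszul formelle : d'abord \'evaluer $\Phi(H_w)$ pour chaque $w \in \WKM$, puis d\'eterminer la mani\`ere dont $\Phi$ interagit avec la multiplication par $v$. La combinaison de ces deux informations d\'eterminera $\Phi$ enti\`erement comme application $\Z$-lin\'eaire.

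Pour le calcul de $\Phi(H_w)$ : par la condition~\eqref{it:def-Koszul-dn} de la D\'efinition~\ref{def:dualite-Koszul}, et puisque la bijection $\varphi$ associ\'ee \`a $\kappa$ est $\id_\WKM$ d'apr\`es le Th\'eor\`eme~\ref{thm:mkd}, on dispose d'un isomorphisme $\kappa(\dmix_{\mathscr{G},w}) \cong \dmix_{\mathscr{G}^\vee,\op,w}$ pour tout $w \in \WKM$. Puisque $\varphi^\bk_{\mathscr{G}}([\dmix_{\mathscr{G},w}]) = H_w$ et, de m\^eme (par la variante de~\eqref{eqn:phi-parity} pour la vari\'et\'e oppos\'ee), $\varphi^{\bk}_{\mathscr{G}^\vee,\mathrm{op}}([\dmix_{\mathscr{G}^\vee,\op,w}]) = H_w$, on en d\'eduit imm\'ediatement $\Phi(H_w) = H_w$.

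Pour la partie gradu\'ee : la D\'efinition~\ref{def:dualite-Koszul} fournit un isomorphisme $\kappa \circ \la 1 \ra \cong \{1\} \circ \kappa$ qui se traduit au niveau des groupes de Grothendieck. Or, sous les isomorphismes $\varphi$, le d\'ecalage $\{1\}$ correspond \`a la multiplication par $v$ (par construction), et la relation $\la 1 \ra = \{-1\}[1]$ implique que le d\'ecalage $\la 1 \ra$ correspond \`a la multiplication par $-v^{-1}$ (des deux c\^ot\'es). L'intertwining fournit donc l'\'egalit\'e $\Phi(-v^{-1} \cdot x) = v \cdot \Phi(x)$ pour tout $x \in \mathcal{H}_\WKM$, ou encore $\Phi(v \cdot x) = -v^{-1} \cdot \Phi(x)$.

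En combinant ces deux propri\'et\'es, on obtient pour tous $w \in \WKM$ et $n \in \Z$ l'\'egalit\'e $\Phi(v^n H_w) = (-v^{-1})^n \Phi(H_w) = (-v)^{-n} H_w = \iota(v^n H_w)$, ce qui conclut. La preuve est essentiellement formelle, d\'ecoulant uniquement des axiomes d'une dualit\'e de Koszul formelle et des d\'efinitions des isomorphismes $\varphi$ ; le seul point auquel il faudra pr\^eter attention est le suivi des signes lors de la traduction entre les trois d\'ecalages $\la 1 \ra$, $\{1\}$ et $[1]$, mais ce calcul ne pr\'esente aucune difficult\'e conceptuelle.
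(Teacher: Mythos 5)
Your proof is correct and takes essentially the same approach as the paper: the paper's one-line proof simply records the isomorphism $\kappa(\dmix_{\mathscr{G},w}\{n\}) \cong \dmix_{\mathscr{G}^\vee,\op,w}\{-n\}[n]$, which packages exactly the two ingredients you separate (the action on $H_w$ coming from $\kappa(\dmix_{\mathscr{G},w}) \cong \dmix_{\mathscr{G}^\vee,\op,w}$, and the interaction with multiplication by $v$ coming from $\kappa \circ \langle 1 \rangle \cong \{1\} \circ \kappa$ together with $\langle 1 \rangle = \{-1\}[1]$). Your decomposition is a clean, careful unpacking of the same computation, with the signs tracked correctly.
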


\begin{proof}
La formule d\'ecoule du fait que
\[
\kappa(\dmix_{\mathscr{G},w} \{n\}) \cong \Delta^{\mix}_{\mathscr{G}^\vee, \op, w} \langle n \rangle = \Delta^{\mix}_{\mathscr{G}^\vee,\op,w} \{ -n \}[n]
\]
pour $n \in \Z$ et $w \in \WKM$.
\end{proof}

Le r\'esultat suivant est une traduction de~\eqref{eqn:kappa-E-T} dans les groupes de Grothendieck. (On peut consid\'erer ce fait comme la ``trace combinatoire'' de la dualit\'e de Koszul.)

\begin{prop}
\label{prop:caracteres-basculant-constr}
Pour tout $w \in \WKM$ on a
\[
\varphi_{\mathscr{G}^\vee,\op}^{\bk}([\mathscr{T}^{\mix}_{\mathscr{G}^\vee,\op,w}]) = \iota (\puH^{\mathscr{G}}_w).
\]
\end{prop}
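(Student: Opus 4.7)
The proof will be essentially a short computation chaining three ingredients already available: the definition $\puH^{\mathscr{G}}_w := \varphi_{\mathscr{G}}^\bk([\mathscr{E}^\mix_{\mathscr{G},w}])$, the first isomorphism of~\eqref{eqn:kappa-E-T} (namely $\kappa(\mathscr{E}^\mix_{\mathscr{G},w}) \cong \mathscr{T}^{\mix}_{\mathscr{G}^\vee,\op,w}$, which comes from Proposition~\ref{prop:Koszul-parite-basculant} applied to the Koszul duality of Theorem~\ref{thm:mkd}), and Lemma~\ref{lem:koszul-iota}.

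Concretely, I would start from the right-hand side and unfold the definition of $\puH^{\mathscr{G}}_w$:
\[
\iota(\puH^{\mathscr{G}}_w) = \iota \bigl( \varphi_{\mathscr{G}}^\bk([\mathscr{E}^\mix_{\mathscr{G},w}]) \bigr).
\]
By Lemma~\ref{lem:koszul-iota}, the automorphism $\iota$ of $\mathcal{H}_\WKM$ coincides with $\varphi_{\mathscr{G}^\vee,\op}^\bk \circ [\kappa] \circ (\varphi_{\mathscr{G}}^\bk)^{-1}$, so the right-hand side rewrites as $\varphi_{\mathscr{G}^\vee,\op}^\bk \bigl( [\kappa(\mathscr{E}^\mix_{\mathscr{G},w})] \bigr)$. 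Applying~\eqref{eqn:kappa-E-T} to identify $\kappa(\mathscr{E}^\mix_{\mathscr{G},w})$ with $\mathscr{T}^{\mix}_{\mathscr{G}^\vee,\op,w}$ then yields exactly $\varphi_{\mathscr{G}^\vee,\op}^{\bk}([\mathscr{T}^{\mix}_{\mathscr{G}^\vee,\op,w}])$, which is the left-hand side.

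There is really no obstacle here: every step is an immediate substitution, and the substantive content (the existence of $\kappa$, the identification of $\kappa(\mathscr{E}^\mix_{\mathscr{G},w})$ with the indecomposable tilting object, and the compatibility of $[\kappa]$ with $\iota$ under the isomorphisms $\varphi^\bk$) is already packaged in Theorem~\ref{thm:mkd}, Proposition~\ref{prop:Koszul-parite-basculant} and Lemma~\ref{lem:koszul-iota}. The only point worth mentioning explicitly in the write-up is that Proposition~\ref{prop:Koszul-parite-basculant} applies because $\kappa$ is a formal Koszul duality in the sense of Definition~\ref{def:dualite-Koszul} and $\mathscr{E}^\mix_{\mathscr{G},w}$ is an indecomposable parity object in $\Dmix_{\mathrm{Br}}(\Flag_{\mathscr{G}},\bk)$ (Proposition~\ref{prop:Dmix-Bruhat}), so its image is the normalised indecomposable tilting object $\mathscr{T}^{\mix}_{\mathscr{G}^\vee,\op,w}$ on the dual side.
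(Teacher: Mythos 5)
Your proof is correct and is exactly the argument the paper has in mind: the paper simply remarks that the proposition is ``une traduction de~\eqref{eqn:kappa-E-T} dans les groupes de Grothendieck,'' and your chain of equalities, combining the definition of $\puH^{\mathscr{G}}_w$, Lemme~\ref{lem:koszul-iota} (giving $\iota = \varphi^{\bk}_{\mathscr{G}^\vee,\op} \circ [\kappa] \circ (\varphi^\bk_{\mathscr{G}})^{-1}$), and the first isomorphism of~\eqref{eqn:kappa-E-T}, makes that translation fully explicit. Nothing is missing and no alternative route is taken.
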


\begin{rmq}
\begin{enumerate}
\item
Consid\'erons le cas particulier o\`u $p=0$. Dans ce cas, comme expliqu\'e au~\S\ref{ss:base-p-can}, on a ${}^0 \hspace{-1pt} \underline{H}_w^{\mathscr{G}} = \underline{H}_w$ pour tout $w \in \WKM$. Dans les conventions de Kazhdan--Lusztig~\cite{kl0}, la base $(\underline{H}_w : w \in \WKM)$ est not\'ee $(C'_w : w \in \WKM)$. Kazhdan et Lusztig consid\`erent \'egalement une autre base, not\'ee $(C_w : w \in \WKM)$. Avec nos notations, on a $C_w = \iota(C'_w)$. En d'autres termes, pour $p$ g\'en\'eral, la base $(\iota (\puH^{\mathscr{G}}_w) : w \in \WKM)$ peut \^etre consid\'er\'ee comme une ``version $p$-canonique'' de ``l'autre'' base de Kazhdan--Lusztig.
\item
Toujours dans le cas particulier o\`u $p=0$, les ``caract\`eres'' des faisceaux pervers mixtes basculants sur les vari\'et\'es de drapeaux ont \'et\'e d\'etermin\'es par Yun~\cite{yun}. Dans ce cas particulier, notre formule est identique \`a la sienne. (Yun utilise une autre notion de ``faisceaux pervers mixtes'' ; voir~\cite{ar:kdsf} pour le lien avec notre d\'efinition.)
\item
Revenons au cas o\`u $p$ est quelconque. En utilisant~\eqref{eqn:varphi-Eop} pour le groupe $\mathscr{G}^\vee$, et le fait que $\iota$ est une involution, on trouve aussi que
$\varphi_{\mathscr{G}}^{\bk}([\mathscr{T}^{\mix}_{\mathscr{G},w}]) = \iota (\puH^{\mathscr{G}^\vee}_w)$
pour tout $w \in \WKM$. En comparant avec la formule de la Proposition~\ref{prop:caracteres-basculant-constr} (appliqu\'ee au groupe $\mathscr{G}^\vee$), il s'ensuit que
\[
\varphi_{\mathscr{G}}^{\bk}([\mathscr{T}^{\mix}_{\mathscr{G},w}]) = \varphi_{\mathscr{G},\op}^{\bk}([\mathscr{T}^{\mix}_{\mathscr{G},\op,w}])
\]
pour tout $w \in \WKM$. Cette \'egalit\'e peut \'egalement se d\'emontrer directement (c'est-\`a-dire sans utiliser le Th\'eor\`eme~\ref{thm:mkd}) en utilisant les ``objets libre-mono\-dromiques'' consid\'er\'es dans~\cite{amrw1}.
\end{enumerate}
\end{rmq}

%----------------------------------------------------------------
\subsection{Le cas particulier des vari\'et\'es de drapeaux affines}
\label{ss:Koszul-affine}
%----------------------------------------------------------------

Consid\'erons le cadre du~\S\ref{ss:drap-affine}, et supposons de plus que $G$ est quasi-simple et simplement connexe. Alors $\Fl_G$ est la vari\'et\'e de drapeaux d'un groupe de Kac--Moody, et on peut donc lui appliquer les r\'esultats consid\'er\'es au~\S\ref{ss:Koszul-constructible} et au~\S\ref{ss:base-p-can}. Le ``groupe de Weyl'' $\WKM$ correspondant est le groupe de Weyl affine non \'etendu associ\'e \`a $G$ (ou au groupe dual, selon les conventions choisies) et qui se d\'ecrit comme $\Waff = W \ltimes \Z R^\vee$, o\`u $\Z R^\vee$ est le r\'eseau des coracines de $(G,T)$.

Cependant, le groupe de Kac--Moody dual du groupe affine non tordu associ\'e \`a $G$ n'est pas toujours du m\^eme type : il s'agit d'un groupe de Kac--Moody affine, mais tordu si $G$ n'est pas simplement lac\'e. Cependant, si on suppose que $p$ est tr\`es bon pour $G$ (et, comme toujours, diff\'erent de $2$), la cat\'egorie des complexes \`a parit\'e sur la vari\'et\'e de drapeaux de ce groupe dual s'identifie canoniquement \`a celle des complexes \`a parit\'e sur
$\Fl_G^{\op}$ ;
voir~\cite[\S 7.1]{amrw2} pour plus de d\'etails. L'alg\`ebre de Hecke associ\'ee sera not\'ee $\mathcal{H}^G_{\mathrm{aff}}$, et sa base $p$-canonique $(\puH_w : w \in \Waff)$.

Plut\^ot que d'\'enoncer les r\'esultats du~\S\ref{ss:Koszul-pcan} dans ce cadre, nous allons les \'etendre au cas des groupes r\'eductifs g\'en\'eraux. Consid\'erons donc le contexte du~\S\ref{ss:drap-affine}, o\`u $G$ est maintenant un groupe reductif connexe complexe quelconque. Le ``groupe de Weyl affine \'etendu'' associ\'e est d\'efini par $\Wext=W \ltimes X_*(T)$. Il contient le groupe $\Waff := W \ltimes \Z R^\vee$ comme sous-groupe distingu\'e (o\`u, comme ci-dessus, $\Z R^\vee$ est le r\'eseau des coracines de $(G,T)$). Comme au~\S\ref{ss:coh-rep} le groupe $\Waff$ admet une structure naturelle de groupe de Coxeter, dont la fonction de longueur v\'erifie
\[
\ell(w \cdot t_\lambda)=\sum_{\substack{\alpha \in R^+ \\  w(\alpha) \in R^+}} |\langle \lambda,
\alpha \rangle | + \sum_{\substack{\alpha \in
      R^+ \\ w(\alpha) \in -R^+}}
  |1 + \langle \lambda, \alpha \rangle |.
\]
Cette formule permet d'\'etendre $\ell$ \`a $\Wext$. De plus $\Omega := \{w \in \Wext \mid \ell(w)=0\}$ est un sous-groupe ab\'elien de $\Wext$, isomorphe \`a $X_*(T) / \Z R^\vee$, dont l'action sur $\Waff$ (par conjugaison) pr\'eserve les g\'en\'erateurs de Coxeter, et tel que la multiplication induit un isomorphisme
\[
\Omega \ltimes \Waff \simto \Wext.
\]

\begin{rmq}
Dans le contexte consid\'er\'e au~\S\ref{ss:coh-rep}, avec le groupe ``$G$'' consid\'er\'e ici \'egal au groupe $G^\vee$ dual de Langlands de $\dot{\bG}$, le groupe de Weyl affine \'etendu s'identifie canoniquement avec celui not\'e de la m\^eme mani\`ere au~\S\ref{ss:coh-rep}. (Dans ce cas particulier le quotient $X^*(T) / \Z R$ est sans torsion ; cette hypoth\`ese n'est pas n\'ecessaire pour les r\'esultats consid\'er\'es ici.)
\end{rmq}

On notera
$\mathcal{H}^G_{\mathrm{ext}}$ l'alg\`ebre de Hecke affine ``\'etendue'', c'est-\`a-dire la $\Z[v,v^{-1}]$-alg\`ebre engendr\'ee par des \'el\'ements $(H_w : w \in \Wext)$, soumis aux relations
\begin{enumerate}
\item
$(H_s + v)(H_s - v^{-1})=0$ si $s \in \Waff$ et $\ell(s)=1$;
\item
$H_x \cdot H_y = H_{xy}$ si $x,y \in \Wext$ et $\ell(xy)=\ell(x)+\ell(y)$.
\end{enumerate}
L'application $\omega \mapsto H_\omega$ induit un morphisme de groupes de $\Omega$ vers le groupe des \'el\'ements inversibles de $\mathcal{H}^G_{\mathrm{ext}}$. D'autre part, l'alg\`ebre de Hecke $\mathcal{H}^G_{\mathrm{aff}}$ du groupe de Coxeter $\Waff$ est naturellement une sous-alg\`ebre de $\mathcal{H}^G_{\mathrm{ext}}$, et la multiplication induit un isomorphisme
\[
 \Omega \ltimes \mathcal{H}^G_{\mathrm{aff}} \simto \mathcal{H}^G_{\mathrm{ext}}
\]
(o\`u l'action de $\Omega$ sur $\mathcal{H}^G_{\mathrm{aff}}$ est induite par l'action sur $\Waff$). On notera encore $\iota$ l'involution d'anneau de $\mathcal{H}^G_{\mathrm{ext}}$ d\'etermin\'ee par $\iota(v^n H_w)=(-v)^{-n} H_w$ pour $n \in \Z$ et $w \in \Wext$.

On ``\'etend'' la base $p$-canonique de $\mathcal{H}^G_{\mathrm{aff}}$ en une base de $\mathcal{H}^G_{\mathrm{ext}}$ en posant
\[
\puH_w := H_\omega \cdot \puH_y \quad \text{si $w=\omega y$ avec $\omega \in \Omega$ et $y \in \Waff$.}
\]
(On peut v\'erifier que si $w=y' \omega'$ avec $y' \in \Waff$ et $\omega' \in \Omega$, alors on a aussi $\puH_w = \puH_{y'} \cdot H_{\omega'}$.)

Du point de vue g\'eom\'etrique, on peut consid\'erer comme au~\S\ref{ss:drap-affine} les vari\'et\'es de drapeaux affines $\Fl_G$ et $\Fl_G^\op$, leurs cat\'egories d\'eriv\'ees mixtes $\Dmix_{\mathrm{Br}}(\Fl_G,\bk)$ et $\Dmix_{\mathrm{Br}}(\Fl^\op_G,\bk)$, et les isomorphismes naturels
\[
\varphi^{\bk}_{\Fl_G} : [\Dmix_{\mathrm{Br}}(\Fl_G,\bk)] \simto \mathcal{H}^G_{\mathrm{ext}}, \qquad
\varphi^{\bk}_{\Fl_G^\op} : [\Dmix_{\mathrm{Br}}(\Fl^\op_G,\bk)] \simto \mathcal{H}^G_{\mathrm{ext}}
\]
d\'etermin\'es par $\varphi^{\bk}_{\Fl_G}(\dmix_{\Fl_G,w}\{n\}) = v^n H_w$ et $\varphi^{\bk}_{\Fl^\op_G}(\dmix_{\Fl^\op_G,w}\{n\}) = v^n H_w$ pour $n \in \Z$ et $w \in \Wext$.

\begin{prop}
\label{prop:pcan-reductif}
Supposons que $p$ est tr\`es bon pour $G$ et diff\'erent de $2$.
Pour tout $w \in \Wext$ on a
\[
\varphi^{\bk}_{\Fl_G}([\mathscr{E}_{\Fl_G,w}^\mix])=\puH_w, \quad \varphi^{\bk}_{\Fl_G^\op}([\mathscr{T}^\mix_{\Fl_G^\op, w}]) = \iota(\puH_w).
\]
\end{prop}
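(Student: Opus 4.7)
La strat\'egie consiste \`a r\'eduire au cas d\'ej\`a trait\'e au~\S\ref{ss:Koszul-pcan}, o\`u $\Fl_G$ est la vari\'et\'e de drapeaux d'un groupe de Kac--Moody, et cela se fait en deux temps. Tout d'abord, on utilise la d\'ecomposition $\Wext = \Omega \ltimes \Waff$ pour se ramener au cas $w \in \Waff$. Pour tout $\omega \in \Omega$, la multiplication \`a gauche par un rel\`evement de $\omega$ induit un automorphisme de $\Fl_G$ (et de $\Fl_G^\op$) qui permute les $I$-orbites selon $y \mapsto \omega y$. Cet automorphisme induit une auto-\'equivalence de $\Dmix_{\mathrm{Br}}(\Fl_G,\bk)$ envoyant (par unicit\'e) $\dmix_{\Fl_G,y}$ sur $\dmix_{\Fl_G,\omega y}$, $\mathscr{E}^\mix_{\Fl_G,y}$ sur $\mathscr{E}^\mix_{\Fl_G,\omega y}$, et similairement pour $\Fl_G^\op$ et les objets basculants. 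Dans les groupes de Grothendieck, ceci correspond \`a la multiplication par $H_\omega$, ce qui reproduit exactement la d\'efinition $\puH_{\omega y} = H_\omega \cdot \puH_y$ ; on est donc ramen\'e \`a d\'emontrer les deux formules pour $w \in \Waff$.

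Dans ce cas, les objets en jeu sont support\'es sur la composante connexe de l'identit\'e $\Fl_G^0 \subset \Fl_G$, qui s'identifie \`a la vari\'et\'e de drapeaux affine du rev\^etement simplement connexe du groupe d\'eriv\'e $G_{\mathrm{der}}$. Si ce dernier se d\'ecompose en un produit de groupes simplement connexes quasi-simples, on utilise la formule de K\"unneth pour d\'ecomposer compatiblement tous les objets consid\'er\'es et se ramener au cas o\`u $G$ est simplement connexe quasi-simple. Dans ce cas, $\Fl_G$ est la vari\'et\'e de drapeaux d'un groupe de Kac--Moody affine non tordu $\mathscr{G}$, dont le groupe de Weyl co\"incide avec $\Waff$. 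La premi\`ere formule r\'esulte alors directement de la d\'efinition de la base $p$-canonique (voir~\S\ref{ss:base-p-can}), et la seconde s'obtient en appliquant la Proposition~\ref{prop:caracteres-basculant-constr} \`a $\mathscr{G}$.

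La difficult\'e principale se situe dans cette derni\`ere \'etape : la Proposition~\ref{prop:caracteres-basculant-constr} fait intervenir les objets basculants sur $\Flag^\op_{\mathscr{G}^\vee}$, vari\'et\'e de drapeaux du groupe de Kac--Moody Langlands-dual de $\mathscr{G}$, qui est affine tordu lorsque $G$ n'est pas simplement lac\'e et n'est donc pas directement $\Fl_G^\op$. Toutefois, sous l'hypoth\`ese que $p$ est tr\`es bon pour $G$ et diff\'erent de $2$, l'identification canonique entre les cat\'egories de complexes \`a parit\'e sur $\Flag^\op_{\mathscr{G}^\vee}$ et sur $\Fl_G^\op$ rappel\'ee au~\S\ref{ss:Koszul-affine} (et \'etablie dans~\cite[\S 7.1]{amrw2}) permet de transporter l'\'egalit\'e fournie par la Proposition~\ref{prop:caracteres-basculant-constr} en la formule souhait\'ee.
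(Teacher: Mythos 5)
Votre preuve est correcte et suit essentiellement la m\^eme strat\'egie que celle du texte : r\'eduction au cas $w \in \Waff$ via la d\'ecomposition $\Wext \cong \Omega \ltimes \Waff$, identification de $\Fl_G^\circ$ avec la vari\'et\'e de drapeaux affine du rev\^etement simplement connexe du sous-groupe d\'eriv\'e (produit de groupes quasi-simples simplement connexes), puis application du Th\'eor\`eme~\ref{thm:mkd} (via la Proposition~\ref{prop:caracteres-basculant-constr}, qui en d\'ecoule). La seule diff\'erence est que vous explicitez les \'etapes que le texte expédie d'une phrase (l'action de $\Omega$, la d\'ecomposition en produit via K\"unneth, et la subtilit\'e du dual de Langlands \'eventuellement tordu, renvoy\'ee dans le texte \`a~\cite[\S 7.1]{amrw2}).
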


\begin{proof}
On se ram\`ene facilement au cas o\`u $w \in \Waff$. Dans ce cas les formules voulues d\'ecoulent du Th\'eor\`eme~\ref{thm:mkd}, puisque la composante connexe $\Fl^\circ_G$ de $\Fl_G$ contenant le point de base s'identifie \`a la vari\'et\'e de drapeaux affine du recouvrement simplement connexe du sous-groupe d\'eriv\'e de $G$, qui lui-m\^eme est un produit de groupes quasi-simples et simplement connexes (voir~\cite[\S 7.1]{amrw2} pour plus de d\'etails).
\end{proof}

%----------------------------------------------------------------
\subsection{Dualit\'e de Koszul constructible parabolique}
\label{ss:koszul-parabolique-aff}
%----------------------------------------------------------------

Dans~\cite{amrw2} nous d\'emontrons \'egalement une version ``parabolique'' du Th\'eor\`eme~\ref{thm:mkd}. Pour simplifier, ici nous n'\'enoncerons ce r\'esultat que dans un cas particulier, dans le cadre des vari\'et\'es de drapeaux affines. 

On continue de supposer que $G$ est un groupe r\'eductif connexe complexe quelconque, et on
consid\`ere comme au~\S\ref{ss:drap-affine} sa ``Grassmannienne affine oppos\'ee''
$\Gr_G^{\op} := G_{\mathscr{O}} \backslash G_{\mathscr{K}}$.
Cette vari\'et\'e admet une d\'ecomposition de Bruhat en terme d'orbites pour l'action du sous-groupe d'Iwahori $I$ induite par multiplication \`a droite sur $G_{\mathscr{K}}$; ses strates sont param\'etr\'ees par le sous-ensemble $\Wextmin \subset \Wext$ form\'e des \'el\'ements $w$ qui sont de longueur minimale dans $W \cdot w$. Comme pour les vari\'et\'es de drapeaux des groupes de Kac--Moody, on lui associe une cat\'egorie d\'eriv\'ee mixte, qu'on notera $\Dmix_{\mathrm{Br}}(\Gr_G^\op,\bk)$. Les objets standards et costandards dans cette cat\'egorie seront not\'es $\dmix_{\Gr_G^\op,w}$ et $\nmix_{\Gr_G^\op,w}$ respectivement ($w \in \Wextmin$).

Fixons maintenant un corps $\F$ alg\'ebriquement clos de caract\'eristique positive $\ell \neq p$, et supposons qu'il existe une racine primitive $\ell$-i\`eme de l'unit\'e dans $\bk$ (qu'on fixe). Notons $G_{\F}$ le $\F$-groupe alg\'ebrique r\'eductif connexe dont la donn\'ee radicielle est la m\^eme que celle de $(G,T)$. Notons \'egalement $B_\F \subset G_\F$ le sous-groupe de Borel et $T_\F \subset B_\F$ le tore maximal correspondant \`a $B$ et $T$. Nous noterons $B_\F^+$ le sous-groupe de Borel de $G_\F$ qui est oppos\'e \`a $B_\F$ par rapport \`a $T_\F$, et $U^+_\F$ son radical unipotent. En fixant une trivialisation de chacun des sous-espaces radiciels de $U^+_\F$ correspondant \`a une racine simple, et en composant avec le morphisme somme, on obtient un ``caract\`ere additif non d\'eg\'en\'er\'e''
\[
\chi : U^+_\F \to \mathbb{G}_{\mathrm{a},\F}.
\]
Notre choix de racine primitive $\ell$-i\`eme de l'unit\'e dans $\bk$ d\'efinit un faisceau d'Artin--Schreier sur $\mathbb{G}_{\mathrm{a},\F}$, qu'on notera $\mathscr{L}_{\mathrm{AS}}$. 

Consid\'erons maintenant la vari\'et\'e de drapeaux affine
\[
\Fl_{G_\F} = G_{\F ( \hspace{-1pt} (z) \hspace{-1pt} )} / I_\F
\]
d\'efinie comme pr\'ec\'edemment, en rempla{\c c}ant $\C$ par $\F$. On consid\'erera \'egalement le sous-groupe d'Iwahori $I^+_\F \subset G_{\F [ \hspace{-1pt} [z] \hspace{-1pt} ]}$ associ\'e \`a $B_\F^+$, dont on notera $I^+_{\mathrm{u},\F}$ le radical pro-unipotent. Il existe une surjection naturelle $I^+_{\mathrm{u},\F} \to U^+_\F$, dont on notera $\chi_I$ la compos\'ee avec $\chi$.
Nous noterons alors
\[
\Db_{\IW}(\Fl_G,\bk)
\]
la cat\'egorie d\'eriv\'ee des $\bk$-faisceaux \'etales sur $\Fl_{G_\F}$ qui sont $(I^+_{\mathrm{u},\F}, \chi_I^* \mathscr{L}_{\mathrm{AS}})$-\'equi\-var\-iants. (Ici, la notation $\IW$ est un symbole pour ``Iwahori--Whittaker''.) Dans cette cat\'egorie on peut consid\'erer les objets \`a parit\'e, puis d\'efinir la cat\'egorie d\'eriv\'ee mixte associ\'ee $\Dmix_{\IW}(\Fl,\bk)$ et sa t-structure perverse. Les orbites de $I^+_{\mathrm{u},\F}$ sur $\Fl$ sont param\'etr\'ees naturellement par $\Wext$; celles qui supportent un syst\`eme local $(I^+_{\mathrm{u},\F}, \chi_I^* \mathscr{L}_{\mathrm{AS}})$-\'equivariant non nul sont celles param\'etr\'ees par les \'el\'ements de $\Wextmin$. Les objets standards et costandards associ\'es \`a $w \in \Wextmin$ dans $\Dmix_{\IW}(\Fl,\bk)$ seront not\'es $\dmix_{\IW,w}$ et $\nmix_{\IW,w}$ respectivement.

Dans~\cite[Theorem~7.4]{amrw2}, sous l'hypoth\`ese o\`u $G$ est quasi-simple et simplement connexe, on construit une dualit\'e de Koszul formelle
\begin{equation}
\label{eqn:Koszul-IW-Gr}
\Db_{\IW}(\Fl,\bk) \simto \Dmix_{\mathrm{Br}}(\Gr_G^\op,\bk)
\end{equation}
dont la bijection associ\'ee sur les ensembles de param\`etres est 
$\id_{\Waffmin}$.

Cette dualit\'e admet l'interpr\'etation ``combinatoire'' suivante, qui s'\'etend en fait au cas des groupes r\'eductifs arbitraires. Consid\'erons l'alg\`ebre de Hecke $\mathcal{H}_W$ associ\'ee \`a $W$ (pour sa structure de groupe de Coxeter d\'etermin\'ee par $B$). Alors $\mathcal{H}_W$ est une sous-alg\`ebre de $\mathcal{H}^G_\ext$, et elle poss\`ede deux actions naturelles sur $\Z[v,v^{-1}]$: l'action ``triviale'', pour laquelle $H_w$ agit par multiplication par $v^{-\ell(w)}$ (pour tout $w \in W$), et l'action ``signe'', pour laquelle $H_w$ agit par multiplication par $(-v)^{\ell(w)}$ (pour tout $w \in W$). Les modules correspondants seront not\'es $\Z[v,v^{-1}]_{\mathrm{triv}}$ et $\Z[v,v^{-1}]_{\mathrm{sgn}}$. On consid\`ere alors les modules ``sph\'erique'' et ``antisph\'erique'' de $\mathcal{H}^G_\ext$ d\'efinis par
\[
\mathcal{M}^{\mathrm{sph}}_\ext := \Z[v,v^{-1}]_{\mathrm{triv}} \otimes_{\mathcal{H}_W} \mathcal{H}^G_\ext \quad \text{et} \quad \mathcal{M}^{\mathrm{asph}}_\ext := \Z[v,v^{-1}]_{\mathrm{sgn}} \otimes_{\mathcal{H}_W} \mathcal{H}^G_\ext
\]
respectivement. L'automorphisme $\iota$ induit un isomorphisme $\mathcal{M}^{\mathrm{sph}}_\ext \simto \mathcal{M}^{\mathrm{asph}}_\ext$, qu'on notera encore $\iota$, et qui v\'erifie
\begin{equation}
\label{eqn:iota-M-N}
\iota(x \cdot h)=\iota(x) \cdot \iota(h)
\end{equation}
pour $x \in \mathcal{M}^{\mathrm{sph}}_\ext$ et $h \in \mathcal{H}^G_\ext$.

Chacun de ces modules poss\`ede une ``base standard'' param\'etr\'ee par $\Wextmin$, et d\'efinie par
\[
M_w := 1 \otimes H_w \in \mathcal{M}^{\mathrm{sph}}_\ext, \qquad N_w := 1 \otimes H_w \in \mathcal{M}^{\mathrm{asph}}_\ext.
\]
Ils poss\`edent \'egalement des bases $p$-canoniques $(\puM_w : w \in \Wextmin)$ et $(\puN_w : w \in \Wextmin)$ qui sont reli\'ees \`a la base canonique de $\mathcal{H}^G_\ext$ de la fa{\c c}on suivante. (Ces bases peuvent \^etre d\'efinies ind\'ependemment des relations qui suivent, mais elles sont caract\'eris\'ees par celles-ci.) Notons $\xi : \mathcal{H}^G_\ext \to \mathcal{M}^{\mathrm{asph}}_{\ext}$ l'application naturelle (avec $\xi(H)=1 \otimes H$) et $\eta : \mathcal{M}^{\mathrm{sph}}_\ext \to \mathcal{H}^G_\ext$ l'application d\'etermin\'ee par $\eta(1 \otimes H) = \underline{H}_{w_0} \cdot H$, o\`u $w_0$ est l'\'el\'ement de plus grande longueur dans $W$. Alors pour $w \in \Wext$ on a
\begin{equation}
\label{eqn:xi-pcan}
\xi(\puH_w) = \begin{cases}
\puN_w & \text{si $w \in \Wextmin$;} \\
0 & \text{sinon}
\end{cases}
\end{equation}
et pour $w \in \Wextmin$ on a
\begin{equation}
\label{eqn:eta-pcan}
\eta(\puM_x) = \puH_{w_0 x}.
\end{equation}
Ces bases d\'eterminent des $p$-polyn\^omes de Kazhdan--Lusztig sph\'eriques et antisph\'eriques respectivement, d\'efinis par les \'egalit\'es
\[
\puM_y = \sum_{x \in \Wextmin} \pmm_{x,y} \cdot M_x, \qquad \puN_y = \sum_{x \in \Wextmin} \pn_{x,y} \cdot N_x.
\]
(Comme dans le cas des bases $p$-canoniques des alg\`ebres de Hecke au~\S\ref{ss:base-p-can}, ces bases d\'ependent du groupe $G$ et pas seulement du groupe $\Wext$. Mais pour all\'eger les notations nous ne ferons pas appara\^itre cette d\'ependance.)

On a des isomorphismes canoniques
\begin{equation}
\label{eqn:Msph-Masph-cat}
[\Dmix_{\mathrm{Br}}(\Gr_G^\op,\bk)] \simto \mathcal{M}_\ext^{\mathrm{sph}}, \qquad [\Db_{\IW}(\Fl,\bk)] \simto \mathcal{M}_\ext^{\mathrm{asph}}
\end{equation}
envoyant $[\dmix_{\Gr_G^\op,w}\{n\}]$ sur $v^n M_w$ et $[\dmix_{\IW,w}\{n\}]$ sur $v^n N_w$ respectivement. Les images des objets \`a parit\'e ind\'ecomposables associ\'es \`a $w \in \Wextmin$ dans $\Dmix_{\mathrm{Br}}(\Gr_G^\op,\bk)$ et $\Db_{\IW}(\Fl,\bk)$ seront not\'es $\mathscr{E}^\mix_{\Gr_G^\op,w}$ et $\mathscr{E}^\mix_{\IW,w}$ respectivement.
Pour $w \in\Wextmin$, on notera \'egalement $\mathscr{T}^\mix_{\Gr_G^\op,w}$ et $\mathscr{T}^\mix_{\IW,w}$ les faisceaux pervers basculants associ\'es \`a $w$ dans $\Dmix_{\mathrm{Br}}(\Gr_G^\op,\bk)$ et $\Db_{\IW}(\Fl,\bk)$ respectivement.

\begin{rmq}
Les formules~\eqref{eqn:xi-pcan} et~\eqref{eqn:eta-pcan} ont des interpr\'etations en termes de complexes \`a parit\'e sur les vari\'et\'es de drapeaux affines : voir~\cite[Corollary~11.10]{rw} pour~\eqref{eqn:xi-pcan} et~\cite[Lemma~A.5]{acr} pour~\eqref{eqn:eta-pcan}.
\end{rmq}

Le r\'esultat suivant d\'ecoule des propri\'et\'es de l'\'equivalence~\eqref{eqn:Koszul-IW-Gr}, suivant les m\^emes arguments que pour la Proposition~\ref{prop:pcan-reductif}.

\begin{prop}
\label{prop:character-tilting}
Via les identifications~\eqref{eqn:Msph-Masph-cat}, les classes $[\mathscr{E}^\mix_{\Gr_G^\op,w}]$ et $[\mathscr{T}^\mix_{\Gr_G^\op,w}]$ correspondent \`a $\puM_w$ et $\iota^{-1}(\puN_w)$ respectivement, et les classes $[\mathscr{E}^\mix_{\IW,w}]$ et $[\mathscr{T}^\mix_{\IW,w}]$ correspondent \`a $\puN_w$ et $\iota(\puM_w)$ respectivement.
\end{prop}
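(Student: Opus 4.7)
La strat\'egie de preuve est analogue \`a celle de la Proposition~\ref{prop:pcan-reductif}: on combine la dualit\'e de Koszul formelle~\eqref{eqn:Koszul-IW-Gr} avec les interpr\'etations g\'eom\'etriques des applications $\xi$ et $\eta$ mentionn\'ees dans la remarque qui pr\'ec\`ede l'\'enonc\'e. Comme dans cette preuve, on se ram\`ene d'abord au cas o\`u $G$ est quasi-simple et simplement connexe, hypoth\`ese sous laquelle l'\'equivalence $\kappa: \Dmix_{\IW}(\Fl,\bk) \simto \Dmix_{\mathrm{Br}}(\Gr_G^\op,\bk)$ est construite dans~\cite{amrw2}.

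La D\'efinition~\ref{def:dualite-Koszul} donne $\kappa(\dmix_{\IW,w}) \cong \dmix_{\Gr_G^\op,w}$ pour tout $w \in \Wextmin$, et la commutation $\kappa \circ \langle 1 \rangle \cong \{1\} \circ \kappa$ implique que, via les identifications~\eqref{eqn:Msph-Masph-cat}, l'application induite par $[\kappa]$ envoie $v^n N_w$ sur $(-v)^{-n} M_w$; elle co\"incide donc avec $\iota^{-1}: \mathcal{M}^{\mathrm{asph}}_\ext \simto \mathcal{M}^{\mathrm{sph}}_\ext$. De plus, la Proposition~\ref{prop:Koszul-parite-basculant}, appliqu\'ee \`a $\kappa$ puis \`a $\kappa^{-1}$ (qui est encore une dualit\'e de Koszul formelle d'apr\`es le Lemme~\ref{lem:Koszul-equiv}), fournit $\kappa(\mathscr{E}^\mix_{\IW,w}) \cong \mathscr{T}^\mix_{\Gr_G^\op,w}$ et $\kappa^{-1}(\mathscr{E}^\mix_{\Gr_G^\op,w}) \cong \mathscr{T}^\mix_{\IW,w}$. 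Il suffit donc d'\'etablir ind\'ependamment les deux identit\'es $[\mathscr{E}^\mix_{\IW,w}] = \puN_w$ et $[\mathscr{E}^\mix_{\Gr_G^\op,w}] = \puM_w$ (via les identifications~\eqref{eqn:Msph-Masph-cat}), les formules concernant les objets basculants s'en d\'eduisant alors en appliquant $\iota^{-1}$ ou $\iota$.

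La premi\`ere identit\'e s'obtient en observant que le foncteur de moyennisation Iwahori--Whittaker $\Dmix_{\mathrm{Br}}(\Fl_G,\bk) \to \Dmix_{\IW}(\Fl,\bk)$ pr\'eserve les objets \`a parit\'e et envoie $\mathscr{E}^\mix_{\Fl_G,w}$ sur $\mathscr{E}^\mix_{\IW,w}$ si $w \in \Wextmin$ et sur $0$ sinon, sa trace combinatoire s'identifiant \`a $\xi$; combin\'ee avec la Proposition~\ref{prop:pcan-reductif} et la formule~\eqref{eqn:xi-pcan}, cette observation fournit la conclusion voulue. La seconde identit\'e se d\'emontre de mani\`ere analogue, en rempla\c{c}ant la moyennisation Iwahori--Whittaker par un foncteur appropri\'e entre $\Dmix_{\mathrm{Br}}(\Gr_G^\op,\bk)$ et $\Dmix_{\mathrm{Br}}(\Fl_G^\op,\bk)$ (provenant de la projection lisse et propre) dont la trace combinatoire r\'ealise $\eta$, et en invoquant la formule~\eqref{eqn:eta-pcan}.

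La principale difficult\'e r\'eside dans cette derni\`ere \'etape: les interpr\'etations g\'eom\'etriques des applications combinatoires $\xi$ et $\eta$ comme traces de foncteurs entre cat\'egories de faisceaux mixtes ne sont pas des cons\'equences formelles de la dualit\'e de Koszul et requi\`erent des arguments sp\'ecifiques \`a la structure des complexes \`a parit\'e sur les vari\'et\'es de drapeaux affines. Ces arguments sont d\'ej\`a disponibles dans la litt\'erature (voir~\cite[Corollary~11.10]{rw} pour $\xi$ et~\cite[Lemma~A.5]{acr} pour $\eta$), ce qui rend la preuve essentiellement formelle une fois qu'on dispose de ces r\'esultats et de la dualit\'e de Koszul formelle~\eqref{eqn:Koszul-IW-Gr}.
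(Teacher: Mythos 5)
Votre preuve est correcte et suit essentiellement la m\^eme strat\'egie que le texte, qui se contente ici d'une phrase renvoyant aux propri\'et\'es de l'\'equivalence~\eqref{eqn:Koszul-IW-Gr} et aux arguments de la Proposition~\ref{prop:pcan-reductif}. Vous d\'epliez correctement cette phrase : calcul de la trace combinatoire $\iota^{-1}$ de $\kappa$ via $\Phi \circ \la 1 \ra \cong \{1\} \circ \Phi$, application de la Proposition~\ref{prop:Koszul-parite-basculant} \`a $\kappa$ et $\kappa^{-1}$ pour l'\'echange parit\'e/basculant, et identification pr\'ealable de $[\mathscr{E}^\mix_{\IW,w}]$ et $[\mathscr{E}^\mix_{\Gr_G^\op,w}]$ avec $\puN_w$ et $\puM_w$ en invoquant les interpr\'etations g\'eom\'etriques de $\xi$ et $\eta$ cit\'ees dans la remarque qui pr\'ec\`ede (\cite[Corollary~11.10]{rw} et~\cite[Lemma~A.5]{acr}), exactement comme le texte le sous-entend.
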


\begin{rmq}
\label{rmq:basculants-mix-red}
Au~\S\ref{ss:drap-affine} on a consid\'er\'e la cat\'egorie d\'eriv\'ee mixte $\Dmix_{(\Iw)}(\Gr_G,\bk)$ des faisceaux sur la Grassmannienne affine ``usuelle'' $\Gr_G = G_{\mathscr{K}} / G_{\mathscr{O}}$ munie de la stratification par les orbites du sous-groupe d'Iwahori \emph{positif} $\Iw$. Cette cat\'egorie est reli\'ee \`a cat\'egorie $\Dmix_{\mathrm{Br}}(\Gr_G^\op,\bk)$ (d\'efinie pour la stratification par les orbites du sous-groupe d'Iwahori \emph{n\'egatif}) consid\'er\'ee ici de la fa{\c c}on suivante. Consid\'erons un anti-automorphisme de $G$ qui se restreint \`a l'identit\'e sur $T$, et qui \'echange les sous-groupes de Borel positif et n\'egatif.
Cet anti-automorphisme induit un anti-automorphisme de $G_\mathscr{K}$ qui stabilise $G_\mathscr{O}$, et donc un isomorphisme de ind-vari\'et\'es $\Gr_G \simto \Gr_G^\op$. Cet isomorphisme \'echange la $\Iw$-orbite param\'etr\'ee par $\lambda \in X_*(T)$ et la $I$-orbite param\'etr\'ee par $w_\lambda$; elle induit donc une \'equivalence de cat\'egories
\[
\Dmix_{(\Iw)}(\Gr_G,\bk) \simto \Dmix_{\mathrm{Br}}(\Gr_G^\op,\bk)
\]
envoyant $\dmix_{\Gr_G,\lambda}$ sur $\dmix_{\Gr_G^\op,w_\lambda}$, $\nmix_{\Gr_G,\lambda}$ sur $\nmix_{\Gr_G^\op,w_\lambda}$, $\mathscr{E}^\mix_{\Gr_G,\lambda}$ sur $\mathscr{E}^\mix_{\Gr_G^\op,w_\lambda}$ et $\mathscr{T}^\mix_{\Gr_G,\lambda}$ sur $\mathscr{T}^\mix_{\Gr_G^\op,w_\lambda}$. On en d\'eduit un isomorphisme
\begin{equation}
\label{eqn:Msph-Iw-Gr}
[\Dmix_{(\Iw)}(\Gr_G,\bk)] \simto \mathcal{M}_\ext^{\mathrm{sph}}
\end{equation}
envoyant $[\dmix_{\Gr_G,\lambda}\{n\}]$ sur $v^n M_{w_\lambda}$, $[\mathscr{E}^\mix_{\Gr_G,\lambda}]$ sur $\puM_{w_\lambda}$ et $[\mathscr{T}^\mix_{\Gr_G,\lambda}]$ sur $\iota^{-1}(\puN_{w_\lambda})$, pour tout $\lambda \in X_*(T)$.
\end{rmq}

%----------------------------------------------------------------
\subsection{Base $p$-canonique et faisceaux exotiques}
\label{ss:p-exotique}
%----------------------------------------------------------------

Revenons aux notations du~\S\ref{ss:ets}. La base $p$-canonique du module $\mathcal{M}_\ext^{\mathrm{sph}}$ encode \'egalement la combinatoire des objets basculants et \`a parit\'e pour la t-structure exotique sur $\Db\Coh^{\GCoh \times \Gm}(\tcN_\GCoh)$. En effet, en combinant l'isomorphisme~\eqref{eqn:Msph-Iw-Gr} avec celui induit par l'\'equivalence du Th\'eor\`eme~\ref{thm:coh-const}, on obtient un isomorphisme
\[
[\Db\Coh^{\GCoh \times \Gm}(\tcN_\GCoh)] \simto \mathcal{M}_\ext^{\mathrm{sph}}
\]
envoyant $[\Delta^\Coh_\lambda \langle n \rangle]$ sur $v^n M_{w_\lambda}$, $[\mathcal{T}_\lambda]$ sur $\puM_{w_\lambda}$ et $[\mathcal{E}_\lambda]$ sur $\iota^{-1}(\puN_{w_\lambda})$, pour tout $\lambda \in \bX$ et tout $n \in \Z$. (Ici l'alg\`ebre de Hecke consid\'er\'ee est celle associ\'ee au groupe r\'eductif complexe $G^\vee$ dual de $\GCoh$ au sens de Langlands.)
Sous cet isomorphisme, l'action de $\mathcal{H}_\ext$ est induite par l'action cat\'egorique du groupe de tresses affine \'etudi\'ee dans~\cite{br,riche}; plus pr\'ecis\'ement, l'action (\`a droite) de $H_w$ est induite par le foncteur not\'e $\mathscr{J}_{(T_{w^{-1}})^{-1}}$ dans~\cite{mr:ets}.

%%%%%%%%%%%%%%%%%%%%%%%%%%%%%%%%
\section{Applications en th\'eorie des repr\'esentations}
\label{sec:applications}
%%%%%%%%%%%%%%%%%%%%%%%%%%%%%%%%

%----------------------------------------------------------------
\subsection{Formule de caract\`eres pour les modules basculants}
\label{ss:rw-form}
%----------------------------------------------------------------

Notre motivation principale pour d\'evelopper les travaux expos\'es au~\S\ref{sec:exemples} \'etait une application \`a un calcul de caract\`eres dans la th\'eorie des repr\'esentations des groupes r\'eductifs, qu'on pr\'esente maintenant.

Comme au~\S\ref{ss:coh-rep}, soit $\bG$ un groupe alg\'ebrique r\'eductif dont le sous-groupe d\'eriv\'e est simplement connexe, soit $\bB \subset \bG$ un sous-groupe de Borel, et soit $\bT \subset \bB$ un tore maximal. Si on consid\`ere les constructions du~\S\ref{ss:koszul-parabolique-aff} pour le groupe $G=G^\vee$ qui est dual de $\dot{\bG}$ au sens de Langlands, le groupe $\Waff$ s'interpr\`ete comme le ``groupe de Weyl affine'' associ\'e \`a $\bG$ comme dans~\cite{jantzen}. L'alg\`ebre de Hecke affine \'etendue associ\'ee sera not\'ee simplement $\mathcal{H}_\ext$. On supposera que $p>h$, o\`u $h$ est le nombre de Coxeter de $\bG$.

La formule suivante a \'et\'e conjectur\'ee par G. Williamson et le second auteur dans~\cite{rw}. Dans cet \'enonc\'e on consid\`ere la base $p$-canonique du module antisph\'erique d\'efinie comme au~\S\ref{ss:koszul-parabolique-aff}, pour le  groupe $G^\vee$, et on note $(\mathsf{T}_w : \mathsf{N}_y)$ le nombre d'occurences de $\mathsf{N}_y$ dans une filtration costandard de $\mathsf{T}_w$.

\begin{conj}
\label{conj:rw}
Pour tous $y,w \in \Wextmin$ on a
\[
(\mathsf{T}_w : \mathsf{N}_y) = \pn_{y,w}(1).
\]
\end{conj}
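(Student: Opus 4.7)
The plan is to chain together the three Koszul-type equivalences of~\S\ref{sec:exemples} so as to transport the tilting multiplicity $(\mathsf{T}_w : \mathsf{N}_y)$ into a Hom computation on the Iwahori--Whittaker side of the affine flag variety of $G^\vee$, where the character formula of Proposition~\ref{prop:character-tilting} identifies the result with $\pn_{y, w}(1)$.

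Write $w = w_\lambda$ and $y = w_\mu$ for the uniquely determined $\lambda, \mu \in \bX$. Reciprocity in the graded highest weight category $\Rep_\varnothing(\bG)$ gives $(\mathsf{T}_w : \mathsf{N}_y) = \dim_\bk \Hom(\mathsf{M}_y, \mathsf{T}_w)$. I would then apply, in order: the inverse of the degraduating Koszul duality $\Xi$ of Theorem~\ref{thm:prinblock} (which by Lemma~\ref{lem:koszul-degrad} is a degraduation functor for the automorphism $\la -1\ra[1]$); the formal Koszul duality $\Psi$ of Theorem~\ref{thm:coh-const} (using $\Psi\circ\la 1\ra \cong \{1\}\circ\Psi$ and the identity $\{-k\}[k] = \la k\ra$); the opposite-Grassmannian identification of Remark~\ref{rmq:basculants-mix-red}; and finally the inverse of the Koszul duality $\kappa$ of~\eqref{eqn:Koszul-IW-Gr} (which exchanges $\la 1\ra$ with $\{1\}$). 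The resulting chain of images $\mathsf{T}_w \mapsto \mathcal{E}_\lambda \mapsto \mathscr{T}^\mix_{\Gr_{G^\vee}, \lambda} \mapsto \mathscr{T}^\mix_{\Gr^\op_{G^\vee}, w} \mapsto \mathscr{E}^\mix_{\IW, w}$ and $\mathsf{M}_y \mapsto \Delta^\Coh_\mu \mapsto \dmix_{\Gr_{G^\vee}, \mu} \mapsto \dmix_{\Gr^\op_{G^\vee}, y} \mapsto \dmix_{\IW, y}$, together with the accumulation of the twists into a single direct sum, produces the identity
\[
(\mathsf{T}_w : \mathsf{N}_y) \;=\; \sum_{k \in \Z} \dim \Hom_{\Dmix_{\IW}(\Fl_{G^\vee}, \bk)}\bigl(\dmix_{\IW, y},\; \mathscr{E}^\mix_{\IW, w}\{k\}\bigr).
\]

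Since $\mathscr{E}^\mix_{\IW, w}$ is an indecomposable parity object, it lies in the perverse heart $\Perv^\mix_\IW(\Fl_{G^\vee}, \bk)$ and admits both a standard and a costandard filtration; by Proposition~\ref{prop:character-tilting} its class equals $\puN_w = \sum_{y'} \pn_{y', w}(v)\, N_{y'}$ in $\mathcal{M}^{\mathrm{asph}}_\ext$. Because $N_{y'} = [\dmix_{\IW, y'}]$ and $[\dmix_{\IW, y'}\{n\}] = v^n N_{y'}$ by~\eqref{eqn:Msph-Masph-cat}, the coefficient $\pn_{y, w}(v)$ coincides with the graded standard-multiplicity polynomial $\sum_k (\mathscr{E}^\mix_{\IW, w} : \dmix_{\IW, y}\{k\})\, v^k$. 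Combining the graded reciprocity relation $\dim \Hom(\dmix_{\IW, y}, \mathscr{E}^\mix_{\IW, w}\{k\}) = (\mathscr{E}^\mix_{\IW, w} : \nmix_{\IW, y}\{-k\})$ with the equality of total standard and total costandard multiplicities in the ungraded heart $\Perv_\IW(\Fl_{G^\vee}, \bk)$, one obtains
\[
\sum_{k \in \Z} \dim \Hom(\dmix_{\IW, y}, \mathscr{E}^\mix_{\IW, w}\{k\}) = \sum_{k \in \Z} (\mathscr{E}^\mix_{\IW, w} : \dmix_{\IW, y}\{k\}) = \pn_{y, w}(1),
\]
and combining with the chain of equivalences above yields $(\mathsf{T}_w : \mathsf{N}_y) = \pn_{y, w}(1)$.

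The main obstacle is the application of the Koszul duality~\eqref{eqn:Koszul-IW-Gr} for an arbitrary reductive $G^\vee$: the equivalence is established in~\cite{amrw2} only when $G$ is quasi-simple and simply connected, so the general reductive case requires either extending the geometric construction (for instance by reducing to the simply connected cover of the derived subgroup together with a central twist) or else bypassing the use of $\kappa$ altogether by reformulating the final step as a purely combinatorial manipulation inside $\mathcal{M}^{\mathrm{sph}}_\ext$, comparing the character formula $[\mathscr{T}^\mix_{\Gr_{G^\vee}, \lambda}] = \iota^{-1}(\puN_{w_\lambda})$ of Proposition~\ref{prop:character-tilting} directly with the graded costandard filtration data of $\mathscr{T}^\mix_{\Gr_{G^\vee}, \lambda}$.
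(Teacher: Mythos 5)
Your overall strategy — transporting $(\mathsf{T}_w:\mathsf{N}_y)$ through the Koszul-type equivalences and reading the answer off the combinatorics — is the right one, and your last-paragraph ``bypass'' (stop at the Grassmannian and compare directly with $[\mathscr{T}^{\mix}_{\Gr_{G^\vee},\lambda}]=\iota^{-1}(\puN_{w_\lambda})$) is in fact exactly what the paper does: the actual proof only combines Theorems~\ref{thm:coh-const} and~\ref{thm:prinblock} to get a degraduation functor $\Dmix_{(\Iw)}(\Gr_{G^\vee},\bk)\to\Db\Rep_\varnothing(\bG)$, rewrites $(\mathsf{T}_w:\mathsf{N}_y)=(\mathsf{T}_w:\mathsf{M}_y)=\sum_n(\mathscr{T}^\mix_{\Gr_{G^\vee},\lambda}:\dmix_{\Gr_{G^\vee},\mu}\la n\ra)$, and concludes from Remark~\ref{rmq:basculants-mix-red}. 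Your main route, by contrast, passes through the Iwahori--Whittaker duality~\eqref{eqn:Koszul-IW-Gr}, which is only established in~\cite{amrw2} when $G$ is quasi-simple and simply connected; so for a general $\bG$ that chain does not literally apply, and the bypass is needed — you correctly identified this obstacle.

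There is, however, a genuine error in your final step. You assert that $\mathscr{E}^\mix_{\IW,w}$, being an indecomposable parity object, ``lies in the perverse heart $\Perv^\mix_\IW(\Fl_{G^\vee},\bk)$ and admits both a standard and a costandard filtration,'' and you then invoke graded BGG-type reciprocity for it. This is false: the objects $\mathscr{E}^\mix_{\IW,w}$ are parity with respect to $\la 1\ra$, not with respect to $\lla 1\rra=\{1\}$, and it is only parity for $\lla 1\rra$ that forces an object into the heart and makes it tilting (Lemma~\ref{lem:parite-lla}). Parity objects for $\la 1\ra$ typically have nonzero perverse cohomology in several degrees; already for $w=s$ a simple reflection one has $[\mathscr{E}^\mix_s]=[\dmix_s]+[\dmix_e\{1\}]$, and $\nmix_e\{1\}=\nmix_e\la -1\ra[1]$ is a shift, so $\mathscr{E}^\mix_s$ cannot be perverse. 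The conflation of parity objects with tilting perverse sheaves is precisely what the formal Koszul duality is meant to mediate, so it cannot be assumed. The identity $\sum_k\dim\Hom(\dmix_{\IW,y},\mathscr{E}^\mix_{\IW,w}\{k\})=\pn_{y,w}(1)$ that you want is nonetheless true, but for a different reason: the parity condition forces $\Hom(\dmix_{\IW,y},\mathscr{E}^\mix_{\IW,w}\{k\}[l])=0$ for $l\ne0$, so each Hom-space equals an Euler characteristic, which one computes in the Grothendieck group from $[\mathscr{E}^\mix_{\IW,w}]=\puN_w$ and the orthogonality $\Hom^\bullet(\dmix_y,\nmix_{y'}\la m\ra)=\delta_{y,y'}\delta_{m,0}\bk$. (Also a small slip: $\Rep_\varnothing(\bG)$ is an ungraded highest weight category, though the reciprocity $(\mathsf{T}_w:\mathsf{N}_y)=\dim\Hom(\mathsf{M}_y,\mathsf{T}_w)$ is correct; and ``$\Xi^{-1}$'' should be understood as the Hom-isomorphism of the degraduation, since $\Xi$ is not invertible as a functor.)
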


\begin{rmq}\phantomsection
\label{rmq:conj-rW}
\begin{enumerate}
\item
Plus pr\'ecis\'ement, dans~\cite{rw} cette formule est conjectur\'ee sous l'hypoth\`ese o\`u $y,w$ sont dans $\Wextmin$. Cependant, en utilisant les propri\'et\'es usuelles des foncteurs de translation (voir~\cite{jantzen}) on voit que les deux versions sont \'equivalentes.
\item
La formule \'enonc\'ee dans la Conjecture~\ref{conj:rw} est tr\`es proche de (et inspir\'ee par) la formule de multiplicit\'es pour les modules basculants des groupes quantiques de Lusztig en une racine de l'unit\'e conjectur\'ee par Soergel dans~\cite{soergel-comb-tilting} puis d\'emontr\'ee peu apr\`es par lui-m\^eme dans~\cite{soergel-char-tilt} (voir \'egalement~\cite{soergel-char-for}). Une extension de cette formule au cas des repr\'esentations modulaires des groupes r\'eductifs a \'et\'e propos\'ee par Andersen (voir notamment~\cite{andersen}). Mais la Conjecture~\ref{conj:rw} est diff\'erente de celle d'Andersen dans la mesure o\`u elle s'exprime en termes des $p$-polyn\^omes de Kazhdan--Lusztig, et non des polyn\^omes de Kazhdan--Lusztig ordinaires. D'autre part, elle n'impose aucune condition sur $w$. (Cette propri\'et\'e est li\'ee au fait que les $p$-polyn\^omes de Kazhdan--Lusztig antisph\'eriques ``int\`egrent'' la formule de Donkin modulaire, alors que leurs analogues ordinaires n'int\`egrent ``que'' sa version quantique; voir le~\S\ref{ss:formule-donkin} ci-dessous pour plus de d\'etails.)
\item
Une remarque d'Andersen (bas\'ee sur des travaux ant\'erieurs de Jantzen et de Donkin) montre que si on connait les multiplicit\'es $(\mathsf{T}_w : \mathsf{N}_y)$ pour certaines valeurs de $w$, on peut en d\'eduire une formule de caract\`eres pour les modules simples $\mathsf{L}_x$ (pour tous les $x \in \Wextmin$); voir~\cite[\S 1.8]{rw} pour des d\'etails et des r\'ef\'erences. Cependant, la formule obtenue suivant cette approche s'av\`ere assez peu \'eclairante. Pour une formule plus satisfaisante, on pourra consulter~\cite{rw-sim}.
\item
\label{it:conj-rw-Groth}
Consid\'erons l'isomorphisme de groupes ab\'eliens
\[
\Z \otimes_{\Z[v,v^{-1}]}  \mathcal{M}_\ext^{\mathrm{asph}} \simto [\Rep_\varnothing(\bG)]
\]
(o\`u $\Z$ est vu comme $\Z[v,v^{-1}]$-module via $v \mapsto 1$)
envoyant $1 \otimes N_w$ sur $[\mathsf{N}_w]=[\mathsf{M}_w]$ pour tout $w \in \Wextmin$. L'action de $\Z \otimes_{\Z[v,v^{-1}]} \mathcal{H}_\ext$ sur le membre de gauche se r\'ealise en termes du membre de droite de la fa{\c c}on suivante : pour tout $s \in \Waff$ tel que $\ell(s)=1$, la multiplication by $\underline{H}_s$ correspond au morphisme induit par le foncteur $\Theta_s$\footnote{Avec les notations de~\cite{prinblock}, on a $\Theta_s:=T_{\{s\}}^\varnothing \circ T_\varnothing^{\{s\}}$.} de ``croisement des murs'' associ\'e \`a $s$. La Conjecture~\ref{conj:rw} peut se reformuler en disant que cet isomorphisme envoie $1 \otimes \puN_w$ sur $[\mathsf{T}_w]$, pour tout $w \in \Wextmin$.
\end{enumerate}
\end{rmq}

L'article~\cite{rw} contient un plan de preuve de la Conjecture~\ref{conj:rw} (suivant des id\'ees de ``repr\'esentations cat\'egoriques'' dues \`a Rouquier et Khovanov--Lauda), ainsi que sa mise en oeuvre effective dans le cas particulier o\`u $\bG=\mathrm{GL}_n(\bk)$. Cependant, cette approche semble tr\`es difficile \`a mettre en pratique pour d'autres groupes. L'application principale des travaux pr\'esent\'es au~\S\ref{sec:exemples} est une preuve g\'en\'erale de cette conjecture, bas\'ee sur des id\'ees diff\'erentes inspir\'ees de certains travaux de Bezrukavnikov (avec divers collaborateurs) consacr\'es aux groupes quantiques (et notamment~\cite{abg}).

Nous obtenons finalement le r\'esultat suivant.

\begin{thm}
\label{thm:conj-characters}
La conjecture~{\rm \ref{conj:rw}} est vraie.
\end{thm}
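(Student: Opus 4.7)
Mon plan consisterait \`a composer les trois dualit\'es de Koszul formelles pr\'esent\'ees au~\S\ref{sec:exemples} afin d'identifier la classe de l'objet basculant $\mathsf{T}_w$ dans $[\Rep_\varnothing(\bG)]$. Fixons $w \in \Wextmin$ et notons $\lambda \in \bX$ l'unique poids tel que $w_\lambda = w$. La premi\`ere \'etape consiste \`a rassembler les propri\'et\'es qui relient $\mathsf{T}_w$ \`a l'objet \`a parit\'e exotique ind\'ecomposable normalis\'e $\mathcal{E}_\lambda \in \Db \Coh^{\dot\bG \times \Gm}(\tcN_{\dot\bG})$ : d'une part, la Proposition~\ref{prop:Koszul-parite-basculant-degrad} appliqu\'ee au foncteur $\Xi$ du Th\'eor\`eme~\ref{thm:prinblock} fournit un isomorphisme $\Xi(\mathcal{E}_\lambda) \cong \mathsf{T}_w$ dans $\Rep_\varnothing(\bG)$ ; d'autre part, la Proposition~\ref{prop:Koszul-parite-basculant} appliqu\'ee au foncteur $\Psi$ du Th\'eor\`eme~\ref{thm:coh-const}, combin\'ee avec l'\'equivalence de la Remarque~\ref{rmq:basculants-mix-red}, identifie $\mathcal{E}_\lambda$ (via l'isomorphisme de groupes de Grothendieck du~\S\ref{ss:p-exotique}) avec le faisceau pervers basculant $\mathscr{T}^\mix_{\Gr_{G^\vee}^\op,w}$.

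La deuxi\`eme \'etape est alors de calculer la classe de $\mathcal{E}_\lambda$ dans $\mathcal{M}^{\mathrm{sph}}_\ext$. D'apr\`es la Proposition~\ref{prop:character-tilting}, on a $[\mathscr{T}^\mix_{\Gr_{G^\vee}^\op,w}] = \iota^{-1}(\puN_w)$, et donc
\[
[\mathcal{E}_\lambda] \; = \; \iota^{-1}(\puN_w) \; = \; \sum_{y \in \Wextmin} \pn_{y,w}(-v^{-1}) \cdot M_y,
\]
en utilisant le fait que $\iota$ est une involution v\'erifiant $\iota(v) = -v^{-1}$ et $\iota(M_y) = N_y$ (cette derni\`ere \'egalit\'e r\'esulte de~\eqref{eqn:iota-M-N} et du cas $y=e$, qui se v\'erifie directement).

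La troisi\`eme \'etape consiste \`a appliquer $\Xi$ au niveau des groupes de Grothen\-dieck. La relation $\Xi \circ \langle 1 \rangle \cong [1] \circ \Xi$ entra\^ine que l'application induite $\Xi_\ast$ se factorise \`a travers la sp\'ecialisation $v \mapsto -1$, et que $\Xi_\ast(M_y) = \Xi_\ast([\Delta^\Coh_{\lambda_y}]) = [\mathsf{M}_y] = [\mathsf{N}_y]$ pour tout $y$. En \'evaluant la formule pr\'ec\'edente \`a $v = -1$ et en utilisant l'identit\'e $\pn_{y,w}(-(-1)^{-1}) = \pn_{y,w}(1)$, on obtient
\[
[\mathsf{T}_w] \; = \; \Xi_\ast([\mathcal{E}_\lambda]) \; = \; \sum_{y \in \Wextmin} \pn_{y,w}(1) \cdot [\mathsf{N}_y] \quad \text{dans $[\Rep_\varnothing(\bG)]$,}
\]
ce qui est bien la formule \'enonc\'ee par la Conjecture~\ref{conj:rw} (compte tenu de la Remarque~\ref{rmq:conj-rW}\eqref{it:conj-rw-Groth}).

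La difficult\'e principale de ce plan ne r\'eside pas dans l'argument combinatoire de sp\'ecialisation, qui est essentiellement formel une fois les identifications \'etablies, mais dans la construction effective des trois \'equivalences cat\'egoriques invoqu\'ees (Th\'eor\`emes~\ref{thm:mkd},~\ref{thm:coh-const}, et~\ref{thm:prinblock}) ainsi que dans la Proposition~\ref{prop:character-tilting}. Chacun de ces \'el\'ements repose sur un ensemble substantiel de techniques sp\'ecifiques --- objets libre-monodromiques et cat\'egories de Soergel bimodules dans~\cite{amrw2}, analyse fine de la t-structure exotique sur la r\'esolution de Springer dans~\cite{arider, mr}, et \'etude du bloc principal \`a coefficients de caract\'eristique positive dans~\cite{prinblock} ---, et constitue le v\'eritable c\oe ur de la preuve du Th\'eor\`eme~\ref{thm:conj-characters}.
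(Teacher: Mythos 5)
Votre démonstration est correcte et suit essentiellement la même stratégie que celle de l'article : on combine les Théorèmes~\ref{thm:coh-const} et~\ref{thm:prinblock} pour obtenir un foncteur de dégraduation de $\Dmix_{(\Iw)}(\Gr_{G^\vee},\bk)$ vers $\Db\Rep_\varnothing(\bG)$ envoyant les basculants sur les basculants, puis on utilise la Remarque~\ref{rmq:basculants-mix-red} (laquelle repose elle-même sur la Proposition~\ref{prop:character-tilting}, donc sur le Théorème~\ref{thm:mkd}) pour exprimer les classes dans le groupe de Grothendieck. Vous explicitez simplement davantage le calcul combinatoire (spécialisation $v\mapsto -1$ issue de $\Xi\circ\langle 1\rangle\cong[1]\circ\Xi$ et identité $\pn_{y,w}(-(-1)^{-1})=\pn_{y,w}(1)$), que l'article laisse implicite dans son renvoi à la Remarque~\ref{rmq:basculants-mix-red}.
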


\begin{proof}
En combinant les foncteurs des Th\'eor\`emes~\ref{thm:coh-const} et~\ref{thm:prinblock}
 on obtient un foncteur de d\'egraduation (au sens du~\S\ref{ss:degrad}, et par rapport \`a $\langle 1 \rangle$) de $\Dmix_{(\Iw)}(\Gr_{G^\vee},\bk)$ vers $\Db \Rep_\varnothing(\bG)$. Ce foncteur envoie $\nmix_{\Gr_{G^\vee},\lambda}$ sur $\mathsf{N}_{w_\lambda}$, $\dmix_{\Gr_{G^\vee},\lambda}$ sur $\mathsf{M}_{w_\lambda}$, et $\mathscr{T}^\mix_{\Gr_{G^\vee},\lambda}$ sur $\mathsf{T}_{w_\lambda}$, pour tout $\lambda \in \bX$. On en d\'eduit que pour tous $w,y \in \Wextmin$ on a
\[
(\mathsf{T}_w : \mathsf{N}_y) = (\mathsf{T}_w : \mathsf{M}_y) = \sum_{n \in \Z} (\mathscr{T}^\mix_{\Gr_{G^\vee},\lambda} : \dmix_{\Gr_{G^\vee},\mu} \langle n \rangle),
\]
o\`u $\lambda$ et $\mu$ sont les poids tels que $w=w_\lambda$ et $y=w_\mu$.
Le r\'esultat voulu d\'ecoule alors de la Remarque~\ref{rmq:basculants-mix-red}.
\end{proof}

%----------------------------------------------------------------
\subsection{Support des modules basculants et conjecture de Humphreys}
%----------------------------------------------------------------

On continue dans le cadre du~\S\ref{ss:rw-form}.  Soit $\Fr: \bG \to \dot{\bG}$ le morphisme de Frobenius, et notons $\bG_1$ son noyau (un $\bk$-schéma en groupes fini). 
Dans ce paragraphe nous pr\'esentons une application des constructions du~\S\ref{ss:coh-rep} au calcul de la cohomologie
\[
\coh^\bullet(\bG_1,M) = \Ext^\bullet_{\bG_1}(\bk,M)
\]
de $\bG_1$ à coefficients dans un $\bG$-module $M$ (ou plus g\'en\'eralement dans un complexe de $\bG$-modules).  Dans le cas où $M$ est un $\bG$-module basculant, 
cette approche permet de d\'emontrer
une conjecture de Humphreys sur le support de cette cohomologie.

Notons que pour tout $M$ dans $\Db \Rep(\bG)$, l'espace vectoriel gradu\'e $\coh^\bullet(\bG_1,M)$ admet une action naturelle de $\bG$, qui se factorise via une action de $\dot \bG = \bG/\bG_1$.

\begin{prop}
\label{prop:cohom-glob}
Pour tout $\cF \in \Db\Coh^{\dot\bG \times \Gm}(\tcN_{\dot\bG})$ et tout $k \in \Z$, il existe un isomorphisme canonique et $\dot\bG$-équivariant
\[
\coh^k(\bG_1, \Xi(\cF)) \cong \bigoplus_{n \in \Z}R^{n+k}\Gamma(\tcN_{\dot\bG}, \cF)_{-n}.
\]
\end{prop}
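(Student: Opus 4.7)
Plan de preuve. La strat\'egie consiste \`a identifier les deux membres comme foncteurs cohomologiques compatibles sur $\Db \Coh^{\dot\bG \times \Gm}(\tcN_{\dot\bG})$, puis \`a v\'erifier la compatibilit\'e sur une famille g\'en\'eratrice, en s'appuyant sur la construction explicite de $\Xi$ donn\'ee dans~\cite{prinblock}.

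Premi\`ere \'etape : r\'eduction \`a $k=0$. Les deux foncteurs $\cF \mapsto \coh^k(\bG_1, \Xi(\cF))$ et
\[
F^k : \cF \mapsto \bigoplus_{n \in \Z} R^{n+k}\Gamma(\tcN_{\dot\bG},\cF)_{-n}
\]
sont $\delta$-fonctoriels sur $\Db \Coh^{\dot\bG \times \Gm}(\tcN_{\dot\bG})$ \`a valeurs dans $\Rep(\dot\bG)$. La relation $\Xi \circ \langle 1\rangle \cong [1]\circ \Xi$ donn\'ee par la D\'efinition~\ref{def:dualite-Koszul-degrad} implique que $\coh^k(\bG_1, \Xi(\cF\la 1\ra)) = \coh^{k+1}(\bG_1, \Xi(\cF))$; un changement d'indice imm\'ediat $m=n+1$ montre \'egalement que $F^k(\cF\la 1\ra) = F^{k+1}(\cF)$. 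Il suffit donc de construire un isomorphisme naturel pour $k=0$, et le cas g\'en\'eral s'en d\'eduit par d\'ecalage.

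Deuxi\`eme \'etape : construction de la transformation naturelle. Le foncteur $\Xi$ de~\cite{prinblock} se d\'ecompose via une variante \'equivariante de la localisation de Bezrukavnikov--Mirkovi\'c--Rumynin, qui met en correspondance les faisceaux coh\'erents $\dot\bG\times\Gm$-\'equivariants sur $\tcN_{\dot\bG}$ avec des modules sur une certaine $\dot\bG$-alg\`ebre admettant une action de $\bG$ via le morphisme de Frobenius. Sous cette correspondance, le foncteur des sections globales $R\Gamma(\tcN_{\dot\bG},-)$ (repr\'esent\'e du c\^ot\'e coh\'erent par l'objet unit\'e $\cO_{\tcN_{\dot\bG}}$) s'identifie canoniquement au foncteur $R\Hom_{\bG_1}(\bk,-)$ qui calcule la cohomologie $\bG_1$-\'equivariante. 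L'action de $\dot\bG$ et la graduation par les poids du $\Gm$ correspondent respectivement \`a l'action r\'esiduelle de $\dot\bG=\bG/\bG_1$ sur $\coh^\bullet(\bG_1,-)$ et \`a la graduation cohomologique (modulo le m\'elange impos\'e par la convention de Koszul).

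Troisi\`eme \'etape : v\'erification sur une famille g\'en\'eratrice. Pour donner un sens rigoureux \`a l'identification pr\'ec\'edente, on v\'erifie la formule sur les objets costandards $\nabla^\Coh_\lambda$ (et leurs d\'ecal\'es), qui engendrent $\Db \Coh^{\dot\bG\times\Gm}(\tcN_{\dot\bG})$ comme cat\'egorie triangul\'ee. Pour ces objets, $\Xi(\nabla^\Coh_\lambda)\cong \mathsf{N}_{w_\lambda}=\mathsf{N}(w_\lambda\cdot_p 0)$ d'apr\`es le Th\'eor\`eme~\ref{thm:prinblock}, et le calcul du type Andersen--Jantzen--Soergel de $\coh^\bullet(\bG_1, \mathsf{N}(\mu))$ en termes de $R^\bullet\Gamma$ d'un fibr\'e en droites sur $\tcN_{\dot\bG}$ fournit exactement la formule souhait\'ee (avec le d\'ecalage de graduation correct provenant de la description de $\nabla^\Coh_\lambda$). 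On conclut par d\'evissage triangul\'e.

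Obstacle principal. La difficult\'e essentielle r\'eside dans la deuxi\`eme \'etape : il faut expliciter suffisamment la construction de $\Xi$ dans~\cite{prinblock} pour produire une transformation naturelle \emph{canonique} et non pas seulement un isomorphisme abstrait d\'etermin\'e objet par objet, ce qui demande de suivre avec soin les compatibilit\'es $\dot\bG\times\Gm$-\'equivariantes \`a travers la localisation BMR, et d'identifier explicitement l'action de $\dot\bG$ sur les deux membres.
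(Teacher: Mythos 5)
Votre plan s'écarte sensiblement de la preuve du texte, et le cœur de l'argument que vous proposez repose sur une prémisse douteuse. Dans la deuxième étape vous affirmez que $\Xi$ ``se décompose via une variante équivariante de la localisation de Bezrukavnikov--Mirkovi\'c--Rumynin'' ; or ce n'est pas ainsi que $\Xi$ est construit dans~\cite{prinblock}, dont la construction passe par l'équivalence de Satake géométrique et les équivalences de type Arkhipov--Bezrukavnikov--Ginzburg (Théorème~\ref{thm:coh-const}), et non par une localisation BMR. Sans cette décomposition, l'identification du foncteur $R\Gamma(\tcN_{\dot\bG},-)$ avec $R\Hom_{\bG_1}(\bk,-)$ que vous invoquez n'a pas de point de départ. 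Vous reconnaissez vous-même que l'obstacle principal est de produire une transformation naturelle \emph{canonique} ; votre troisième étape (vérification sur les objets $\nabla^{\Coh}_\lambda$ puis dévissage) donnerait au mieux des isomorphismes objet par objet, et ne répare pas ce défaut.

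La preuve du texte évite entièrement cette difficulté, et ne réduit pas au cas $k=0$ (réduction correcte mais superflue chez vous). Elle enchaîne trois ingrédients fonctoriels : (i) l'adjonction $\Ext^\bullet_{\bG_1}(\bk,M) \cong \Ext^\bullet_{\bG}(\bk, M \otimes \Fr^*\bk[\dot\bG])$ ; (ii) la compatibilité tensorielle de $\Xi$, à savoir $\Xi(\cF \otimes V) \cong \Xi(\cF) \otimes \Fr^*(V)$ pour $V$ un $\dot\bG$-module (propriété explicite de la construction de~\cite{prinblock}, que vous ne mentionnez pas et qui est le véritable ingrédient-clé) ; (iii) le fait que $\Xi$ est un foncteur de dégraduation (Lemme~\ref{lem:koszul-degrad}), qui permet de convertir les $\Ext$ dans $\Rep(\bG)$ en $\Hom$ dans $\Db\Coh^{\dot\bG\times\Gm}(\tcN_{\dot\bG})$. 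On conclut ensuite par le lemme élémentaire $\Hom_{\dot\bG}(\bk, M\otimes\bk[\dot\bG]) \cong M$. Cette chaîne d'isomorphismes est intrinsèquement canonique et $\dot\bG$-équivariante, sans aucun recours à une famille génératrice. Pour remettre votre preuve sur les rails, remplacez l'étape~2 par l'utilisation de la compatibilité tensorielle $\Xi(\cF\otimes V)\cong\Xi(\cF)\otimes\Fr^*(V)$ et de l'adjonction de Frobenius : c'est là que réside le contenu non formel de la proposition.
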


Ici $\Xi$ est le foncteur du Théorème~\ref{thm:prinblock}, et le ``$-n$'' en indice dans le membre de droite désigne la composante graduée de degré $-n$. (Notons que les espaces vectoriels $R^i\Gamma(\tcN_{\dot\bG},\cF)$ ont une structure naturelle de $\dot\bG \times \Gm$-module. La graduation provient de l'action de $\Gm$.)

\begin{proof}
Par adjonction, pour tout $\bG$-module $M$, on a
\[
\Ext^\bullet_{\bG_1}(\bk, M) \cong \Ext^\bullet_{\bG}(\bk, M \otimes \Fr^*\bk[\dot\bG]),
\]
où $\bk[\dot\bG]$ est l'anneau des fonctions sur $\dot \bG$.  Ensuite, nous exploitons l'amplification suivante du Théorème~\ref{thm:prinblock}: si $V$ est un $\dot \bG$-module, alors pour tout $\cF \in \Db\Coh^{\dot\bG \times \Gm}(\tcN)$ il existe un isomorphisme naturel
\[
\Xi(\cF \otimes V) \cong \Xi(\cF) \otimes \Fr^*(V)
\]
(voir~\cite{prinblock}).
Supposons désormais que $M = \Xi(\cF)$. Puisque $\Xi$ est un foncteur de d\'egraduation pour $\langle -1 \rangle [1]$ (voir le Lemme~\ref{lem:koszul-degrad}), on a
\begin{align*}
\Ext^k_{\bG_1}(\bk, M) 
&\cong \Ext^k_{\bG}(\Xi(\cO_{\tcN_{\dot\bG}}), \Xi(\cF) \otimes \Fr^*\bk[\dot\bG]) \\
&\cong \bigoplus_{n \in \Z} \Hom_{\Db\Coh^{\dot\bG \times \Gm}(\tcN_{\dot\bG})}( \cO_{\tcN_{\dot\bG}}, \cF \otimes \bk[\dot\bG]\la -n\ra[n+k]) \\
&\cong \bigoplus_{n \in \Z} \Hom_{\Db\Rep(\dot\bG \times \Gm)}(\bk, R\Gamma(\tcN_{\dot\bG},\cF)\la -n\ra[n+k] \otimes \bk[\dot\bG]).
\end{align*}
%où la dernière étape découle du fait que $\Hom(\cO_{\tcN_{\dot\bG}},{-})$ est adjoint à gauche au foncteur $R\Gamma(\tcN_{\dot\bG},{-})$ de sections globales.  
La proposition est alors une conséquence du fait que pour tout $\dot\bG$-module $M$, l'espace $\Hom_{\dot\bG}(\bk, M \otimes \bk[\dot\bG])$ s'identifie canoniquement à l'espace vectoriel sous-jacent \`a $M$.
\end{proof}

Soit $\cN_{\dot\bG}$ le cône nilpotent dans l'algèbre de Lie de $\dot\bG$, et notons
\[
\pi: \tcN_{\dot\bG} \to \cN_{\dot\bG}
\]
la \emph{résolution de Springer}.  Rappelons que c'est une résolution \emph{rationelle}; en d'autres termes on a $\pi_*\cO_{\tcN_{\dot\bG}} \cong \cO_{\cN_{\dot\bG}}$, ou, puisque $\cN_{\dot\bG}$ est une vari\'et\'e affine, $R\Gamma(\tcN_{\dot\bG}, \cO_{\tcN_{\dot\bG}}) \cong \bk[\cN_{\dot\bG}]$ (voir~\cite[Theorem~2]{klt}).  Par conséquent, dans le cas où $\cF = \cO_{\tcN_{\dot\bG}}$, la Proposition~\ref{prop:cohom-glob} nous donne un isomorphisme 
\[
\coh^\bullet(\bG_1,\bk) \cong \bk[\cN_{\dot\bG}],
\]
dont on v\'erifie facilement qu'il s'agit d'un isomorphisme d'anneaux (o\`u le membre de gauche est muni du produit de Yoneda.)
On retrouve ainsi un célèbre résultat d'Andersen--Jantzen~\cite{aj} (d\'emontr\'e \'egalement par Friedlander--Parshall~\cite{fp} sous des hypoth\`eses l\'eg\`erement plus fortes).

Pour tout $\bG$-module $M$, la cohomologie $\coh^\bullet(\bG_1,M)$ est munie d'une structure de module sur $\coh^\bullet(\bG_1,\bk)$, et donc sur $\bk[\cN_{\dot\bG}]$, et par l\`a peut s'interpr\'eter comme un faisceau quasi-cohérent sur $\cN_{\dot \bG}$. Le support de ce dernier (une sous-variété fermée de $\cN_{\dot\bG}$) est appelé la \emph{variété de support relative de $M$}. La \emph{variété de support} ordinaire (i.e., non relative) est définie de la même façon en remplaçant $\coh^\bullet(\bG_1,M)$ par $\Ext^\bullet_{\bG_1}(M,M)$.  Les idées de la preuve de la Proposition~\ref{prop:cohom-glob} s'adaptent aisément pour démontrer l'énoncé suivant.

\begin{prop}
Pour tout $\cF \in \Db\Coh^{\dot\bG \times \Gm}(\tcN_{\dot\bG})$, la variété de support relative de $\Xi(\cF)$ est le support de l'objet $\pi_*\cF \in \Db\Coh^{\dot\bG \times \Gm}(\cN_{\dot\bG})$.
\end{prop}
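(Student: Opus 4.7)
Le plan est d'adapter la preuve de la Proposition~\ref{prop:cohom-glob} précédente, en montrant de plus que l'isomorphisme qu'elle fournit est compatible avec les structures de module sur $\coh^\bullet(\bG_1,\bk) \cong \bk[\cN_{\dot\bG}]$ (identification due à Andersen--Jantzen rappelée ci-dessus).

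Je commencerais par revisiter la preuve de la Proposition~\ref{prop:cohom-glob}. L'isomorphisme y est obtenu en combinant l'adjonction de Frobenius, la compatibilité de $\Xi$ avec le produit tensoriel par un pull-back de Frobenius, et l'identité $\Hom_{\dot\bG}(\bk, M \otimes \bk[\dot\bG]) \cong M$. Chacune de ces étapes respecte l'action de l'algèbre $\Ext^\bullet_{\bG_1}(\bk,\bk)$. En particulier, via $\Xi$, l'action de cette algèbre sur $\coh^\bullet(\bG_1, \Xi(\cF))$ s'identifie à l'action naturelle de $\Hom^\bullet_{\Db\Coh^{\dot\bG \times \Gm}(\tcN_{\dot\bG})}(\cO_{\tcN_{\dot\bG}}, \cO_{\tcN_{\dot\bG}})$ sur $\bigoplus_n R^{n+\bullet}\Gamma(\tcN_{\dot\bG}, \cF)_{-n}$. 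En utilisant le fait que $R\Gamma(\tcN_{\dot\bG}, \cO_{\tcN_{\dot\bG}}) \cong \bk[\cN_{\dot\bG}]$ (car $\pi$ est une résolution rationnelle), cela fournira un isomorphisme $\bk[\cN_{\dot\bG}]$-linéaire (à décalage de graduation près)
\[
\coh^\bullet(\bG_1, \Xi(\cF)) \cong \bigoplus_n R^{n+\bullet}\Gamma(\tcN_{\dot\bG}, \cF)_{-n}.
\]

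Ensuite, puisque $\pi : \tcN_{\dot\bG} \to \cN_{\dot\bG}$ est propre, on a $R\Gamma(\tcN_{\dot\bG}, \cF) \cong R\Gamma(\cN_{\dot\bG}, \pi_*\cF)$ (avec la notation $\pi_*$ du membre de droite désignant bien s\^ur le foncteur dérivé). Cette identification est également $\bk[\cN_{\dot\bG}]$-linéaire par naturalité. Enfin, puisque $\cN_{\dot\bG}$ est une variété affine, le foncteur $R\Gamma(\cN_{\dot\bG}, -)$ induit une équivalence entre $\Db\Coh(\cN_{\dot\bG})$ et la catégorie dérivée des complexes parfaits de $\bk[\cN_{\dot\bG}]$-modules, et un résultat classique assure que le support (ensemblistement) d'un complexe cohérent sur $\cN_{\dot\bG}$ coïncide avec le support de son $\bk[\cN_{\dot\bG}]$-module gradué des sections globales.

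L'obstacle principal sera la vérification de la compatibilité des structures de module à la première étape, qui demande de tracer soigneusement les actions de $\Ext^\bullet_{\bG_1}(\bk,\bk)$ à travers chaque identification intervenant dans la preuve de la Proposition~\ref{prop:cohom-glob}. Une fois cette compatibilité établie, le reste de l'argument est un enchainement formel des faits géométriques rappelés ci-dessus.
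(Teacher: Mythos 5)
Votre esquisse suit exactement la piste que l'article indique (le texte se contente d'affirmer que « les idées de la preuve de la Proposition~\ref{prop:cohom-glob} s'adaptent aisément »), et les trois étapes — fonctorialité de l'isomorphisme de la Proposition~\ref{prop:cohom-glob} par rapport à l'action de $\coh^\bullet(\bG_1,\bk)\cong\bk[\cN_{\dot\bG}]$, comparaison $R\Gamma(\tcN_{\dot\bG},\cF)\cong R\Gamma(\cN_{\dot\bG},\pi_*\cF)$ via la propreté de $\pi$, et calcul du support d'un complexe cohérent sur la variété affine $\cN_{\dot\bG}$ par ses sections globales — constituent bien l'argument attendu; vous identifiez aussi à juste titre que le seul point réellement délicat est la compatibilité des structures de module dans la première étape. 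Un détail mineur : pour la variété singulière $\cN_{\dot\bG}$, le foncteur $R\Gamma$ donne une équivalence $\Db\Coh(\cN_{\dot\bG})\simeq\Db(\bk[\cN_{\dot\bG}]\lmod)$ (modules de type fini), et non pas avec les complexes parfaits — mais cela n'affecte pas la conclusion sur les supports.
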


Considérons maintenant les variétés de support relatives des $\bG$-modules basculants $\mathsf{T}_w$ avec $w \in \Waff$.  Il n'est pas difficile de voir que si $\lambda \in \Z R$ n'est pas antidominant, alors $\coh^\bullet(\bG_1, \mathsf{T}_{w_\lambda}) = 0$, et donc sa variété de support relative est vide. La \emph{conjecture de Humphreys relative} propose une description de la variété de support relative de $\mathsf{T}_{w_\lambda}$ pour $\lambda \in (-\bX^+) \cap \Z R$.\footnote{La conjecture telle qu'énoncée dans~\cite{humphreys} concerne les variétés de support ordinaires, mais celles-ci sont \'etroitement liées aux variétés de support relatives : voir notamment~\cite[Lemma~8.11 et Remark~9.4]{ahr}.} Pour expliquer cette conjecture, rappelons que $\Waff$ se répartit en \emph{cellules bilatères}, et que ces derni\`eres sont en bijection naturelle avec les $\bG$-orbites dans $\cN_{\dot\bG}$~\cite{lusztig2}.  Pour $\lambda \in \bX$, posons
\begin{align*}
\bc_\lambda &:= \text{la cellule bilatère qui contient $w_\lambda$}, \\
C_{\bc_\lambda} &:= \text{la $\bG$-orbite dans $\cN_{\dot\bG}$ qui correspond à $\bc_\lambda$.}
\end{align*}
Ces notations permettent d'énoncer le théorème suivant, qui est l'un des résultats principaux de~\cite{ahr}. 

\begin{thm}[Conjecture de Humphreys relative]
Si la caractéristique $p$ de $\bk$ est assez grande, ou si $\bG = \mathrm{SL}_n$ et $p>n+1$, alors pour tout $\lambda \in (-\bX^+) \cap \Z R$ la variété de support relative de $\mathsf{T}_{w_\lambda}$ est $\overline{C_{\bc_\lambda}}$.
\end{thm}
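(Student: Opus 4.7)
Le plan est de ramener l'\'enonc\'e \`a une question g\'eom\'etrique sur les faisceaux coh\'erents sur la r\'esolution de Springer, puis d'exploiter la combinatoire des cellules bilat\`eres. D'apr\`es la Proposition~\ref{prop:Koszul-parite-basculant-degrad} appliqu\'ee au foncteur $\Xi$ du Th\'eor\`eme~\ref{thm:prinblock}, on a $\mathsf{T}_{w_\lambda} \cong \Xi(\mathcal{E}_\lambda)$ o\`u $\mathcal{E}_\lambda$ est l'objet \`a parit\'e exotique ind\'ecomposable normalis\'e associ\'e \`a $\lambda$. La proposition pr\'ec\'edant l'\'enonc\'e montre alors que la vari\'et\'e de support relative de $\mathsf{T}_{w_\lambda}$ co\"incide avec le support du faisceau coh\'erent $\dot\bG$-\'equivariant $\pi_* \mathcal{E}_\lambda$ dans $\cN_{\dot\bG}$. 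Le th\'eor\`eme \'equivaut donc \`a l'\'egalit\'e $\mathrm{supp}(\pi_* \mathcal{E}_\lambda) = \overline{C_{\bc_\lambda}}$, un \'enonc\'e purement g\'eom\'etrique.

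Pour l'inclusion ``$\subseteq$'', je m'appuierais sur le fait (rappel\'e au~\S\ref{ss:p-exotique}) que la classe $[\mathcal{E}_\lambda]$ correspond \`a $\iota^{-1}(\puN_{w_\lambda})$ dans $\mathcal{M}^{\mathrm{sph}}_\ext$, et sur la description g\'eom\'etrique de l'action cat\'egorique de $\mathcal{H}_\ext$ sur $\Db\Coh^{\dot\bG \times \Gm}(\tcN_{\dot\bG})$. Cette action respecte une filtration cellulaire dont les sous-quotients se d\'ecrivent, apr\`es poussage en avant par $\pi$, en termes de faisceaux coh\'erents \'equivariants support\'es sur les adh\'erences d'orbites nilpotentes, via la bijection de Lusztig entre cellules bilat\`eres de $\Waff$ et orbites nilpotentes dans $\cN_{\dot\bG}$ (et sa cat\'egorification due \`a Bezrukavnikov et ses collaborateurs). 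Sous les hypoth\`eses sur $p$ de l'\'enonc\'e, la base $p$-canonique antisph\'erique co\"incide avec la base de Kazhdan--Lusztig classique en $w_\lambda$: $\puN_{w_\lambda} = \underline{N}_{w_\lambda}$. Comme $\underline{N}_{w_\lambda}$ appartient \`a l'id\'eal cellulaire associ\'e \`a $\bc_\lambda$ (et aux cellules inf\'erieures), on en d\'eduit que $\pi_* \mathcal{E}_\lambda$ est support\'e dans $\overline{C_{\bc_\lambda}}$.

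Pour l'inclusion oppos\'ee ``$\supseteq$'', l'argument consiste \`a montrer que l'image de $\mathcal{E}_\lambda$ dans le quotient triangul\'e par la sous-cat\'egorie associ\'ee aux cellules strictement inf\'erieures \`a $\bc_\lambda$ est non nulle. En sp\'ecialisant en $v=1$ l'expression de $\iota^{-1}(\puN_{w_\lambda})$ dans la base cellulaire, et en utilisant la positivit\'e des coefficients (cons\'equence de la r\'ealisation g\'eom\'etrique des bases canoniques via les complexes \`a parit\'e), on v\'erifie que cette image est non nulle. Ceci force $\pi_* \mathcal{E}_\lambda$ \`a \^etre non trivial au point g\'en\'erique de $C_{\bc_\lambda}$, d'o\`u l'inclusion voulue apr\`es passage \`a l'adh\'erence.

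L'obstacle principal r\'eside dans le contr\^ole pr\'ecis de la condition sur la caract\'eristique. D'une part, l'\'egalit\'e $\puN_{w_\lambda} = \underline{N}_{w_\lambda}$ n'est connue explicitement que pour $p$ ``assez grand'' en g\'en\'eral, sans borne effective; le cas $\bG=\mathrm{SL}_n$ avec $p>n+1$ b\'en\'eficie des travaux sp\'ecifiques au type $A$ (o\`u les $p$-polyn\^omes de Kazhdan--Lusztig antisph\'eriques peuvent \^etre calcul\'es explicitement). D'autre part, la description g\'eom\'etrique des sous-cat\'egories cellulaires de $\Db\Coh^{\dot\bG \times \Gm}(\tcN_{\dot\bG})$ en termes d'orbites nilpotentes (\'etablie originellement en caract\'eristique nulle) requiert un argument de r\'eduction modulaire impos\'e lui aussi par une contrainte de grande caract\'eristique. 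La conjonction de ces deux contraintes explique la forme qualitative de l'hypoth\`ese sur $p$ dans l'\'enonc\'e.
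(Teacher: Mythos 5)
Cet article ne d\'emontre pas ce th\'eor\`eme : il l'\'enonce comme r\'esultat de~\cite{ahr}, auquel il renvoie. Je commente donc votre proposition pour elle-m\^eme.

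Votre r\'eduction, via la proposition qui pr\'ec\`ede l'\'enonc\'e, \`a l'\'egalit\'e g\'eom\'etrique $\mathrm{supp}(\pi_*\mathcal{E}_\lambda) = \overline{C_{\bc_\lambda}}$ est correcte et constitue bien la premi\`ere \'etape attendue. En revanche, le pivot de votre argument pour l'inclusion $\subseteq$ --- \`a savoir que sous l'hypoth\`ese de l'\'enonc\'e on aurait $\puN_{w_\lambda} = \underline{N}_{w_\lambda}$ --- ne peut pas s'appliquer \`a \emph{tous} les $\lambda$ antidominants simultan\'ement. Comme le rappelle le~\S\ref{ss:base-p-can}, pour chaque $w$ il existe une borne $N(w)$ telle que $\puH_w = \underline{H}_w$ pour $p \geq N(w)$, mais cette borne d\'epend de $w$ et n'est pas uniforme ; rien ne garantit (ni ne laisse esp\'erer) que pour un $p$ fix\'e la base $p$-canonique co\"incide avec la base de Kazhdan--Lusztig sur l'ensemble infini des $w_\lambda$ antidominants. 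Or l'\'enonc\'e affirme pr\'ecis\'ement une conclusion valide pour tous les $\lambda$, avec $p$ fix\'e. Ce qui rend possible cet \'enonc\'e uniforme (et qui est le moteur de la preuve dans~\cite{ahr}) est l'observation que $\mathrm{supp}(\pi_*\mathcal{E}_\lambda)$ ne d\'epend que de la cellule bilat\`ere $\bc_\lambda$ : comme il n'y a qu'un nombre \emph{fini} de cellules, il suffit ensuite de v\'erifier l'\'egalit\'e pour un repr\'esentant par cellule, ce qui n'engage qu'un nombre fini de bornes $N(w_\lambda)$. Votre esquisse mentionne une filtration cellulaire, mais ne d\'egage pas cette invariance du support par cellule, qui est le point cl\'e manquant de votre argument.

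Une inexactitude suppl\'ementaire concerne le cas $\bG = \mathrm{SL}_n$ : vous l'attribuez \`a une calculabilit\'e explicite des $p$-polyn\^omes de Kazhdan--Lusztig antisph\'eriques en type $A$, ce qui n'est pas exact (les contre-exemples de Williamson montrent justement que ces polyn\^omes ont un comportement impr\'evisible en type $A$). Comme l'indique la remarque qui suit l'\'enonc\'e, ce cas d\'ecoule de la version non relative du r\'esultat, d\'emontr\'ee par Hardesty dans~\cite{hardesty} par des m\'ethodes propres au type $A$, ind\'ependantes de la combinatoire des bases $p$-canoniques.
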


\begin{rmq}
\begin{enumerate}
\item
Dans~\cite{ahr} on d\'emontre \'egalement une version non relative de ce r\'esultat, pour $p$ assez grand.
\item
Dans le cas $\bG = \mathrm{SL}_n$ (et $p>n+1$), ce th\'eor\`eme est obtenu comme corollaire de sa version non relative, d\'emontr\'ee par W. Hardesty dans~\cite{hardesty}.
\item
Dans le cas g\'en\'eral, les m\'ethodes utilis\'ees dans~\cite{ahr} ne permettent pas de d\'eterminer explicitement une borne sur $p$ au-del\`a de laquelle la conjecture est vraie.
\end{enumerate}
\end{rmq}

%----------------------------------------------------------------
\subsection{Une ``formule de Donkin'' pour la base $p$-canonique antisph\'erique}
\label{ss:formule-donkin}
%----------------------------------------------------------------

Continuons avec les notations du~\S\ref{ss:rw-form}.
Pour $\lambda \in \bX$, on notera $\vartheta_\lambda$ l'\'el\'ement de Bernstein de $\mathcal{H}_\ext$ associ\'e, qui est d\'efini par
\[
\vartheta_\lambda = (H_{t_\mu}) \cdot (H_{t_\nu})^{-1} = (H_{t_\nu})^{-1} \cdot H_{t_\mu}
\]
o\`u $\lambda=\mu-\nu$ avec $\mu,\nu \in \bX^+$ ; voir par exemple~\cite{lusztig}. (Il est bien connu que cet \'el\'ement ne d\'epend pas du choix de $\mu$ et $\nu$.) Pour tout $V$ dans $\Rep(\dot\bG)$, on posera
\[
\vartheta_V := \sum_{\lambda \in \bX} \dim(V_\lambda) \cdot \vartheta_\lambda.
\]
(Ici, $V_\lambda$ d\'esigne le sous-espace de poids $\lambda$ dans $V$, o\`u on a identifi\'e $\bX$ aux caract\`eres de $\dot{\bT}$ comme au~\S\ref{ss:coh-rep}. Cette somme est bien s\^ur finie, et a donc un sens.) Puisque $\dim(V_{w\lambda})=\dim(V_\lambda)$ pour tous $\lambda \in \bX$ et $w \in W$, des r\'esultats de Bernstein et Lusztig (voir~\cite{lusztig}) montrent que cet \'el\'ement est central dans $\mathcal{H}_\ext$. En particulier, on consid\'erera ces \'el\'ements quand $V=\dot{\mathsf{T}}(\lambda)$ est le $\dot\bG$-module basculant ind\'ecomposable de plus haut poids $\lambda$.

Choisissons un poids $\varsigma \in \bX$ tel que $\langle \varsigma, \alpha^\vee \rangle = 1$ pour toute racine simple $\alpha$. (Un tel poids existe gr\^ace \`a notre hypoth\`ese que le sous-groupe d\'eriv\'e de $\bG$ est simplement connexe.)
Nous dirons qu'un \'el\'ement $x \in \Wextmin$ est \emph{restreint} si le poids dominant $x \cdot_p 0$ est restreint. (Bien s\^ur, cette d\'efinition ne d\'epend pas de la caract\'eristique $p$ sur laquelle on travaille, sous notre hypoth\`ese o\`u $p>h$.)
Le r\'esultat suivant est un analogue 
de la ``formule de Donkin'' pour les $\bG$-modules basculants (voir~\cite[\S E.9]{jantzen}).

\begin{prop}
\label{prop:Donkin-pcan}
Supposons que $p \geq 2h-1$. Alors pour tout $x \in \Wextmin$ tel que
$t_{-\varsigma} x$ appartient \`a $\Wextmin$ et est restreint, et pour tout $\lambda \in \bX^+$, on a
\[
\puN_{t_\lambda \cdot x} = \puN_x \cdot \vartheta_{\dot{\mathsf{T}}(\lambda)}.
\]
\end{prop}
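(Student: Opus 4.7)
The plan is to categorify the identity in the Iwahori--Whittaker category of the affine flag variety of $G^\vee$ and reduce it to Donkin's tensor product formula for tilting modules. By Proposition~\ref{prop:character-tilting}, the identification $\mathcal{M}^{\mathrm{asph}}_{\ext} \simeq [\Db_{\IW}(\Fl_{G^\vee},\bk)]$ carries $\puN_w$ to $[\mathscr{E}^{\mix}_{\IW,w}]$. Moreover, the right action of $\mathcal{H}_\ext$ on $\mathcal{M}^{\mathrm{asph}}_\ext$ is categorified by convolution with spherical sheaves on $\Gr_{G^\vee}$, and for a tilting $\dot\bG$-module $V$ the associated parity sheaf $\mathcal{S}^{\mathrm{par}}_V$ on $\Gr_{G^\vee}$ (produced by the ``parity'' version of modular geometric Satake, Juteau--Mautner--Williamson) realizes multiplication by the Bernstein element $\vartheta_V$; this follows from the Bernstein--Lusztig presentation of $\mathcal{H}_\ext$ combined with the fact that the stalks of $\mathcal{S}^{\mathrm{par}}_V$ compute the weight multiplicities of $V$. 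Hence it suffices to prove the geometric isomorphism
\[
\mathscr{E}^{\mix}_{\IW,x} \star \mathcal{S}^{\mathrm{par},\mix}_{\dot{\mathsf{T}}(\lambda)} \cong \mathscr{E}^{\mix}_{\IW,t_\lambda x}
\]
in $\Dmix_{\IW}(\Fl_{G^\vee},\bk)$.

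The left-hand side is a convolution of parity objects along a proper map, hence itself parity (Juteau--Mautner--Williamson), and therefore decomposes as a direct sum of shifted indecomposable parity objects $\mathscr{E}^{\mix}_{\IW,y}\{n\}$. A support computation (the convolution is supported on the closure of the $t_\lambda x$-stratum) together with a top-stratum calculation shows that $\mathscr{E}^{\mix}_{\IW,t_\lambda x}$ appears once, with zero shift. The essential remaining task is to exclude all other summands $\mathscr{E}^{\mix}_{\IW,y}\{n\}$ with $y \prec t_\lambda x$; this indecomposability is precisely the geometric counterpart of Donkin's tensor product formula, and it is where the hypotheses $p \geq 2h-1$ and the restrictedness of $t_{-\varsigma} x$ enter the argument.

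The cleanest route to this indecomposability is to transport the geometric identity through the chain of Koszul-type equivalences~\eqref{eqn:Koszul-IW-Gr} and of Theorems~\ref{thm:coh-const} and~\ref{thm:prinblock}. Under this chain, the desired isomorphism translates, on the coherent side, into an identity $\mathcal{E}_{\mu} \otimes \dot{\mathsf{T}}(\lambda) \cong \mathcal{E}_{\mu'}$ in $\Db\Coh^{\dot\bG\times\Gm}(\tcN_{\dot\bG})$, where $\mu,\mu' \in \bX$ satisfy $w_\mu = x$ and $w_{\mu'} = t_\lambda x$. Applying the degrading functor $\Xi$ and invoking the Frobenius twist compatibility $\Xi(\mathcal{F} \otimes V) \cong \Xi(\mathcal{F}) \otimes \Fr^*V$ of Theorem~\ref{thm:prinblock} reduces the question to the classical tilting isomorphism $\mathsf{T}_x \otimes \Fr^*\dot{\mathsf{T}}(\lambda) \cong \mathsf{T}_{t_\lambda x}$, which is Donkin's tensor product formula; the restrictedness hypothesis places $x \cdot_p 0$ in the shifted restricted alcove $p\varsigma + X_1$, and $p \geq 2h-1$ is the bound making this formula valid. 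The main obstacle is that $\Xi$ is only a degrading functor, so Donkin's formula does not directly yield the graded isomorphism on the coherent side; the final step is therefore to lift it via Theorem~\ref{thm:JMW-gradue}, by checking that $\mathcal{E}_\mu \otimes \dot{\mathsf{T}}(\lambda)$ is indecomposable exotic parity and has the correct image in the quotient $\Db\Coh^{\dot\bG\times\Gm}(\tcN_{\dot\bG})/\mathscr{D}_{\{\prec \mu'\}}$, which uniquely characterises $\mathcal{E}_{\mu'}$.
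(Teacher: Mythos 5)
Your broad strategy---reduce to Donkin's tensor product formula $\mathsf{T}_x \otimes \Fr^* \dot{\mathsf{T}}(\lambda) \cong \mathsf{T}_{t_\lambda x}$ via the chain of Koszul-type equivalences---is the same as the paper's. The logical structure, however, is reversed, and this reversal is where the gap lies. You begin by asserting that the convolution $\mathscr{E}^{\mix}_{\IW,x} \star \mathcal{S}^{\mathrm{par},\mix}_{\dot{\mathsf{T}}(\lambda)}$ is parity ``since it is a convolution of parity objects.'' This is not correct: to act on Iwahori--Whittaker sheaves on $\Fl_{G^\vee}$ by a spherical object one must first apply Gaitsgory's central functor $\mathsf{Z}$, and $\mathsf{Z}(\mathscr{E}^{\mathrm{sph}}_\lambda)$ is a perverse sheaf obtained by nearby cycles, not a parity complex. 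The paper explicitly warns (in the remark following Proposition~\ref{prop:Donkin-parity}) that $\mathscr{E}_{\IW,x} \star^{I^\vee} \mathsf{Z}(\mathscr{E}^{\mathrm{sph}}_\lambda)$ is \emph{not} a parity complex for general $x$ and $\lambda$; the restrictedness hypothesis is precisely what rescues it. So parity of the left-hand side is the whole difficulty, not a starting point, and your first step is circular.

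The paper sidesteps this by working on the spherical Grassmannian side with the \emph{tilting} mixed perverse sheaves $\mathscr{T}^\mix_{\Gr_{G^\vee},\mu}$ rather than the Koszul-dual parity objects, and by invoking the degrading functor $\mathsf{F}:\Perv^\mix_{(\Iw)}(\Gr_{G^\vee},\bk) \to \Rep_\varnothing(\bG)$ together with two facts established in \cite[\S 11]{prinblock}: (a) $\mathsf{F}(\mathscr{F} \star^{G^\vee_\mathscr{O}} \mathscr{E}^{\mathrm{sph}}_\lambda) \cong \mathsf{F}(\mathscr{F}) \otimes \Fr^*(\dot{\mathsf{T}}(\lambda))$, and (b) $\mathsf{F}$ \emph{reflects} being (indecomposable) tilting. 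Because of (b), Donkin's isomorphism immediately forces $\mathscr{T}^\mix_{\Gr_{G^\vee},\mu} \star^{G^\vee_\mathscr{O}} \mathscr{E}^{\mathrm{sph}}_\lambda \cong \mathscr{T}^\mix_{\Gr_{G^\vee},\nu}$, with no separate parity or indecomposability argument needed; the Grothendieck-group identity then follows from Lemma~\ref{lem:conv-Elambda-theta} and Proposition~\ref{prop:character-tilting}. Only \emph{afterwards}, in Proposition~\ref{prop:Donkin-parity}, does the paper deduce the parity-side statement you start from---and it does so using the already-proved combinatorial identity to establish parity of the convolution. The paper even comments that the geometric order you propose ``would be more satisfying,'' but notes it has not been carried out. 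If you want to repair your argument, you should either supply a genuinely geometric proof that the convolution is parity under the stated hypotheses (which is the open problem the remark alludes to), or, more realistically, invert the order: work with tilting objects, use the tilting-reflection property of the degrading functor, and deduce your parity isomorphism at the end.
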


\begin{rmq}
\begin{enumerate}
\item
Il n'est pas difficile de voir que notre hypoth\`ese sur $x$ ne d\'epend pas du choix de $\varsigma$.
\item
Puisque $p$ est premier, l'hypoth\`ese ``$p \geq 2h-1$'' est \'equivalente \`a demander que $p \geq 2h-2$ et $p>h$. Ces derni\`eres seront les deux conditions dont nous aurons besoin.
\item
L'hypoth\`ese que $p \geq 2h-1$ n'est probablement pas n\'ecessaire. Elle est utilis\'ee dans la preuve pour se ramener \`a la formule de Donkin en th\'eorie des repr\'esentations ; mais une approche g\'eom\'etrique (qui serait de toute fa{\c c}on plus satisfaisante) devrait permettre de s'en passer.
\end{enumerate}
\end{rmq}

Avant de donner la preuve de la Proposition~\ref{prop:Donkin-pcan} nous devons faire quelques rappels (pour lesquels l'hypoth\`ese sur $p$ n'est pas n\'ecessaire). Consid\'erons la cat\'egorie $\Perv_{G^\vee_\mathscr{O}}(\Gr_{G^\vee},\bk)$ des $\bk$-faisceaux pervers $G^\vee_\mathscr{O}$-\'equivariants sur $\Gr_{G^\vee}$. Cette cat\'egorie peut \^etre munie d'un ``produit de convolution'' $\star^{G^\vee_\mathscr{O}}$ qui en fait une cat\'egorie mono\"idale, et l'\emph{\'equivalence de Satake g\'eom\'etrique} fournit une \'equivalence de cat\'egories mono\"idales
\[
\mathsf{Sat} : (\Perv_{G^\vee_\mathscr{O}}(\Gr_{G^\vee},\bk), \star^{G^\vee_\mathscr{O}}) \simto (\Rep(\dot\bG),\otimes).
\]
(Voir~\cite{mv} pour la preuve originale dans cette g\'en\'eralit\'e, et~\cite{bar} pour une version plus d\'etaill\'ee et des remarques historiques. Cette \'equivalence ne n\'ecessite aucune hypoth\`ese sur $p$, ni sur $\bG$.)

Si on suppose que $p$ est tr\`es bon pour $\bG$, alors la th\'eorie des complexes \`a parit\'e de~\cite{jmw} s'applique dans la cat\'egorie d\'eriv\'ee \'equivariante $\Db_{G^\vee_\mathscr{O}}(\Gr_{G^\vee},\bk)$. Puisque les $G^\vee_\mathscr{O}$-orbites dans $\Gr_{G^\vee}$ sont param\'etr\'ees par $\bX^+$, les complexes \`a parit\'e ind\'ecomposables dans $\Db_{G^\vee_\mathscr{O}}(\Gr_{G^\vee},\bk)$ sont param\'etr\'es par $\bX^+ \times \Z$; on notera $\mathscr{E}^\mathrm{sph}_\lambda$ l'objet associ\'e \`a $(\lambda,0)$. Les r\'esultats de~\cite{mr} montrent que $\mathscr{E}^\mathrm{sph}_\lambda$ est un faisceau pervers, et que $\mathsf{Sat}(\mathscr{E}^\mathrm{sph}_\lambda)$ est isomorphe \`a
$\dot{\mathsf{T}}(\lambda)$. (Voir~\cite{jmw2} pour une preuve ant\'erieure, sous des hypoth\`ese plus fortes.)

Continuons de supposer que $p$ est tr\`es bon pour $\bG$. Pour tout $\lambda \in \bX^+$, la convolution \emph{\`a gauche} avec $\mathscr{E}_\lambda^{\mathrm{sph}}$ d\'efinit un endofunctor de la cat\'egorie $\Db_{\mathrm{Br}}(\Gr_{G^\vee}^\op,\bk)$ qui stabilise les complexes \`a parit\'e (gr\^ace aux r\'esultats de~\cite[\S 4.1]{jmw}); elle induit donc un foncteur
\begin{equation}
\label{eqn:convolution-Esph}
\mathscr{E}_\lambda^{\mathrm{sph}} \star^{G^\vee_\mathscr{O}} (-) : \Dmix_{\mathrm{Br}}(\Gr_{G^\vee}^\op,\bk) \to \Dmix_{\mathrm{Br}}(\Gr_{G^\vee}^\op,\bk).
\end{equation}

\begin{lem}
\label{lem:conv-Elambda-theta}
Via le premier isomorphisme de~\eqref{eqn:Msph-Masph-cat}, l'endomorphisme induit par le foncteur~\eqref{eqn:convolution-Esph} sur $[\Dmix_{\mathrm{Br}}(\Gr_{G^\vee}^\op,\bk)]$ est l'action de $\vartheta_{\dot{\mathsf{T}}(\lambda)} \in \mathcal{H}_\ext$ sur $\mathcal{M}_\ext^{\mathrm{sph}}$.
\end{lem}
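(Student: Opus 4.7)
Les deux opérations comparées dans l'énoncé---convolution avec $\mathscr{E}_\lambda^{\mathrm{sph}}$ et multiplication par $\vartheta_{\dot{\mathsf{T}}(\lambda)}$---sont des endomorphismes $\Z$-linéaires de $\mathcal{M}_\ext^{\mathrm{sph}}$. Je commencerais par observer qu'elles s'étendent toutes deux en des homomorphismes d'anneaux. Du côté géométrique, l'assignation
\[
[\mathscr{P}] \mapsto \bigl(\mathscr{F} \mapsto [\mathscr{P} \star^{G^\vee_\mathscr{O}} \mathscr{F}]\bigr)
\]
définit par additivité un morphisme $[\Perv_{G^\vee_\mathscr{O}}(\Gr_{G^\vee},\bk)] \to \mathrm{End}_\Z(\mathcal{M}_\ext^{\mathrm{sph}})$, qui est un homomorphisme d'anneaux par associativité de la convolution et par monoïdalité de $\mathsf{Sat}$. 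Du côté algébrique, $V \mapsto \vartheta_V$ est un homomorphisme d'anneaux $[\Rep(\dot\bG)] \to Z(\mathcal{H}_\ext)$ d'après Bernstein--Lusztig. Via $\mathsf{Sat}$, l'énoncé se reformule donc comme l'égalité de deux homomorphismes d'anneaux $[\Rep(\dot\bG)] \to \mathrm{End}_\Z(\mathcal{M}_\ext^{\mathrm{sph}})$, évalués en $[\dot{\mathsf{T}}(\lambda)]$.

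\textbf{Seconde étape.} Pour comparer ces deux homomorphismes d'anneaux, il suffit de les comparer sur une famille génératrice. Le choix naturel est d'évaluer les deux membres sur l'élément $M_e = [\dmix_{\Gr_{G^\vee}^\op, e}] \in \mathcal{M}_\ext^{\mathrm{sph}}$, auquel cas le membre de gauche se réduit à $[\mathscr{E}_\lambda^{\mathrm{sph}}]$ (la convolution à gauche avec le faisceau standard sur l'orbite de base est l'identité), et le membre de droite devient $\sum_\mu \dim(\dot{\mathsf{T}}(\lambda)_\mu) \cdot (1 \otimes \vartheta_\mu)$. L'égalité à prouver se ramène ainsi à une \emph{formule combinatoire} exprimant $[\mathscr{E}_\lambda^{\mathrm{sph}}]$ comme combinaison $\Z$-linéaire des éléments $(1 \otimes \vartheta_\mu)$ pondérés par les multiplicités de poids de $\dot{\mathsf{T}}(\lambda)$. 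Pour $\mu \in \bX^+$ on a $1 \otimes \vartheta_\mu = M_{t_\mu}$, et la relation de Bernstein permet d'exprimer $1 \otimes \vartheta_\mu$ pour $\mu$ général via une différence $(1 \otimes H_{t_{\mu_1}}) \cdot (H_{t_{\mu_2}})^{-1}$ avec $\mu = \mu_1 - \mu_2$, $\mu_i \in \bX^+$.

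\textbf{Étape calculatoire et principal obstacle.} Pour établir cette formule, j'utiliserais le théorème de Mirković--Vilonen, qui fournit une identification $\dim \dot{\mathsf{T}}(\lambda)_\mu = \dim \mathsf{H}^*_c(S_\mu \cap \mathrm{supp}(\mathscr{E}_\lambda^{\mathrm{sph}}), \mathscr{E}_\lambda^{\mathrm{sph}})$ où $S_\mu$ est la cellule semi-infinie associée à $\mu$. L'idée est de construire, à partir d'une filtration de $\mathscr{E}_\lambda^{\mathrm{sph}}$ par les $S_\mu$ (via une action de $\Gm$ adéquate), une décomposition dans le groupe de Grothendieck de la forme $[\mathscr{E}_\lambda^{\mathrm{sph}}] = \sum_\mu \dim(\dot{\mathsf{T}}(\lambda)_\mu) \cdot [\mathscr{Z}_\mu]$ où $[\mathscr{Z}_\mu]$ est une classe virtuelle dont la convolution à gauche correspond à $(1 \otimes \vartheta_\mu)$. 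Le point technique délicat est précisément l'identification $[\mathscr{Z}_\mu \star^{G^\vee_\mathscr{O}} \dmix_{\Gr_{G^\vee}^\op, e}] = 1 \otimes \vartheta_\mu$ pour $\mu$ non dominant : elle nécessite une manipulation soigneuse des conventions de décalage $\{1\}$ vs $\la 1 \ra$ dans la catégorie dérivée mixte, et d'avoir un modèle géométrique approprié pour les $\mathscr{Z}_\mu$ compatible avec la structure mixte. Une alternative, plus conceptuelle mais requérant davantage d'infrastructure, consisterait à invoquer directement le modèle de Bezrukavnikov pour l'action du centre de l'algèbre de Hecke affine sur la catégorie $\Dmix_{\mathrm{Br}}(\Gr_{G^\vee}^\op,\bk)$, dans lequel la relation entre convolution sphérique et éléments de Bernstein est encodée catégoriquement.
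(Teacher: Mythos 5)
Votre réduction initiale est essentiellement la bonne, mais elle est mal justifiée : plutôt que d'invoquer des « homomorphismes d'anneaux $[\Rep(\dot\bG)] \to \mathrm{End}_\Z(\mathcal{M}_\ext^{\mathrm{sph}})$ » comparés sur une famille génératrice, il faut observer que les deux endomorphismes de $\mathcal{M}_\ext^{\mathrm{sph}}$ considérés sont des morphismes de $\mathcal{H}_\ext$-modules \emph{à droite} (pour la convolution à gauche avec $\mathscr{E}^{\mathrm{sph}}_\lambda$, ceci découle de l'associativité de la convolution ; pour la multiplication par $\vartheta_{\dot{\mathsf{T}}(\lambda)}$, de la centralité des éléments de Bernstein), et que $\mathcal{M}_\ext^{\mathrm{sph}}$ est cyclique engendré par $M_e$, ce qui légitime la réduction à $M_e$. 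Le vrai problème est dans l'étape suivante : vous identifiez correctement l'obstacle technique, mais ne le franchissez pas, ce qui constitue une lacune réelle. Votre décomposition de la cible $M_e \cdot \vartheta_{\dot{\mathsf{T}}(\lambda)}$ via les multiplicités de poids $\dim(\dot{\mathsf{T}}(\lambda)_\mu)$ vous force à interpréter géométriquement $M_e \cdot \vartheta_\mu$ pour $\mu$ \emph{non dominant} dans la catégorie dérivée mixte, ce pour quoi vous ne proposez pas de construction aboutie.

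L'article contourne cet obstacle par un chemin différent. Au lieu de la décomposition par poids, il utilise la décomposition $\vartheta_{\dot{\mathsf{T}}(\lambda)} = \sum_{\mu \in \bX^+} (\dot{\mathsf{T}}(\lambda) : \dot{\mathsf{N}}(\mu)) \cdot \vartheta_{\dot{\mathsf{N}}(\mu)}$ via les multiplicités de la filtration costandard de $\dot{\mathsf{T}}(\lambda)$, indexée uniquement par des poids \emph{dominants}. Plus précisément : (i) l'image de $M_e$ sous la convolution est $[\mathscr{E}^{\mathrm{sph}}_\lambda] = \puM_{w_0 x_\lambda}$ (où $x_\mu$ désigne l'élément le plus long de $W t_\mu W$), et la comparaison de l'équivalence de Satake sur $\bk$ et en caractéristique $0$ (cf.\ [JMW2, Remark~4.2]) donne $\puM_{w_0 x_\lambda} = \sum_{\mu \in \bX^+} (\dot{\mathsf{T}}(\lambda) : \dot{\mathsf{N}}(\mu)) \cdot \underline{M}_{w_0 x_\mu}$ ; (ii) la formule classique de Lusztig ([Lu1, Proposition~8.6]) s'interprète comme $\underline{M}_{w_0 x_\mu} = M_e \cdot \vartheta_{\dot{\mathsf{N}}(\mu)}$ pour $\mu \in \bX^+$. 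Ces deux ingrédients ne font intervenir que des poids dominants et concluent immédiatement. Votre piste via Mirković--Vilonen et les cellules semi-infinies n'est peut-être pas fausse, mais elle demanderait une construction substantielle des classes $[\mathscr{Z}_\mu]$ dans le groupe de Grothendieck mixte, construction que vous ne donnez pas et que l'article ne cherche pas à donner.
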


\begin{proof}
Les deux applications consid\'er\'ees sont des endomorphismes de $\mathcal{H}_\ext$-modules \`a droite. Puisque $\mathcal{M}_\ext^{\mathrm{sph}}$ est un module cyclique, il suffit donc de v\'erifier qu'elles co\"incident sur l'\'el\'ement $M_e$. 

Pour tout $\mu \in \bX^+$, on notera $x_\mu$ l'\'el\'ement de plus grande longueur dans $Wt_\mu W$. (Cet \'el\'ement est not\'e $n_\mu$ dans~\cite{lusztig}.)
Avec cette notation,
l'image de $M_e$ par le morphisme induit par~\eqref{eqn:convolution-Esph} est $[\mathscr{E}^{\mathrm{sph}}_\lambda] = [\mathscr{E}^\mix_{\Gr_G^\op,w_0 x_\lambda}]$, c'est-\`a-dire $\puM_{w_0 x_\lambda}$. En comparant l'\'equivalence de Satake g\'eom\'etrique sur $\bk$ et en caract\'eristique $0$, et en notant $\dot{\mathsf{N}}(\mu)$ le $\dot\bG$-module induit de plus haut poids $\mu$, on voit que
\[
\puM_{w_0 x_\lambda} = \sum_{\mu \in \bX^+} (\dot{\mathsf{T}}(\lambda) : \dot{\mathsf{N}}(\mu)) \cdot \underline{M}_{w_0 x_\mu}
\]
(voir par exemple~\cite[Remark~4.2]{jmw2}). D'un autre c\^ot\'e, on peut interpr\'eter~\cite[Proposition~8.6]{lusztig} comme disant que pour tout $\mu \in \bX^+$ on a
\[
\underline{M}_{w_0 x_\mu} = M_e \cdot \vartheta_{\dot{\mathsf{N}}(\mu)}.
\]
On en d\'eduit que
\[
\puM_{w_0 x_\lambda} = \sum_{\mu \in \bX^+} (\dot{\mathsf{T}(\lambda)} : \dot{\mathsf{N}}(\mu)) \cdot M_e \cdot \vartheta_{\dot{\mathsf{N}}(\mu)} = M_e \cdot \vartheta_{\dot{\mathsf{T}}(\lambda)},
\]
ce qui conclut la preuve.
\end{proof}

\begin{proof}[D\'emonstration de la Proposition~{\rm \ref{prop:Donkin-pcan}}]
On a vu au cours de la preuve du Th\'eor\`eme~\ref{thm:conj-characters} qu'il existe un foncteur de d\'egraduation $\Dmix_{(\Iw)}(\Gr_{G^\vee}, \bk) \to \Db\Rep_\varnothing(\bG)$. Ce foncteur est t-exact pour la t-structure perverse sur sa source et la t-structure tautologique sur son but. Il induit donc un foncteur exact
\[
\mathsf{F} : \Perv^\mix_{(\Iw)}(\Gr_{G^\vee}, \bk) \to \Rep_\varnothing(\bG).
\]

Dans~\cite[\S 11]{prinblock} on consid\`ere le foncteur de convolution \`a droite avec $\mathscr{E}_\lambda^{\mathrm{sph}}$ sur $\Dmix_{(\Iw)}(\Gr_{G^\vee}, \bk)$ (d\'efini comme ci-dessus pour~\eqref{eqn:convolution-Esph}). On v\'erifie que ce foncteur est t-exact, et induit donc un foncteur entre cat\'egories de faisceaux pervers, et que de plus $\mathsf{F}$ a les propri\'et\'es suivantes :
\begin{enumerate}
\item
$\mathsf{F}(\mathscr{F} \star^{G^\vee_{\mathscr{O}}} \mathscr{E}^{\mathrm{sph}}_\lambda) \cong \mathsf{F}(\mathscr{F}) \otimes_\bk \Fr^* \bigl( \dot{\mathsf{T}}(\lambda) \bigr)$ pour $\mathscr{F}$ dans $\Perv^\mix_{(\Iw)}(\Gr_{G^\vee}, \bk)$ et $\lambda \in \bX^+$;
\item
un object $\mathscr{F}$ de $\Perv^\mix_{(\Iw)}(\Gr_{G^\vee}, \bk)$ est basculant (resp.~basculant et ind\'ecomposable) ssi son image $\mathsf{F}(\mathscr{F})$ est un $\bG$-module basculant (resp.~un $\bG$-module basculant et ind\'ecomposable).
\end{enumerate}

Sous nos hypoth\`eses un r\'esultat de Donkin (voir~\cite[\S II.E.9]{jantzen}) assure que
\[
\mathsf{T}_x \otimes \Fr^* \bigl( \dot{\mathsf{T}}(\lambda) \bigr) \cong \mathsf{T}_{t_\lambda x}
\]
pour $x$ comme dans l'\'enonc\'e et $\lambda \in \bX^+$.
En utilisant les propri\'et\'es de $\mathsf{F}$ rappel\'ees ci-dessus,
on en d\'eduit que
\[
\mathscr{T}^\mix_{\Gr_{G^\vee},\mu} \star^{G^\vee_{\mathscr{O}}} \mathscr{E}^{\mathrm{sph}}_\lambda \cong \mathscr{T}^\mix_{\Gr_{G^\vee},\nu},
\]
o\`u $\mu,\nu \in \bX$ sont les poids tels que $w_\mu = x$ et $w_\nu = t_\lambda x$. On utilise ensuite la Remarque~\ref{rmq:basculants-mix-red} pour ``transf\'erer'' cet isomorphisme en un isomorphisme
\[
\mathscr{E}^{\mathrm{sph}}_\lambda \star^{G^\vee_{\mathscr{O}}} \mathscr{T}^\mix_{\Gr_{G^\vee}^\op,x} \cong \mathscr{T}^\mix_{\Gr_{G^\vee}^\op,t_\lambda x}.
\]
En utilisant le Lemme~\ref{lem:conv-Elambda-theta} et la Proposition~\ref{prop:character-tilting}, cet isomorphisme implique que
\[
\iota^{-1}(\puN_x) \cdot \vartheta_{\dot{\mathsf{T}}(\lambda)} = \iota^{-1}(\puN_{t_\lambda x}).
\]
En appliquant $\iota$ et en utilisant~\eqref{eqn:iota-M-N} et le fait que $\iota(\vartheta_\nu) =\vartheta_\nu$ pour tout $\nu \in \bX$, on en d\'eduit l'\'egalit\'e voulue.
\end{proof}

Une fois l'\'egalit\'e de la Proposition~\ref{prop:Donkin-pcan} \'etablie,
on peut la ``relever'' en un \'enonc\'e sur les complexes \`a parit\'e de la fa{\c c}on suivante. Dans cet \'enonc\'e, on note
\[
\mathsf{Z} : \Perv_{G^\vee_{\mathscr{O}}}(\Gr_{G^\vee},\bk) \to \Perv_{I^\vee}(\Fl_{G^\vee},\bk)
\]
(o\`u $I^\vee$ est le sous-groupe d'Iwahori not\'e $I$ au~\S\ref{ss:Koszul-affine}, pour notre groupe $G^\vee$) le ``foncteur central'' construit par Gaitsgory dans~\cite{gaitsgory} (ou sa variante consid\'er\'ee par Zhu dans~\cite{zhu-conj}). D'autre part, la cat\'egorie d\'eriv\'ee $I^\vee$-\'equivariante $\Db_{I^\vee}(\Fl_{G^\vee},\bk)$ poss\`ede un produit de convolution qui en fait une cat\'egorie mono\"idale, et qu'on notera $\star^{I^\vee}$. Cette cat\'egorie agit naturellement par convolution \`a droite sur $\Db_{\IW}(\Fl_{G^\vee},\bk)$ ; le bifoncteur associ\'e sera not\'e $\star^{I^\vee}$ \'egalement.

\begin{prop}
\label{prop:Donkin-parity}
Supposons que $p \geq 2h-1$, et soient $x$ et $\lambda$ comme dans la Proposition~{\rm \ref{prop:Donkin-pcan}}. Alors on a
\[
\mathscr{E}_{\IW,x} \star^{I^\vee} \mathsf{Z}(\mathscr{E}^{\mathrm{sph}}_\lambda) \cong \mathscr{E}_{\IW,t_\lambda x}.
\]
\end{prop}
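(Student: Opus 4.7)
La stratégie consiste à relever l'égalité de la Proposition~\ref{prop:Donkin-pcan} au niveau des objets eux-mêmes, en s'appuyant sur le fait que les deux côtés de l'isomorphisme recherché sont des complexes à parité indécomposables qu'on identifie par leurs classes dans le groupe de Grothendieck.

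La première étape consiste à montrer que l'objet $\mathscr{E}_{\IW,x} \star^{I^\vee} \mathsf{Z}(\mathscr{E}^{\mathrm{sph}}_\lambda)$ est à parité dans $\Db_{\IW}(\Fl_{G^\vee},\bk)$. Pour cela, on vérifiera d'abord que $\mathsf{Z}(\mathscr{E}^{\mathrm{sph}}_\lambda)$ est un complexe à parité $I^\vee$-équivariant sur $\Fl_{G^\vee}$ (en invoquant le fait que le foncteur nearby $\mathsf{Z}$ de Gaitsgory préserve la parité sous l'hypothèse $p$ très bon, voir les résultats d'Achar--Rider et leurs raffinements), puis que la convolution à droite $(-)\star^{I^\vee} \mathscr{F}$ par un complexe à parité $I^\vee$-équivariant $\mathscr{F}$ préserve la parité des objets dans $\Db_{\IW}(\Fl_{G^\vee},\bk)$. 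Cette dernière propriété découle du formalisme général de~\cite{jmw} adapté au cadre Iwahori--Whittaker.

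La deuxième étape sera de calculer la classe dans le groupe de Grothendieck. La convolution à droite avec $\mathsf{Z}(\mathscr{E}^{\mathrm{sph}}_\lambda)$ induit sur $[\Db_{\IW}(\Fl_{G^\vee},\bk)] \cong \mathcal{M}^{\mathrm{asph}}_{\ext}$ la multiplication par un certain élément central de $\mathcal{H}_\ext$. En adaptant les arguments du Lemme~\ref{lem:conv-Elambda-theta} au cadre antisphérique (en utilisant que $\mathsf{Z}$ entrelace l'action centrale côté sphérique et côté affine via la propriété de centralité de Gaitsgory), on identifie cet élément central à $\vartheta_{\dot{\mathsf{T}}(\lambda)}$. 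En combinant ceci avec la Proposition~\ref{prop:character-tilting} (qui donne $[\mathscr{E}^\mix_{\IW,x}] = \puN_x$) et la Proposition~\ref{prop:Donkin-pcan}, on obtient
\[
[\mathscr{E}_{\IW,x} \star^{I^\vee} \mathsf{Z}(\mathscr{E}^{\mathrm{sph}}_\lambda)] = \puN_x \cdot \vartheta_{\dot{\mathsf{T}}(\lambda)} = \puN_{t_\lambda x} = [\mathscr{E}_{\IW,t_\lambda x}]
\]
dans $\mathcal{M}^{\mathrm{asph}}_\ext$, modulo le passage à la catégorie mixte associée.

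Enfin, pour conclure l'isomorphisme au niveau des objets (et non seulement des classes), on décompose le complexe à parité $\mathscr{E}_{\IW,x} \star^{I^\vee} \mathsf{Z}(\mathscr{E}^{\mathrm{sph}}_\lambda)$ en somme directe de ses facteurs indécomposables, qui sont de la forme $\mathscr{E}_{\IW,y}\{n\}$ pour divers $(y,n)$ d'après la classification (analogue au Théorème~\ref{thm:JMW}) des objets à parité indécomposables dans $\Db_{\IW}(\Fl_{G^\vee},\bk)$. Un argument de support (utilisant que $t_\lambda x$ est maximal parmi les indices $y$ tels que $\mathscr{E}_{\IW,y}$ peut apparaître, via le calcul du support de la convolution) montre que $\mathscr{E}_{\IW,t_\lambda x}$ apparaît avec multiplicité exactement $1$ et sans décalage, et l'égalité des classes dans le groupe de Grothendieck force alors l'absence d'autres facteurs. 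L'obstacle principal sera l'adaptation du Lemme~\ref{lem:conv-Elambda-theta} au cadre Iwahori--Whittaker et la vérification précise du fait que $\mathsf{Z}$ préserve la parité sous nos hypothèses sur $p$ ; une fois ces deux points établis, le reste de la preuve est essentiellement formel.
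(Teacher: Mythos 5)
Votre plan global — établir la parité du produit de convolution, calculer sa classe dans le groupe de Grothendieck via la Proposition~\ref{prop:Donkin-pcan}, puis invoquer la rigidité des objets à parité indécomposables — reproduit le squelette de la preuve, et vos étapes~2 et~3 sont essentiellement correctes. Mais votre étape~1 contient une erreur qui invalide l'argument tel que vous l'énoncez.

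Vous affirmez que $\mathsf{Z}(\mathscr{E}^{\mathrm{sph}}_\lambda)$ est un complexe à parité $I^\vee$-équivariant et que la convolution par un tel objet préserve la parité dans $\Db_{\IW}(\Fl_{G^\vee},\bk)$. Or le premier point n'est pas vrai en général (le foncteur des cycles proches $\mathsf{Z}$ produit un faisceau pervers central, pas un complexe à parité — ce sont des propriétés très différentes en caractéristique positive), et le second non plus : la remarque qui suit la Proposition~\ref{prop:Donkin-parity} dans l'article signale explicitement qu'il n'est \emph{pas} vrai que $\mathscr{E}_{\IW,x} \star^{I^\vee} \mathsf{Z}(\mathscr{E}^{\mathrm{sph}}_\lambda)$ soit à parité pour tout $x \in \Wextmin$ et tout $\lambda \in \bX^+$. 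Votre argument, s'il fonctionnait, donnerait pourtant la parité sans restriction sur $x$; c'est le signe qu'il y a une hypothèse cachée essentielle. C'est précisément là que l'hypothèse de restriction sur $x$ (à savoir $t_{-\varsigma}x \in \Wextmin$, c'est-à-dire que $\mathsf{T}_x$ est $\bG_1$-projectif) devient cruciale.

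La preuve de l'article contourne cet obstacle de la manière suivante : elle introduit le sous-$\mathcal{H}_\ext$-module $E \subset \mathcal{M}^{\mathrm{asph}}_\ext$ engendré par les $\puN_x$ avec $t_{-\varsigma}x \in \Wextmin$, montre que $E$ est cyclique engendré par $\puN_{t_\varsigma} = [\mathscr{E}_{\IW,t_\varsigma}]$ (ce qui nécessite un va-et-vient avec la théorie des représentations, via~\cite[Lemma~E.8 et \S II.9.22]{jantzen}, en utilisant le Théorème~\ref{thm:conj-characters} et la positivité), puis établit la parité uniquement pour le cas particulier $\mathscr{E}_{\IW,t_\varsigma} \star^{I^\vee} \mathsf{Z}(\mathscr{E}^{\mathrm{sph}}_\lambda)$ via~\cite[Proposition~4.18]{bgmrr}, et propage enfin ce résultat à tous les $x$ de $E$ par la centralité de $\mathsf{Z}$ au sens de~\cite[Theorem~7.3]{zhu-conj} et la cyclicité. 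Votre preuve est donc incomplète sur ce point-clé : ce n'est pas une question d'``adaptation technique'' qu'on peut reporter, c'est l'idée principale qui manque.
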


\begin{rmq}
\begin{enumerate}
\item
Comme pour la Proposition~\ref{prop:Donkin-pcan}, l'hypoth\`ese sur $p$ dans la Proposition~\ref{prop:Donkin-parity} n'est probablement pas n\'ecessaire. En fait ici on va d\'eduire la Proposition~\ref{prop:Donkin-parity} de la Proposition~\ref{prop:Donkin-pcan}. Une strat\'egie plus satisfaisante serait de d\'emontrer la Proposition~\ref{prop:Donkin-parity} par des arguments g\'eom\'etriques, et sous une hypoth\`ese plus faible, et d'en d\'eduire la Proposition~\ref{prop:Donkin-pcan}.
\item
Signalons qu'il n'est \emph{pas} vrai que $\mathscr{E}_{\IW,x} \star^{I^\vee} \mathsf{Z}(\mathscr{E}^{\mathrm{sph}}_\lambda)$ est un complexe \`a parit\'e pour tous $x \in \Wextmin$ et $\lambda \in \bX^+$.
\end{enumerate}
\end{rmq}

\begin{proof}
Notons $E$ le sous-$\Z[v,v^{-1}]$-module de $\mathcal{M}^{\mathrm{asph}}_\ext$ engendr\'e par les \'el\'ements $\puN_x$ avec $x \in \Wextmin$ tel que $t_{-\varsigma} x \in \Wextmin$. Alors $E$ est un sous-$\mathcal{H}_\ext$-module de $\mathcal{M}^{\mathrm{asph}}_\ext$. En effet, pour justifier cela il suffit de montrer que pour tout $x$ comme ci-dessus et pour toute r\'eflexion simple $s$, $\puN_x \cdot \underline{H}_s$ appartient \`a $E$. Mais $\puN_x \cdot \underline{H}_s$ est la classe d'un complexe \`a parit\'e (consid\'er\'e comme un complexe dans $\Dmix_{\IW}(\Fl_{G^\vee},\bk)$ concentr\'e en degr\'e $0$) ; ses coefficients dans la base $(\puN_y : y \in \Wextmin)$ sont donc des polyn\^omes \`a coefficients positifs ou nuls. Donc, pour d\'emontrer notre affirmation, il suffit de voir que les valeurs en $1$ des coefficients de $\puN_x \cdot \underline{H}_s$ sur les \'el\'ements $\puN_y$ tels que $t_{-\varsigma} y$ n'appartient \emph{pas} \`a $\Wextmin$ s'annulent. En utilisant 
la Remarque~\ref{rmq:conj-rW}\eqref{it:conj-rw-Groth}, ce qu'on doit d\'emontrer est donc que
si $x$ est comme ci-dessus et si $\Theta_s$ est le foncteur de ``croisement des murs'' associ\'e \`a $s$, 
alors $\Theta_s(\mathsf{T}_x)$ est une somme directe d'objets $\mathsf{T}_y$ avec $y \in \Wextmin$ tel que $t_{-\varsigma} y \in \Wextmin$. Mais les modules basculants ind\'ecomposables $\mathsf{T}_y$ avec $y$ v\'erifiant cette condition sont exactement ceux qui sont projectifs comme $\bG_1$-modules ; voir~\cite[Lemma~E.8]{jantzen}. Il suffit donc de remarquer que cette propri\'et\'e est stable par les foncteurs de translation, ce qui d\'ecoule de la discussion de~\cite[\S II.9.22]{jantzen}.

Cette discussion montre \'egalement que $E$ est un $\mathcal{H}_\ext$-module cyclique, engendr\'e par $\puN_{t_\varsigma}=[\mathscr{E}_{\IW,t_{\varsigma}}]$ ; plus pr\'ecis\'ement, tout complexe \`a parit\'e de la forme $\mathscr{E}_{\IW,x}$ avec $x \in \Wextmin$ tel que $t_{-\varsigma} x \in \Wextmin$ apparait comme facteur direct d'un complexe \`a parit\'e de la forme $\mathscr{E}_{\IW,t_{\varsigma}} \star^{I^\vee} \mathscr{F}$ avec $\mathscr{F}$ un complexe \`a parit\'e $I^\vee$-\'equivariant sur $\Fl_{G^\vee}$.
En utilisant~\cite[Theorem~7.3]{zhu-conj} on voit que
\[
\mathscr{E}_{\IW,t_{\varsigma}} \star^{I^\vee} \mathscr{F} \star^{I^\vee} \mathsf{Z}(\mathscr{E}^{\mathrm{sph}}_\lambda) \cong \mathscr{E}_{\IW,t_{\varsigma}} \star^{I^\vee} \mathsf{Z}(\mathscr{E}^{\mathrm{sph}}_\lambda) \star^{I^\vee} \mathscr{F}.
\]
D'autre part, d'apr\`es~\cite[Proposition~4.18]{bgmrr} l'objet $\mathscr{E}_{\IW,t_{\varsigma}} \star^{I^\vee} \mathsf{Z}(\mathscr{E}^{\mathrm{sph}}_\lambda)$ est un complexe \`a parit\'e. On en d\'eduit qu'il en est de m\^eme pour $\mathscr{E}_{\IW,t_{\varsigma}} \star^{I^\vee} \mathscr{F} \star^{I^\vee} \mathsf{Z}(\mathscr{E}^{\mathrm{sph}}_\lambda)$, et donc pour $\mathscr{E}_{\IW,x} \star^{I^\vee} \mathsf{Z}(\mathscr{E}^{\mathrm{sph}}_\lambda)$.

Ce qu'on vient donc de montrer implique en particulier que le foncteur $(-) \star^{I^\vee} \mathsf{Z}(\mathscr{E}^{\mathrm{sph}}_\lambda)$ induit un endomorphisme de $\mathcal{H}_\ext$-module de $E$. Puisque, d'apr\`es~\cite[Proposition~4.18]{bgmrr}, on a
\[
[\mathscr{E}_{\IW,t_{\varsigma}} \star^{I^\vee} \mathsf{Z}(\mathscr{E}^{\mathrm{sph}}_\lambda)] = [\mathscr{E}_{\IW,t_{\lambda+\varsigma}}] = \puN_{t_{\lambda+\varsigma}} = \puN_{t_\varsigma} \cdot \vartheta_{\dot{\mathsf{T}}(\lambda)},
\]
cet endomorphisme co\"incide avec l'action de $\vartheta_{\dot{\mathsf{T}}(\lambda)}$. Donc, pour $x$ comme dans l'\'enonc\'e, on a
\[
[\mathscr{E}_{\IW,x} \star^{I^\vee} \mathsf{Z}(\mathscr{E}^{\mathrm{sph}}_\lambda)] = \puN_x \cdot \vartheta_{\dot{\mathsf{T}}(\lambda)} = \puN_{t_\lambda x} = [\mathscr{E}_{\IW,t_\lambda x}].
\]
On en d\'eduit l'isomorphisme de la proposition.
\end{proof}

%----------------------------------------------------------------
\subsection{Une ``formule de Steinberg'' pour la base $p$-canonique sph\'erique}
\label{ss:formule-steinberg}
%----------------------------------------------------------------

Consid\'erons un \'el\'ement
$\varsigma \in \bX$
comme au~\S\ref{ss:formule-donkin}.
Dans l'article~\cite{rw-sim}, G. Williamson et le second auteur consid\`erent l'unique morphisme de $\mathcal{H}_\ext$-modules \`a droite $\varphi : \mathcal{M}^{\mathrm{sph}}_\ext \to \mathcal{M}^{\mathrm{asph}}_\ext$ envoyant $M_e$ sur $\underline{N}_{t_\varsigma}$. Ils d\'emontrent que ce morphisme est injectif et v\'erifie
\[
\varphi(\puM_w) = \puN_{t_{\varsigma} w}
\]
pour tout $w \in \Wextmin$, sous l'hypoth\`ese o\`u $p$ est bon pour $\bG$. En utilisant cet r\'esultat, la Proposition~\ref{prop:Donkin-pcan} est \'equivalente au r\'esultat suivant, qu'on peut voir comme une ``formule de Steinberg'' pour la base $p$-canonique sph\'erique.

\begin{prop}
\label{prop:Steinberg-pcan}
Supposons que $p \geq 2h-1$. Alors pour tout $x \in \Wextmin$ restreint et tout $\lambda \in \bX^+$, on a
\[
\puM_{t_\lambda \cdot x} = \puM_x \cdot \vartheta_{\dot{\mathsf{T}}(\lambda)}.
\]
\end{prop}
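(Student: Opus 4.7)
Le plan est d'utiliser le morphisme $\varphi : \mathcal{M}^{\mathrm{sph}}_\ext \to \mathcal{M}^{\mathrm{asph}}_\ext$ introduit par Williamson et le second auteur dans~\cite{rw-sim}, dont les propri\'et\'es sont rappel\'ees juste avant l'\'enonc\'e de la proposition. Rappelons que $\varphi$ est $\mathcal{H}_\ext$-lin\'eaire \`a droite, injectif, et v\'erifie $\varphi(\puM_w) = \puN_{t_\varsigma w}$ pour tout $w \in \Wextmin$. (L'hypoth\`ese ``$p$ bon pour $\bG$'' n\'ecessaire pour ce r\'esultat est bien v\'erifi\'ee sous notre hypoth\`ese $p \geq 2h-1$.) L'\'enonc\'e sera donc obtenu en appliquant $\varphi$ \`a l'\'egalit\'e voulue et en utilisant la Proposition~\ref{prop:Donkin-pcan}.

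Soit $x \in \Wextmin$ restreint et $\lambda \in \bX^+$. On pose $y := t_\varsigma x$; montrons d'abord que $y \in \Wextmin$ et que $t_{-\varsigma} y = x$ appartient \`a $\Wextmin$ et est restreint. La seconde affirmation est imm\'ediate ($t_{-\varsigma} y = x$ par d\'efinition, et $x$ est restreint par hypoth\`ese). Pour la premi\`ere, on calcule $(t_\varsigma x) \cdot_p 0 = x \cdot_p 0 + p\varsigma$ \`a partir de la d\'efinition de l'action de $\Wext$ sur $\bX$ : puisque $x \cdot_p 0$ est dominant (restreint) et $p\varsigma$ est dominant, la somme est dominante, et donc $y \in \Wextmin$. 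Les hypoth\`eses de la Proposition~\ref{prop:Donkin-pcan} sont donc satisfaites par $y$, et on en d\'eduit que
\[
\puN_{t_\lambda y} = \puN_y \cdot \vartheta_{\dot{\mathsf{T}}(\lambda)}.
\]

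Maintenant, en appliquant $\varphi$ aux deux membres de l'\'egalit\'e cherch\'ee et en utilisant la $\mathcal{H}_\ext$-lin\'earit\'e \`a droite ainsi que la formule $\varphi(\puM_w) = \puN_{t_\varsigma w}$, on obtient
\[
\varphi(\puM_{t_\lambda x}) = \puN_{t_\varsigma t_\lambda x} = \puN_{t_\lambda t_\varsigma x} = \puN_{t_\lambda y}
\]
et
\[
\varphi(\puM_x \cdot \vartheta_{\dot{\mathsf{T}}(\lambda)}) = \varphi(\puM_x) \cdot \vartheta_{\dot{\mathsf{T}}(\lambda)} = \puN_{t_\varsigma x} \cdot \vartheta_{\dot{\mathsf{T}}(\lambda)} = \puN_y \cdot \vartheta_{\dot{\mathsf{T}}(\lambda)}.
\]
(On a utilis\'e que $t_\varsigma$ et $t_\lambda$ commutent dans $\Wext$.) D'apr\`es l'\'egalit\'e obtenue ci-dessus via la Proposition~\ref{prop:Donkin-pcan}, les deux membres co\"incident; par injectivit\'e de $\varphi$, on conclut que $\puM_{t_\lambda x} = \puM_x \cdot \vartheta_{\dot{\mathsf{T}}(\lambda)}$, ce qui ach\`eve la preuve.

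Il n'y a pas de vraie difficult\'e dans cette preuve : tout le travail a \'et\'e fait en amont, d'une part dans la Proposition~\ref{prop:Donkin-pcan} (qui encode, via la base $p$-canonique antisph\'erique, la formule de Donkin pour les $\bG$-modules basculants), et d'autre part dans la construction par Williamson--Riche du morphisme $\varphi$ reliant les bases sph\'erique et antisph\'erique (qui encode la dualit\'e entre ces deux c\^ot\'es). Le seul point \`a v\'erifier soigneusement est que le passage $x \mapsto t_\varsigma x$ respecte les conditions impos\'ees dans la Proposition~\ref{prop:Donkin-pcan}, ce qui est imm\'ediat.
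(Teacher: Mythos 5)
Votre preuve est correcte et suit exactement l'approche que le texte sugg\`ere (sans la d\'etailler) : d\'eduire la formule sph\'erique de la Proposition~\ref{prop:Donkin-pcan} via le morphisme injectif $\varphi$ de~\cite{rw-sim}, en v\'erifiant que $y := t_\varsigma x$ satisfait les hypoth\`eses de la formule antisph\'erique. Le seul point que vous n'explicitez pas, bien que facile (c'est le m\^eme calcul que pour $y$), est que $t_\lambda x \in \Wextmin$, ce qui est n\'ecessaire pour pouvoir appliquer la formule $\varphi(\puM_w) = \puN_{t_\varsigma w}$ \`a $w = t_\lambda x$.
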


Une fois cette \'egalit\'e \'etablie, comme pour la Proposition~\ref{prop:Donkin-parity} on peut obtenir les isomorphismes suivants (sous l'hypoth\`ese $p \geq 2h-1$) :
\begin{enumerate}
\item
\label{it:steinberg-parity}
dans $\Db_{\mathrm{Br}}(\Gr_{G^\vee}^\op,\bk)$, si $x \in \Wextmin$ est restreint et si $\lambda \in \bX^+$ alors
\[
\mathscr{E}^{\mathrm{sph}}_\lambda \star^{G^\vee_{\mathscr{O}}} \mathscr{E}_{\Gr_{G^\vee}^\op,x} \cong \mathscr{E}_{\Gr_{G^\vee}^\op,t_\lambda x};
\]
\item
\label{it:steinberg-tilting}
dans $\Db \Coh^{\dot \bG \times \Gm}(\tcN_{\dot \bG})$, si $\mu \in \bX$ est tel que $w_\mu$ est restreint et si $\lambda \in \bX^+$, alors
\[
\mathcal{T}_\mu \otimes \dot{\mathsf{T}}(\lambda) \cong \mathcal{T}_\nu,
\]
o\`u $\nu \in \bX$ est le poids tel que $w_\nu = t_\lambda w_\mu$.
\end{enumerate}
(Pour le cas~\eqref{it:steinberg-parity}, le fait que notre objet est un complexe \`a parit\'e d\'ecoule des r\'esultats de~\cite[\S 4.1]{jmw}, et on utilisera le Lemme~\ref{lem:conv-Elambda-theta}. Pour le cas~\eqref{it:steinberg-tilting}, le fait que notre objet est un faisceau exotique basculant d\'ecoule de~\cite[Proposition~4.10]{mr:ets}, et on peut identifier l'action du foncteur $\dot{\mathsf{T}}(\lambda) \otimes (-)$ sur le groupe de Grothendieck en utilisant~\cite[Proposition~4.6]{mr:ets} et la relation entre les $q$-analogues de Lusztig et les polyn\^omes de Kazhdan--Lusztig, voir~\cite{kato}. Alternativement, on peut d\'eduire ce cas du cas~\eqref{it:steinberg-parity} en utilisant l'\'equivalence du Th\'eor\`eme~\ref{thm:coh-const}, dont la version construite dans~\cite{arider} est compatible avec l'\'equivalence de Satake au sens appropri\'e.)

\begin{rmq}
\begin{enumerate}
\item
Les isomorphismes \'enonc\'es en~\eqref{it:steinberg-parity} et~\eqref{it:steinberg-tilting} ci-dessus peuvent \^etre consid\'er\'es comme des analogues de la ``formule de Steinberg'' pour les $\bG$-modules simples; voir~\cite[Proposition~II.3.16]{jantzen}. (Cependant, ils font intervenir un produit tensoriel avec un module \emph{basculant} et non un module simple.)
\item
Comme pour la Proposition~\ref{prop:Donkin-parity}, il serait plus satisfaisant de d\'emontrer les isomorphismes ci-dessus par des arguments g\'eom\'etriques. Pour des coefficients de caract\'eristique $0$, un analogue de l'isomorphisme pr\'esent\'e en~\eqref{it:steinberg-parity} est d\'emontr\'e dans~\cite[Theorem~1.3.5]{abbgm}.
\end{enumerate}
\end{rmq}

\addtocontents{toc}{\protect\addvspace{1.5em}}

%%%%%%%%%%%%%%%%%%%%%%%%%%%%%%%%%%%%%%%%%%%%%%%%%%%%%%%%%%%%%%%%%%%%%%%%%%%


\begin{thebibliography}{BGMRR}

\bibitem[ACR]{acr}
P.~Achar, N.~Cooney, et S.~Riche, \emph{The parabolic exotic t-structure}, pr\'epublication~\texttt{arXiv:1805.05624}.

\bibitem[AHR]{ahr}
P.~Achar, W.~Hardesty, et S.~Riche, \emph{On the Humphreys conjecture on support varieties of tilting modules}, pr\'epublication \texttt{arXiv:1707.07740}, \`a para\^itre dans Transform. Groups.

\bibitem[AMRW1]{amrw1}
P.~Achar, S.~Makisumi, S.~Riche, et G.~Williamson, {\em Free-monodromic mixed
  tilting sheaves on flag varieties}, pr\'epublication \texttt{arXiv:1703.05843}.

\bibitem[AMRW2]{amrw2}
P.~Achar, S.~Makisumi, S.~Riche, et G.~Williamson, {\em Koszul duality for
  Kac--Moody groups and characters of tilting modules}, pr\'epublication \texttt{arXiv:1706.00183}, \`a para\^itre dans J. Amer. Math. Soc.

\bibitem[AR1]{ar:kdsf}
P.~Achar et S.~Riche, {\em Koszul duality and semisimplicity of Frobenius},
  Ann. Inst. Fourier {\bf 63} (2013), 1511--1612.

\bibitem[AR2]{modrap2}
P.~Achar et S.~Riche, {\em Modular perverse sheaves on flag varieties II:
  Koszul duality and formality}, Duke Math. J. {\bf 165} (2016), 161--215.

\bibitem[AR3]{modrap3}
P.~Achar et S.~Riche, {\em Modular perverse sheaves on flag varieties III:
  positivity conditions}, Trans. Amer. Math. Soc. {\bf 370} (2018), 447--485.

\bibitem[AR4]{prinblock}
P.~Achar et S.~Riche, {\em Reductive groups, the loop Grassmannian, and the
  Springer resolution}, pr\'epublication \texttt{arXiv:1602.04412}, \`a para\^itre dans Invent. Math.

\bibitem[ARd]{arider}
P.~Achar et L.~Rider, {\em The affine Grassmannian and the Springer resolution
  in positive characteristic}, avec un appendice en collaboration avec S. Riche, Compos. Math. {\bf 152} (2016), 2627--2677.

\bibitem[ADL]{adl}
I.~\'Agoston, V.~Dlab, et E.~Luk\'acs, {\em Quasi-hereditary extension
  algebras}, Algebr. Represent. Theory {\bf 6} (2003), 97--117.

\bibitem[An]{andersen}
H.~H.~Andersen, \emph{Tilting modules for algebraic groups}, dans \emph{Algebraic groups and their representations (Cambridge, 1997)},
NATO Adv. Sci. Inst. Ser. C Math. Phys. Sci. 517, Kluwer Acad. Publ., 1998, 25--42. 

\bibitem[AJ]{aj}
H.~H. Andersen et J.~C. Jantzen, {\em Cohomology of induced representations
  for algebraic groups}, Math. Ann. {\bf 269} (1984), 487--525.

\bibitem[ABBGM]{abbgm}
S.~Arkhipov, R.~Bezrukavnikov, A.~Braverman, D.~Gaitsgory, et I.~Mirkovi{\'c},
\emph{Modules over the small quantum group and semi-infinite flag manifold},
Transform. Groups \textbf{10} (2005), 
279--362. 

\bibitem[ABG]{abg}
S.~Arkhipov, R.~Bezrukavnikov, et V.~Ginzburg, {\em Quantum groups, the loop
  Grassmannian, and the Springer resolution}, J. Amer. Math. Soc. {\bf 17}
  (2004), 595--678.

\bibitem[Bac]{bac}
E.~Backelin, \emph{Koszul duality for parabolic and singular category $\mathscr{O}$},
Represent. Theory \textbf{3} (1999), 139--152.

\bibitem[BaR]{bar}
P.~Baumann et S.~Riche, \emph{Notes on the geometric Satake equivalence}, pr\'epublication \texttt{arXiv:1703.07288}, \`a para\^itre dans Lecture Notes in Mathematics 2221.

\bibitem[BBD]{bbd}
A.~Be{\u\i}linson, J.~Bernstein, et P.~Deligne, \textit{Faisceaux pervers}, dans \emph{Analyse et topologie sur les espaces singuliers, I (Luminy, 1981)}, 5--172,
Ast\'erisque \textbf{100} (1982).

\bibitem[BBM]{bbm}
A.~Be{\u\i}linson, R.~Bezrukavnikov, et I.~Mirkovi{\'c}, {\em Tilting
  exercises}, Moscow Math. J. {\bf 4} (2004), 547--557, 782.

\bibitem[BG]{bg}
A.~Be{\u\i}linson et V.~Ginzburg,
\emph{Wall-crossing functors and $\mathcal{D}$-modules},
Represent. Theory \textbf{3} (1999), 1--31.

\bibitem[BGS]{bgs}
A.~Be{\u\i}linson, V.~Ginzburg, et W.~Soergel, {\em Koszul duality patterns in
  representation theory}, J. Amer. Math. Soc. {\bf 9} (1996), 473--527.

\bibitem[BGG]{bgg}
J.~Bernstein, I.~M. Gelfand, et S.~I. Gelfand, {\em A certain category of
  {$\mathfrak{g}$}-modules}, Funktsional. Anal. i Prilozhen. {\bf 10} (1976),
  1--8.

\bibitem[B1]{bez}
R.~Bezrukavnikov, \emph{Quasi-exceptional sets and equivariant coherent sheaves on the nilpotent cone}, Represent. Theory~\textbf{7} (2003), 1--18.

\bibitem[B2]{bez:ctm}
R.~Bezrukavnikov, {\em Cohomology of tilting modules over quantum groups and
  {$t$}-structures on derived categories of coherent sheaves}, Invent. Math.
  {\bf 166} (2006), 327--357.

\bibitem[BGMRR]{bgmrr}
R.~Bezrukavnikov, D.~Gaitsgory, I.~Mirkovi\'c, S.~Riche, et L.~Rider, \emph{An Iwahori--Whittaker model for the Satake category}, pr\'epublication~\texttt{arXiv:1806.06527}.

\bibitem[BR]{br}
R.~Bezrukavnikov et S.~Riche, \emph{Affine braid group actions on Springer resolutions}, Ann. Sci. {\'E}c. Norm. Sup{\'e}r. \textbf{45} (2012), 535--599.

\bibitem[BY]{by}
R.~Bezrukavnikov et Z.~Yun, {\em On Koszul duality for Kac--Moody groups},
  Represent. Theory {\bf 17} (2013), 1--98.
  
\bibitem[CYZ]{cyz}
X.~W.~Chen, Y.~Ye, et P.~Zhang, \emph{Algebras of derived dimension zero},
Comm. Algebra \textbf{36} (2008), 
1--10. 
  
\bibitem[CG]{cg}
N.~Chriss et V.~Ginzburg, \emph{Representation theory and complex geometry}, Bir\-kh{\"a}user, 1997.

\bibitem[CPS]{cps}
E.~Cline, B.~Parshall, et L.~Scott, {\em Finite-dimensional algebras and
  highest-weight categories}, J. Reine Angew. Math. {\bf 391} (1988), 85--99.
  
\bibitem[ES]{es}
M.~Ehrig et C.~Stroppel, \emph{Koszul gradings on Brauer algebras}, Int. Math. Res. Not. IMRN \textbf{2016} (2016), 
3970--4011.
  
  \bibitem[EW]{ew}
B.~Elias et G.~Williamson, {\em Soergel calculus}, Represent. Theory \textbf{20}, 295--374.

\bibitem[FP]{fp}
E.~Friedlander et B.~Parshall, {\em Cohomology of Lie algebras and algebraic
  groups}, Amer. J. Math. {\bf 108} (1986), 235--253.

\bibitem[Ga]{gaitsgory}
D.~Gaitsgory,
\emph{Construction of central elements in the affine Hecke algebra via nearby cycles},
Invent. Math.~\textbf{144} (2001), 
253--280. 

\bibitem[GG]{gg}
R.~Gordon et E.~L.~Green, \emph{Graded Artin algebras},
  J. Algebra \textbf{76} (1982), 111--137.

\bibitem[Ha]{hardesty}
W.~Hardesty, {\em On support varieties and the Humphreys conjecture in type $A$},
  Adv. Math. {\bf 329} (2018), 392--421.

\bibitem[Hu]{humphreys}
J.~E. Humphreys, {\em Comparing modular representations of semisimple groups
  and their Lie algebras}, Modular interfaces ({R}iverside, {CA}, 1995), AMS/IP
  Stud. Adv. Math., vol.~4, Amer. Math. Soc., Providence, RI, 1997, pp.~69--80.

\bibitem[Ja]{jantzen}
J.~C.~Jantzen, \emph{Representations of algebraic
groups, second edition}, Mathematical surveys and monographs 107, Amer. Math. Soc., 2003.

\bibitem[JW]{jw}
L.~T.~Jensen et G.~Williamson, \emph{The $p$-canonical basis for Hecke algebras}, dans \emph{Categorification in Geometry, Topology and Physics}, Contemp. Math. 583 (2017), 333--361.

\bibitem[JMW]{jmw}
D.~Juteau, C.~Mautner, et G.~Williamson, {\em Parity sheaves}, J. Amer. Math. Soc. {\bf 27} (2014), 1169--1212.

\bibitem[JMW2]{jmw2}
D.~Juteau, C.~Mautner, et G.~Williamson, \emph{Parity sheaves and tilting modules}, Ann. Sci. {\'E}c. Norm. Sup{\'e}r. \textbf{49} (2016), 257--275.

\bibitem[Ka]{kato}
S.~Kato, \emph{Spherical functions and a $q$-analogue of Kostant's weight multiplicity formula}, Invent. Math. \textbf{66} (1982), 461--468. 

\bibitem[KL1]{kl0}
D. Kazhdan et G. Lusztig, \emph{Representations of Coxeter groups and Hecke algebras}, Invent. Math. \textbf{53} (1979), 191--213.

\bibitem[KL2]{kl}
D. Kazhdan et G. Lusztig, \emph{Schubert varieties and Poincar\'e duality},
dans \emph{Geometry of the Laplace operator}, pp.~185--203, Proc. Sympos. Pure Math. 36, Amer. Math.
Soc., 1980.

\bibitem[KLT]{klt}
S.~Kumar, N.~Lauritzen, et J.~F. Thomsen, {\em Frobenius splitting of
  cotangent bundles of flag varieties}, Invent. Math. {\bf 136} (1999),
  603--621.

\bibitem[Lu1]{lusztig}
G.~Lusztig, \emph{Singularities, character formulas, and a $q$-analog of weight multiplicities}, in \emph{Analysis and topology on singular spaces, II, III (Luminy, 1981)}, 208--229,
Ast{\'e}risque \textbf{101--102}, Soc. Math. France, 1983.

\bibitem[Lu2]{lusztig2}
G.~Lusztig, {\em Cells in affine {W}eyl groups. {IV}}, J. Fac. Sci. Univ. Tokyo
  Sect. IA Math. {\bf 36} (1989), 297--328.

\bibitem[MR1]{mr:ets}
C.~Mautner et S.~Riche, \emph{On the exotic t-structure in positive characteristic}, Int. Math. Res. Not. IMRN~\textbf{2016} (2016), 5727--5774.

\bibitem[MR2]{mr}
C.~Mautner et S.~Riche, {\em Exotic tilting sheaves, parity sheaves on affine
  Grassmannians, and the Mirkovi{\'c}--Vilonen conjecture}, pr\'epublication \texttt{arXiv:1501.07369}, \`a para\^itre dans J. Eur. Math. Soc.

\bibitem[Ma]{mazorchuk}
V.~Mazorchuk, {\em Koszul duality for stratified algebras I. Balanced
  quasi-hereditary algebras}, Manuscripta Math. {\bf 131} (2010), 1--10.
  
\bibitem[MV]{mv}
I.~Mirkovi{\'c} et K.~Vilonen, {\em Geometric {L}anglands duality and representations of algebraic groups over commutative rings}, Ann. of Math. {\bf 166} (2007), 95--143.
  
  \bibitem[R1]{riche}
S.~Riche, \emph{Geometric braid group action on derived categories of coherent sheaves} 
(avec un appendice en collaboration avec R.~Bezrukavnikov), 
Represent. Theory \textbf{12} (2008), 131--169.

\bibitem[R2]{riche-koszul}
S.~Riche, \emph{Koszul duality and modular representations of semisimple {L}ie algebras}, Duke Math.~J.~\textbf{154} (2010), 31--134.

\bibitem[R3]{riche-hab}
S.~Riche, \emph{Geometric Representation Theory in positive characteristic}, th\`ese d'habilitation, disponible sur \texttt{https://tel.archives-ouvertes.fr/tel-01431526}.
  
 \bibitem[RSW]{rsw}
S.~Riche, W.~Soergel, et G.~Williamson, \emph{Modular Koszul duality}, Compos.~Math.~\textbf{150} (2014), 273--332.

  \bibitem[RW1]{rw}
  S.~Riche et G.~Williamson, \emph{Tilting modules and the $p$-canonical basis}, Ast\'erisque~\textbf{397} (2018).
  
  \bibitem[RW2]{rw-sim}
S.~Riche et G.~Williamson, \emph{A simple character formula}, en pr\'eparation.

\bibitem[Rin]{ringel}
C.~M. Ringel, {\em The category of modules with good filtrations over a
  quasi-hereditary algebra has almost split sequences}, Math. Z. {\bf 208}
  (1991), 209--223.

\bibitem[SVV]{svv}
P.~Shan, M.~Varagnolo, et \'E.~Vasserot, \emph{Koszul duality of affine Kac--Moody algebras and cyclotomic rational double
affine Hecke algebras},
Adv. Math. \textbf{262} (2014), 370--435.

\bibitem[So1]{so-icm} W. Soergel, \emph{Gradings on representation categories}, Proc. I.C.M., Z{\"u}rich, Switzerland, 1994, vol. 2, 800--806.

\bibitem[So2]{soergel-comb-tilting}
W.~Soergel, \emph{Kazhdan--Lusztig polynomials and a combinatoric[s] for tilting modules}, Represent. Theory \textbf{1} (1997), 83--114.

\bibitem[So3]{soergel-char-tilt}
W.~Soergel, \emph{Character formulas for tilting modules over Kac--Moody algebras}, Represent. Theory \textbf{2} (1998), 432--448.

\bibitem[So4]{soergel-char-for}
W.~Soergel, \emph{Character formulas for tilting modules over quantum groups at roots of one}, dans \emph{Current developments in mathematics, 1997 (Cambridge, MA)}, Int. Press, 1999, 161--172.

\bibitem[So5]{soergel-dualitat}
W.~Soergel, \emph{Modulare Koszul-Dualit\"at}, pr\'epublication \texttt{arXiv:1109.0563}.

\bibitem[Sp]{springer}
T.~A.~Springer, \emph{Quelques applications de la cohomologie d'intersection}, in
\emph{S\'eminaire N.~Bourbaki, Vol. 1981/82},
Ast\'erisque \textbf{92--93} (1982), Exp. 589, 249--273.

\bibitem[Yu]{yun}
Z.~Yun, {\em Weights of mixed tilting sheaves and geometric Ringel duality},  Sel. Math., New. ser. {\bf 14} (2009), 299--320.

\bibitem[Zh]{zhu-conj}
X.~Zhu, \emph{On the coherence conjecture of Pappas and Rapoport},
Ann. of Math. \textbf{180} (2014), 
1--85. 

\end{thebibliography}
\end{document}